\DeclareMathOperator{\supp}{supp}
\DeclareMathOperator{\dom}{dom}
\DeclareMathOperator{\ran}{ran}
\newcommand{\Go}{\ensuremath {\mathcal{G}^{(0)}}}
\newcommand{\Skew}{A \rtimes_\alpha S}
\newcommand{\Gop}{\ensuremath {\mathcal{G}^{op}}}
\newcommand{\Ga}{\ensuremath {\mathcal{G}^{a}}}
\DeclareMathOperator{\so}{\mathfrak{s}}
\DeclareMathOperator{\ra}{\mathfrak{r}}
\newcounter{generalnumbering} \numberwithin{generalnumbering}{section}
\theoremstyle{plain}
\newtheorem{theorem}[generalnumbering]{Theorem}
\newtheorem{lemma}[generalnumbering]{Lemma}
\newtheorem{proposition}[generalnumbering]{Proposition}
\newtheorem{corollary}[generalnumbering]{Corollary}
\theoremstyle{definition}
\newtheorem{definition}[generalnumbering]{Definition}	
\newtheorem{remark}[generalnumbering]{Remark}
\newtheorem{example}[generalnumbering]{Example}
\def\G{\@ifnextchar[{\@Gwithbrak}{\@Gwithoutbrak}}
\def\@Gwithbrak[#1]{\mathcal{G}^{(#1)}}
\def\@Gwithoutbrak{\mathcal{G}}
\newcommand{\ntag}{\stepcounter{generalnumbering}\tag{\arabic{section}.\arabic{generalnumbering}}}
\let\tempexists\exists
\let\tempforall\forall
\renewcommand{\exists}{\tempexists\mkern2mu}
\renewcommand{\forall}{\tempforall\mkern2mu}
\begin{document}

\title[The dynamics of partial inverse semigroup actions]{The dynamics of partial inverse semigroup actions}

\author{Luiz Gustavo Cordeiro$\hspace{0pt}^{\dagger}$}
\address{UMPA, UMR 5669 CNRS -- École Normale Supérieure de Lyon\\
46 alée d'Italie, 69364 Lyon Cedex 07, France}
\email{luizgc6@gmail.com}
\thanks{$\hspace{0pt}^{\dagger}$Supported by CAPES/Ciência Sem Fronteiras PhD scholarship 012035/2013-00, and by ANR project GAMME (ANR-14-CE25-0004).}

\author{Viviane Beuter}
\address{Departamento de Matem\'{a}tica, Universidade Federal de Santa Catarina, \\
Florian\'{o}polis, BR-88040-900, Brazil
and\\
Departamento de Matem\'{a}tica, Universidade do Estado de Santa Catarina, \\Joinville, BR-89219-710, Brazil}
\email{vivibeuter@gmail.com}

\subjclass[2010]{Primary
20M30; 
Secondary 
16S99, 
22A22
}

\keywords{Partial action of inverse semigroups, groupoid of germs, Steinberg algebras, crossed product, skew inverse semigroup ring, topologically free, topologically principal, effective, continuous orbit equivalence, $E$-unitary}

\date{\today}

\begin{abstract}
Given an inverse semigroup $S$ endowed with a partial action on a topological space $X$, we construct a groupoid of germs $S\ltimes X$ in a manner similar to Exel's groupoid of germs, and similarly a partial action of $S$ on an algebra $A$ induces a crossed product $A\rtimes S$. We then prove, in the setting of partial actions, that if $X$ is locally compact Hausdorff and zero-dimensional, then the Steinberg algebra of the groupoid of germs $S\ltimes X$ is isomorphic to the crossed product $A_R(X)\rtimes S$, where $A_R(X)$ is the Steinberg algebra of $X$. We also prove that the converse holds, that is, that under natural hypotheses, crossed products of the form $A_R(X)\rtimes S$ are Steinberg algebras of appropriate groupoids of germs of the form $S\ltimes X$. We introduce a new notion of topologically principal partial actions, which correspond to topologically principal groupoids of germs, and study orbit equivalence for these actions in terms of isomorphisms of the corresponding groupoids of germs. This generalizes previous work of the first-named author as well as from others, which dealt mostly with global actions of semigroups or partial actions of groups. We finish the article by comparing our notion of orbit equivalence of actions and orbit equivalence of graphs.
\end{abstract}

\maketitle

\section{Introduction}

Partial actions of groups on C*-algebras, initially defined for the group of integers in \cite{Exel1994} (and for general discrete groups in \cite{MR1331978}), are a powerful tool in the study of many C*-algebras associated to dynamical systems. In \cite{Dokuchaev2005}, Dokuchaev and Exel introduced the analogous notion of partial group actions in a purely algebraic context, and although the theory is not at present as well-developed as its C*-algebraic counterpart, it has attracted interest or researchers in the area, since some important classes of algebra, such as graph and ultragraph Leavitt path algebras, have been shown to be crossed products (see \cite{Goncalves2014a, 1706.03628v2}).

In fact, in \cite[Theorem~4.2]{Exel1998} it is proven that partial group actions correspond to actions of certain ``universal'' inverse semigroups, which were already considered in \cite{Sieben1997} and can be used, for example, to describe groupoid C*-algebras as crossed products by inverse semigroups (see \cite[Theorem~3.3.1]{Paterson1999}). Although these approaches are similar in some respects, each of them has its advantages and drawbacks -- for example, actions of inverse semigroups respect the operation completely, whereas groups have, overall, a better algebraic structure.

Groupoids are also being extensively used in order to classify and study similar classes of algebras (see \cite{Clark2014} for example), and one can relate these two approaches in the following manner: From a partial group action on a topological space we associate a transformation groupoid, or from an inverse semigroup action on a space we associate a groupoid of germs (see \cite{Abadie2004} and \cite{Exel2008}, respectively). It turns out that both in the purely algebraic and the C*-algebraic settings, the algebras of such groupoids coincide with the algebras induced from the group or semigroup actions (see \cite{Beuter2018, Demeneghi2017}). In fact, under appropriate assumptions, the relationships between the representation theory of groupoids and inverse semigroups have also been made categorical, see for example \cite{MR2969047, MR3077869, Starling2018}.

In this article we will be concerned with partial actions of inverse semigroups, defined in \cite{MR3231479}, which are a common generalization of both partial actions of groups and actions of inverse semigroups. In particular, we generalize the constructions of groupoids of germs for topological partial actions, and of crossed products for algebraic partial actions. 

Therefore we have a common ground for the study of both partial group actions and inverse semigroup actions.

The first problem we tackle is to describe the Steinberg algebra of the groupoid of germs of a topological partial inverse semigroup action as a crossed product algebra. This generalizes results of \cite{Beuter2018,Demeneghi2017}, where such isomorphisms were obtained under (strictly) stronger assumptions. In the converse direction, by starting with an appropriate crossed product, we manage to construct a groupoid of germs which realizes the isomorphism above.

Orbit equivalence and full groups for Cantor systems were initially studied by Giordano, Putnam and Skau in \cite{MR1363826, MR1710743}, and has enjoyed recent developments in \cite{MR3789176,Li2017,MR3614030}. The notion of continuous orbit equivalence can be immediately extended to partial inverse semigroup actions. We introduce and study a natural notion of topological principality for partial inverse semigroup actions, which corresponds to topological principality of the groupoid of germs. In the Hausdorff setting, we prove that two ample, topologically principal partial inverse semigroup actions are continuously orbit equivalent if and only if the corresponding groupoids of germs are isomorphic, thus generalizing the analogous part of \cite[Theorem 2.7]{Li2017}. It is important to note that the semigroups considered do not need to be isomorphic, since continuous orbit equivalence deals mostly with the dynamics of the unit space inherited from the partial action.

We finish this article by connecting the notions of continuous orbit equivalence of (partial) semigroup actions, continuous orbit equivalence of graphs, and isomorphism of Leavitt path algebras.

\subsection*{Acknowledgements}
We thank Thierry Giordano for the several useful comments and suggestions, and who worked as both the second named author's PhD supervisor (co-supervised by Vladimir Pestov), and as the first named author's supervisor during her stay at the University of Ottawa as a Visiting Student Researcher during the Winter Session of 2018. 

\section{Preliminaries}
 
\subsection*{Inverse semigroups}

A \emph{semigroup} is a set endowed with an associative binary operation $(s,t)\mapsto st$, called \emph{product}. An \emph{inverse semigroup} is a semigroup $S$ such that for every $s\in S$, there exists a unique $s^*\in S$ such that $ss^*s=s$ and $s^*ss^*=s^*$. We call $s^*$ the \emph{inverse} of $S$.

A \emph{sub-inverse semigroup} of an inverse semigroups $S$ is a nonempty subset $P\subseteq S$ which is closed under product and inverses. \emph{Homomorphisms} and \emph{isomorphisms} of inverse semigroups are defined in the same manner as for groups. We refer to \cite{MR1455373} for details.

\begin{example}
    Given a set $X$, define $\mathcal{I}(X)$ to be the set of partial bijections of $X$, i.e., bijections $f\colon\dom(f)\to\ran(f)$ where $\dom(f),\ran(f)\subseteq X$. We endow $\mathcal{I}(X)$ with the natural composition of partial maps: given $f,g\in\mathcal{I}(X)$, the product $gf$ has domain $\dom(gf)=f^{-1}(\ran(f)\cap\dom(g))$ and range $\ran(gf)=g(\ran(f)\cap\dom(g))$, and is defined by $(gf)(x)=g(f(x))$ for all $x\in\dom(gf)$.
    
    This makes $\mathcal{I}(X)$ into an inverse semigroup, where the inverse element of $f\in\mathcal{I}(X)$ is the inverse function $f^*=f^{-1}$.
\end{example}
    
Given an inverse semigroup $S$, we denote by $E(S)=\left\{e\in S:e^2=e\right\}$ the set of idempotents of $S$. $E(S)$ is a commutative sub-inverse semigroup of $S$, and it is a semilattice (Definition \ref{def:weaksemilattice}) under the order $e\leq f\iff e=ef$. This order is extended to all of $S$ by setting $s\leq t\iff s=ts^*s$. This order is preserved under products and inverses of $S$. Homomorphisms of inverse semigroups preserve their orders.

\subsection*{Partial actions of inverse semigroups}

\begin{definition}[{\cite[Definition 2.11, Proposition 3.1]{MR3231479}}]\label{def:partialhomomorphism}
A \emph{partial homomorphism} between inverse semigroups $S$ and $T$ is a map $\varphi\colon S \rightarrow T$ such that for all $s$ and $t$ in $S$, one has that
\begin{enumerate}\setlength\itemsep{1ex}
\item[(i)] $\varphi(s^*)=\varphi(s)^*$;  
\item[(ii)] $\varphi(s)\varphi(t) \leq \varphi(st)$;
\item[(iii)]\label{def:partialhomomorphism3} $\varphi(s) \leq \varphi(t)$ whenever $s \leq t$.
\end{enumerate}
Note that homomorphisms of inverse semigroups are also partial homomorphisms.
\end{definition}

In the most general context (\cite[Definition 3.3]{MR3231479}), a partial action of a semigroup $S$ on a set $X$ is simply a partial homomorphism $S\to\mathcal{I}(X)$. However, when $X$ has some extra structure (topological and/or algebraic) we will be interested in partial actions that preserve this structure.

\begin{definition}
A \emph{topological partial action} of an inverse semigroup $S$ on a topological space $X$ is a tuple $\theta=(\left\{X_s\right\}_{s\in S},\left\{\theta_s\right\}_{s\in S})$ such that:
\begin{enumerate}[label=(\roman*)]
\item For all $s\in S$, $X_s$ is an open subset of $X$ and $\theta_s\colon X_{s^*}\to X_s$ is a homeomorphism;
\item The map $s\mapsto\theta_s$ is a partial homomorphism of inverse semigroups;
\item $X=\bigcup_{e\in E(S)}X_e$.
\end{enumerate}
If $s\mapsto\theta_s$ is a homomorphism of inverse semigroups, we call $\theta$ a \emph{global action}, or simply an \emph{action} if no confusion arises.
\end{definition}

Condition (iii) above is usually called \emph{non-degeneracy}, and is sometimes not required. If (i) and (ii) are satisfied by a tuple $\theta$ as above, then $X_{s^*}\subseteq X_{s^*s}$ for all $s\in S$, and thus one can always substitute $X$ by $\bigcup_{e\in E(S)}X_e$ (which in fact coincides with $\bigcup_{s\in S}X_s$) and obtain a non-degenerate partial action. In fact, $\theta$ is a global action if and only if $X_{s^*}=X_{s^*s}$ for all $s\in S$. Similar comments hold for partial actions of groups on algebras, which we now define. For the remainder of this section, we fix a commutative unital ring $R$, and will consider algebras over $R$.

\begin{remark}
Every ring has a canonical $\mathbb{Z}$-algebra structure, or alternatively, when restricted to commutative rings, every commutative ring $R$ has a canonical $R$-algebra structure. Thus the definitions we adopt for algebras restrict to partial actions and crossed products of rings.
\end{remark}
\begin{definition}
An \emph{algebraic partial action} of an inverse semigroup $S$ on an associative $R$-algebra $A$ is a tuple $\alpha=(\left\{A_s\right\}_{s\in S},\left\{\alpha_s\right\}_{s\in S})$ such that:
\begin{enumerate}
\item[(i)] For all $s\in S$, $A_s$ is an ideal of $A$ and $\alpha_s\colon A_{s^*}\to A_s$ is an $R$-isomorphism;
\item[(ii)] $\alpha\colon S\to\mathcal{I}(A)$, $s\mapsto\alpha_s$ is a partial homomorphism of inverse semigroups;
\item[(iii)] $X=\operatorname{span}_R\bigcup_{e\in E(S)}A_e$.
\end{enumerate}
If $s\mapsto\alpha_s$ is a homomorphism of inverse semigroups, we call $\alpha$ a \emph{global action} or simply an \emph{action}.
\end{definition}

\subsection*{Crossed products}

Let $R$ be a commutative unital ring, and let $\alpha=(\{A_s\}_{s\in S}, \{\alpha_s\}_{s \in S})$ be a partial action of an inverse semigroup $S$ on an associative $R$-algebra $A$. Consider $\mathscr{L}=\mathscr{L}(\alpha)$ the $R$-module of all finite sums of the form 
\[\sum_{s\in S}^{\text{finite}} a_s \delta_s, \qquad \text{where } a_s \in A_s \text{ and } \delta_s  \text{ is a formal symbol},\]
with a multiplication defined as the bilinear extension of the rule
\[(a_s \delta_s)(b_t \delta_t) = \alpha_{s}(\alpha_{s^*}(a_s) b_t) \delta_{st}.\]

Then $\mathscr{L}$ is an $R$-algebra which is possibly not associative (see \cite[Example~3.5]{Dokuchaev2005}). A proof similar to that of \cite[Corollary 3.2]{Dokuchaev2005} shows that if $A_s$ is idempotent or non-degenerate for each $s\in S$, then $\mathscr{L}$ is associative.

\begin{definition}\label{def:partialskewinversesemigroupring}
Let $\alpha=(\left\{A_s\right\}_{s\in S},\left\{\alpha_s\right\}_{s\in S})$ be an algebraic partial action of an inverse semigroup $S$ on an $R$-algebra $A$ end let $\mathscr{N}=\mathscr{N}(\alpha)$ be the additive subgroup of $\mathscr{L}$ generated by all elements of the form 
\[a\delta_r - a\delta_s,\quad\text{where}\quad r\leq s\quad\text{and}\quad a\in A_r.\] 
Then $\mathscr{N}$ is an ideal of the $R$-algebra $\mathscr{L}$. We define the \emph{crossed product}, which we denote by $A\rtimes_\alpha S$ (or simply $A\rtimes S$) as the quotient algebra
\[A\rtimes_\alpha S:=\mathscr{L}/\mathscr{N}\]
The class of an element $x\in\mathscr{L}$ in $\Skew$ will be denoted by $\overline{x}$.
\end{definition}

\begin{remark}
\begin{enumerate}
    \item As a ring, $A\rtimes S$ depends only on the ring structure of $A$ and the maps $\alpha$. So distinct algebra structures over $A$ will induce distinct algebra structures over the \emph{same ring} $A\rtimes S$ (as long as the partial action preserves these distinct algebra structures).
    \item Crossed products are sometimes called \emph{skew inverse semigroup algebras} or \emph{rings}, or \emph{partial crossed products} (see \cite{beutergoncalvesoinertroyer2018,Boava2013,Dokuchaev2005}). Since these are simply particular cases of the construction above, we adopt the simplest nomenclature for the most general case.
\end{enumerate}
\end{remark}

The \emph{diagonal} of the crossed product $A\rtimes S$ is the additive abelian subgroup generated by elements of the form $\overline{a\delta_e}$, where $e\in E(S)$ and $a\in A_e$, and the diagonal is a subalgebra of $A\rtimes S$.

Recall that a ring $B$ is \emph{left $s$-unital} if for all finite subsets $F\subseteq B$, there exists $u\in B$ such that $x=ux$ for all $x\in F$.

\begin{proposition}
Suppose that $\alpha$ is a partial action of $S$ on an algebra $A$, and that for all $e\in E(S)$, $A_e$ is a left $s$-unital ring. Then $A$ is isomorphic to the diagonal algebra of $A\rtimes S$.
\end{proposition}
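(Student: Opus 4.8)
The plan is to construct an explicit $R$-algebra isomorphism $\Phi\colon A\to D$, where $D$ denotes the diagonal of $A\rtimes S$, and to produce a one-sided inverse for it at the linear level. First I would record two elementary facts about the idempotent part of the action. Since $e\mapsto\alpha_e$ is a partial homomorphism and the idempotents of $\mathcal{I}(A)$ are precisely the partial identities, one checks that $\alpha_e=\id_{A_e}$ for every $e\in E(S)$. Moreover, property (ii) of a partial homomorphism gives $\alpha_e\alpha_f\le\alpha_{ef}$, i.e. $\id_{A_e\cap A_f}\le\id_{A_{ef}}$, which forces $A_e\cap A_f\subseteq A_{ef}$ (in fact equality holds), and order preservation yields $A_r\subseteq A_s$ whenever $r\le s$, so that each generator $a\delta_r-a\delta_s$ of $\mathscr{N}$ makes sense.

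Because $A=\sum_{e\in E(S)}A_e$ by non-degeneracy, I would define $\Phi$ on a decomposition $a=\sum_i a_i$ with $a_i\in A_{e_i}$ by $\Phi(a)=\sum_i\overline{a_i\delta_{e_i}}$, so that $\Phi(A)=D$ by construction. Two properties are then immediate. For multiplicativity, using $\alpha_e=\id_{A_e}$ together with $ab\in A_e\cap A_f\subseteq A_{ef}$ one computes $(a\delta_e)(b\delta_f)=\alpha_e(\alpha_e(a)b)\delta_{ef}=ab\,\delta_{ef}$ for $a\in A_e$, $b\in A_f$, whence $\Phi(a)\Phi(b)=\Phi(ab)$ on generators and hence everywhere by bilinearity. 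For injectivity, the $R$-linear map $q\colon\mathscr{L}\to A$, $\sum_s a_s\delta_s\mapsto\sum_s a_s$, sends every generator $a\delta_r-a\delta_s$ of $\mathscr{N}$ to $a-a=0$, so it descends to $\bar q\colon A\rtimes S\to A$ satisfying $\bar q\circ\Phi=\id_A$; thus $\Phi$ is injective as soon as it is well defined.

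The crux, and the only place the $s$-unital hypothesis intervenes, is the well-definedness of $\Phi$: I must show that $\sum_i a_i=0$ in $A$ (with $a_i\in A_{e_i}$) implies $\sum_i a_i\delta_{e_i}\in\mathscr{N}$. The engine is the observation that if $a\in A_e\cap A_f$, then $a\delta_e\equiv a\delta_f\pmod{\mathscr{N}}$, since both are congruent to $a\delta_{ef}$ via the relations for $ef\le e$ and $ef\le f$ (here $a\in A_{ef}$ by the inclusion above). I would then induct on the number $n$ of terms. Choosing by left $s$-unitality an element $u\in A_{e_n}$ with $ua_n=a_n$, each $ua_i$ lies in $A_{e_n}\cap A_{e_i}\subseteq A_{e_ne_i}$, so $(ua_i)\delta_{e_n}\equiv(ua_i)\delta_{e_i}\pmod{\mathscr{N}}$ by the observation. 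Rewriting $a_n=ua_n=-\sum_{i<n}ua_i$ inside $A_{e_n}$ and substituting yields $\sum_{i=1}^n a_i\delta_{e_i}\equiv\sum_{i<n}(a_i-ua_i)\delta_{e_i}\pmod{\mathscr{N}}$. The new coefficients $b_i:=a_i-ua_i\in A_{e_i}$ satisfy $\sum_{i<n}b_i=-a_n+ua_n=0$, so by the inductive hypothesis $\sum_{i<n}b_i\delta_{e_i}\in\mathscr{N}$, which closes the induction; the base case $n=1$ is trivial.

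Putting these together, $\Phi$ is a well-defined injective $R$-algebra homomorphism onto $D$, hence an isomorphism $A\cong D$ with inverse $\bar q|_D$. I expect the inductive well-definedness argument to be the main obstacle: the absence of a top idempotent means one cannot simply compare all the $\delta_{e_i}$ against a single idempotent, and the left local unit $u$ is exactly what lets one push each product $ua_i$ into the meet $A_{e_ne_i}$, thereby lowering the number of summands while preserving the relation $\sum b_i=0$. The preliminary identities $\alpha_e=\id_{A_e}$ and $A_e\cap A_f\subseteq A_{ef}$, although routine, are what make both multiplicativity and the key lemma run, so I would establish them carefully at the outset.
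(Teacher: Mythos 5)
Your proposal is correct, and it is in fact more self-contained than what the paper offers: the paper does not write out an argument at all, but instead notes that $A$ is left $s$-unital by \cite[Theorem 1]{MR0419511} (since every element of $A$ is a finite sum of elements of $\bigcup_{e\in E(S)}A_e$) and then defers to \cite[Proposition 4.3.11]{cordeirothesis} and \cite[Proposition 3.1]{beutergoncalvesoinertroyer2018}, asserting that those proofs adapt. Your argument supplies precisely the details being outsourced, and by essentially the same mechanism as in those references: the preliminary identities $\alpha_e=\operatorname{id}_{A_e}$ and $A_e\cap A_f\subseteq A_{ef}$, the computation $(a\delta_e)(b\delta_f)=ab\,\delta_{ef}$, the augmentation-type map $\bar q$ (which kills the generators $a\delta_r-a\delta_s$ of $\mathscr{N}$ and hence descends, giving injectivity for free), and --- the genuine crux, which you correctly isolate --- the inductive lemma that $\sum_i a_i=0$ with $a_i\in A_{e_i}$ forces $\sum_i a_i\delta_{e_i}\in\mathscr{N}$, proved by absorbing the last term with a left local unit $u\in A_{e_n}$ and using $ua_i\in A_{e_n}\cap A_{e_i}\subseteq A_{e_ne_i}$ to trade $\delta_{e_n}$ for $\delta_{e_i}$ while preserving the relation $\sum_i b_i=0$. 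One small structural difference worth noting: the paper's first step upgrades $s$-unitality from the ideals $A_e$ to all of $A$ via Tominaga's theorem, whereas your induction never needs $A$ itself to be $s$-unital --- a local unit inside the single ideal $A_{e_n}$ suffices at each step --- so your route is marginally leaner on hypotheses actually invoked.
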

\begin{proof}
Any element of $A$ is a sum of elements of $\bigcup_{e\in E(S)}A_e$, so the same argument of \cite[Theorem 1]{MR0419511} proves that $A$ is a left $s$-unital ring. The proof of \cite[Proposition 4.3.11]{cordeirothesis} (see also \cite[Proposition 3.1]{beutergoncalvesoinertroyer2018}) can be easily adapted to obtain an isomorphism between $A$ and the diagonal subalgebra of $A\rtimes S$.\qedhere
\end{proof}

\subsection*{Étale Groupoids}

A \emph{groupoid} is a small category $\G$ with invertible arrows. We identify $\G$ with the underlying set of arrows, so that objects of $\G$ correspond to unit arrows, and the space of all units is denoted by $\G[0]$. The \emph{source} of an element $a\in\G$ is defined as $\so(a)=a^{-1}a$ and the \emph{range} of $a$ is $\ra(a)=aa^{-1}$. A pair $(a,b)\in \G\times\G$ is \emph{composable} (i.e., the product $ab$ is defined) if and only if $\so(a)=\ra(b)$, and the set of all composable pairs is denoted by $\G[2]$.

A \emph{topological groupoid} is a groupoid $\G$ endowed with a topology which makes the multiplication map $\G[2]\ni(a,b)\mapsto ab\in\G$ and the inverse map $\G\ni a\mapsto a^{-1}\in \G$ continuous, where we endow $\G[2]$ with the topology induced from the product topology of $\G\times\G$.

\begin{definition}[\cite{MR3077869,MR2304314}]
An \emph{étale groupoid} is a topological groupoid $\G$ such that the source map $\mathfrak{s}\colon\G\to\Go$ is a local homeomorphism.
\end{definition}

Alternatively, a topological groupoid $\G$ is étale precisely when $\Go$ is open and the product of any two open subsets of $\G$ is open (see \cite[Theorem 5.18]{MR2304314}), where the product of $A,B\subseteq\G$ is defined as
\[AB=\left\{ab:(a,b)\in(A\times B)\cap \G^{(2)}\right\}.\]

An open \emph{bisection} of an étale groupoid is an open subset $U\subseteq\G$ such that the source and range maps are injective on $U$, and hence homeomorphisms onto their images. The set of all open bisections of an étale groupoid forms a basis for its topology and it is an inverse semigroup under the product of sets. We denote this semigroup by $\G^{op}$.

\begin{definition}
An étale groupoid is \emph{ample} if $\G[0]$ is Hausdorff and admits a basis of compact-open subsets.
\end{definition}

Suppose that $\G$ is an ample groupoid. Then $\G$ admits a basis of compact-open bisections, since $\G[0]$ does and $\so\colon\G\to\G[0]$ is a local homeomorphism. Since $\Go$ is Hausdorff then $\G^{(2)}$ is closed in $\G\times\G$, then the product of two compact subsets $A,B$ of $\G$ is compact, as $AB$ is the image of the compact $(A\times B)\cap\G[2]$ under the continuous product map (alternatively, see \cite[Lemma 3.13]{MR3077869}). We denote by $\Ga$ the sub-inverse semigroup of $\Gop$ consisting of compact-open bisections and call $\Ga$ the \emph{ample semigroup} of $\G$.

\begin{example}\label{examplecanonicalaction}
Let $\mathcal{G}$ be an étale groupoid. The \emph{canonical action} of $\Gop$ on $\mathcal{G}^{(0)}$ is defined as $\tau=\tau^{\G}=\left(\left\{\ra(U)\right\}_{U\in\Gop},\left\{\tau_U\right\}_{U\in\Gop}\right)$, with $\tau_U\colon\so(U)\to\ra(U)$ the homeomorphism $\tau=\ra\circ\so|_U^{-1}$. This is the homeomorphism which takes the source of each arrow of $U$ to its range.
\end{example}

\subsection*{Steinberg algebras of ample groupoids}

Throughout this section, we fix a commutative unital ring $R$. Given an ample Hausdorff groupoid $\mathcal{G}$, we denote by $R^{\G}$ the $R$-module of $R$-valued functions on $\G$. Given $A\subseteq\G$, define $1_A$ as the characteristic function of $A$ (with values in $R$).

\begin{definition}
Given an ample groupoid $\G$, $A_R(\G)$ is the $R$-submodule of $R^{\G}$ generated by the characteristic functions of compact-open bisections of $\G$.
\end{definition}

The \emph{support} of $f\in R^{\G}$ is defined as $\supp f=\left\{a\in\G:f(a)\neq 0\right\}$. If $\G$ is Hausdorff, then $A_R(\G)$ coincides with the $R$-module of locally constant compactly supported $R$-valued functions on $\G$ \cite[Lemma 3.3]{Clark2014}.

In the general (non-Hausdorff) case, for every $f\in A_R(\G)$ and every $x\in\Go$, $(\supp f) \cap \so^{-1}(x)$ and $(\supp f)\cap\ra^{-1}(x)$ are finite, and so we can define their \emph{convolution product}
\[(f\ast g)(a)=\sum_{xy=a}f(x)g(y)=\sum_{x\in\ra^{-1}(\ra(a))}f(x)g(x^{-1}a)=\sum_{y\in\so^{-1}(\so(a))}f(ay^{-1})g(y).\]

This product makes $A_R(\G)$ an associative $R$-algebra, called the \emph{Steinberg algebra} of $\G$ (with coefficients in $R$).

The map $\Ga\to A_R(\G)$, $U\mapsto 1_U$, is a representation of $\Ga$ as a Boolean semigroup (see \cite{MR3077869}), that is, it satisfies (i) $1_U\ast1_V=1_{UV}$; and (ii) $1_{U\cup V}=1_U+1_V$ if $U\cap V=\varnothing$ and $U\cup V\in\Ga$.

In fact, $A_R(\G)$ is universal for such representations. The proof for a general commutative ring with unit $R$ follows the same arguments as in \cite[Theorem~3.10]{Clark2014}, and we state it here explicitly:

\begin{theorem}[Universal property of Steinberg algebras, {\cite[Theorem 4.4.8]{cordeirothesis}}]\label{theo:universalpropertyofsteinbergalgebras}
Let $R$ be a commutative unital ring and $\G$ an ample Hausdorff groupoid. Then $A_R(\G)$ is universal for \emph{Boolean representations} of $\Ga$, i.e., if $B$ is an $R$-algebra and $\pi\colon\Ga\to B$ is a function satisfying
\begin{enumerate}[label=(\roman*)]
    \item $\pi(AB)=\pi(A)\pi(B)$ for all $A,B\in\Ga$; and
    \item $\pi(A)=\pi(A\setminus B)+\pi(B)$ whenever $A,B\in\Ga$ and $B\subseteq A$,
\end{enumerate}
then there exists a unique $R$-algebra homomorphism $\Phi\colon A_R(\G)\to B$ such that $\Phi(1_U)=\pi(U)$ for all $U\in\Ga$.
\end{theorem}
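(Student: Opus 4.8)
The plan is to define $\Phi$ on generators by $\Phi(1_U)=\pi(U)$ for $U\in\Ga$ and to extend it $R$-linearly to all of $A_R(\G)$. Since the characteristic functions $1_U$ span $A_R(\G)$ as an $R$-module, any $R$-algebra homomorphism with the prescribed values on them is uniquely determined, so uniqueness is immediate and the entire difficulty lies in proving that the assignment $\sum_i r_i 1_{U_i}\mapsto\sum_i r_i\pi(U_i)$ is independent of the chosen expression. The key tool is the ability to refine any finite family of compact-open bisections into pairwise disjoint ones.

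I would first establish this refinement. Because $\G$ is ample and Hausdorff, the intersection $A\cap B$ of two compact-open bisections is again a compact-open bisection: it is open, it is compact since compact subsets of the Hausdorff space $\G$ are closed, and any subset of a bisection is a bisection. As a compact-open subset of the Hausdorff space $\G$, $A\cap B$ is moreover clopen, so the relative complement $A\setminus B=A\setminus(A\cap B)$ is likewise a compact-open bisection. Hence $\Ga\cup\{\varnothing\}$ is closed under finite intersections and relative complements, and any finite family $U_1,\dots,U_n\in\Ga$ generates a finite Boolean algebra of subsets of $\bigcup_i U_i$ whose atoms $W_1,\dots,W_m$ are pairwise disjoint members of $\Ga$, with each $U_i$ equal to the disjoint union of the atoms it contains.

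For well-definedness I would first record the consequences of hypothesis (ii): taking $A=B$ gives $\pi(\varnothing)=0$, and taking $A=W_1\sqcup W_2$, $B=W_2$ gives $\pi(W_1\sqcup W_2)=\pi(W_1)+\pi(W_2)$, so by induction $\pi\bigl(\bigsqcup_k W_k\bigr)=\sum_k\pi(W_k)$ for every finite disjoint family in $\Ga$. Now suppose $\sum_i r_i 1_{U_i}=\sum_j s_j 1_{V_j}=:f$ in $A_R(\G)$. Refining the combined family $\{U_i\}\cup\{V_j\}$ into disjoint atoms $W_1,\dots,W_m$ as above, finite additivity of $\pi$ yields $\sum_i r_i\pi(U_i)=\sum_k t_k\pi(W_k)$ and $\sum_j s_j\pi(V_j)=\sum_k t_k'\pi(W_k)$, where $t_k=\sum_{U_i\supseteq W_k}r_i$ and $t_k'=\sum_{V_j\supseteq W_k}s_j$. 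Since the $W_k$ are pairwise disjoint bisections, evaluating $f$ at any point of $W_k$ gives $t_k=f(w)=t_k'$; hence the two images agree and $\Phi$ is a well-defined $R$-linear map.

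It remains to verify that $\Phi$ is multiplicative, which by $R$-bilinearity of both the convolution and the product in $B$ reduces to the generators: using the Boolean rule $1_U\ast 1_V=1_{UV}$ together with hypothesis (i), $\Phi(1_U\ast 1_V)=\Phi(1_{UV})=\pi(UV)=\pi(U)\pi(V)=\Phi(1_U)\Phi(1_V)$. \textbf{The main obstacle} is precisely the well-definedness step, and more specifically the refinement lemma underlying it, where Hausdorffness is essential to guarantee that intersections of compact-open bisections stay compact and that the resulting Boolean algebra has disjoint atoms; once disjointness is available, the value $t_k=f(w)$ is intrinsic to $f$ and hypothesis (ii) transports this intrinsic description through $\pi$.
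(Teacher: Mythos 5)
Your proposal is correct and takes essentially the same approach as the proof the paper relies on (the standard argument it cites from Clark et al.\ and the first-named author's thesis): refine finitely many compact-open bisections into pairwise disjoint ones using closure of $\Ga\cup\{\varnothing\}$ under intersections and relative complements (where Hausdorffness is indeed essential), deduce $\pi(\varnothing)=0$ and finite additivity of $\pi$ from hypothesis (ii) to obtain well-definedness of the linear extension, and combine the identity $1_U\ast 1_V=1_{UV}$ with hypothesis (i) for multiplicativity. I see no gaps; the only cosmetic point is that your finite-additivity statement should be read as applying to disjoint families whose union again lies in $\Ga$, which is exactly how you use it.
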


Recall that a topological space $X$ is \emph{zero-dimensional} (or has \emph{small inductive dimension} $0$) if it admits a basis of clopen subsets of $X$. A locally compact Hausdorff space is zero-dimensional if and only if it is totally disconnected. Moreover, an étale groupoid $\G$ is ample if and only if $\G[0]$ is locally compact Hausdorff and zero-dimensional.

\begin{example}
Every locally compact Hausdorff and zero-dimensional space $X$ is an ample groupoid with $X^{(0)}=X$ (that is, the product is only defined as $xx=x$ for all $x\in X$). The Steinberg algebra $A_R(X)$ coincides with the $R$-algebra of locally constant compactly supported $R$-valued functions on $X$ with pointwise operations.
\end{example}

In general, we identify $A_R(\Go)$ with the sub-$R$-algebra $D_R(\G)=\left\{f\in A_R(\G):\supp f\subseteq\G[0]\right\}$ of $A_R(\G)$, called the \emph{diagonal subalgebra} of $A_R(\G)$. More precisely, the map $D_R(\G)\ni f\mapsto f|_{\G[0]}\in A_R(\G[0])$ is an $R$-algebra-isomorphism, and its inverse extends every $f\in A_R(\Go)$ as $0$ on $\G\setminus\Go$.

\section{Groupoids of germs}

Groupoids of germs were already considered by Paterson in \cite{Paterson1999} for localizations of inverse semigroups, and for natural actions of pseudogroups by Renault in \cite{Renault2008}. In \cite{Exel2008}, Exel defined groupoids of germs for arbitrary actions of inverse semigroups on topological spaces in a similar, albeit more general, manner than both previous definitions of groupoids of germs. Moreover, partial actions of groups -- also introduced by Exel in \cite{Exel1994} -- have many application in the theory of C*-dynamics, see for example \cite{Boava2013,MR3539347,MR1703078,MR3699170,Goncalves2014c,Goncalves2017}. Partial group actions also induce transformation groupoids similarly to the classical (global) case, see \cite{Abadie2004}.

Our objective in this section is to construct a groupoid of germs associated to any partial action of an inverse semigroup in a way that generalizes both groupoids of germs of inverse semigroup actions, and transformation groupoids of partial group actions.

Let $\theta=\left(\{X_s\}_{s\in S},\{\theta_s\}_{s\in S}\right)$ be a partial action of an inverse semigroup $S$ on a topological space $X$. We denote by $S\ast X$ the subset of $S \times X$ given by
\[S\ast X:= \left\lbrace (s,x) \in S \times X : x \in X_{s^*} \right\rbrace.\]

Recall that a \emph{semigroupoid} is a structure satisfying the same axioms as a category\footnote{A slightly more general definition appears in \cite[Definition 2.1]{MR2754831}.}, except possibly the existence of identities at objects (see \cite[Appendix B]{MR915990}). Quotients of semigroupoids are defined, up to obvious modifications, in the same manner as quotients of categories (see \cite[Section II.8]{MR1712872}).

We make $S\ast X$ a semigroupoid with object space $(S\ast X)^{(0)}=X$ by setting the source and range maps as
\[\so(s,x)=x,\quad\ra(s,x)=\theta_s(x)\]
and the product $(s,x)(t,y)=(st,y)$ whenever $\so(s,x)=\ra(t,y)$. Moreover, $S\ast X$ is an \emph{inverse semigroupoid}, in the sense that for every $p=(s,x)\in S\ast X$, $p^*=(s^*,\theta_s(x))$ is the unique element of $S\ast X$ satisfying $pp^*p=p$ and $p^*pp^*=p^*$.

We define the \emph{germ relation} $\sim$ on $S\ast X$: for every $(s,x)$ and $(t,y)$ in $S\ast X$,
\[\ntag\label{eq:equivalencegroupoidgerms}
(s,x) \sim (t,y)\iff x=y\quad\text{and there exists } u \in S\quad\text{such that}\quad u \leq s,t\quad\text{and}\quad x \in X_{u^*}.\]
Alternatively,
\[\ntag\label{eq:equivalencegroupoidgerms2}
(s,x) \sim (t,y)\iff x=y\quad\text{and there exists } e \in E(S)\quad\text{such that}\quad x \in X_e\quad\text{and}\quad se=te.\]
Indeed, if $(s,x) \sim (t,y)$ and $u\in S$ satisfies \eqref{eq:equivalencegroupoidgerms}, then $e=u^*u$ satisfies \eqref{eq:equivalencegroupoidgerms2}. Conversely, if $e\in E(S)$ satisfies \eqref{eq:equivalencegroupoidgerms2}, then $u=se$ satisfies \eqref{eq:equivalencegroupoidgerms}.

We call the $\sim$-equivalence class of $(s,x)$ is the \emph{germ of $s$ at $x$}, and we denote it by $[s,x]$.

\begin{remark}\label{changebygreater}
If $u\leq s$ in $S$ and $x\in X_{u^*}$, then $x\in X_{s^*}$ as well and $[s,x]=[u,x]$.
\end{remark}

\begin{proposition}\label{prop:quotientoftrnsformationsemigroupoidisagroupoid}
The relation $\sim$ is a congruence, and the quotient semigroupoid $S\ltimes X:=(S\ast X)/\!\!\sim$ is a groupoid. The inverse of $[s,x]\in S\ltimes X$ is $[s^*,\theta_s(x)]$.
\end{proposition}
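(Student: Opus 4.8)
The plan is to verify in turn that $\sim$ is an equivalence relation, that it is compatible with the source, range and partial product of the inverse semigroupoid $S\ast X$ (hence a congruence), and finally that the quotient category carries inverses given by the stated formula. Throughout I would work with the idempotent formulation \eqref{eq:equivalencegroupoidgerms2} of the germ relation, and I would first record three elementary consequences of the partial-action axioms that get used repeatedly: for every idempotent $e\in E(S)$ the map $\theta_e$ is the identity on $X_e$ (since $\theta_e^2\le\theta_{e^2}=\theta_e$ forces a bijection to equal a restriction of itself); for idempotents $e,f$ one has $X_{ef}=X_e\cap X_f$ (the inclusion $\subseteq$ follows from $ef\le e,f$ together with axiom (iii), while $\supseteq$ follows from $\theta_e\theta_f\le\theta_{ef}$); and $\theta_{s^*}=\theta_s^{-1}$ for every $s$ (because $\theta_{s^*}\theta_s\le\theta_{s^*s}=\id_{X_{s^*s}}$ and the left-hand side already has domain $X_{s^*}$).

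Reflexivity then uses $x\in X_{s^*}\subseteq X_{s^*s}$ with $s(s^*s)=s$; symmetry is immediate from the symmetric shape of \eqref{eq:equivalencegroupoidgerms2}; and for transitivity, given witnesses $e,f$ for $(s,x)\sim(t,x)$ and $(t,x)\sim(r,x)$, the idempotent $g=ef$ satisfies $x\in X_e\cap X_f=X_g$ and $sg=rg$ after commuting the idempotents. Compatibility with the object maps is quick: sources agree by definition, while for ranges I would use $se=te$ and $\theta_e=\id_{X_e}$ to write $\theta_s(x)=\theta_s\theta_e(x)=\theta_{se}(x)=\theta_{te}(x)=\theta_t(x)$, checking via the order relations among the $\theta$'s that $x$ lies in each relevant domain.

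The main obstacle is compatibility with composition. Since the relation forces equality of base points, it suffices to show that if $(s,x)\sim(s',x)$ and $(t,y)\sim(t',y)$ with $x=\theta_t(y)$, then $(st,y)\sim(s't',y)$; I would factor this through $(s't,y)$. The step $(s't,y)\sim(s't',y)$ is easy: if $tf=t'f$ with $y\in X_f$, then $(s't)f=(s't')f$. The delicate step is $(st,y)\sim(s't,y)$, because the witnessing idempotent $e$ for $s,s'$ lives at the point $x=\theta_t(y)$ rather than at $y$. Here I would transport it by setting $h=t^*et$, which is idempotent, verify $y\in X_h$ (as $y\in X_{t^*}$ and $\theta_t(y)=x\in X_e$ place $y$ in the domain of $\theta_{t^*}\theta_e\theta_t\le\theta_{t^*et}$), and compute $(st)h=s(tt^*)et=set=s'et=(s't)h$ using that $tt^*$ and $e$ commute and $tt^*t=t$. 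Transitivity then yields the congruence, and $S\ltimes X$ is a semigroupoid by the general theory of quotients.

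Finally I would produce identities and inverses. For each $x\in X$, any idempotent $e$ with $x\in X_e$ gives a loop $[e,x]$ at $x$, independent of $e$ by the case $g=ef$ of the germ relation; taking $e=s^*s$ (respectively $ss^*$) and using $ss^*s=s$ shows that $[s^*s,x]$ and $[ss^*,\theta_s(x)]$ act as the right and left units of $[s,x]$, so the quotient is a category. For inverses, $\theta_{s^*}=\theta_s^{-1}$ shows $(s^*,\theta_s(x))$ is composable with $(s,x)$ on both sides, and the two composites are $[s^*s,x]$ and $[ss^*,\theta_s(x)]$, which are exactly the units at $x$ and at $\theta_s(x)$. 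This simultaneously proves that $S\ltimes X$ is a groupoid and identifies $[s,x]^{-1}=[s^*,\theta_s(x)]$.
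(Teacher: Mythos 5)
Your proof is correct: the preliminary facts you isolate ($\theta_e=\operatorname{id}_{X_e}$, $X_{ef}=X_e\cap X_f$, $\theta_{s^*}=\theta_s^{-1}$), the transport idempotent $h=t^*et$ (it is idempotent, $y\in X_h$ by the domain computation you indicate, and $(st)h=s(tt^*)et=set=s'et=(s't)h$), and the unit/inverse verifications all check out. But your route differs from the paper's in a substantive way. The paper works with the order formulation \eqref{eq:equivalencegroupoidgerms} throughout: transitivity is witnessed by $uv^*v$, and compatibility with products is a \emph{one-step} argument --- if $u_i\leq s_i,t_i$ witness the two equivalences, then $u_1u_2\leq s_1s_2,t_1t_2$ witnesses the equivalence of the products (using that the order of $S$ is preserved by multiplication), together with the domain check $x_2\in X_{u_2^*}\cap\theta_{u_2}^{-1}(X_{u_1^*}\cap X_{u_2})\subseteq X_{(u_1u_2)^*}$. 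You instead work with the idempotent formulation \eqref{eq:equivalencegroupoidgerms2}, where witnesses cannot simply be multiplied because, as you note, the witness for the left-hand factors lives at $\theta_t(y)$ rather than at $y$; this forces your two-step factorization through $(s't,y)$ and the conjugation $h=t^*et$. Observe that the transport you perform on semigroup elements happens in the paper at the level of domains, inside $\theta_{u_2}^{-1}(X_{u_1^*}\cap X_{u_2})$, so both arguments resolve the same difficulty, but the order formulation does it in one step instead of two. What your version buys is self-containedness: you prove the facts the paper uses silently, you prove the invariance of source and range under $\sim$ (which the paper only asserts), and you construct the identity arrows $[e,x]$ explicitly and show they are independent of the choice of $e$, whereas the paper compresses the entire groupoid verification into the display $([s^*,\theta_s(x)][s,x])[t,y]=[s^*s,x][t,y]=[s^*st,y]=[t,y]$ justified by Remark~\ref{changebygreater}.
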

\begin{proof}
The relation $\sim$ is clearly reflexive and symmetric. As for transitivity, if $(s,x)\sim (t,y)$ and $(t,y)\sim (r,z)$, then $x=y=z$, so there exist $u\leq s,t$ and $v\leq t,r$ such that $x\in X_{u^*}\cap X_{v^*}$. It follows that
\[uv^*v\leq tv^*v=v\leq r,\qquad\text{and of course}\quad uv^*v\leq u\leq s.\]
Moreover, $x\in X_{u^*}\cap X_{v^*v}\subseteq X_{(uv^*v)^*}$. Therefore $(s,x)\sim(r,z)$ by \eqref{eq:equivalencegroupoidgerms}.

To prove that $\sim$ is a congruence, first note that the source and range maps of $S\ast X$ are invariant on $\sim$-equivalence classes. We need to prove that $\sim$ is invariant under taking products, so suppose that $(s_i,x_i)\sim(t_i,y_i)$ ($i=1,2$) and $\so(s_1,x_1)=\ra(s_2,x_2)$. Then for $i=1,2$ we have $x_i=y_i$, and there exists $u_i\leq s_i,t_i$ such that $x_i\in X_{u_i^*}$. Thus
\[u_1u_2\leq s_1s_2,t_1t_2\qquad\text{and}\qquad x_2\in X_{u_2^*}\cap \theta_{u_2}^{-1}(X_{u_1^*}\cap X_{u_2})\subseteq X_{(u_1u_2)^*},\] because $\theta_{u_2}(x_2)=\theta_{s_2}(x_2)=x_1$. This proves that
\[(s_1,x_1)(s_2,x_2)=(s_1s_2,x_2)\sim (t_1t_2,y_2)=(t_1,y_1)(t_2,y_2).\]

To conclude that $S\ltimes X$ is a groupoid, simply note that for all $(s,x)\in S\ast X$, if $(s,x)(t,y)$ is defined, then
\[([s^*,\theta_s(x)][s,x])[t,y]=[s^*s,x][t,y]=[s^*st,y]=[t,y],\]
by Remark \ref{changebygreater}, and similarly $[r,z]([s,x][s^*,\theta_s(x)])=[r,z]$ whenever $(r,z)(s,x)$ is defined. This means precisely that $S\ltimes X$ is a groupoid.
\end{proof}

Note that, by construction, the object (unit) space of $S\ltimes X$ is $X$, which we identify with the set of unit arrows of $S\ltimes X$ as usual: the source $\so[s,x]=x$ of an arrow $[s,x]\in S\ltimes X$ corresponds to the arrow
\[[s,x]^{-1}[s,x]=[s^*,\theta_s(x)][s,x]=[s^*s,x]\]
and similarly the range $\ra[s,x]=\theta_s(x)$ corresponds to the arrow $[ss^*,\theta_s(x)]$. In other words, we identify $(S\ltimes X)^{(0)}$ and $X$ via the map
\[\ntag\label{eq:unitspaceofgroupoidofgerms}
X\to (S\ltimes X)^{(0)},\qquad x\mapsto [e,x],\quad\text{where}\quad e\in E(S)\quad\text{is chosen so that}\quad x\in X_e\]
which is well-defined since we only consider non-degenerate partial actions.

We will now endow $S\ltimes X$ with an appropriate topology. Given $s\in S$ and $U\subseteq X_{s^*}$, define the subset $[s,U]$ of $S\ltimes X$
\[[s,U]=\left\{[s,x]:x\in U\right\}.\]
Using the definition of germs, it readily follows that
\[\ntag\label{eq:intersectionofbasic}
[s,U]\cap[t,V]=\bigcup\left\{[z,U\cap V\cap X_{z^*}]:z\in S,\ z\leq s,t\right\}.\]

\begin{proposition}\label{propositionbgermisbasis}
The family $\mathscr{B}_{\mathrm{germ}}$ of sets $[s,U]$, where $s\in S$ and $U\subseteq X_{s^*}$ is open, forms a basis for a topology on $S\ltimes X$, which makes it a topological groupoid.
\end{proposition}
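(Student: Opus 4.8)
The plan is to verify the two basis axioms first, and then to treat continuity of the two groupoid operations separately; the latter, and in particular continuity of the product, is the substantive part.

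For the basis property I would check covering and stability under pairwise intersection. Covering is immediate: any germ $[s,x]$ satisfies $x\in X_{s^*}$ with $X_{s^*}$ open, so $[s,x]\in[s,X_{s^*}]\in\mathscr{B}_{\mathrm{germ}}$. For intersections, the identity \eqref{eq:intersectionofbasic} already writes $[s,U]\cap[t,V]$ as the union of the sets $[z,U\cap V\cap X_{z^*}]$ over $z\leq s,t$, and each such set lies in $\mathscr{B}_{\mathrm{germ}}$ because $U\cap V\cap X_{z^*}$ is open and contained in $X_{z^*}$. Hence every point of $[s,U]\cap[t,V]$ sits inside a basic set contained in the intersection, which is exactly the basis condition.

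Next I would handle continuity of inversion $\iota\colon[s,x]\mapsto[s^*,\theta_s(x)]$. The key point is that $\iota$ carries a basic set to a basic set, since $\iota([s,U])=[s^*,\theta_s(U)]$ and $\theta_s(U)$ is open in $X_s$ (hence in $X$) because $\theta_s$ is a homeomorphism. Using the partial-homomorphism axiom $\theta_{s^*}=\theta_s^*=\theta_s^{-1}$ in $\I(X)$, one gets $\theta_{s^*}\theta_s=\mathrm{id}_{X_{s^*}}$, so $\iota(\iota[s,x])=[s,\theta_{s^*}\theta_s(x)]=[s,x]$ and $\iota$ is its own inverse. An involution that maps a basis into itself is a homeomorphism, so $\iota$ is continuous.

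The main work is continuity of the product $m\colon(S\ltimes X)^{(2)}\to S\ltimes X$, which I would prove pointwise. Fix a composable pair $([s,x],[t,y])$, so that $x=\theta_t(y)$ and $m([s,x],[t,y])=[st,y]$, and fix a basic neighborhood $[u,W]$ of $[st,y]$. Comparing sources forces $y\in W$ and $[st,y]=[u,y]$, so by \eqref{eq:equivalencegroupoidgerms2} there is an idempotent $f$ with $y\in X_f$ and $(st)f=uf$. I would then take the neighborhoods $[s,X_{s^*}]$ of $[s,x]$ and $[t,V]$ of $[t,y]$ with $V=W\cap X_f\cap X_{t^*}$, and show that $m$ maps $([s,X_{s^*}]\times[t,V])\cap(S\ltimes X)^{(2)}$ into $[u,W]$. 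Such a composable pair is of the form $([s,\theta_t(y')],[t,y'])$ with $y'\in V$ and $\theta_t(y')\in X_{s^*}$, and its product is $[st,y']$; since $y'\in V\subseteq X_f$ and $(st)f=uf$, the \emph{same} idempotent $f$ witnesses $(st,y')\sim(u,y')$ via \eqref{eq:equivalencegroupoidgerms2}, so $[st,y']=[u,y']\in[u,W]$ because $y'\in W$.

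The step I expect to be the main obstacle is exactly this last reconciliation: because germs have many representatives, a direct description of $m^{-1}([u,W])$ is unwieldy, and one cannot argue naively on representatives. The resolution is to work locally in the forward direction and, crucially, to shrink the second factor into $X_f$ for the idempotent $f$ produced by the equality $[st,y]=[u,y]$, so that a single $f$ simultaneously witnesses the germ equality for every nearby $y'$. Along the way I would record the cleaner identity $[s,U][t,V]=[st,\,V\cap\theta_t^{-1}(U\cap X_{s^*}\cap X_t)]$, which exhibits products of basic sets as basic sets (and will be useful later for the étale property, via the inclusion of domains coming from $\theta_s\theta_t\leq\theta_{st}$); for continuity alone, however, the containment above suffices.
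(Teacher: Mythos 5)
Your proof is correct, and its overall skeleton (basis axioms, then inversion, then multiplication) matches the paper's; the basis verification and the inversion step are exactly the paper's, via \eqref{eq:intersectionofbasic} and the identity $[s,U]^{-1}=[s^*,\theta_s(U)]$. The genuinely different part is the continuity of multiplication. The paper proves the global preimage formula \eqref{eq:continuityofproduct}, expressing $m^{-1}[u,U]$ as a union of sets $([s,X_{s^*}]\times[t,U\cap X_{t^*}])\cap(S\ltimes X)^{(2)}$ over pairs with $st\leq u$; there the nontrivial inclusion is obtained by \emph{replacing the representative}: from $[sq,x]\in[u,U]$ one picks $v\leq sq,u$ with $x\in X_{v^*}$ and substitutes $t=qv^*v$, so that $[q,x]=[t,x]$ and $st=v\leq u$. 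You argue pointwise and make the dual move: keep the representatives $s,t$ fixed and \emph{shrink the open set} of the second factor into $X_f$, where $f$ is the idempotent witnessing $[st,y]=[u,y]$ through \eqref{eq:equivalencegroupoidgerms2}, so that this single witness applies uniformly to every nearby germ. Both devices resolve the same obstacle (germs have many representatives); the paper's buys an explicit description of the preimage of each basic set, while yours is more local and dispenses with the bookkeeping over all pairs with $st\leq u$. One step you should record explicitly: asserting $(st,y')\sim(u,y')$ via \eqref{eq:equivalencegroupoidgerms2} presupposes $(u,y')\in S\ast X$, i.e.\ $y'\in X_{u^*}$. This does hold: $y'\in X_f\cap X_{(st)^*}\subseteq X_{((st)f)^*}=X_{(uf)^*}\subseteq X_{u^*}$, using $\theta_f=\operatorname{id}_{X_f}$, $\theta_{st}\theta_f\leq\theta_{(st)f}$ and $uf\leq u$; equivalently, apply Remark \ref{changebygreater} twice to $w=(st)f=uf\leq st,u$. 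Since the paper leaves the same verification implicit when passing between \eqref{eq:equivalencegroupoidgerms} and \eqref{eq:equivalencegroupoidgerms2}, this is a presentational point rather than a gap. Your closing identity $[s,U][t,V]=[st,\,V\cap\theta_t^{-1}(U\cap X_{s^*}\cap X_t)]$ is also correct (note that $U\cap X_{s^*}=U$, so it simplifies to $[st,\,V\cap\theta_t^{-1}(U\cap X_t)]$), and it gives a clean alternative reason why products of open sets are open in $S\ltimes X$.
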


\begin{proof}
By \eqref{eq:intersectionofbasic}, $\mathscr{B}_{\mathrm{germ}}$ is a basis for a topology on $S\ltimes X$.

Let $m\colon(S\ltimes X)^{(2)}\to S\ltimes X$ be the product map. Given $[u,U]\in\mathscr{B}_{\mathrm{germ}}$, let us prove that
\[\ntag\label{eq:continuityofproduct}
m^{-1}[u,U]=\bigcup\left\{([s,X_{s^*}]\times[t,U\cap X_{t^*}])\cap (S\ltimes X)^{(2)}:s,t\in S\quad\text{and}\quad st\leq u\right\}\]
Indeed, the inclusion ``$\supseteq$'' in \eqref{eq:continuityofproduct} is immediate from the definition of the product. For the converse inclusion, assume $([s,y],[q,x])\in m^{-1}[u,U]$. This means that $[sq,x]\in [u,U]$, so $x\in U$ and there exists $v\leq sq,u$ such that $x\in X_{v^*}$. Set $t=qv^*v$, so that $x\in X_{t^*}$, $[q,x]=[t,x]$, and $st=sqv^*v=v\leq u$, and therefore $([s,y],[q,x])=([s,y],[t,x])$ belongs to the right-hand side of \eqref{eq:continuityofproduct}. This proves that the product map is continuous.

Similarly, the definition of the inverse in $S\ltimes X$ implies that
\[[u,U]^{-1}=[u^*,\theta_u(U)]\in\mathscr{B}_{\mathrm{germ}},\]
so the inversion map is also continuous.\qedhere
\end{proof}

\begin{proposition}\label{prop:bisectionhomeomorphic}
$S\ltimes X$ is an étale groupoid, and each basic open set $[s,U]\in\mathscr{B}_{\mathrm{germ}}$ is an open bisection.
\end{proposition}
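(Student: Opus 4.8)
The plan is to exhibit the source map as a local homeomorphism, which settles both assertions at once. I would first pin down the unit space. Since every unit equals its own source and $\so[s,x]=[s^*s,x]$ with $s^*s\in E(S)$, the set of units is exactly $\Go=\bigcup_{e\in E(S)}[e,X_e]$; being a union of members of $\mathscr{B}_{\mathrm{germ}}$, it is open. I would then verify that the identification \eqref{eq:unitspaceofgroupoidofgerms}, $x\mapsto[e,x]$, is a homeomorphism onto $\Go$: it is well-defined and injective by the germ relation \eqref{eq:equivalencegroupoidgerms2}, and, intersecting a basic set $[s,V]$ with $\Go$ and rewriting each unit $[s,x]=[s^*s,x]$, one sees that the family $\{[e,U]:e\in E(S),\ U\subseteq X_e\text{ open}\}$ is a basis for the subspace topology on $\Go$. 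Under the identification these correspond to the opens $U\subseteq X_e$, which form a basis for $X$ since the $X_e$ cover $X$.

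Next, fix $s\in S$ and an open set $U\subseteq X_{s^*}$. On $[s,U]$ the source map is $[s,x]\mapsto x$ and the range map is $[s,x]\mapsto\theta_s(x)$. Both are injective on $[s,U]$: the source because $[s,x]=[s,x']$ forces $x=x'$ by \eqref{eq:equivalencegroupoidgerms2}, and the range because $\theta_s$ is a bijection on $X_{s^*}\supseteq U$. Both are continuous, being restrictions of the continuous source and range maps of the topological groupoid of Proposition \ref{propositionbgermisbasis}, and they are surjective onto the open sets $U$ and $\theta_s(U)$ respectively. Granting that these restrictions are open maps, the basic sets $[s,U]$ cover $S\ltimes X$, and on each of them $\so$ restricts to a homeomorphism onto an open subset of $\Go$ (note $\so([s,U])$ is the image of $U$ under \eqref{eq:unitspaceofgroupoidofgerms}); hence $\so$ is a local homeomorphism and $S\ltimes X$ is étale. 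Since moreover $\so$ and $\ra$ are injective on each $[s,U]$, every member of $\mathscr{B}_{\mathrm{germ}}$ is an open bisection.

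The main point, and the step I expect to require the most care, is the openness of $\so|_{[s,U]}$, equivalently the identification of the subspace topology on $[s,U]$. Here I would use the intersection formula \eqref{eq:intersectionofbasic}: for any basic set $[t,V]$, an element $[s,x]\in[s,U]$ lies in $[t,V]$ precisely when $x\in V$ and some $z\leq s,t$ satisfies $x\in X_{z^*}$, so that
\[
[s,U]\cap[t,V]=\left[s,\; U\cap V\cap\bigcup_{z\leq s,t}X_{z^*}\right],
\]
with the set on the right open in $X$ (and contained in $X_{s^*}$). Thus the relatively open subsets of $[s,U]$ are exactly the sets $[s,U']$ with $U'\subseteq U$ open, and $\so([s,U'])=U'$ is open; openness of $\ra|_{[s,U]}$ then follows since $\theta_s$ is a homeomorphism. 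This makes $\so|_{[s,U]}$ and $\ra|_{[s,U]}$ homeomorphisms onto their images and completes both claims.
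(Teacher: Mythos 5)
Your proof is correct, and its engine is the same as the paper's: the source map is continuous, open, and injective on each basic set, hence a local homeomorphism, and injectivity of both $\so$ and $\ra$ on $[s,U]$ makes each basic set a bisection. The difference lies in how openness is obtained and in the logical ordering. The paper disposes of openness in one line: $\so[s,U]=[s^*s,U]$, which is itself a member of $\mathscr{B}_{\mathrm{germ}}$, so $\so$ maps basic opens to basic opens and is therefore open; the identification of $(S\ltimes X)^{(0)}$ with $X$ as a homeomorphism is then deduced \emph{after} the proposition, as a consequence of it. You invert this order: you first prove that \eqref{eq:unitspaceofgroupoidofgerms} is a homeomorphism onto the unit space and compute the relative topology on $[s,U]$ (via \eqref{eq:intersectionofbasic} together with the absorption $[z,W]=[s,W]$ for $z\leq s$, $W\subseteq X_{z^*}$, i.e.\ Remark \ref{changebygreater}), and only then conclude that $\so|_{[s,U]}$ is open onto its image. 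This costs more work --- your formula $[s,U]\cap[t,V]=[s,\,U\cap V\cap\bigcup_{z\leq s,t}X_{z^*}]$ and the unit-space basis computation are precisely the facts the paper records as remarks \emph{following} the proposition --- but it buys a self-contained argument in which the homeomorphism $X\cong(S\ltimes X)^{(0)}$ is available as an ingredient rather than derived as a corollary, and there is no circularity since neither of these facts uses étaleness in their proofs.
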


\begin{proof}
Given $[s,U]\in\mathscr{B}_{\mathrm{germ}}$, we have $\so[s,U]=[s^*s,U]\in\mathscr{B}_{\mathrm{germ}}$, so the source map is open. Moreover, it is injective on $[s,U]$. Therefore the source map is locally injective, continuous and open, hence a local homeomorphism, so $S\ltimes X$ is étale. Similarly, the range map is also injective on $[s,U]$, which is therefore a bisection.\qedhere
\end{proof}

The proof above shows that a basic open set of $(S\ltimes X)^{(0)}$ is of the form $[e,U]$, where $e\in E(S)$ and $U\subseteq X_e$. Under the identification of $(S\ltimes X)^{(0)}$ with $X$ as in Equation \eqref{eq:unitspaceofgroupoidofgerms}, $[s,U]$ corresponds to $U$. Therefore this identification is a homeomorphism.

Notice that if $s \in S$ and $U \subseteq X_{s^*}$ is an open set then $[s,U]$ is compact if and only if $U$ is compact. Moreover, if $\mathscr{B}$ is a basis for the topology of $X$, then a basis for $S\ltimes X$ consists of those sets of the form $[s,U]$ with $U\in \mathscr{B}$. Hence, if $X$ is zero-dimensional then the collection of sets of the form $[s,U]$ with $U$ compact-open subset of $X$ is a basis for $S\ltimes X$.

\begin{corollary}
If $X$ is a locally compact Hausdorff and zero-dimensional space then $S\ltimes X$ is an ample groupoid.
\end{corollary}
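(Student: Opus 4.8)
The plan is to reduce ampleness to a property of the unit space alone, using the characterization recalled just before the statement: an étale groupoid $\G$ is ample precisely when $\G[0]$ is locally compact Hausdorff and zero-dimensional. Since Proposition \ref{prop:bisectionhomeomorphic} already establishes that $S\ltimes X$ is étale, all that remains is to identify its unit space with $X$ and transport the topological hypotheses.

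First I would invoke the identification \eqref{eq:unitspaceofgroupoidofgerms}, together with the observation following the proof of Proposition \ref{prop:bisectionhomeomorphic}, which shows that the map $x\mapsto[e,x]$ (for $e\in E(S)$ with $x\in X_e$) is a homeomorphism of $X$ onto $(S\ltimes X)^{(0)}$: basic open sets of the unit space have the form $[e,U]$ with $e\in E(S)$ and $U\subseteq X_e$ open, and these correspond exactly to the open sets $U$ of $X$. Consequently $(S\ltimes X)^{(0)}$ inherits from $X$ the properties of being locally compact, Hausdorff, and zero-dimensional.

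With this identification in hand, the conclusion is immediate from the cited characterization. Alternatively, one could verify the definition of ampleness directly by checking that $(S\ltimes X)^{(0)}\cong X$ admits a basis of compact-open sets; this is a standard consequence of $X$ being locally compact Hausdorff and zero-dimensional, obtained by intersecting a compact neighborhood of a point with a clopen neighborhood contained in its interior, so that a clopen subset of a compact set is itself compact-open.

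The one subtlety to keep in mind — rather than a genuine obstacle — is that ampleness as defined here requires only the unit space $(S\ltimes X)^{(0)}$ to be Hausdorff, not the total space $S\ltimes X$, which for groupoids of germs need not be Hausdorff in general. Thus no separation hypothesis on the whole groupoid is needed, and the proof rests entirely on the homeomorphism $(S\ltimes X)^{(0)}\cong X$ and the étale property already proved, making this corollary a short assembly of facts established above.
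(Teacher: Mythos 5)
Your proof is correct and takes essentially the same route as the paper, which states this corollary without a separate proof precisely because it follows from the facts you assemble: étaleness from Proposition \ref{prop:bisectionhomeomorphic}, the homeomorphism $(S\ltimes X)^{(0)}\cong X$ via \eqref{eq:unitspaceofgroupoidofgerms} noted immediately after that proposition, and the fact that a locally compact Hausdorff zero-dimensional space has a basis of compact-open sets. Your closing observation that only the unit space (not the whole groupoid of germs) must be Hausdorff is also exactly the paper's convention, as its later non-Hausdorff examples confirm.
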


Let us prove a universal property for the groupoid of germs. Recall from Example \ref{examplecanonicalaction} the definition of the \emph{canonical action} of an étale groupoid.

\begin{theorem}[Universal property of groupoids of germs]\label{theo:universalpropertyofgroupoidofgerms}
Let $\theta$ be a topological partial action of an inverse semigroup $S$ on a topological space $X$. Suppose that $\G$ is an étale groupoid, $\sigma\colon S\to\G^{op}$ is a partial homomorphism, and $\phi\colon X\to\G[0]$ is a continuous function satisfying
\begin{enumerate}[label=(\roman*)]
    \item $\phi(X_s)\subseteq\ra(\sigma(s))$ for all $s\in S$; and
    \item\label{universalpropertyofgroupoidofgerms2} $\tau_{\sigma(s)}(\phi(x))=\phi(\theta_s(x))$ for all $s\in S$ and $x\in X_{s^*}$,
\end{enumerate}
where $\tau$ denotes the canonical action of $\G^{op}$ on $\G[0]$.

Then there exists a unique continuous groupoid homomorphism $\Psi\colon S\ltimes X\to\G$ satisfying
\[\ntag\label{eq:universalpropertygroupoidofgerms}
\Psi[s,x]\in\sigma(s)\quad\text{and}\quad\so(\Psi[s,x])=\phi(x),\]
whenever $s\in S$ and $x\in X_{s^*}$.
\end{theorem}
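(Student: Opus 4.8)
The plan is to define $\Psi$ arrow-by-arrow using the fact that each $\sigma(s)$ is an open bisection, and then verify in turn that the definition is independent of the chosen representative, that it preserves the product, that it is continuous, and that the defining conditions force uniqueness. To define $\Psi$, fix $s\in S$ and $x\in X_{s^*}$. First I would check that $\phi(x)\in\so(\sigma(s))$: applying hypothesis (i) to $s^*$ gives $\phi(x)\in\phi(X_{s^*})\subseteq\ra(\sigma(s^*))$, and since $\sigma$ is a partial homomorphism $\sigma(s^*)=\sigma(s)^*$, while $\ra(U^*)=\so(U)$ for any open bisection $U$. As $\sigma(s)$ is a bisection there is then a \emph{unique} arrow in $\sigma(s)$ whose source is $\phi(x)$, and I set $\Psi[s,x]$ equal to that arrow, so that \eqref{eq:universalpropertygroupoidofgerms} holds by construction.

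Next I would establish well-definedness. Suppose $[s,x]=[t,y]$; by \eqref{eq:equivalencegroupoidgerms} we have $x=y$ and there is $u\leq s,t$ with $x\in X_{u^*}$. Because partial homomorphisms preserve the order and the natural order on $\Gop$ is inclusion of bisections, $\sigma(u)\subseteq\sigma(s)$ and $\sigma(u)\subseteq\sigma(t)$. Applying the observation of the previous paragraph to $u$ yields $\phi(x)\in\so(\sigma(u))$, so there is a unique $a\in\sigma(u)$ with $\so(a)=\phi(x)$; since $a$ lies in each of the bisections $\sigma(s)$ and $\sigma(t)$, uniqueness within each of them forces $a=\Psi[s,x]=\Psi[t,y]$. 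Thus $\Psi$ is a well-defined function on germs, and in fact $\Psi[s,x]=\Psi[u,x]$ whenever $u\le s$, consistent with Remark \ref{changebygreater}.

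For the homomorphism property, let $[s,x]$ and $[t,y]$ be composable, i.e. $x=\theta_t(y)$, so that $[s,x][t,y]=[st,y]$. I would first verify composability of the images in $\G$: the range of $\Psi[t,y]$ is $\tau_{\sigma(t)}(\phi(y))$, which by hypothesis (ii) equals $\phi(\theta_t(y))=\phi(x)=\so(\Psi[s,x])$. Writing $a:=\Psi[s,x]\in\sigma(s)$ and $b:=\Psi[t,y]\in\sigma(t)$, the product $ab$ lies in $\sigma(s)\sigma(t)$, and since $\sigma$ is a partial homomorphism $\sigma(s)\sigma(t)\subseteq\sigma(st)$. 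As $ab$ has source $\so(b)=\phi(y)$ and lies in the bisection $\sigma(st)$, it is the unique such arrow, namely $\Psi[st,y]$; hence $\Psi([s,x][t,y])=\Psi[s,x]\,\Psi[t,y]$. One also checks (using that $\sigma$ carries idempotents into $\Go$, a short argument from the partial-homomorphism axioms) that $\Psi$ restricts to $\phi$ on the unit space, so together with product-preservation $\Psi$ is a groupoid homomorphism. Continuity is then routine: on a basic open set $[s,U]\in\mathscr{B}_{\mathrm{germ}}$ the map $\Psi$ factors as the source homeomorphism $[s,U]\to U$ (under the identification \eqref{eq:unitspaceofgroupoidofgerms}), followed by the continuous $\phi$ with image in $\so(\sigma(s))$, followed by the inverse of the source homeomorphism of the open bisection $\sigma(s)$; since the sets $[s,U]$ cover $S\ltimes X$, $\Psi$ is continuous.

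Finally, uniqueness is immediate: any map satisfying \eqref{eq:universalpropertygroupoidofgerms} must send $[s,x]$ into $\sigma(s)$ with source $\phi(x)$, and the bisection property of $\sigma(s)$ singles this arrow out, so such a map coincides with $\Psi$ (continuity is not even needed for this). The step I expect to be the main obstacle is well-definedness, since it is precisely there that all the hypotheses must interact: order-preservation of $\sigma$ turns a common lower bound $u\leq s,t$ into a common sub-bisection $\sigma(u)$, and hypothesis (i) places $\phi(x)$ in $\so(\sigma(u))$, which is exactly what pins the germ's image down to a single arrow. Once this bisection picture is in place, the homomorphism and continuity verifications follow mechanically.
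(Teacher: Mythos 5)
Your proposal is correct and follows essentially the same route as the paper: you define $\Psi[s,x]$ as the unique arrow of the bisection $\sigma(s)$ with source $\phi(x)$, obtain well-definedness on germs from order-preservation of $\sigma$ (with the order on $\G^{op}$ being inclusion) together with hypothesis (i), and get the homomorphism property from hypothesis (ii) plus $\sigma(s)\sigma(t)\subseteq\sigma(st)$, exactly as in the paper's factorization of the map $S\ast X\to\G$ through the germ relation. The only immaterial difference is the continuity check --- the paper computes $\Psi^{-1}(V)$ explicitly as a union of basic open sets, while you factor $\Psi$ on each $[s,U]$ as a composition of the source homeomorphism, $\phi$, and $\so|_{\sigma(s)}^{-1}$ --- and both verifications are valid.
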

\begin{proof}
Equation \eqref{eq:universalpropertygroupoidofgerms} simply means that $\Psi[s,x]=\so|_{\sigma(s)}^{-1}(\phi(x))$ for all $s\in S$ and $x\in X_{s^*}$, so uniqueness is immediate.

Define $\Phi\colon S\ast X\to\G$ by $\Phi(s,x)=\so|_{\sigma(s)}^{-1}(\phi(\phi(x)))$ for all $(s,x)\in S\ast X$, that is, $\Phi(s,x)$ is the arrow in $\sigma(s)$ with source $\phi(x)$. Let us prove that it is a semigroupoid homomorphism.

Suppose that the product $(s,x)(t,y)$ is defined in $S\ast X$. This means that $x=\theta_t(y)$. Applying $\phi$ and using property (\ref{universalpropertyofgroupoidofgerms2}) yields
\[\so(\Phi(s,x))=\phi(x)=\phi(\theta_t(y))=\tau_{\sigma(t)}(\phi(y)),\]
and the last term above is simply the range of the arrow in $\sigma(t)$ whose source is $\phi(y)$, that is,
\[\so(\Phi(s,x))=\ra(\Phi(t,y)).\]
Therefore, the product $\Phi(s,x)\Phi(t,y)$ is defined. It belongs to $\sigma(s)\sigma(t)\sigma(st)$, since $\sigma$ is a partial homomorphism, and its source is $\so(\Phi(t,y))=\phi(y)$. Therefore,
\[\Phi(s,x)\Phi(t,y)=\Phi(st,y)=\Phi((s,x)(t,y)),\]
which proves that $\Phi$ is a semigroupoid homomorphism.

Let us prove that $\Phi$ is invariant by the germ relation $\sim$ as in \eqref{eq:equivalencegroupoidgerms}: Suppose $(s,x)\sim(t,y)$. Then $x=y$ and there exists $v\leq s,t$ such that $x\in X_{v^*}$. Then $\Phi(v,x)$ is an arrow which in $\sigma(v)\subseteq\sigma(s),\sigma(t)$, as $\sigma$ is a partial homomorphism, and whose source is $\phi(x)$, thus
\[\Phi(s,x)=\Phi(v,x)=\Phi(t,x)=\Phi(t,y).\]
Therefore $\Phi$ factors though a groupoid homomorphism $\Psi\colon S\ltimes X\to\G$ satisfying \eqref{eq:universalpropertygroupoidofgerms}.

It remains only to prove that $\Psi$ is continuous. Suppose that $V\subseteq\G$ is open. As $\G$ is étale, $\so(V)$ is open. We are thus finished by proving that
\[\ntag\label{eq:proofofcontinuityuniversalpropertygroupoidofgerms}\Psi^{-1}(V)=\bigcup\left\{[s,X_{s^*}\cap\phi^{-1}(\so(\sigma(s)\cap V))]:s\in S\right\}.\]
If $[s,x]\in\Psi^{-1}(V)$, then \[\phi(x)=\so(\Psi[s,x])\in\so(\sigma(s)\cap V)\]
so $[s,x]$ belongs to the right-hand side of \eqref{eq:proofofcontinuityuniversalpropertygroupoidofgerms}.

Conversely, if $[s,x]$ belongs to the right-hand side of \eqref{eq:proofofcontinuityuniversalpropertygroupoidofgerms}, then there is an arrow $\gamma$ in $\sigma(s)\cap V$ whose source is $\phi(x)$. By definition of $\Psi$, we have $\Psi[s,x]=\gamma\in V$, so $[s,x]\in\Psi^{-1}(V)$.\qedhere
\end{proof}
\begin{example}
Following \cite{Paterson1999}, a \emph{localization} consists of a global action $\theta=(\left\{X_s\right\}_{s\in S},\left\{\theta_s\right\}_{s\in S})$ of an inverse semigroup $S$ on a topological space $X$ such that $\left\{X_s:s\in S\right\}$ is a basis for the topology of $S$. The groupoid of germs in the sense of Paterson (see \cite{Paterson1999}) coincides with the definition above of groupoids of germs.
\end{example}

\begin{example}
Let $X$ be a topological space. The \emph{canonical action} of $\mathcal{I}(X)$ on $X$ is the action $\tau$ given by $\tau_\phi=\phi$ for all $\phi\in\mathcal{I}(X)$. A \emph{pseudogroup} on $X$ is a sub-inverse semigroup of $\mathcal{I}(X)$ whose elements are homeomorphisms between open subsets of $X$.

Let $\mathcal{B}$ be a basis for the topology of $X$, and for each $B\in\mathcal{B}$ consider its identity function $\operatorname{id}_B\colon B\to B$.

Given a pseudogroup $\mathcal{G}$ on $X$, let $\mathcal{G}\mathcal{B}$ be the sub-inverse semigroup of $\mathcal{I}(X)$ generated by $\mathcal{G}\cup\left\{\operatorname{id}_B:B\in\mathcal{B}\right\}$, which is again a pseudogroup on $X$, and in fact the canonical action of $\mathcal{G}\mathcal{B}$ on $X$ is a localization.

The groupoid of germs in the sense of Renault (see \cite{Renault2008}) of $\mathcal{G}$ coincides with the groupoid of germs $\mathcal{G}\mathcal{B}\ltimes X$ defined above.
\end{example}

\begin{example}[Transformation groupoids]
In the case that $S$ is a discrete group, the equivalence relation $\sim$ on $S \ast X$ is trivial and the topology is the product topology, so $S\ltimes X$ is (isomorphic to) the usual transformation groupoid.
\end{example}

\begin{example}[Maximal group image]\label{example:maximalgroupimageastransformationgroupoid}
Suppose that $X=\left\{x\right\}$ is a one-point space on which $S$ acts trivially. Then $S\ltimes X$ is a groupoid whose unit space is $X$, a singleton, that is, $S\ltimes X$ is a group. The universal property of $S\ltimes X$ implies that $S\ltimes X$ satisfies the universal property of the maximal group image $\mathbf{G}(S)$ of $S$ (see \cite{Paterson1999} or Section~\ref{sectionassociatedactionsandisomorphicgroupoidofgerms}), so $S\ltimes X$ is isomorphic to $\mathbf{G}(S)$.
\end{example}

\begin{example}[Restricted product groupoid]
Let $X=E(S)$ with the discrete topology, and let $\theta=\left(\{X_s\}_{s\in S},\{\theta_s\}_{s\in S}\right)$ be the \emph{Munn representation} of $S$ (see \cite{MR0262402}): $X_s=\left\{e\in E(S):e\leq ss^*\right\}$ and $\theta_s(e)=ses^*$ for all $e\in X_{s^*}$.

From $S$ we can construct the \emph{restricted product groupoid} $(S,\cdot)$, which is the same as $S$ but the product $s\cdot t=st$ is defined only when $s^*s=tt^*$. (See \cite{MR1694900} for more details.)

Then $S\ltimes E(S)$ is a discrete groupoid, and the map
\[S\ltimes E(S)\to(S,\cdot),\qquad [s,e]\mapsto se\]
is an isomorphism from $S\ltimes E(S)$ to $(S,\cdot)$, with inverse $s\mapsto [s,s^*s]$.
\end{example}

\begin{example}[{\cite{Steinberg2010}}]\label{nonhausdorffaction}
Let $S=\mathbb{N}\cup\{\infty, z\}$, with product given, for $m,n\in\mathbb{N}$,
\[nm=\min(n,m),\quad n\infty=\infty n=nz=zn=n,\qquad z\infty=\infty z=z\quad\text{and}\quad zz=\infty\infty=\infty.\]
In other words, $S$ is the inverse semigroup obtained by adjoining the lattice $\mathbb{N}$ to the group $\left\{\infty,z\right\}$ of order 2 (where $\infty$ is the unit), in a way that every element of $\mathbb{N}$ is smaller than $z$ and $\infty$.

Let $X=E(S)=\mathbb{N}\cup\left\{\infty\right\}$, seen as the one-point compactification of the natural numbers, and $\theta$ the Munn representation of $S$, so that $S\ltimes X=(S,\cdot)$, however with the topology whose open sets are either cofinite or contained in $\mathbb{N}$. In particular, $S \ltimes X$ is not Hausdorff.
\end{example}

\begin{example}\label{ex:amplegermesopenbisections}
Every étale groupoid is isomorphic to a groupoid of germs. Indeed, let $\G$ be an étale groupoid, and $S$ any subsemigroup of $\G^{op}$ which covers $\G$ (i.e., $\G=\bigcup_{A\in S}A$). We let $S$ act on $\G[0]$ by the restriction of the canonical action of $\G^{op}$ on $\Go$ (as Example \ref{examplecanonicalaction}). Then the map $\Phi\colon S\ltimes \Go\to\G$, $[A,x]\mapsto\so|_A^{-1}(x)$, is a surjective homomorphism of topological groupoids. Moreover, $\Phi$ is injective if and only if $S$ forms a basis for some topology on $\G$ (see \cite[Proposition 5.4]{Exel2008}).

In particular, if $\G$ is an ample groupoid, and $\gamma$ is the canonical action of $\G^{a}$ on $\Go$, then the groupoid of germs $\G^{a}\ltimes \Go$ is (canonically) isomorphic to $\G$.
\end{example}

For the results in Section \ref{sec:continuousorbitequivalence} we will need to consider partial actions with Hausdorff groupoids of germs. Let us mention conditions on inverse semigroups which guarantee that groupoids of germs are Hausdorff.

\begin{definition}\label{def:weaksemilattice}
A poset $(L,\leq)$ is a
\begin{enumerate}
\item\label{def:itemweaksemilattice} (meet-)\emph{weak semilattice} if for all $s,t\in L$ there exists a finite (possibly empty) subset $F\subseteq L$ such that
\[\left\{x\in L:x\leq s\text{ and }x\leq t\right\}=\bigcup_{f\in F}\left\{x\in L:x\leq f\right\}.\]
\item\label{def:itemsemilattice} (meet-)\emph{semilattice} if every pair of elements $s,t\in L$ admits a meet, that is, $s\land t=\inf\left\{s,t\right\}$ exists.
\end{enumerate}
\end{definition}

\begin{example}
If $\G$ is an étale groupoid, then $\Gop$ is a semilattice. If $\G$ is an ample Hausdorff groupoid, then $\Ga$ is a semilattice. In either of these cases, the meets are given by intersection: $U\land V=U\cap V$.
\end{example}

\begin{example}
Every $E$-unitary inverse semigroup $S$ (see Section \ref{sectionassociatedactionsandisomorphicgroupoidofgerms}) is a weak semilattice: If $s,t\in S$ do not have any common lower bound, $F=\varnothing$ in satisfies Definition \ref{def:weaksemilattice}\ref{def:itemweaksemilattice}. If $s,t$ have some common lower bound then they are compatible, and $s\land t=st^*t$, so instead we take $F=\left\{st^*t\right\}$.

More generally, every $E^*$-unitary inverse semigroup is a semilattice (see \cite[Example 4.5.4]{cordeirothesis}).
\end{example}

The following relation between inverse semigroups which are weak semilattices and the topology of their groupoids of germs can be proven just as in \cite[Theorem~5.17]{Steinberg2010}.

\begin{proposition}[{\cite[Theorem~5.17]{Steinberg2010}}]\label{prop:weaksemilattice} 
An inverse semigroup $S$ is a weak semilattice if and only if for any partial action $\theta=\left(\{X_s\}_{s\in S},\{\theta_s\}_{s\in S}\right)$ of $S$ on a Hausdorff space $X$ such that $X_s$ is clopen for all $s \in S$, the groupoid of germs $S\ltimes X$ is Hausdorff.

In particular, if $S$ is a weak semilattice and $X$ is zero-dimensional, then the groupoid of germs $S\ltimes X$ is an ample Hausdorff groupoid.
\end{proposition}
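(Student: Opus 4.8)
The plan is to prove the biconditional through a single separation criterion read off from the basis $\mathscr{B}_{\mathrm{germ}}$. For $s,t\in S$ set
\[
D(s,t):=\bigcup_{z\le s,t}X_{z^*},
\]
an always-open subset of $X_{s^*}\cap X_{t^*}$. By the definition of the germ relation, two germs $[s,x],[t,x]$ sharing the source $x\in X_{s^*}\cap X_{t^*}$ are equal precisely when $x\in D(s,t)$, and by the intersection formula \eqref{eq:intersectionofbasic} two basic neighbourhoods $[s,U]$ and $[t,U]$ meet exactly when some $X_{z^*}$ with $z\le s,t$ meets $U$. Points with distinct source or range are separated using continuity of $\so,\ra$ into the Hausdorff space $X$ (and, since the domains are closed, a point of $\overline{D(s,t)}\cap X_{s^*}\cap X_{t^*}$ automatically has $\theta_s(x)=\theta_t(x)$). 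Hence the only obstruction is a pair $[s,x]\ne[t,x]$ with $x\in\overline{D(s,t)}\setminus D(s,t)$, and I conclude: $S\ltimes X$ is Hausdorff if and only if $D(s,t)$ is relatively closed in $X_{s^*}\cap X_{t^*}$ for all $s,t\in S$.

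For the forward implication, assume $S$ is a weak semilattice and fix a partial action with each $X_s$ clopen. Choosing a finite $F\subseteq S$ as in Definition \ref{def:weaksemilattice}\ref{def:itemweaksemilattice} for the pair $s,t$, each $f\in F$ satisfies $f\le s,t$ (take $z=f$), and every common lower bound $z$ satisfies $z\le f$ for some $f\in F$, whence $X_{z^*}\subseteq X_{f^*}$ because partial homomorphisms preserve order. Thus $D(s,t)=\bigcup_{f\in F}X_{f^*}$ is a finite union of clopen sets, hence closed, and the criterion yields Hausdorffness.

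For the converse I argue contrapositively with a single test action: let $X=\widehat{E(S)}$ be the spectrum of nonzero characters $\xi\colon E(S)\to\{0,1\}$, topologised as a subspace of $\{0,1\}^{E(S)}$, and let $\theta$ be the spectral (Munn-type) action, for which each $X_e=\{\xi:\xi(e)=1\}$ is compact-open and $X$ is zero-dimensional Hausdorff. If $S\ltimes X$ were Hausdorff then $D(s,t)$ would be closed, hence compact as a closed subset of the compact set $X_{s^*}$, so the clopen cover $\{X_{z^*z}:z\le s,t\}$ would admit a finite subcover $X_{z_1^*z_1},\dots,X_{z_n^*z_n}$. The decisive step is to promote this topological finiteness back to the order: for any $z\le s,t$ the principal character $\xi_{z^*z}$, defined by $\xi_{z^*z}(f)=1\iff f\ge z^*z$, lies in $X_{z^*z}\subseteq D(s,t)$, hence in some $X_{z_i^*z_i}$, giving $z^*z\le z_i^*z_i$; a short computation using $z=s\,z^*z$ and $z_i=s\,z_i^*z_i$ then yields $z=z_i\,z^*z\le z_i$. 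Therefore $\{z:z\le s,t\}=\bigcup_i\{z:z\le z_i\}$ with $F=\{z_1,\dots,z_n\}$ finite, so $S$ is a weak semilattice. I expect this passage from a finite subcover to algebraic finite-generation of the common lower bounds, effected by the principal characters $\xi_{z^*z}$, to be the main obstacle; everything else is bookkeeping with the germ relation.

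Finally, the ``in particular'' clause combines the two ingredients: for a zero-dimensional (locally compact Hausdorff) space with clopen domains, the forward implication makes $S\ltimes X$ Hausdorff, while the Corollary above — that $S\ltimes X$ is ample whenever $X$ is locally compact Hausdorff and zero-dimensional — supplies ampleness.
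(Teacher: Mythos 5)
Your overall architecture is sound, and most of it checks out: the forward implication (weak semilattice plus clopen domains forces each $D(s,t)=\bigcup_{z\le s,t}X_{z^*}$ to be a finite union of clopen sets, hence closed, hence the germs separate), the choice of the spectral action on $\widehat{E(S)}$ as the test action, and the order-theoretic endgame (finite subcover, principal characters $\xi_{z^*z}$, and the computation $z=z_iz^*z\le z_i$) are all correct as written, and this is essentially the Steinberg argument that the paper cites. The genuine gap is in the separation criterion itself, in exactly the direction your converse depends on: you assert that if $S\ltimes X$ is Hausdorff then $D(s,t)$ is relatively closed, but your justification only rules out separation by neighbourhoods of the special form $[s,U]$ and $[t,U]$, indexed by the \emph{same} $s$ and $t$ as the germs being separated. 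Hausdorffness fails only if \emph{no} pair of open sets separates $[s,x]$ from $[t,x]$, and a priori a smaller basic neighbourhood $[w,W]$ with $w\le s$ strictly below $s$ might separate where $[s,U]$ cannot; Equation \eqref{eq:intersectionofbasic} alone does not exclude this. The missing step is the observation that sets of the form $[s,U]$, with $U$ an open neighbourhood of $x$ inside $X_{s^*}$, already form a neighbourhood base at $[s,x]$: given a basic $[s',U']\ni[s,x]$, choose $w\le s,s'$ with $x\in X_{w^*}$; then for every $y\in U'\cap X_{w^*}$ Remark \ref{changebygreater} gives $[s,y]=[w,y]=[s',y]$, so $[s,U'\cap X_{w^*}]\subseteq[s',U']$. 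Equivalently: if $[w,W]\ni[s,x]$ and $[v,V]\ni[t,x]$ with $w\le s$, $v\le t$, and $y\in W\cap V\cap X_{z^*}$ for some $z\le s,t$, then $[w,y]=[s,y]=[z,y]=[t,y]=[v,y]$, so $[w,W]$ and $[v,V]$ meet. With this lemma inserted, your criterion -- and hence the whole proof -- is correct.

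It is worth knowing that the route the paper points to (Steinberg's Theorem 5.17) bypasses this delicate point in the converse altogether: in a Hausdorff topological space the intersection of two compact sets is compact, so for the spectral action the set $[s,X_{s^*}]\cap[t,X_{t^*}]$ is compact, and its image under the continuous source map is exactly $D(s,t)$ (by Equation \eqref{eq:intersectionofbasic} together with Remark \ref{changebygreater}). That yields the compactness of $D(s,t)$ your finite-subcover argument needs without ever proving relative closedness. Your criterion is stronger information and is true, but as written it is the one load-bearing assertion that does not follow from the facts you actually stated; either supply the neighbourhood-base lemma above or replace that step with the compactness argument.
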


\begin{remark}
The hypothesis that the domains of the partial action are clopen is necessary. For example, if $\G$ is a non-Hausdorff ample groupoid, then $\Gop$ is still a semilattice, however, as in Example~\ref{ex:amplegermesopenbisections}, the groupoid of germs $\Gop\ltimes\Go\cong\G$ is not Hausdorff.
\end{remark}

\section{Partial actions from associated groups and inverse semigroups}\label{sectionassociatedactionsandisomorphicgroupoidofgerms}

We will now describe how to construct partial actions of groups from actions of inverse semigroups and vice-versa. The class of inverse semigroups which allows us to do this in a more precise manner is that of \emph{$E$-unitary inverse semigroups}.

To each inverse semigroup $S$ we can naturally associate a group $\mathbf{G}(S)$: define a relation in $S$ by
\[\ntag\label{relation}
s\sim t\iff\quad\text{there exists}\quad u\in S \quad\text{such that}\quad u\leq s,t.
\]

Alternatively, $s\sim t$ if and only if there exists $e\in E(S)$ such that $se=te$. From this and the fact that the order of $S$ is preserved under products and inverses, it follows that $\sim$ is in fact a congruence, so we endow $S/\!\!\sim$ with the quotient semigroup structure. Given $s\in S$, we denote by $[s]$ the equivalence class of $s$ with respect to the relation (\ref{relation}). The following proposition is a particular case of Proposition \ref{prop:quotientoftrnsformationsemigroupoidisagroupoid} and Theorem \ref{theo:universalpropertyofgroupoidofgerms} (see Example \ref{example:maximalgroupimageastransformationgroupoid}).

\begin{proposition}[{\cite[Proposition~2.1.2]{Paterson1999}}]
Let $S$ an inverse semigroup. The quotient 
\[\mathbf{G}(S):=S/\!\!\sim\]
is a group. Furthermore, $\mathbf{G}(S)$ is the maximal group homomorphic image of $S$ in the sense that if $\psi\colon S \rightarrow G$ is a homomorphism and $G$ is a group, then $\psi$ factors through $\mathbf{G}(S)$.
\end{proposition}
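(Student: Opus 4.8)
The plan is to treat the two assertions separately. Since the surrounding text already establishes that $\sim$ is a congruence, the quotient $\mathbf{G}(S) = S/\!\!\sim$ is a well-defined semigroup, and it remains only to exhibit a group structure and verify the maximality. First I would identify the identity element. The key observation is that all idempotents lie in a single $\sim$-class: given $e,f\in E(S)$, the product $ef$ is again idempotent (idempotents commute), and a short computation using the characterization $x\leq g\iff x=gx^*x$ gives $ef\leq e$ and $ef\leq f$. Hence $u=ef$ witnesses $e\sim f$ via \eqref{relation}, and I denote this common class by $\mathbf{1}$.

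Next I would check the group axioms on $\mathbf{G}(S)$. For any $s\in S$, both $s^*s$ and $ss^*$ are idempotents, so $[s^*s]=[ss^*]=\mathbf{1}$; consequently $[s]\mathbf{1}=[ss^*s]=[s]=[ss^*s]=\mathbf{1}[s]$, showing that $\mathbf{1}$ is a two-sided identity. The same identities give $[s][s^*]=[ss^*]=\mathbf{1}$ and $[s^*][s]=[s^*s]=\mathbf{1}$, so $[s^*]$ is a two-sided inverse of $[s]$. Associativity is inherited from $S$, and therefore $\mathbf{G}(S)$ is a group.

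For the maximality, let $\psi\colon S\to G$ be a homomorphism into a group $G$. I would use two facts recorded earlier: homomorphisms of inverse semigroups preserve the natural order, and --- the crucial point --- the natural order on a group, viewed as an inverse semigroup, is trivial. Indeed, in $G$ one has $a^*=a^{-1}$, so $a\leq b\iff a=ba^*a=b$. Thus, if $s\sim t$ with $u\leq s,t$, then $\psi(u)\leq\psi(s)$ and $\psi(u)\leq\psi(t)$ force $\psi(s)=\psi(u)=\psi(t)$. Hence $\psi$ is constant on $\sim$-classes and factors as $\psi=\overline{\psi}\circ q$ through the quotient map $q\colon S\to\mathbf{G}(S)$; the induced map $\overline{\psi}([s])=\psi(s)$ is a well-defined group homomorphism, and it is unique because $q$ is surjective.

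The argument is essentially a chain of short order-theoretic computations, so I do not anticipate a serious obstacle; the one point deserving care is the universal property, whose entire content collapses to the observation that a group carries the trivial natural order. Alternatively, as the paper indicates, both claims can be read off from the groupoid-of-germs framework: taking the trivial action of $S$ on a one-point space $X=\{x\}$ makes the germ relation on $S\ast X$ coincide with \eqref{relation}, so $S\ltimes X$ is a one-unit groupoid --- that is, a group isomorphic to $\mathbf{G}(S)$ --- and the universal property of Theorem \ref{theo:universalpropertyofgroupoidofgerms} specializes to the maximality statement (cf. Example \ref{example:maximalgroupimageastransformationgroupoid}).
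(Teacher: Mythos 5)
Your proof is correct, but its main line of argument is genuinely different from the paper's. The paper gives no direct verification at all: it cites Paterson and observes that the proposition is a \emph{special case} of the germ-groupoid machinery already developed, namely Proposition \ref{prop:quotientoftrnsformationsemigroupoidisagroupoid} and Theorem \ref{theo:universalpropertyofgroupoidofgerms} applied to the trivial action of $S$ on a one-point space (Example \ref{example:maximalgroupimageastransformationgroupoid}), where the germ relation on $S\ast X$ reduces exactly to \eqref{relation}. Your primary argument is instead a self-contained, order-theoretic verification: all idempotents are $\sim$-equivalent (via $ef\leq e,f$) and their common class is the identity, $[s^*]$ inverts $[s]$, and maximality follows from the two facts that homomorphisms preserve the natural order and that the natural order on a group is trivial ($a\leq b\iff a=ba^{-1}a=b$), so any homomorphism into a group is constant on $\sim$-classes. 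Each step checks out, including the one subtle point you flag correctly, and your reliance on the surrounding text for $\sim$ being a congruence is legitimate since the paper establishes that before the proposition. What each approach buys: yours is elementary and independent of the groupoid-of-germs apparatus (it is essentially Paterson's original argument), which makes the proposition usable before Section 3; the paper's route avoids repeating work and illustrates that the maximal group image is the degenerate instance of the germ construction, which is precisely the point of Example \ref{example:maximalgroupimageastransformationgroupoid}. Since you also sketch this reduction in your closing paragraph, you have in effect recorded both proofs.
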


\begin{example}
If $G$ is a group then $\mathbf{G}(G)$ is isomorphic to $G$.
\end{example}

\begin{example}
If $L$ is a (meet-)semilattice then $\mathbf{G}(L)=\left\{1\right\}$ is the trivial group.
\end{example}

\begin{example}
If $S$ is an inverse semigroup with a zero, then $\mathbf{G}(S)=\left\{1\right\}$ is the trivial group.
\end{example}

Recall that an inverse semigroup $S$ is \emph{$E$-unitary} if whenever $e,s\in S$, $e\leq s$ and $e\in E(S)$, we have $s\in E(S)$ as well. We first reword the $E$-unitary property in terms of compatibility of elements. Two elements $s,t$ of an inverse semigroup $S$ are \emph{compatible} if $s^*t$ and $st^*$ are idempotents. In this case, the set $\left\{s,t\right\}$ has infimum $s\land t=\inf\left\{s,t\right\}=st^*t=ts^*s$.

\begin{lemma}[{\cite[Theorem~2.4.6]{MR1694900}}]\label{lem:eunitarycompatibility}
$S$ is $E$-unitary if and only if $s,t,u\in S$ and $u\leq s,t$ implies that $s$ and $t$ are compatible.
\end{lemma}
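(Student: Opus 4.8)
The plan is to establish the two implications separately, using throughout only that the natural order on $S$ is preserved under products and under inverses, together with the fundamental identity $ss^*s=s$ and the fact that $e^*=e$ for every idempotent $e$.

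For the forward direction, I would assume $S$ is $E$-unitary and take $s,t,u$ with $u\le s$ and $u\le t$. Since the order is preserved under inverses, $u^*\le s^*$; multiplying this with $u\le t$ (the order being preserved under products, via right multiplication by $u$ and then left multiplication by $s^*$) gives $u^*u\le s^*t$. As $u^*u$ is idempotent and lies below $s^*t$, the $E$-unitary hypothesis forces $s^*t\in E(S)$. A symmetric argument, multiplying $u\le s$ with $u^*\le t^*$, yields $uu^*\le st^*$ and hence $st^*\in E(S)$; therefore $s$ and $t$ are compatible.

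For the reverse direction, I would assume the stated condition and verify $E$-unitarity directly: given $e\in E(S)$ with $e\le s$, I must show $s\in E(S)$. The key move is to apply the hypothesis to the pair $s$ and $t:=s^*s$. Since $e\le s$ forces $e=e^*e\le s^*s$ (again because the order is multiplicative), the idempotent $e$ is a common lower bound of $s$ and $s^*s$, so by hypothesis $s$ and $s^*s$ are compatible. In particular $s(s^*s)^*\in E(S)$; but $(s^*s)^*=s^*s$ and $s(s^*s)=ss^*s=s$, so this says precisely that $s\in E(S)$, as required.

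Both implications are short once the multiplicativity of the order is in hand, and I do not anticipate any serious obstacle. The only non-mechanical step is the instantiation $t=s^*s$ in the reverse direction: it is exactly this choice that converts the (one-sided) compatibility identity $s(s^*s)^*=s$ into the $E$-unitary conclusion, so identifying this pairing is the real crux of the argument.
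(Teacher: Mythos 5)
Your proof is correct. In the forward direction, monotonicity of the natural partial order under products and inverses gives $u^*u\le s^*t$ and $uu^*\le st^*$, and since $u^*u$, $uu^*$ are idempotents, $E$-unitarity forces $s^*t, st^*\in E(S)$, i.e.\ $s$ and $t$ are compatible. In the reverse direction your instantiation $t=s^*s$ works exactly as claimed: $e\le s$ gives $e=e^*e\le s^*s$, so $e$ is a common lower bound of $s$ and $s^*s$, and compatibility yields $s=ss^*s=s(s^*s)^*\in E(S)$ (you only need one of the two idempotency conditions, as you note). For the comparison: the paper does not prove this lemma at all -- it is quoted directly from Lawson's book \cite{MR1694900}, so there is no internal argument to measure against. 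Your argument is a clean, self-contained replacement for that citation, relying only on facts the paper has already recorded (the order is preserved by products and inverses, idempotents are self-inverse, and $ss^*s=s$). Lawson's Theorem 2.4.6 proves a longer chain of equivalent characterizations of $E$-unitary semigroups (involving the minimum group congruence and the compatibility relation), so your direct two-implication argument is the more economical route to this particular equivalence.
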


We will now be interested in relating partial actions of inverse semigroups and partial actions of their maximal group images. A version this theorem has been proven in \cite[Lemma 3.8]{MR1848088} when considering global actions of inverse semigroups. The next theorem is a specific instance of \cite[Lemma 2.2]{Mikola2017}, where the author in fact considers a \emph{strictly weaker} notion of partial action -- namely, condition \ref{def:partialhomomorphism}(iii) is not required. Note that this condition is trivial when considering partial actions of groups, and thus we may apply \cite[Lemma 2.2]{Mikola2017} without problems.

\begin{theorem}[{\cite[Remark 2.3]{Mikola2017}}]\label{theoremfactorpartialactioneunitary}
Let $\theta=\left(\left\{X_s\right\}_{s\in S},\left\{\theta\right\}_{s\in S}\right)$ be a partial action of an $E$-unitary inverse semigroup $S$ on a topological space $X$. Then there is a unique partial action $\widetilde{\theta}=\left(\left\{X_{\gamma}\right\}_{\gamma\in \mathbf{G}(S)},\left\{\widetilde{\theta}_\gamma\right\}_{\gamma\in \mathbf{G}(S)}\right)$ of $\mathbf{G}(S)$ on $X$ such that for all $s\in S$,
\begin{enumerate}
\item[(i)] $X_\gamma=\bigcup_{[s]=\gamma}X_s$ for all $\gamma\in \mathbf{G}(S)$;
\item[(ii)] $\widetilde{\theta}_{[s]}(x)=\theta(x)$ for all $(s,x)\in S*X$;
\end{enumerate}
(in other words, $\widetilde{\theta}_{\gamma}$ is the join of $\left\{\theta_s:[s]=\gamma\right\}$ in $\mathcal{I}(X)$, which is commonly denoted by $\bigvee_{[s]=\gamma}\theta_s$). 
\end{theorem}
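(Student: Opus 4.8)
The plan is to \emph{define} $\widetilde{\theta}_\gamma := \bigvee_{[s]=\gamma}\theta_s$ and then check that this formula produces a genuine partial action, with conditions (i) and (ii) forcing uniqueness. Since a join of a family of partial bijections exists in $\mathcal{I}(X)$ — realized as the piecewise-defined map on the union of the domains — exactly when the family is pairwise compatible, the heart of the argument is to prove that $\left\{\theta_s:[s]=\gamma\right\}$ is compatible. As a preliminary I would record that $\theta_e=\id_{X_e}$ for every $e\in E(S)$: the map $\theta_e\colon X_e\to X_e$ is a bijection satisfying $\theta_e^2\le\theta_{e^2}=\theta_e$, and a surjective idempotent self-map is the identity; consequently $\theta$ sends idempotents of $S$ to partial identities of $\mathcal{I}(X)$.

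For the compatibility step, suppose $[s]=[t]$. By definition of the congruence defining $\mathbf{G}(S)$, the elements $s$ and $t$ admit a common lower bound, so Lemma~\ref{lem:eunitarycompatibility} — and this is precisely where the $E$-unitary hypothesis enters — guarantees that $s$ and $t$ are compatible in $S$, i.e.\ $s^*t,st^*\in E(S)$. Applying conditions (i) and (ii) of a partial homomorphism gives $\theta_s^*\theta_t=\theta_{s^*}\theta_t\le\theta_{s^*t}=\id_{X_{s^*t}}$; as any element below a partial identity is itself a partial identity, $\theta_s^*\theta_t$ is idempotent, and symmetrically so is $\theta_s\theta_t^*$. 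Hence $\theta_s$ and $\theta_t$ are compatible in $\mathcal{I}(X)$ and in particular agree on $X_{s^*}\cap X_{t^*}$. This makes $\widetilde{\theta}_\gamma$ a well-defined bijection from $X_{\gamma^{-1}}=\bigcup_{[s]=\gamma}X_{s^*}$ onto $X_\gamma=\bigcup_{[s]=\gamma}X_s$; continuity of $\widetilde{\theta}_\gamma$ and of its inverse then follows by gluing the homeomorphisms $\theta_s$ along this open cover.

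It then remains to verify the partial-action axioms for $\widetilde\theta$ over the group $\mathbf{G}(S)$, where the natural order is trivial so condition \ref{def:partialhomomorphism}(iii) is vacuous. For inverses, $\widetilde\theta_\gamma^*=\bigvee_{[s]=\gamma}\theta_s^*=\bigvee_{[s]=\gamma}\theta_{s^*}=\widetilde\theta_{\gamma^{-1}}$, using that $*$ commutes with joins and $[s^*]=\gamma^{-1}$. For the product inequality $\widetilde\theta_\gamma\widetilde\theta_\delta\le\widetilde\theta_{\gamma\delta}$ I would argue pointwise: any $x$ in the domain of $\widetilde\theta_\gamma\widetilde\theta_\delta$ is moved first by some $\theta_{t_0}$ with $[t_0]=\delta$ and then by some $\theta_{s_0}$ with $[s_0]=\gamma$, whence $(\widetilde\theta_\gamma\widetilde\theta_\delta)(x)=\theta_{s_0}\theta_{t_0}(x)=\theta_{s_0t_0}(x)$ by $\theta_{s_0}\theta_{t_0}\le\theta_{s_0t_0}$, exhibiting $\widetilde\theta_\gamma\widetilde\theta_\delta$ as a restriction of $\widetilde\theta_{\gamma\delta}$ since $[s_0t_0]=\gamma\delta$. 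For non-degeneracy I must check $X_1=X$, for which I would show $[s]=1\iff s\in E(S)$: if $[s]=1=[s^*s]$ then there is $u\le s,s^*s$, and since $u$ lies below the idempotent $s^*s$ it is idempotent, so $u\le s$ with $u\in E(S)$ forces $s\in E(S)$ by $E$-unitarity; therefore $X_1=\bigcup_{s\in E(S)}X_s=X$ by non-degeneracy of $\theta$.

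Uniqueness is then immediate: condition (ii) forces any admissible $\widetilde{\theta}'_\gamma$ to coincide with each $\theta_s$ (for $[s]=\gamma$) on $X_{s^*}$, and condition (i) fixes its domain, so $\widetilde{\theta}'_\gamma=\widetilde{\theta}_\gamma$. I expect the compatibility/well-definedness step to be the only real obstacle: without $E$-unitarity the maps $\theta_s$ sharing a class need not agree on overlaps, so the join would fail to be single-valued; once the joins are known to exist, the homomorphism inequality, non-degeneracy, and uniqueness are routine order-theoretic bookkeeping.
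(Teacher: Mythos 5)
Your proof is correct, and it cannot be compared against an in-paper argument because the paper offers none: Theorem \ref{theoremfactorpartialactioneunitary} is quoted from \cite[Remark 2.3]{Mikola2017} (as a special case of \cite[Lemma 2.2]{Mikola2017}), so your argument is a self-contained replacement for that citation. Your route is also the natural one given the surrounding text: the crux --- that $[s]=[t]$ forces $\theta_s$ and $\theta_t$ to be compatible in $\mathcal{I}(X)$, so that $\bigvee_{[s]=\gamma}\theta_s$ exists as a partial homeomorphism --- rests on Lemma \ref{lem:eunitarycompatibility}, which is exactly the lemma the paper records immediately before the theorem, and the same mechanism (common lower bound implies compatibility, which implies agreement on overlaps) is how the paper argues injectivity in Proposition \ref{prop:eunitarygroupoidofgerms}. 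The remaining verifications --- partial homomorphisms send idempotents to partial identities, elements below a partial identity are partial identities, the pointwise check of $\widetilde\theta_\gamma\widetilde\theta_\delta\le\widetilde\theta_{\gamma\delta}$, the identification $[s]=1\iff s\in E(S)$ via $E$-unitarity (giving $X_1=X$), and uniqueness --- are all sound. Two small points deserve an explicit line if this were written out in full: first, $\theta_e^2\le\theta_e$ upgrades to $\theta_e^2=\theta_e$ only because $\theta_e$ maps $X_e$ \emph{onto} $X_e$, so that $\theta_e^2$ is defined on all of $X_e$; this is what licenses the word ``idempotent'' in your phrase ``surjective idempotent self-map.'' Second, in the compatibility step, $\theta_s\theta_t^*$ being idempotent yields agreement of the maps on $X_{s^*}\cap X_{t^*}$, while $\theta_s^*\theta_t$ being idempotent yields agreement of their inverses on $X_s\cap X_t$; both are needed for the glued map to be single-valued \emph{and} injective, and you correctly establish both. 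As for what each approach buys: the paper's citation inherits generality (Mikola's lemma is proved for a strictly weaker notion of partial action, dropping condition (iii) of Definition \ref{def:partialhomomorphism}), whereas your proof buys self-containedness within the paper's own toolkit at essentially no extra cost.
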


\begin{remark}
If one allows degenerate partial actions, then item (i) implies that $\theta$ is non-degenerate if and only if $\widetilde{\theta}$ is non-degenerate.
\end{remark}

A version of the next theorem has been proven in \cite{MR3231226}, when considering the canonical action of $S$ on the spectrum of its idempotent set $E(S)$. We prove the result for general partial actions of inverse semigroups on arbitrary topological spaces.

\begin{proposition}\label{prop:eunitarygroupoidofgerms}
Let $\theta$ be a partial action of an $E$-unitary group $S$ on a space $X$ and $\widetilde{\theta}$ be the induced action on $\mathbf{G}(S)$. Then
\[S\ltimes_\theta X\cong \mathbf{G}(S)\ltimes_{\widetilde{\theta}} X\]
\end{proposition}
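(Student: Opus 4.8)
The plan is to construct the isomorphism directly using the universal property of groupoids of germs (Theorem~\ref{theo:universalpropertyofgroupoidofgerms}) in both directions, obtaining mutually inverse continuous homomorphisms. Both $S$ and $\mathbf{G}(S)$ act on the same space $X$, and the quotient map $S\to\mathbf{G}(S)$, $s\mapsto[s]$, is a homomorphism of inverse semigroups (hence a partial homomorphism). The key compatibility is that, by Theorem~\ref{theoremfactorpartialactioneunitary}(ii), the induced action $\widetilde{\theta}$ satisfies $\widetilde{\theta}_{[s]}(x)=\theta_s(x)$ for all $(s,x)\in S\ast X$, and by (i) we have $X_s\subseteq X_{[s]}$. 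This says precisely that the identity map on $X$ intertwines the two actions along the quotient $s\mapsto[s]$.

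First I would build the forward map. I take $\G=\mathbf{G}(S)\ltimes_{\widetilde\theta}X$, and consider the partial homomorphism $\sigma\colon S\to\G^{op}$ defined by $\sigma(s)=[[s],X_{s^*}]$ (a basic open bisection of the germ groupoid for $\widetilde\theta$), together with $\phi=\id_X\colon X\to\G[0]$. Conditions (i) and (ii) of Theorem~\ref{theo:universalpropertyofgroupoidofgerms} translate into $X_s\subseteq X_{[s]}$ and $\widetilde\theta_{[s]}(x)=\theta_s(x)$, both supplied by Theorem~\ref{theoremfactorpartialactioneunitary}. This yields a continuous homomorphism $\Psi\colon S\ltimes_\theta X\to\mathbf{G}(S)\ltimes_{\widetilde\theta}X$ with $\Psi[s,x]=[[s],x]$. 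For the reverse direction I would apply the universal property again, now with the group $\mathbf{G}(S)$ acting and target $S\ltimes_\theta X$; here I must produce a partial homomorphism $\mathbf{G}(S)\to(S\ltimes_\theta X)^{op}$, which I define on a class $\gamma=[s]$ by sending it to the open bisection $\bigcup_{[s]=\gamma}[s,X_{s^*}]$, checking this is well-defined and a partial homomorphism (for a group, a partial homomorphism respecting inverses and the trivial order is just what is needed). This gives a continuous $\Xi$ with $\Xi[[s],x]=[s,x]$.

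The two maps are visibly mutually inverse on germs, so $\Psi$ and $\Xi$ are inverse homeomorphisms and the proof concludes. The main obstacle, and the point requiring the $E$-unitary hypothesis, is the injectivity of $\Psi$ (equivalently, well-definedness of $\Xi$): one must show that if $[[s],x]=[[t],x]$ in the group germ groupoid then $[s,x]=[t,x]$ in the semigroup germ groupoid. Unpacking definitions, $[[s],x]=[[t],x]$ means there is an idempotent neighborhood datum forcing $\widetilde\theta_{[s]}$ and $\widetilde\theta_{[t]}$ to agree near $x$, i.e. $[s]=[t]$ in $\mathbf{G}(S)$ and $x$ lies in the relevant domain; this gives $u\in S$ with $u\leq s,t$. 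To pass from $[s]=[t]$ back to a genuine common lower bound witnessing $[s,x]=[t,x]$, I would invoke Lemma~\ref{lem:eunitarycompatibility}: $E$-unitarity guarantees that $s$ and $t$ are \emph{compatible}, so $s\land t=st^*t$ exists and $x\in X_{(s\land t)^*}$, whence $[s,x]=[s\land t,x]=[t,x]$ by Remark~\ref{changebygreater}. Without $E$-unitarity this step fails, which is exactly why the hypothesis is imposed.
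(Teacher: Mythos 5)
Your proof is correct, and it produces exactly the same isomorphism as the paper's --- the map $[s,x]\mapsto[[s],x]$ --- with the same essential use of the $E$-unitary hypothesis: Lemma~\ref{lem:eunitarycompatibility} turns the common lower bound coming from $[s]=[t]$ into compatibility of $s$ and $t$, and the meet $s\land t$ then witnesses equality of germs. Where you genuinely differ from the paper is in the machinery. The paper defines the map directly on germs, checks well-definedness from \eqref{eq:equivalencegroupoidgerms} and \eqref{relation}, deduces surjectivity from $\widetilde{\theta}_\gamma=\bigvee_{[s]=\gamma}\theta_s$, and proves injectivity by the compatibility argument, leaving the topological side (continuity of the map and of its inverse) implicit. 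You instead invoke the universal property (Theorem~\ref{theo:universalpropertyofgroupoidofgerms}) in both directions; this costs you the verifications that $\sigma(s)=[[s],X_{s^*}]$ and $\rho(\gamma)=\bigcup_{[s]=\gamma}[s,X_{s^*}]$ are partial homomorphisms into the respective bisection semigroups, and that each $\rho(\gamma)$ is in fact a bisection (the point where $E$-unitarity enters on your side), but it buys continuity of $\Psi$ and of $\Xi$ for free, so the homeomorphism property --- which the paper's direct proof does not address explicitly --- comes out of the construction automatically.

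One assertion in your key step needs a line of justification: that $x\in X_{s^*}\cap X_{t^*}$ implies $x\in X_{(s\land t)^*}$. For a partial (non-global) action this is not automatic: in general $X_{u^*}\subsetneq X_{u^*u}$, so knowing $x\in X_{s^*st^*t}=X_{(s\land t)^*(s\land t)}$ would not suffice. The claim is nevertheless true: since $\theta_{t^*t}=\operatorname{id}_{X_{t^*t}}$ (idempotents act as identity maps under any partial action) and $\theta_s\theta_{t^*t}\leq\theta_{st^*t}$, one has $X_{s^*}\cap X_{t^*t}=\dom(\theta_s\theta_{t^*t})\subseteq X_{(st^*t)^*}=X_{(s\land t)^*}$, and $x\in X_{s^*}\cap X_{t^*}\subseteq X_{s^*}\cap X_{t^*t}$. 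Alternatively you can avoid discussing the meet's domain altogether, as the paper does, by using formulation \eqref{eq:equivalencegroupoidgerms2}: take $e=s^*st^*t$, note that $x\in X_e$ and $se=te$ by compatibility, and conclude $[s,x]=[t,x]$ directly. With that sentence added, your argument is complete.
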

\begin{proof}
Consider the map $[s,x]\mapsto ([s],x)$, which is well-defined by the definitions of the relations involved (see Equations \eqref{eq:equivalencegroupoidgerms} and \eqref{relation}). It is clearly a homomorphism, and surjectivity follows since $\widetilde{\theta}_{\gamma}=\bigvee_{[s]=\gamma}\theta_s$. As for injectivity, suppose $([s],x)=([t],y)$, where $[s,x],[t,y]\in S\ltimes_\theta X$. Then $x=y$ and $[s]=[t]$, so $x\in X_{s^*}\cap X_{t^*}$. Hence $s$ and $t$ are compatible, which implies $s(s^*st^*t)=t(s^*st^*t)$ (as both products describe the meet $s\land t$). Since $x\in X_{s^*}\cap X_{t^*}\subseteq X_{s^*s}\cap X_{t^*t}\subseteq X_{s^*st^*t}$ we conclude that $[s,x]=[t,y]$.\qedhere
\end{proof}

The two previous propositions describe a strong relationship between partial actions of an $E$-unitary inverse semigroup and partial actions of the associated group. The other direction initially reads as follows: ``How to associate, to a group $G$, an inverse semigroup $S$ together with a map $G\to S$ such that every partial action of $G$ factors through a partial action of $S$?'' The obvious answer would be $S=G$, so instead we look for \emph{global actions} of our semigroup $S$. This is the content of the paper \cite{Exel1998}:

Given a group $G$, let $\mathbf{S}(G)$ be the universal semigroup generated by symbols of the form $[t]$, where $t\in G$, modulo the relations
\begin{enumerate}[label=(\roman*)]
\item $[s^{-1}][s][t]=[s^{-1}][st]$;
\item $[s][t][t^{-1}]=[st][t^{-1}]$;
\item $[s][1]=[s]$;
\item $[1][s]=[s]$;
\end{enumerate}

Exel proved that $\mathbf{S}(G)$ is an inverse semigroup with unit $[1]$ (see \cite[Theorem~3.4]{Exel1998}). We will describe all the necessary properties of $\mathbf{S}(G)$ that we will need. For every $g\in G$, the inverse of $[g]$ is $[g^{-1}]$. Let us denote 
\[\epsilon_g=[g][g^{-1}].\]
By \cite[Proposition~2.5~and~3.2]{Exel1998}, for each $\gamma\in\mathbf{S}(G)$, there is a unique $n\geq 0$ and distinct elements $r_1,\ldots,r_n, g\in G$ such that
\begin{enumerate}
\item $\gamma=\epsilon_{r_1}\cdots\epsilon_{r_n}[g]$, (if $n=0$, this is simply $[g]$), and
\item $r_i\neq 1$ for all $i$.
\end{enumerate}
We call such a decomposition $\gamma=\epsilon_{r_1}\cdots\epsilon_{r_n}[g]$ the \emph{standard form} of $\gamma$, which is unique up to the order of $r_1,\ldots,r_n$. Moreover, given $g,r\in G$, we have $[g]\epsilon_r=\epsilon_{gr}[g]$.
Thus, for $\gamma=\epsilon_{r_1}\cdots\epsilon_{r_n}[g] \in \mathbf{S}(G)$, the inverse of $\gamma$ is written in standard form as
\[\gamma^*=[g^{-1}]\epsilon_{r_n}\cdots\epsilon_{r_1}=\epsilon_{g^{-1}r_n}\cdots\epsilon_{g^{-1}r_1}[g^{-1}],\]
The idempotents of $\mathbf{S}(G)$ are the elements of the form $\epsilon=\epsilon_{r_1}\cdots\epsilon_{r_n}[1]$.

For any group $G$ the inverse semigroup associated $\mathbf{S}(G)$ is $E$-unitary (\cite[Remark~3.5]{Exel1998}). Indeed, suppose $\gamma \in \mathbf{S}(G)$, $\epsilon \in E(\mathbf{S}(G))$ and $\epsilon \leq \gamma$. Writing $\gamma$ and $\epsilon$ in standard form, we obtain
\[\gamma=\epsilon_{s_1}\cdots\epsilon_{s_n}[s]\qquad\text{and}\qquad\epsilon=\epsilon_{e_1}\cdots\epsilon_{e_m}[1].\]
Since $\epsilon=\epsilon\gamma$ and  $[1]$ is a unit of $\mathbf{S}(G)$, we obtain
\[\epsilon_{e_1}\cdots\epsilon_{e_m}[1]=\epsilon=\epsilon\gamma=\epsilon_{e_1}\cdots\epsilon_{e_m}\epsilon_{s_1}\cdots\epsilon_{s_n}[s].\]
From the uniqueness of the standard form of $\epsilon$ we conclude that $s=1$ and $\gamma$ is an idempotent.

The main result of \cite{Exel1998} is the following property of the semigroup $\mathbf{S}(G)$. Although it is proven in principle only for partial on discrete sets, the same proof applies in the topological setting.

\begin{proposition}[{\cite[Theorem 4.2.]{Exel1998}}]\label{thm:actionuniversal}
Let $\theta=\left(\left\{X_s\right\}_{s\in S},\left\{\theta_s\right\}_{s\in S}\right)$ be a topological partial action of a group $G$ on a space $X$. Then there is a unique topological action
$\overline{\theta}$ of $\mathbf{S}(G)$ on $X$ such that $\overline{\theta}_{[g]} = \theta_g$, for all $g \in  G$.
\end{proposition}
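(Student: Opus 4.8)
The plan is to use the presentation of $\mathbf{S}(G)$ by the generators $\{[g]:g\in G\}$ subject to the relations (i)--(iv), so that producing a homomorphism out of $\mathbf{S}(G)$ reduces to assigning elements of $\mathcal{I}(X)$ to the generators compatibly with these relations. First I would set $\overline{\theta}_{[g]}:=\theta_g$ for every $g\in G$ and check that the four relations hold in $\mathcal{I}(X)$ under this assignment. Relations (iii) and (iv) are immediate from the fact that non-degeneracy forces $X_1=X$ and $\theta_1=\id_X$, while relations (i) and (ii) follow from the partial-action axioms $\theta_{g^{-1}}=\theta_g^{-1}$ and $\theta_g\theta_h\leq\theta_{gh}$; this verification is purely a manipulation of compositions of partial bijections and is exactly the computation already carried out in \cite{Exel1998}, so it is insensitive to the topology. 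The universal property of the presentation then yields a unique \emph{semigroup} homomorphism $\overline{\theta}\colon\mathbf{S}(G)\to\mathcal{I}(X)$ with $\overline{\theta}_{[g]}=\theta_g$, and since any semigroup homomorphism between inverse semigroups automatically preserves inverses, $\overline{\theta}$ is a homomorphism of inverse semigroups. Uniqueness of $\overline{\theta}$ is immediate because the $[g]$ generate $\mathbf{S}(G)$.

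It remains to verify the topological and non-degeneracy conditions. Here I would argue from the standard form: writing $\gamma=\epsilon_{r_1}\cdots\epsilon_{r_n}[g]$ and using $\overline{\theta}_{\epsilon_r}=\theta_r\theta_{r^{-1}}=\id_{X_r}$, one obtains the explicit description
\[\overline{\theta}_\gamma=\id_{X_{r_1}\cap\cdots\cap X_{r_n}}\circ\,\theta_g,\]
so that $\overline{\theta}_\gamma$ is the restriction of $\theta_g$ to the open set $\theta_g^{-1}(X_g\cap X_{r_1}\cap\cdots\cap X_{r_n})$, mapping it homeomorphically onto the open set $X_g\cap X_{r_1}\cap\cdots\cap X_{r_n}$. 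As $\theta_g$ is a homeomorphism and finite intersections of open sets are open, each $\overline{\theta}_\gamma$ is a homeomorphism between open subsets of $X$. Non-degeneracy is automatic, since $[1]$ is the identity idempotent of $\mathbf{S}(G)$ and $\overline{\theta}_{[1]}=\id_X$ has domain all of $X$. Finally, $\overline{\theta}$ is a \emph{global} action rather than merely a partial one precisely because it is a total homomorphism of inverse semigroups, which is what being a global action means.

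I expect the only delicate point to be the domain bookkeeping needed to verify relations (i) and (ii) -- matching the domains of the two composite partial bijections on each side -- but this is exactly the content already established in \cite{Exel1998}, and the remark preceding the statement records that the argument is purely set-theoretic and hence transfers verbatim to the topological setting. Consequently the genuinely new work lies entirely in the second paragraph, namely confirming that each $\overline{\theta}_\gamma$ is a homeomorphism between open sets, and the standard-form description above reduces this to the openness of finite intersections.
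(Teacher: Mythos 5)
Your proposal is correct and takes essentially the same approach as the paper, whose entire argument consists of citing Exel's Theorem 4.2 and remarking that his presentation-based, set-theoretic proof carries over to the topological setting; your first paragraph is precisely Exel's argument (check the four defining relations in $\mathcal{I}(X)$, then invoke the universal property of the presentation). Your second paragraph, with the standard-form computation $\overline{\theta}_\gamma=\id_{X_{r_1}\cap\cdots\cap X_{r_n}}\circ\theta_g$ exhibiting each $\overline{\theta}_\gamma$ as a restriction of $\theta_g$ to a homeomorphism between open subsets of $X$, correctly fills in the only point the paper leaves implicit, namely why the argument ``applies in the topological setting.''
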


\begin{proposition}\label{prop:propriedadesdesg}
Let $G$ be a group and $\mathbf{S}(G)$ the universal semigroup of $G$. Then the map $G\to \mathbf{G}(\mathbf{S}(G))$, $g\mapsto [[g]]$, is an isomorphism.
\end{proposition}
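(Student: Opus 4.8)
The plan is to avoid computing directly with the relations of $\mathbf{S}(G)$ and instead to construct an explicit two-sided set-inverse $\bar\pi$ of the map $\phi\colon G\to\mathbf{G}(\mathbf{S}(G))$, $g\mapsto[[g]]$, chosen so that $\bar\pi$ is visibly a group homomorphism. Once $\phi$ and $\bar\pi$ are shown to be mutually inverse bijections, the fact that $\bar\pi$ is a homomorphism will force $\bar\pi$, and hence its inverse $\phi$, to be an isomorphism.

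First I would produce a semigroup homomorphism $\pi\colon\mathbf{S}(G)\to G$ with $\pi([g])=g$. Since $\mathbf{S}(G)$ is by definition the universal semigroup on the symbols $[t]$ ($t\in G$) subject to relations (i)--(iv), it suffices to check that the assignment $[t]\mapsto t$ turns each of these into a true identity in $G$; this is immediate, as for instance (ii) becomes $s\cdot t\cdot t^{-1}=(st)\cdot t^{-1}$, i.e.\ $s=s$. By the universal property of the maximal group image, the homomorphism $\pi$ into the group $G$ factors through the quotient map $q\colon\mathbf{S}(G)\to\mathbf{G}(\mathbf{S}(G))$, $\gamma\mapsto[\gamma]$, yielding a group homomorphism $\bar\pi\colon\mathbf{G}(\mathbf{S}(G))\to G$ with $\bar\pi\circ q=\pi$; in particular $\bar\pi([[g]])=\pi([g])=g$.

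Next I would verify that $\phi$ and $\bar\pi$ are mutually inverse as maps of sets. The identity $\bar\pi\circ\phi=\mathrm{id}_G$ is exactly the formula $\bar\pi([[g]])=g$ just obtained. For the reverse composite I would use the standard form: any $\xi\in\mathbf{G}(\mathbf{S}(G))$ equals $q(\gamma)$ for some $\gamma\in\mathbf{S}(G)$, and writing $\gamma=\epsilon_{r_1}\cdots\epsilon_{r_n}[g]$ in standard form gives $\xi=q(\epsilon_{r_1})\cdots q(\epsilon_{r_n})\,q([g])$. Since $q$ maps into a group, each $q(\epsilon_{r_i})$ is an idempotent of that group and is therefore trivial, so $\xi=q([g])=[[g]]=\phi(g)$. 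Thus every element of $\mathbf{G}(\mathbf{S}(G))$ lies in the image of $\phi$, and $\phi(\bar\pi(\xi))=\phi(\bar\pi(\phi(g)))=\phi(g)=\xi$, i.e.\ $\phi\circ\bar\pi=\mathrm{id}$.

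Having exhibited $\phi$ and $\bar\pi$ as inverse bijections with $\bar\pi$ a group homomorphism, I conclude that $\bar\pi$ is a group isomorphism and hence so is its inverse $\phi$. I do not expect a genuine obstacle here, since both universal properties and the standard form of elements of $\mathbf{S}(G)$ are already available in the text; the one step deserving care is the vanishing of the idempotents $\epsilon_{r_i}$ under $q$, which is what reduces an arbitrary element of $\mathbf{G}(\mathbf{S}(G))$ to the form $[[g]]$ and thereby yields surjectivity of $\phi$ without any separate computation. (One could alternatively check directly that $\phi$ is a homomorphism using $[g][h]=\epsilon_g[gh]$, but the inverse-map argument makes this unnecessary.)
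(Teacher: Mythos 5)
Your proof is correct, but it takes a genuinely different route from the paper's. The paper argues directly: it first checks that $g\mapsto[g]$ is a partial homomorphism (via the identity $[s][t]=[st]\epsilon_t$), so that $g\mapsto[[g]]$ is a partial homomorphism between groups and hence a homomorphism; surjectivity then follows from the existence of standard forms, and injectivity from the \emph{uniqueness} of the standard form (if $[[g]]=[[1]]$, then $\epsilon[g]=\epsilon[1]$ for some idempotent $\epsilon$, and comparing standard forms forces $g=1$). You instead build an explicit inverse: the presentation of $\mathbf{S}(G)$ yields a homomorphism $\pi\colon\mathbf{S}(G)\to G$, $[t]\mapsto t$ (your verification of relations (i)--(iv) is exactly what is needed), which by the maximal-group-image property factors through $q\colon\mathbf{S}(G)\to\mathbf{G}(\mathbf{S}(G))$ to give a group homomorphism $\bar\pi$ retracting $\phi$. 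This makes injectivity automatic and lets you \emph{deduce}, rather than verify, that $\phi$ is a homomorphism, as the inverse of the bijective homomorphism $\bar\pi$. The two proofs share the surjectivity step (existence of standard forms plus triviality of idempotents in a group --- this step is unavoidable in your setup, since without knowing $\phi$ is a homomorphism its image need not be a subgroup). What your approach buys is softness: you never invoke the uniqueness half of Exel's structure theorem, only existence, trading the paper's computational ingredients for the two universal properties already available in the text. What the paper's approach buys is that it isolates the partial-homomorphism identity $[s][t]=[st]\epsilon_t$, which is reused implicitly elsewhere (e.g.\ in Corollary \ref{corollarygroupoidofgermsofgisthesameastheoneofsg}).
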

\begin{proof}
First note that for all $s,t\in G$,
\[[s][t]=[s][t][t^{-1}[t]=[st]\epsilon_{t},\]
Thus the map $G\to \mathbf{S}(G)$, $g\mapsto [g]$, is a partial homomorphism, and the map $\mathbf{S}(G)\to \mathbf{G}(\mathbf{S}(G))$, $\alpha\mapsto[\alpha]$, is a homomorphism. So $g\mapsto[[g]]$ is a partial homomorphism between groups, hence a homomorphism.

Given $\alpha\in \mathbf{S}(G)$, since $\alpha=\epsilon_{s_1}\cdots\epsilon_{s_n}[s]$ for certain $s,s_1,\ldots,s_n$ we get $[\alpha]=[[s]]$, so $g\mapsto [[g]]$ is surjective.

If $[[g]]=1=[[1]]$, then there is an idempotent $\epsilon=\epsilon_{e_1}\cdots\epsilon_{e_n}[1]$ for which
\[\epsilon_{e_1}\cdots\epsilon_{e_n}[g]=\epsilon[g]=\epsilon[1]=\epsilon_{e_1}\cdots\epsilon_{e_n}\]
and the uniqueness of the standard form implies $g=1$.\qedhere
\end{proof}

\begin{corollary}\label{corollarygroupoidofgermsofgisthesameastheoneofsg}
Let $\theta$ be a partial action of an group $G$ on a space $X$ and $\widetilde{\theta}$ be the induced action of $\mathbf{S}(G)$. Then
\[G\ltimes_\theta X \cong \mathbf{S}(G)\ltimes_{\widetilde{\theta}} X.\]
\end{corollary}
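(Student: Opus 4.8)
The plan is to reduce Corollary \ref{corollarygroupoidofgermsofgisthesameastheoneofsg} to the two main structural results already established, namely Proposition \ref{prop:eunitarygroupoidofgerms} and Proposition \ref{thm:actionuniversal}. The key observation is that $\mathbf{S}(G)$ is $E$-unitary and that its maximal group image is canonically isomorphic to $G$ via Proposition \ref{prop:propriedadesdesg}. So the strategy is a short chain of isomorphisms: start with $\mathbf{S}(G)\ltimes_{\widetilde\theta} X$, apply Proposition \ref{prop:eunitarygroupoidofgerms} to the $E$-unitary semigroup $\mathbf{S}(G)$ to obtain $\mathbf{G}(\mathbf{S}(G))\ltimes_{\widetilde{\widetilde{\theta}}} X$, and then transport along the isomorphism $G\cong\mathbf{G}(\mathbf{S}(G))$ to land on $G\ltimes_\theta X$.

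First I would fix notation: let $\widetilde\theta$ denote the action of $\mathbf{S}(G)$ on $X$ given by Proposition \ref{thm:actionuniversal}, characterized by $\widetilde\theta_{[g]}=\theta_g$ for all $g\in G$. Since $\mathbf{S}(G)$ is $E$-unitary (as recalled in the excerpt, following \cite[Remark 3.5]{Exel1998}), Proposition \ref{prop:eunitarygroupoidofgerms} applies and gives an isomorphism $\mathbf{S}(G)\ltimes_{\widetilde\theta} X\cong\mathbf{G}(\mathbf{S}(G))\ltimes_{(\widetilde\theta)^\sim} X$, where $(\widetilde\theta)^\sim$ is the induced partial action of the group $\mathbf{G}(\mathbf{S}(G))$ given by Theorem \ref{theoremfactorpartialactioneunitary}. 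Next I would invoke Proposition \ref{prop:propriedadesdesg} to identify $\mathbf{G}(\mathbf{S}(G))$ with $G$ through the isomorphism $g\mapsto[[g]]$.

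The step that requires genuine care is verifying that, under the identification $G\cong\mathbf{G}(\mathbf{S}(G))$, the induced action $(\widetilde\theta)^\sim$ coincides with the original partial action $\theta$; once this is checked, the two groupoids of germs are literally equal (not merely isomorphic), and combining with the previous isomorphism finishes the proof. Concretely, for $g\in G$ the domain of $(\widetilde\theta)^\sim_{[[g]]}$ is $\bigcup_{[\alpha]=[[g]]}X_\alpha$ by Theorem \ref{theoremfactorpartialactioneunitary}(i), and one must show this equals $X_{g^*}=X_{g^{-1}}$ (the domain of $\theta_g$), while Theorem \ref{theoremfactorpartialactioneunitary}(ii) together with $\widetilde\theta_{[g]}=\theta_g$ forces the maps to agree on the relevant points. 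The main subtlety is that many elements $\alpha\in\mathbf{S}(G)$ satisfy $[\alpha]=[[g]]$, so I must confirm that their domains $X_\alpha$ are all contained in $X_{g^{-1}}$ and that $\widetilde\theta_\alpha$ agrees with $\theta_g$ wherever defined; this follows because any such $\alpha$ satisfies $\alpha\leq\beta$ for some $\beta$ with $[\beta]=[[g]]$, and the compatibility afforded by the $E$-unitary property (Lemma \ref{lem:eunitarycompatibility}) pins down the values. Once this identification is in place, the conclusion $G\ltimes_\theta X\cong\mathbf{S}(G)\ltimes_{\widetilde\theta} X$ is immediate by composing the two isomorphisms.
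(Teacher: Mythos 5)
Your overall route is exactly the paper's: both arguments chain Proposition \ref{prop:eunitarygroupoidofgerms} (applied to the $E$-unitary semigroup $\mathbf{S}(G)$), the isomorphism $G\cong\mathbf{G}(\mathbf{S}(G))$ of Proposition \ref{prop:propriedadesdesg}, and the identification of the twice-induced partial action $\widetilde{\widetilde{\theta}}$ with $\theta$ via Theorem \ref{theoremfactorpartialactioneunitary}. You also correctly isolate the one step carrying real content: showing $\widetilde{\widetilde{\theta}}_{[[g]]}=\theta_g$, that is, that every $X_{\alpha^*}$ with $[\alpha]=[[g]]$ lies in the domain of $\theta_g$ and that $\widetilde{\theta}_\alpha$ is a restriction of $\theta_g$.

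However, your justification of that step has a genuine gap. The claim that ``any such $\alpha$ satisfies $\alpha\leq\beta$ for some $\beta$ with $[\beta]=[[g]]$'' is vacuous (take $\beta=\alpha$), and the compatibility supplied by Lemma \ref{lem:eunitarycompatibility} cannot close it: compatibility of $\alpha$ and $[g]$ only forces $\widetilde{\theta}_\alpha$ and $\widetilde{\theta}_{[g]}$ to agree on the intersection of their domains, and gives no containment of domains --- which is precisely the point you flagged as needing proof. The issue is real: in a general $E$-unitary inverse semigroup, $\sim$-equivalent elements need not be comparable (in any semilattice with two incomparable elements, \emph{all} elements are $\sim$-equivalent since the maximal group image is trivial), so the join $\bigvee_{[\alpha]=[[g]]}\widetilde{\theta}_\alpha$ defining $\widetilde{\widetilde{\theta}}_{[[g]]}$ could a priori be strictly larger than $\widetilde{\theta}_{[g]}$. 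What makes the join collapse is a fact special to $\mathbf{S}(G)$, and it is the one the paper invokes: by uniqueness of the standard form, any $\alpha$ with $[\alpha]=[[g]]$ is written $\alpha=\epsilon_{r_1}\cdots\epsilon_{r_n}[h]$ with $[[h]]=[[g]]$, so $h=g$ by injectivity of $g\mapsto[[g]]$, whence $\alpha\leq[g]$. Equivalently, $\alpha\sim[g]$ if and only if $\alpha\leq[g]$, so $[g]$ is the maximum of its $\sim$-class and $\widetilde{\widetilde{\theta}}_{[[g]]}=\widetilde{\theta}_{[g]}=\theta_g$. Substituting this standard-form argument for your compatibility argument turns your outline into the paper's proof.
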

\begin{proof}
Let $\gamma=\widetilde{\widetilde{\theta}}$, the partial action of $\mathbf{G}(\mathbf{S}(G))$ induced by $\widetilde{\theta}$ as in Theorem \ref{theoremfactorpartialactioneunitary}. Let us prove that for all $g\in G$, $\theta_g=\gamma_{[[g]]}$. From this fact and Proposition \ref{prop:propriedadesdesg}, it follows easily that \[G\ltimes_\theta X\to\mathbf{G}(\mathbf{S}(G))\ltimes_\gamma X,\qquad (g,x)\mapsto ([[g]],x)\]
is a topological groupoid isomorphism. Proposition \ref{prop:eunitarygroupoidofgerms} provides the isomorphism $\mathbf{G}(\mathbf{S}(G))\ltimes_\gamma X\cong \mathbf{S}(G)\ltimes_{\widetilde{\theta}}X$, so we are done.

Let $g\in G$ be fixed. By definition, $\gamma_{[[g]]}$ is the supremum of $\left\{\widetilde{\theta}_{s}:s\sim [g]\right\}$. From the uniqueness of the standard form of each $s\in\mathbf{S}(G)$, it follows that $s\sim[g]$ if and only if $s\leq[g]$, and thus we conclude that $\gamma_{[[g]]}=\widetilde{\theta}_{[g]}$.\qedhere
\end{proof}

Note that the construction $S\mapsto\mathbf{G}(S)$ is functorial, from the category $\normalfont{\textsc{\textbf{Inv}}_{\mathrm{part}}}$ of inverse semigroups and partial homomorphisms, to the category $\normalfont{\textsc{\textbf{Grp}}}$ of groups and their homomorphisms: Given $\theta\colon S\to T$ a partial homomorphism, the map $s\mapsto [\theta(s)]$ is a partial homomorphism from $S$ to the group $\mathbf{G}(T)$, hence a homomorphism, and thus it factors through a homomorphism $\mathbf{G}(\theta)\colon\mathbf{G}(S)\to\mathbf{G}(T)$.

Similarly, we have a functor $\mathbf{S}$ from $\normalfont{\textsc{\textbf{Grp}}}$ to $\normalfont{\textsc{\textbf{Inv}}}$, the subcategory of  $\normalfont{\textsc{\textbf{Inv}}_{\mathrm{part}}}$ consisting of semigroup homomorphisms. It is not hard to see (following the proof of Proposition \ref{corollarygroupoidofgermsofgisthesameastheoneofsg}) that $\mathbf{G}\circ\mathbf{S}$ is naturally isomorphic to the identity of $\normalfont{\textsc{\textbf{Grp}}}$.

\textbf{Question:} Which semigroups are isomorphic to $\mathbf{S}(G)$ for some group $G$? (Note that, up to isomorphism, we need $G=\mathbf{G}(S)$.) One condition for such a semigroup is that it satisfies the ascending chain condition.

The following interesting corollary shows that for such semigroups one can always extend partial actions to actions:

\begin{corollary}
Let $G$ be a group, $S=\mathbf{S}(G)$ and $\theta$ a partial action of $S$ on a set $X$. Then there exists an action $\alpha$ of $S$ on $X$ such that $\theta_s\leq\alpha_s$ for all $s\in S$ and $S\ltimes_\theta X=S\ltimes_\alpha X$.
\end{corollary}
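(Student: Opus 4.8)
The plan is to exploit the very special structure of $S = \mathbf{S}(G)$ established earlier in the section, namely the existence and uniqueness of the standard form $\gamma = \epsilon_{r_1}\cdots\epsilon_{r_n}[g]$, together with the description of the idempotents as exactly the elements of the form $\epsilon_{r_1}\cdots\epsilon_{r_n}[1]$. The goal is to enlarge the given partial action $\theta$ to a genuine global action $\alpha$ with $\theta_s \leq \alpha_s$ for every $s$, in such a way that the germ relation is unaffected and hence the groupoids of germs coincide on the nose, not merely up to isomorphism.

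\emph{First} I would reduce the construction of $\alpha$ to a single, purely combinatorial choice. Recall $\theta$ is a global action exactly when $X_{s^*} = X_{s^*s}$ for all $s$; the obstruction to $\theta$ being global is that $\theta$ assigns to each $s$ a domain $X_{s^*}$ which may be strictly smaller than the idempotent domain $X_{s^*s}$. The natural candidate is therefore to set $\alpha_s$ to be the largest extension of $\theta_s$ that remains compatible with the semilattice of idempotent domains. Concretely, I would define new domains $Y_s$ by decreeing $Y_e = X_e$ for idempotents $e\in E(S)$ — so the action on the unit space / on idempotent domains is left \emph{unchanged} — and for a general $s$ with standard form, let $Y_{s^*}$ be forced to equal $Y_{s^*s}$ by defining $\alpha_s\colon Y_{s^*s}\to Y_{ss^*}$ as the canonically induced map. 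The content to verify is that this prescription is well defined and associative: because $\mathbf{S}(G)$ is $E$-unitary and, more importantly, because every element factors through its standard form with $[g]$ invertible in $\mathbf{G}(S)\cong G$, the map $\theta_{[g]}$ already has domain $X_{[g]^*[g]}$ up to the idempotent factors, and one can consistently glue these together. The key algebraic input is the relation $[g]\epsilon_r = \epsilon_{gr}[g]$, which tells us precisely how the invertible part $[g]$ conjugates idempotents, and hence how to propagate the extension coherently across products.

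\emph{Second}, having defined $\alpha = (\{Y_s\},\{\alpha_s\})$, I would check the two required conclusions. That $\theta_s \leq \alpha_s$ for all $s$ is immediate from the construction, since $X_{s^*}\subseteq X_{s^*s} = Y_{s^*}$ and $\alpha_s$ restricts to $\theta_s$ on the smaller domain. That $\alpha$ is a global action amounts to checking $Y_{s^*} = Y_{s^*s}$, which holds by fiat, together with the partial-homomorphism axioms now upgraded to a genuine homomorphism; the latter is where the standard-form bookkeeping does the real work, and I expect the verification of axiom (ii), $\alpha_s\alpha_t \leq \alpha_{st}$ sharpened to equality of domains, to be the main technical obstacle. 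Finally, $S\ltimes_\theta X = S\ltimes_\alpha X$ follows because the germ relation \eqref{eq:equivalencegroupoidgerms2} and the basic open sets $[s,U]$ depend only on the idempotent domains $X_e = Y_e$ (via the condition $x\in X_e$ and $se=te$) and on the underlying maps on those domains, which $\theta$ and $\alpha$ share; extending $\theta_s$ to the larger domain $Y_{s^*}$ merely adds germs $[s,x]$ that already equal germs $[se', x]$ for a suitable idempotent $e'$ with smaller domain, so no new equivalence classes and no new open sets are created. I would close by remarking that the ascending chain condition satisfied by $\mathbf{S}(G)$, noted just before the statement, is exactly what guarantees the standard-form decomposition terminates and makes the gluing finite, so the extension is genuinely well defined.
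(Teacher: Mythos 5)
Your construction has a gap that cannot be repaired. The map you call ``the canonically induced map'' $\alpha_s\colon Y_{s^*s}\to Y_{ss^*}$ is never actually defined, and the premise offered for it --- that ``$\theta_{[g]}$ already has domain $X_{[g]^*[g]}$ up to the idempotent factors'' --- is exactly what fails for a non-global partial action: the deficit $X_{[g]^*[g]}\setminus X_{[g]^*}$ can be nonempty already for the generators $[g]$, and $\theta$ carries no information about where those points should be sent (the relation $[g]\epsilon_r=\epsilon_{gr}[g]$ shuffles idempotents but cannot produce values of $\alpha_{[g]}$ on the missing set). Worse, your closing claim that enlarging the domains ``adds no new equivalence classes'' is false: since $\mathbf{S}(G)$ is $E$-unitary, no idempotent lies below a non-idempotent, so by \eqref{eq:equivalencegroupoidgerms2} each new pair $(s,x)$ with $s\notin E(S)$ and $x\in Y_{s^*}\setminus X_{s^*}$ is liable to be a genuinely new arrow (rewriting $[s,x]=[se',x]$ does not identify it with any germ of $\theta$). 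Concretely, let $G=\mathbb{Z}/2=\{1,g\}$, so that $S=\mathbf{S}(G)=\{[1],[g],\epsilon_g\}$ with $[g][g]=\epsilon_g$ and $[g]\epsilon_g=\epsilon_g[g]=[g]$, and let $X=\{a,b\}$ with $X_{[1]}=X_{\epsilon_g}=X$, $X_{[g]}=\{a\}$, all maps being identities. This is a non-global partial action, and $S\ltimes_\theta X$ has three arrows: a copy of $\mathbb{Z}/2$ at $a$ and a unit at $b$. Your prescription forces $Y_{\epsilon_g}=X_{\epsilon_g}=X$, and globality forces $\alpha_{[g]}$ to be an involution of $X$ extending $\operatorname{id}_{\{a\}}$, hence $\alpha_{[g]}=\operatorname{id}_X$. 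Then $[[g],b]$ is a fourth arrow of $S\ltimes_\alpha X$, not equal to the unit $[[1],b]=[\epsilon_g,b]$: Equation \eqref{eq:equivalencegroupoidgerms2} would require an idempotent $e$ with $[g]e$ idempotent, but $[g]e=[g]$ for both idempotents $e$ of $S$. So $S\ltimes_\alpha X\not\cong S\ltimes_\theta X$. In fact the example shows that requiring $\alpha_e\supseteq\theta_e$ on idempotents is incompatible with preserving the groupoid of germs for \emph{any} choice of $\alpha$, so every proof along your lines must fail.

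The paper's proof avoids this precisely by \emph{not} fixing the idempotent domains. It composes three earlier results: pass from $\theta$ to the induced partial action $\gamma$ of $\mathbf{G}(S)$ (Theorem~\ref{theoremfactorpartialactioneunitary}, using that $\mathbf{S}(G)$ is $E$-unitary); transport $\gamma$ to a partial action $\gamma'$ of $G$ along the isomorphism $G\cong\mathbf{G}(\mathbf{S}(G))$ of Proposition~\ref{prop:propriedadesdesg}; and extend $\gamma'$ to a global action $\alpha$ of $\mathbf{S}(G)=S$ by Exel's theorem (Proposition~\ref{thm:actionuniversal}). The identification $S\ltimes_\theta X\cong\mathbf{G}(S)\ltimes_{\gamma}X\cong G\ltimes_{\gamma'}X\cong S\ltimes_\alpha X$ then follows from Proposition~\ref{prop:eunitarygroupoidofgerms} and Corollary~\ref{corollarygroupoidofgermsofgisthesameastheoneofsg}. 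In that construction the domain attached to an idempotent $\epsilon_{r_1}\cdots\epsilon_{r_n}[1]$ is an intersection of the group-indexed domains $\bigcup_{[t]=[r_i]}X_t$, and it typically \emph{shrinks} relative to $X_e$; in the example above the paper's $\alpha$ has $Y_{\epsilon_g}=\{a\}\subsetneq X_{\epsilon_g}$, which is exactly what prevents new germs from appearing. Correspondingly, the extension property is established there only in the form $\theta_{[g]}\leq\alpha_{[g]}$ for the group-like elements $g\in G$ (note the subscripts $[s]$ in the displayed line of the paper's proof); your attempt to secure $\theta_s\leq\alpha_s$ literally at every $s\in S$, idempotents included, is strictly stronger than what that argument yields and, as the counterexample shows, cannot be combined with the groupoid statement.
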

\begin{proof}
Let
\begin{enumerate}
\item $\gamma$ be the partial action of $\mathbf{G}(S)$ induced by $\theta$;
\item $\gamma'$ be the composition of $\gamma$ with the canonical isomorphism $G\to\mathbf{G}(\mathbf{S}(G))$, $g\mapsto [[g]]$;
\item $\alpha$ be the action of $S=\mathbf{S}(G)$ induced by $\gamma'$;
\end{enumerate}
Then for all $s\in S$,
\[\theta_{[s]}\leq\gamma_{[[s]]}=\gamma'_{s}=\alpha_{[s]}\]
and
\[S\ltimes_\theta X\cong\mathbf{G}(S)\ltimes_{\gamma} X\cong G\ltimes_{\gamma'}X\cong \mathbf{S}(G)\ltimes_\alpha X=S\ltimes_\alpha X.\qedhere\]
\end{proof}

\section{Dual partial actions and their crossed products}\label{sectiondualpartialaction}

In \cite[Theorem~3.2]{Beuter2018}, Beuter and Gonçalves showed that any Steinberg algebra of a transformation groupoid given by a partial action of a group, $A_R(G\ltimes X)$, is isomorphic to the crossed product $A_R(X)\rtimes G$. In the same paper (\cite[Theorem~5.2]{Beuter2018}), they proved that every Steinberg algebra associated with an ample Hausdorff groupoid $\G$, is isomorphic to the crossed product $A_R(\G[0])\rtimes \G^a$. Similarly, in \cite[Theorem~2.3.6]{Demeneghi2017}, Demeneghi proved that any Steinberg algebra of a groupoid of germs associated to an ample global action of an inverse semigroup is isomorphic to a crossed product $A_R(X)\rtimes S$, and as a consequence obtained the latter result presented by the previous authors (see \cite[Proposition~2.4.3]{Demeneghi2017}). However, \cite[Theorem 3.2]{Beuter2018} considers partial (non-global) actions of groups, and thus does not follow from \cite[Theorem 2.3.6]{Demeneghi2017}.

The objective of this section is to present a self-contained proof that generalizes both results above. More precisely, let $\theta=\left(\{X_s\}_{s\in S},\{\theta_s\}_{s\in S}\right)$ be a partial action of an inverse semigroup $S$ on a locally compact Hausdorff and zero-dimensional space $X$. Define, for each $s \in S$,
\[D_s=\{f \in A_R(X):\operatorname{supp}f\subseteq X_s \} \cong A_R(X_s),\]
where the rightmost isomorphism, $D_s\to A_R(X_s)$, is given by restriction: $f\mapsto f|_{X_s}$ (the inverse map extends elements of $A_R(X_s)$ as zero on $X\setminus X_s$). We then define
\[\begin{array}{rrcl}
 \alpha_s\colon& D_{s^*} &\to          &D_s\\
                 &f              & \mapsto &f \circ \theta_{s^*}
\end{array}\]
(or, more precisely, $\alpha_s(f)$ is the extensions of $f\circ\theta_{s^*}$ as zero on $X\setminus X_s$).

It is routine to check that  $\alpha=(\{D_s\}_{s\in S}, \{\alpha_s\}_{s \in S})$ is an algebraic partial action of $S$ on $A_R(X)$. In this case, we say that $\alpha$ is the \emph{dual partial action} of $\theta$.

We will now prove that the Steinberg algebra $A_R(S\ltimes_\theta X)$ is isomorphic to the crossed product $A_R(X)\rtimes_\alpha S$. To this, end, we will need a few technical lemmas.

\begin{lemma}\label{lemmapartitionbybasic}
Every compact-open bisection of $S\ltimes X$ is a disjoint union of compact-open elements of $\mathscr{B}_{\mathrm{germ}}$ (see Proposition \ref{propositionbgermisbasis}).
\end{lemma}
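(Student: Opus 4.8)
The plan is to prove Lemma~\ref{lemmapartitionbybasic} by first reducing a general compact-open bisection to a finite union of basic sets, and then disjointifying this union using the inverse-semigroup structure of $\Ga$. Let $B\subseteq S\ltimes X$ be a compact-open bisection. Since the compact-open sets of the form $[s,U]$ with $U\subseteq X_{s^*}$ compact-open form a basis for the topology of $S\ltimes X$ (as observed right after Proposition~\ref{prop:bisectionhomeomorphic}, using that $X$ is zero-dimensional), I can cover $B$ by such basic sets; compactness of $B$ then yields a \emph{finite} cover $B=\bigcup_{i=1}^{n}[s_i,U_i]$, where each $[s_i,U_i]$ may be taken inside $B$ (intersecting with $B$ if necessary and using that the intersection of basic sets is again open, by \eqref{eq:intersectionofbasic}). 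So the content of the lemma is really the passage from a finite cover by basic compact-open sets to a finite \emph{disjoint} union of such sets.

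The key step is the disjointification. I would proceed by induction on $n$. Given $B=\bigcup_{i=1}^{n}[s_i,U_i]$, the natural move is to replace $[s_n,U_n]$ by its ``new part'' $[s_n,U_n]\setminus\bigcup_{i<n}[s_i,U_i]$ and argue that this set is again a basic compact-open bisection (or a finite disjoint union of such). The crucial point is to understand $[s,U]\setminus[t,V]$ for two basic sets. Because everything lives inside a single bisection $B$, for a fixed germ $[s_n,x]\in B$ there is at most one $i<n$ and one germ in $[s_i,U_i]$ with the same source $x$; so on the level of \emph{sources} (which sit inside the Hausdorff, zero-dimensional space $X$ via the identification \eqref{eq:unitspaceofgroupoidofgerms}) the overlap $[s_n,U_n]\cap[s_i,U_i]$ projects to a compact-open subset $W_i\subseteq U_n$ of $X$. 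The set where $[s_n,x]$ coincides with the germ of $s_i$ is exactly $\{x\in U_n\cap U_i: [s_n,x]=[s_i,x]\}$, which by \eqref{eq:equivalencegroupoidgerms2} is $\{x: x\in X_e,\ s_ne=s_ie\text{ for some }e\in E(S)\}$; this is an open subset of $X$, and being open in a zero-dimensional space it is a union of compact-open sets, so its intersection with the compact-open $U_n$ is compact-open. Then $[s_n,U_n]\setminus\bigcup_{i<n}[s_i,U_i]=[s_n,U_n\setminus\bigcup_i W_i]$, a single basic compact-open set, and $U_n\setminus\bigcup_i W_i$ is compact-open because $X$ is Hausdorff (so the $W_i$ are closed in $U_n$) and zero-dimensional.

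With that, I would finish the induction: writing $B=[s_n,U_n']\sqcup\bigcup_{i<n}[s_i,U_i]$ where $U_n'=U_n\setminus\bigcup_{i<n}W_i$ and the second piece is covered by $n-1$ basic sets, the inductive hypothesis disjointifies the tail, and the first piece is disjoint from all of it by construction. This expresses $B$ as a finite disjoint union of compact-open basic sets, each of the form $[s,U]$ with $U$ compact-open, which is precisely the claim.

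The main obstacle I expect is handling the germ relation carefully in the non-Hausdorff groupoid: the sets $[s_i,U_i]$ need not be disjoint even when their defining data differ, and two basic sets can overlap on a complicated locus. The essential simplification that makes the argument go through is the hypothesis that $B$ is a \emph{bisection}, so that for each $x\in X$ there is at most one arrow of $B$ with source $x$; this forces the overlaps to be governed entirely by the equality-of-germs locus $\{x:[s_n,x]=[s_i,x]\}$ rather than by any genuinely two-dimensional phenomenon in $S\ltimes X$, and reduces the whole disjointification to set operations inside the Hausdorff zero-dimensional unit space $X$, where complements of compact-open sets behave well. I would take care to verify that this equality locus is open (via \eqref{eq:equivalencegroupoidgerms2}, noting that $se=te$ and $x\in X_e$ is an open condition since $X_e$ is open and the germ condition is local), as this is exactly the step where Hausdorffness of $X$ and zero-dimensionality are both used.
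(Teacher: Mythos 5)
Your overall strategy --- cover the compact-open bisection $B$ by finitely many basic compact-open sets and then disjointify at the level of sources --- is the same as the paper's, but the justification of your key step contains a genuine gap. You need the overlap sets $W_i=\left\{x\in U_n\cap U_i:[s_n,x]=[s_i,x]\right\}$ to be \emph{compact-open}, and neither of the two arguments you give for this is valid. First, the claim that $W_i$ is open and that ``being open in a zero-dimensional space it is a union of compact-open sets, so its intersection with the compact-open $U_n$ is compact-open'' is a non-sequitur: an open subset of a compact-open set is open but in general not compact (any open, non-closed subset of the Cantor set intersected with the whole space is a counterexample). Second, the claim that ``the $W_i$ are closed in $U_n$ because $X$ is Hausdorff'' does not follow: $W_i$ is the locus where the two continuous sections $x\mapsto[s_n,x]$ and $x\mapsto[s_i,x]$, taking values in $S\ltimes X$, agree, and $S\ltimes X$ need not be Hausdorff (see Example \ref{nonhausdorffaction}). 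Indeed, the failure of such germ-equality loci to be closed is precisely the obstruction to Hausdorffness of groupoids of germs (cf. Proposition \ref{prop:weaksemilattice}), so this is exactly the step at which one cannot wave at Hausdorffness of $X$ alone.

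The gap is easy to close, using an observation you gesture at but never actually exploit: since $[s_n,U_n]$ and $[s_i,U_i]$ are both contained in $B$, for every $x\in U_n\cap U_i$ the germs $[s_n,x]$ and $[s_i,x]$ are two arrows of $B$ with the same source, hence \emph{equal}, because $\so$ is injective on the bisection $B$. Therefore $W_i=U_n\cap U_i$, which is compact-open because $X$ is Hausdorff --- no appeal to openness of equality loci is needed at all. With this correction your induction goes through, and the resulting argument is essentially the paper's proof: the paper performs the same disjointification in one stroke, setting $W_i=U_i\setminus\bigcup_{j<i}U_j$ inside $X$ and using injectivity of $\so$ on $A$ twice, once to see that the sets $[s_i,W_i]$ still cover $A$, and once (via disjointness of the $W_i$) to see that they are pairwise disjoint.
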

\begin{proof}
Let $A$ be a compact-open bisection of $S\ltimes X$. Since $\mathscr{B}_{\mathrm{germ}}$ is a basis for $S\ltimes X$ and $A$ is compact, there exists a finite family $\left\{[s_i,U_i]:1\leq i\leq n\right\}$ in $\mathscr{B}_{\mathrm{germ}}$ such that each $U_i$ is a compact-open subset of $X$ and $A=\bigcup_{i=1}^n [s_i,U_i]$.

Let $W_1=U_1$ and for $i\geq 2$, let $W_i=U_i\setminus\bigcup_{j=1}^{i-1}U_i$. Then the $W_i$ are all compact-open subsets of $X$, and
\[\so\left(\bigcup_{i=1}^n[s_i,W_i]\right)=\bigcup_{i=1}^n W_i=\bigcup_{i=1}^nU_i=\so\left(\bigcup_{i=1}^n[s_i,U_i]\right)=\so(A).\]
Since the source map is injective on $A$, we have $A=\bigcup_{i=1}^n[s_i,W_i]$. Moreover, if $i\neq j$, then
\[\so([s_i,W_i]\cap [s_j,W_j])\subseteq \so[s_i,W_i]\cap\so[s_j,W_j]=W_i\cap W_j=\varnothing,\]
and thus we have a partition of $A$ by compact-open elements of $\mathscr{B}_{\mathrm{germ}}$.\qedhere
\end{proof}

\begin{lemma}\label{lemmasamedomain}
For every pair of finite families $s_1,\ldots,s_n\in S$ and $r_1,\ldots,r_n\in R$, and for every compact-open subset $V\subseteq\bigcap_{i=1}^n X_{s_i^*}$, if $\sum_{i=1}^nr_i 1_{[s_i,V]}=0$ in $A_R(S\ltimes X)$, then $\sum_{i=1}^n\overline{r_i1_{\theta_{s_i}(V)}\delta_{s_i}}=0$ in $A_R(X)\rtimes S$.
\end{lemma}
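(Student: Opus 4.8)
The statement asserts that a linear relation among the germ-indicator functions $1_{[s_i,V]}$ (all sharing the \emph{same} open set $V$ in the base) forces the corresponding relation among the classes $\overline{r_i 1_{\theta_{s_i}(V)}\delta_{s_i}}$ in the crossed product. The plan is to analyze the hypothesis pointwise on $S\ltimes X$ and extract, for each point of the base, which indices $i$ are ``glued together'' by the germ relation, then to translate exactly that gluing into the defining relations $\mathscr{N}$ of the crossed product.

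First I would fix a point $x\in V$ and evaluate the hypothesis $\sum_{i=1}^n r_i 1_{[s_i,V]}=0$ at each germ $[s_j,x]\in S\ltimes X$. Since $1_{[s_i,V]}([s_j,x])=1$ precisely when $[s_i,x]=[s_j,x]$ (using $x\in V\subseteq X_{s_i^*}\cap X_{s_j^*}$), evaluating gives
\[
\sum_{i:\,[s_i,x]=[s_j,x]} r_i = 0 \qquad\text{for every } x\in V \text{ and every } j.
\]
In other words, grouping the indices by the germ they determine at $x$, the sum of the coefficients $r_i$ over each such group vanishes. Next I would make this uniform: by the characterization \eqref{eq:equivalencegroupoidgerms2}, $[s_i,x]=[s_j,x]$ holds iff there is $e\in E(S)$ with $x\in X_e$ and $s_i e=s_j e$; covering $V$ by the (open) sets on which each pairwise coincidence does or does not occur, and refining to a finite partition of $V$ into compact-open pieces, I can arrange that on each piece the partition of $\{1,\dots,n\}$ into germ-classes is constant.

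The main work is then on a single piece $W\subseteq V$ on which the coincidence pattern is constant. Group the indices as $\{1,\dots,n\}=\bigsqcup_k I_k$ so that $i,j\in I_k$ iff $[s_i,x]=[s_j,x]$ for all $x\in W$; by the pointwise computation, $\sum_{i\in I_k} r_i=0$ for each $k$. For a fixed class $I_k$ and indices $i,j\in I_k$, constancy of the coincidence on $W$ yields an idempotent witness: there is $e\in E(S)$ with $W\subseteq X_e$ and $s_i e=s_j e=:u_{ij}\leq s_i,s_j$. Then $1_{\theta_{s_i}(W)}\delta_{s_i}$ and $1_{\theta_{s_j}(W)}\delta_{s_j}$ have the \emph{same} class modulo $\mathscr{N}$, because each is equivalent (via the generator $a\delta_r-a\delta_s$ with $r=u_{ij}\le s_i,s_j$, after identifying the relevant function supported on $\theta_{u_{ij}}(W)$) to the common element $\overline{\,1_{\theta_{u_{ij}}(W)}\delta_{u_{ij}}}$. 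Hence within each class all the $\overline{r_i 1_{\theta_{s_i}(W)}\delta_{s_i}}$ are scalar multiples of one common class, and their sum is $\big(\sum_{i\in I_k} r_i\big)\,\overline{1_{\theta_{u}(W)}\delta_{u}}=0$. Summing over $k$ gives $\sum_{i=1}^n \overline{r_i 1_{\theta_{s_i}(W)}\delta_{s_i}}=0$, and summing over the finitely many pieces $W$ partitioning $V$ gives the desired identity, since $1_{\theta_{s_i}(V)}=\sum_W 1_{\theta_{s_i}(W)}$ and the dual action respects this decomposition.

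\textbf{Expected main obstacle.}
The delicate point is passing from the \emph{pointwise} coincidence of germs to a \emph{uniform} idempotent witness on an open (indeed compact-open) neighborhood: the relation $\eqref{eq:equivalencegroupoidgerms2}$ supplies, for each $x$, some $e$ with $x\in X_e$ and $s_ie=s_je$, but I must choose these witnesses coherently so that a single $e$ works on a whole compact-open piece and so that the resulting partition of the index set is locally constant. I expect to handle this by observing that $\{x: s_ie=s_je \text{ for some } e \text{ with } x\in X_e\}$ is exactly $\so([s_i,X_{s_i^*}]\cap[s_j,X_{s_j^*}])$, which is open by Proposition \ref{prop:bisectionhomeomorphic} and \eqref{eq:intersectionofbasic}; intersecting and complementing these finitely many open sets and refining into compact-open pieces (using zero-dimensionality of $X$) then yields the required finite partition on which the germ-pattern is constant, after which the crossed-product bookkeeping is routine.
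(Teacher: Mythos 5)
Your reduction of the problem to a finite partition of $V$ into compact-open pieces on which the germ-coincidence pattern is constant is exactly where the argument breaks. The coincidence set $C_{ij}=\{x\in V:[s_i,x]=[s_j,x]\}=\bigcup\{V\cap X_{u^*}:u\leq s_i,s_j\}$ is indeed open, as you say, but it is in general \emph{not} closed; its failure to be closed is precisely the failure of $S\ltimes X$ to be Hausdorff, which the lemma must allow (Theorem \ref{theo:steinbergiscrossed} assumes only that $X$ is Hausdorff; cf.\ Example \ref{nonhausdorffaction}). Consequently ``complementing'' the open sets $C_{ij}$ does not produce open sets, zero-dimensionality of $X$ notwithstanding, and the partition your proof requires may simply not exist. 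Concretely, in Example \ref{nonhausdorffaction} (where $S=\mathbb{N}\cup\{\infty,z\}$ and $X=\mathbb{N}\cup\{\infty\}$), take $V=X$, $s_1=z$, $s_2=\infty$: then $C_{12}=\mathbb{N}$ is open and dense but not closed, so every compact-open set containing the point $\infty$ meets both $C_{12}$ and its complement, and no finite compact-open partition of $V$ with constant pattern exists. The hypothesis of the lemma does not exclude such configurations: $1_{[z,X]}+1_{[\infty,X]}-1_{[z,X]}-1_{[\infty,X]}=0$ is a legitimate instance (the statement allows repetitions among the $s_i$), on which your proof cannot even get started. Note also that your classwise bookkeeping silently uses the constancy: the pointwise evaluation only gives $\sum_{i\in I}r_i=0$ for the germ-class $I$ \emph{at the given point}, so without constancy on $W$ you cannot conclude that the sums over your groups $I_k$ vanish.

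By contrast, the paper's proof is an induction on $n$ that uses only the \emph{one-sided} openness you correctly established: around each $x$, the indices whose germs agree with a distinguished term $[s,\cdot]$ at $x$ keep agreeing on a neighbourhood $W_x\subseteq V\cap X_{t_x^*}$; on a finite disjoint refinement those terms are rewritten as $s$-terms, and the leftover sum $\sum_{i\notin F_j}r_i1_{[s_i,W_j]}=0$ (isolated by a support/disjointness argument) is handled by the inductive hypothesis --- no statement about where germs \emph{disagree} is ever needed. Your argument does become correct under the extra assumption that $S\ltimes X$ is Hausdorff: then each $C_{ij}$ is clopen (it is the agreement locus of the two continuous maps $x\mapsto[s_i,x]$ and $x\mapsto[s_j,x]$ into a Hausdorff space, and it is open), so your partition exists and the crossed-product bookkeeping, which is fine, finishes the proof; this covers for instance the case where $S$ is a weak semilattice and the $X_s$ are clopen (Proposition \ref{prop:weaksemilattice}). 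But as written it does not prove Lemma \ref{lemmasamedomain} in the generality in which it is stated and used.
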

\begin{proof}
We proceed by induction on $n$. The case $n=1$ is trivial, for $r_1 1_{[s_1,V]}=0$ implies that either $r_1=0$ or $V=\varnothing$, and in either case we have $\overline{r_11_{\theta_{s_1}(V)}\delta_{s_1}}=0$.

Assume then that the statement is valid for $n$, and that we have a sum with $n+1$ elements, of the form
\[\ntag\label{eqlemmasamedomaininduction}\sum_{i=1}^n r_i 1_{[s_i,V]}+r1_{[s,V]}=0\]

For every $x\in V$, consider the finite subcolletion $F(x)=\left\{i\in\left\{1,\ldots,n\right\}:[s,x]\in [s_i,V]\right\}$. Applying both sides of \eqref{eqlemmasamedomaininduction} on $[s,x]$, we obtain
\[r=-\sum_{i\in F(x)}r_i.\ntag\label{eq:rassumofri}\]
Moreover, by definition of $F(x)$, we have $[s,x]=[s_i,x]$ for all $i\in F(x)$, so there exists $t_x\in S$ such that
\begin{enumerate}
    \item $t_x\leq s,s_i$ ($i\in F(x)$);
    \item $x\in X_{t_x^*}$.
\end{enumerate}
Fix any compact-open neighbourhood $W_x\subseteq V\cap X_{t_x^*}$ of $x$.

The collection $\left\{W_x:x\in V\right\}$ is an open cover of $V$, so it admits a finite subcover: There exist $x_1,\ldots,x_M\in V$ such that $\left\{W_1,\ldots,W_M\right\}$ is a cover of $V$, where $W_j=W_{x_j}$. We may, with the same argument as in Lemma \ref{lemmapartitionbybasic}, assume that the $W_j$ are pairwise disjoint. Denote also $F_j=F(x_j)$ and $t_j=t_{x_j}$.

Given any $j$, we apply Equation \eqref{eq:rassumofri} on $x=x_j$ to obtain $r=-\sum_{i\in F_j}r_i$. Thus
\[r1_{[s,V]}=\sum_{j=1}^M r1_{[s,W_j]}=-\sum_{j=1}^M\sum_{i\in F_j}r_i1_{[s,W_j]}=-\sum_{j=1}^M\sum_{i\in F_j}r_i1_{[s_i,W_j]}\]
where we used $t_j\leq s,s_i$ ($i\in F_j$) and $W_j\subseteq X_{t_j^*}$ in the third equality. Then
\[\ntag\label{eqlemmasamedomain2}
    0=\sum_{i=1}^n r_i1_{[s_i,V]}+r1_{[s,V]}=\sum_{j=1}^M\left(\sum_{i=1}^n r_i1_{[s_i,W_j]}-\sum_{i\in F_j}r_i1_{[s_i,W_j]}\right)=\sum_{j=1}^M\left(\sum_{i\not\in F_j} r_i1_{[s_i,W_j]}\right).
\]
Now note that
\[\supp\left(\sum_{i\not\in F_j}r_i1_{[s_i,W_j]}\right)\subseteq\so^{-1}(W_j),\]
and these sets are pairwise disjoint since the $W_j$ are pairwise disjoint. Equation \eqref{eqlemmasamedomain2} then implies that for each $j$,
\[\sum_{i\not\in F_j}r_i1_{[s_i,W_j]}=0.\ntag\label{eqlemmasamedomain3}\]
Using the induction hypothesis on Equation \eqref{eqlemmasamedomain3} and summing over $j$ we obtain
\begin{align*}
0&=\sum_{j=1}^M\left(\sum_{i\not\in F_j}\overline{r_i1_{\theta_{s_i}(W_j)}\delta_{s_i}}\right)=\sum_{j=1}^M\left(\sum_{i=1}^n\overline{r_i1_{\theta_{s_i}(W_j)}\delta_{s_i}}-\sum_{i\in F_j}\overline{r_i 1_{\theta_{s_i}(W_j)}\delta_{s_i}}\right)\\
&=\sum_{i=1}^n \overline{r_i1_{\theta_{s_i}(V)}\delta_{s_i}}-\sum_{j=1}^M \sum_{i\in F_j} \overline{r_i1_{\theta_{s_i}(W_j)}\delta_{s_i}}.\ntag\label{eqlemmasamedomain4}
\end{align*}
Given $i\in F_j$, we have $t_j\leq s,s_i$ and $W_j\subseteq X_{t_j^*}$, so we have can again apply Equation \eqref{eq:rassumofri}, and the fact that $\left\{W_j:j=1,\ldots,M\right\}$ is a partition of $V$ to obtain
\[\sum_{j=1}^M\sum_{i\in F_j}\overline{r_i 1_{\theta_{s_i}(W_j)}\delta_{s_i}}=\sum_{j=1}^M\sum_{i\in F_j}\overline{r_i1_{\theta_{s}(W_j)}\delta_s}=\sum_{j=1}^M-r\overline{1_{\theta_s(W_j)}\delta_s}=-\overline{r1_{\theta_s(V)}\delta_s},\]
so the induction step follows from \eqref{eqlemmasamedomain4}.\qedhere
\end{proof}

\begin{lemma}\label{lemmafunctionfromsteinbergalgebraiswelldefined}
If $\sum_{i=1}^nr_i 1_{[s_i,U_i]}=0$ in $A_R(S\ltimes X)$, then $\sum_{i=1}^n\overline{r_i 1_{\theta_{s_i}(U_i)}\delta_{s_i}}=0$ in $A_R(X)\rtimes S$.
\end{lemma}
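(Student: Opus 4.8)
The plan is to reduce the general statement to the common-domain case already established in Lemma \ref{lemmasamedomain}, by replacing the individual domains $U_i$ with a common compact-open refinement. Observe first that since each $[s_i,U_i]$ is a compact-open element of $\mathscr{B}_{\mathrm{germ}}$, each $U_i$ is a compact-open subset of $X_{s_i^*}$; as $X$ is locally compact Hausdorff and zero-dimensional, the compact-open subsets of $X$ form a Boolean algebra, so all the set-theoretic combinations used below remain compact-open.

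First I would form the atoms of the Boolean subalgebra generated by $U_1,\dots,U_n$: for each nonempty $I\subseteq\{1,\dots,n\}$ set
\[V_I=\Bigl(\bigcap_{i\in I}U_i\Bigr)\setminus\Bigl(\bigcup_{j\notin I}U_j\Bigr).\]
These are pairwise disjoint compact-open sets with $U_i=\bigsqcup_{I\ni i}V_I$, and crucially $V_I\subseteq\bigcap_{i\in I}X_{s_i^*}$, which is exactly the containment required to invoke Lemma \ref{lemmasamedomain}. Since $x\mapsto[s_i,x]$ is injective on $X_{s_i^*}$, we get $1_{[s_i,U_i]}=\sum_{I\ni i}1_{[s_i,V_I]}$, and substituting into the hypothesis and regrouping by $I$ gives
\[0=\sum_{i=1}^n r_i1_{[s_i,U_i]}=\sum_{I}\Bigl(\sum_{i\in I}r_i1_{[s_i,V_I]}\Bigr).\]

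Next I would decouple the summands indexed by distinct $I$. Each function $g_I:=\sum_{i\in I}r_i1_{[s_i,V_I]}$ is supported in $\so^{-1}(V_I)$, and since the $V_I$ are pairwise disjoint, evaluating the displayed identity at any germ $a$ — whose source lies in at most one $V_I$ — forces $g_I=0$ for every $I$. Because the family $\{[s_i,V_I]\}_{i\in I}$ has the common domain $V_I\subseteq\bigcap_{i\in I}X_{s_i^*}$, Lemma \ref{lemmasamedomain} applies to the relation $g_I=0$ and yields
\[\sum_{i\in I}\overline{r_i1_{\theta_{s_i}(V_I)}\delta_{s_i}}=0\qquad\text{for each }I.\]

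Finally I would reassemble. Summing the last identity over all $I$ and interchanging the order of summation,
\[0=\sum_{i=1}^n\sum_{I\ni i}\overline{r_i1_{\theta_{s_i}(V_I)}\delta_{s_i}}=\sum_{i=1}^n\overline{r_i1_{\theta_{s_i}(U_i)}\delta_{s_i}},\]
where the last equality uses that $\theta_{s_i}$ is a bijection, so $\theta_{s_i}(U_i)=\bigsqcup_{I\ni i}\theta_{s_i}(V_I)$ and hence $\sum_{I\ni i}r_i1_{\theta_{s_i}(V_I)}=r_i1_{\theta_{s_i}(U_i)}$ inside the ideal $D_{s_i}$, together with the additivity of $a\mapsto\overline{a\delta_{s_i}}$. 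This is the desired conclusion. I expect the only genuinely delicate point to be the decoupling step — verifying that the vanishing of $\sum_I g_I$ forces each $g_I=0$ — which rests entirely on the pairwise disjointness of the sources $V_I$; everything else is bookkeeping with the Boolean refinement and an appeal to Lemma \ref{lemmasamedomain}.
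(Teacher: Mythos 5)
Your proof is correct and follows essentially the same route as the paper: partition $\bigcup_i U_i$ into compact-open atoms, use disjointness of the sources $\so^{-1}(V_I)$ to decouple the sum into one vanishing relation per atom, apply Lemma \ref{lemmasamedomain} to each, and reassemble by additivity. Your explicit atoms $V_I=\bigl(\bigcap_{i\in I}U_i\bigr)\setminus\bigl(\bigcup_{j\notin I}U_j\bigr)$ are exactly the partition the paper invokes (and in fact state more carefully what the paper's footnote intends), so there is no substantive difference between the two arguments.
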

\begin{proof}
All the subset $U_1,\ldots,U_n$ are compact-open, and thus so is $U=\bigcup_{i=1}^n U_i$. We may find\footnote{This is a combinatorial fact easily proven by induction, or with the following argument: For any of the $2^n-1$ non-zero sequences $S\in \left\{0,1\right\}^n\setminus\left\{0\right\}$, we set $V_{S}=\bigcap_{i\in S^{-1}(1)}U_i$, and disregard any of these sets which are empty.} a partition $\left\{V_1,\ldots,V_m\right\}$ (where $m\leq 2^n-1$) of $U$ by compact-open subsets of $X$ with the property that each $U_i$ is the union of some of the sets $V_j$. In this case, $U_i\cap V_j\neq\varnothing$ if and only if $V_j\subseteq U_i$. We then have
\[\ntag\label{eqlemmafunctionfromsteinbergalgebraiswelldefined}
    0=\sum_{i=1}^n r_i1_{[s_i,U_i]}=\sum_{i=1}^n\left(\sum_{j:V_j\subseteq U_i}r_i1_{[s_i,V_j]}\right)=\sum_{j=1}^m\left(\sum_{i:V_j\subseteq U_i}r_i1_{[s_i,V_j]}\right).
\]
For each $j$, we have $\supp\left(\sum_{i:V_j\subseteq U_i}r_i1_{[s_i,V_j]}\right)\subseteq\so^{-1}(V_j)$, and these sets are pairwise disjoint. Equation \eqref{eqlemmafunctionfromsteinbergalgebraiswelldefined} implies that for each $j$, $\sum_{i:V_j\subseteq U_i}r_i1_{[s_i,V_j]}=0$. By Lemma \ref{lemmasamedomain}, $\sum_{i:V_j\subseteq U_i}\overline{r_i1_{\theta_{s_i}(V_j)}\delta_{s_i}}=0$ for each $j$. Summing over $j$,
\[0=\sum_{j=1}^m\left(\sum_{i:V_j\subseteq U_i}\overline{r_i1_{\theta_{s_i}(V_j)}\delta_{s_i}}\right)=\sum_{i=1}^n\left(\sum_{j: V_j\subseteq U_i}\overline{r_i 1_{\theta_{s_i}(V_j)}\delta_{s_i}}\right)=\sum_{i=1}^n \overline{r_i1_{\theta_{s_i}(U_i)}\delta_{s_i}}.\qedhere\]
\end{proof}

\begin{theorem}\label{theo:steinbergiscrossed}
Let $\theta=\left(\{X_s\}_{s\in S},\{\theta_s\}_{s\in S}\right)$ be a partial action of an inverse semigroup $S$ on a locally compact Hausdorff and zero-dimensional topological space $X$. Then the Steinberg algebra of $S \ltimes_{\theta} X$ is isomorphic to the crossed product $A_R(X) \rtimes_{\alpha} S$, where $\alpha=(\{D_s\}_{s \in S}, \{\alpha_s\}_{s \in S})$ is the dual partial action of $\theta$.
\end{theorem}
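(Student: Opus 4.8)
The plan is to construct the isomorphism $A_R(X)\rtimes_\alpha S\to A_R(S\ltimes X)$ by invoking the universal property of the crossed product, and then produce an inverse (or prove bijectivity directly) using the universal property of Steinberg algebras, Theorem~\ref{theo:universalpropertyofsteinbergalgebras}. The two technical lemmas just proven are clearly meant to handle the well-definedness of the map \emph{out of} the Steinberg algebra, so I expect the cleanest route is to build $\Phi\colon A_R(S\ltimes X)\to A_R(X)\rtimes_\alpha S$ first.

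First I would define $\Phi$ on generators by $\Phi(1_{[s,U]})=\overline{1_{\theta_s(U)}\delta_s}$ for each compact-open basic bisection $[s,U]\in\mathscr{B}_{\mathrm{germ}}$. By Lemma~\ref{lemmapartitionbybasic} every compact-open bisection is a finite disjoint union of such basic ones, so every element of $A_R(S\ltimes X)$ is an $R$-linear combination $\sum_i r_i 1_{[s_i,U_i]}$; I set $\Phi\bigl(\sum_i r_i 1_{[s_i,U_i]}\bigr)=\sum_i \overline{r_i 1_{\theta_{s_i}(U_i)}\delta_{s_i}}$. The point of Lemma~\ref{lemmafunctionfromsteinbergalgebraiswelldefined} is precisely that this prescription is well-defined, i.e.\ independent of the chosen representation: if the left side is zero then so is the right. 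Hence $\Phi$ is a well-defined $R$-module homomorphism. It remains to check $\Phi$ is multiplicative. Since products distribute over the linear combinations, it suffices to verify $\Phi(1_{[s,U]}\ast 1_{[t,V]})=\Phi(1_{[s,U]})\Phi(1_{[t,V]})$ on basic bisections. On the groupoid side, $1_{[s,U]}\ast 1_{[t,V]}=1_{[s,U][t,V]}$, and one computes $[s,U][t,V]=[st,\theta_t^{-1}(\theta_t(V)\cap U)]$ (germs compose by composing the semigroup elements, with domain the set where both germs are defined); on the crossed-product side, $\overline{1_{\theta_s(U)}\delta_s}\cdot\overline{1_{\theta_t(V)}\delta_t}=\overline{\alpha_s(\alpha_{s^*}(1_{\theta_s(U)})1_{\theta_t(V)})\delta_{st}}$, and unwinding $\alpha_s(f)=f\circ\theta_{s^*}$ shows the two agree. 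Thus $\Phi$ is an $R$-algebra homomorphism.

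Next I would produce the map in the reverse direction using the crossed product's construction. The assignment $s\mapsto$ (the rule $1_{\theta_s(U)}\delta_s\mapsto 1_{[s,U]}$) should extend to an algebra homomorphism $\Psi\colon A_R(X)\rtimes_\alpha S\to A_R(S\ltimes X)$: one first defines $\mathscr{L}(\alpha)\to A_R(S\ltimes X)$ on the module generators $f\delta_s$ with $f\in D_s$ by $f\delta_s\mapsto g$, where $g$ is the function supported on $[s,X_{s^*}]$ given by $g([s,x])=f(\theta_s(x))$; checking this respects the multiplication on $\mathscr{L}$ is a direct computation mirroring the one above. One then verifies it kills the ideal $\mathscr{N}(\alpha)$: if $r\le s$ and $a\in A_r=D_r$, then $a\delta_r$ and $a\delta_s$ map to the same function because, by Remark~\ref{changebygreater}, $[r,x]=[s,x]$ for $x\in X_{r^*}$, so the images of $a\delta_r$ and $a\delta_s$ agree pointwise. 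Hence $\Psi$ descends to the quotient $A_R(X)\rtimes_\alpha S$.

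Finally I would check $\Phi$ and $\Psi$ are mutually inverse, which is immediate on generators: $\Psi\Phi(1_{[s,U]})=\Psi(\overline{1_{\theta_s(U)}\delta_s})=1_{[s,U]}$ and $\Phi\Psi(\overline{1_{\theta_s(U)}\delta_s})=\overline{1_{\theta_s(U)}\delta_s}$, and since these generators span, $\Phi$ and $\Psi$ are inverse isomorphisms. The main obstacle is the well-definedness of $\Phi$, but that is exactly the content of the preceding lemmas, so the real work in the proof is the bookkeeping verifying multiplicativity of both $\Phi$ and $\Psi$ and the vanishing of $\Psi$ on $\mathscr{N}(\alpha)$; the germ-composition identity $[s,U][t,V]=[st,\theta_t^{-1}(\theta_t(V)\cap U)]$ and the unwinding of $\alpha_s=(\,\cdot\,)\circ\theta_{s^*}$ are the two computations to carry out carefully. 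An alternative, perhaps cleaner, packaging would be to obtain $\Phi$ from Theorem~\ref{theo:universalpropertyofsteinbergalgebras} by checking that $U\mapsto\overline{1_{\theta_s(U)}\delta_s}$ (on basic bisections, extended via Lemma~\ref{lemmapartitionbybasic}) is a Boolean representation of $(S\ltimes X)^a$, which repackages well-definedness and additivity into the single verification of conditions (i) and (ii) of that theorem.
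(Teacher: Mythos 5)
Your proposal is correct and takes essentially the same route as the paper: the paper likewise obtains the map out of $A_R(S\ltimes X)$ additively from Lemmas \ref{lemmapartitionbybasic} and \ref{lemmafunctionfromsteinbergalgebraiswelldefined}, builds the reverse map on $\mathscr{L}$, shows it is multiplicative and kills $\mathscr{N}$ (there, besides Remark \ref{changebygreater}, one also needs $\supp a\subseteq X_r$ together with injectivity of $\ra$ on the bisection $[s,X_{s^*}]$ to see that the images of $a\delta_r$ and $a\delta_s$ also agree \emph{off} $[r,X_{r^*}]$ --- a detail your sketch glosses over), and finishes exactly as you do by checking mutual inversion on generators. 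The only genuinely problematic point is your closing alternative: Theorem \ref{theo:universalpropertyofsteinbergalgebras} is stated for ample \emph{Hausdorff} groupoids, and $S\ltimes X$ need not be Hausdorff under the hypotheses of this theorem (Example \ref{nonhausdorffaction}), so that repackaging would not yield the result in full generality.
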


\begin{proof}
We will use the notation introduced in the definition of crossed product, Definition~\ref{def:partialskewinversesemigroupring}.

We will first show the existence of a homomorphism $\phi$ of $\mathscr{L}$ to $A_R(S\ltimes X)$ that vanishes on the ideal $\mathscr{N}$, and thus factors through a homomorphism $\Phi$ of the quotient $\mathscr{L}/\mathscr{N}= A_R(X) \rtimes_{\alpha} S$.

Define $\phi\colon \mathscr{L} \rightarrow A_R(S\ltimes X)$ on a generating element $f_s\delta_s$ of $\mathscr{L}$ by 
\[
  \phi(f_s\delta_s)(a)=\begin{cases}
                          f_s(\ra(a)), & \text{if } a \in [s,X_{s^*}]\\
                                    0, &  \text{otherwise},
                        \end{cases}
\]
and extend $\phi$ linearly to all of $\mathscr{L}$.

We first need check that $\phi$ is well-defined, that is, $\phi(f_s\delta_s)$ is a linear combination of characteristic functions of bisections of $S\ltimes X$.

We first write $f_s=\sum_{i=1}^n r_i1_{\theta_s(U_i)}$ for certain $r_1,\ldots,r_n\in R$ and $U_i\subseteq X_{s^*}$ compact-open. Then
\[\ntag\label{equationniceformulaforphi}
\phi(f_s\delta_s)(a)=\begin{cases}\displaystyle{\sum_{i:a\in[s,X_{s^*}]\cap\ra^{-1}(\theta_s(U_i))}r_i},&\text{if }a\in[s,X_{s^*}]\\0,&\text{otherwise.}\end{cases}\]
As $[s,X_{s^*}]\cap\ra^{-1}(\theta_s(U_i))=[s,U_i]$, equation \eqref{equationniceformulaforphi} simply means that
\[\ntag\label{equsefuldefinitionofPhi}
\phi(f_s\delta_s)=\sum_{i=1}^nr_i1_{[s,U_i]}\qquad\text{whenever }f_s=\sum_{i=1}^n r_i1_{\theta_s(U_i)}.\]

Therefore $\phi$ is a well-defined $R$-module homomorphism from $\mathscr{L}$ to $A_R(S\ltimes X)$. Now, we will show that $\phi$ is multiplicative. By linearity of $\phi$, it is enough to verify that it is multiplicative on the generators. Notice that $\supp(\phi(f_s\delta_s))=[s,\theta_s^{-1}(\supp(f_s))]$ for every generator $f_s\delta_s$.

Let $f_s\delta_s, f_t\delta_t \in \mathscr{L}$ and $a \in S \ltimes X$. There are two possibilities:

\textbf{Case 1}: $a\not\in [s,X_{s^*}][t,X_{t^*}]=[st,\theta_t^{-1}(X_t\cap X_{s^*})]$.

Since $\supp(\phi(f_s\delta_s)\ast\phi(f_t\delta_t))\subseteq[s,X_{s^*}][t,X_{t^*}]$, then
\[\phi(f_s\delta_s)*\phi(f_t\delta_t)(a)=0\]
On the other hand, $(f_s\delta_s)(f_t\delta_t)=\alpha_s(\alpha_{s^*}(f_s)f_t)\delta_{st}$. Since
\[\supp(\alpha_{s^*}(f_s)f_t)=\theta_s^{-1}(\supp f_s)\cap\supp(f_t)\]
then
\[\supp(\alpha_s(\alpha_{s^*}(f_s)f_t))=\supp(f_s)\cap\theta_s(\supp(f_t)\cap X_{s^*})\]
and this set is contained in $X_s\cap\theta_s(X_t \cap X_{s^*})$, which is the domain of the composition $\theta_t^{-1}\circ\theta_s^{-1}=\theta_{t^*}\circ\theta_{s^*}$, and thus in the domain $X_{st}$ of $\theta_{(st)^*}$. Hence
\[\theta_{st}^{-1}(\supp(\alpha_s(\alpha_{s^*}(f_s)f_t)))=\theta_t^{-1}(\theta_s^{-1}(\supp f_s)\cap\supp f_t),\]
and therefore
\[\supp(\phi((f_s\delta_s)(f_t\delta_t)))=[st,\theta_t^{-1}(\theta_s^{-1}(\supp f_s)\cap\supp f_t)]\]
which is contained in $[st,\theta_t^{-1}(X_t\cap X_{s^*})]=[s,X_{s^*}][t,X_{t^*}]$, so
\[\phi((f_s\delta_s)(f_t\delta_t))(a)=0=(\phi(f_s\delta_s)\ast \phi(f_t\delta_t))(a)\]
as we expected.

\textbf{Case 2}: $a\in[s,X_{s^*}][t,X_{t^*}]$.

In this case, we can write $a=[s,x][t,y]$ for unique $x\in X_{s^*}$ and $y\in X_{t^*}$ with $\theta_t(y)=x$. Since $\supp(\phi(f_s\delta_s))\subseteq[s,X_{s^*}]$ then
\begin{align*}
(\phi(f_s\delta_s)\ast\phi(f_t\delta_t))(a)
	&=\sum_{b\in\ra^{-1}(\ra(a))}\phi(f_s\delta_s)(b)\phi(f_t\delta_t)(b^{-1}a)\nonumber\\
	&=\phi(f_s\delta_s)[s,x]\phi(f_t\delta_t)[t,y]\nonumber\\
	&=f_s(\theta_s(x))f_t(\theta_t(y)).
\end{align*} 
On the other hand, $a\in[s,X_{s^*}][t,X_{t^*}]\subseteq[st,X_{(st)^*}]$, so
\begin{align*}
\phi((f_s\delta_s)\ast(f_t\delta_t))(a)&=\phi(\alpha_s(\alpha_{s^*}(f_s)f_t)\delta_{st})(a)\\
&=\alpha_s(\alpha_{s^*}(f_s)f_t)(\ra(a))=\alpha_s(\alpha_{s^*}(f_s)f_t)(\theta_s(x))\\
&=(\alpha_{s^*}(f_s)f_t)(x)=f_s(\theta_s(x))f_t(x)=f_s(\theta_s(x))f_t(\theta_t(y))\\\
&=(\phi(f_s\delta_s)\ast\phi(f_t\delta_t))(a)
\end{align*}
as we desired.

Now let us prove that $\phi$ vanishes on the ideal $\mathscr{N}$. Since $\phi$ is a homomorphism, it is enough to show that $\phi$ is zero in elements of the form $f\delta_s-f\delta_t$, where $s\leq t$ and $f\in D_s$, because these elements generate $\mathscr{N}$. Let $a\in S\ltimes X$. Then
\begin{itemize}\setlength\itemsep{1ex}
\item if $a \in [s, X_{s^*}]$ then $a \in [t,X_{t^*}]$, and 
 \[\phi(f\delta_s-f\delta_t)(a)=f(\ra(a))-f(\ra(a))=0;\]
\item if $a \in [t,X_{t^*}]\setminus [s,X_{s^*}]$ then $\ra(a) \notin X_s$, because $\ra$ is injective on $[t,X_{t^*}]$, and $f(\ra(a))=0$ because $f \in D_s$. Thus 
  \[\phi(f\delta_s-f\delta_t)(a)=0-f(\ra(a))=0;\]  
  \item if $a \notin [t,X_{t^*}]$ then $a \notin [s,X_{s^*}]$ as well, so
 \[\phi(f\delta_s-f\delta_t)(a)=0-0=0.\]
\end{itemize}

Therefore, $\phi$ factors through the quotient $\mathscr{L}/\mathscr{N}=A_R(X)\rtimes_\alpha S$ to a map $\Phi\colon A_R(X)\rtimes_\alpha S\to A_R(S\ltimes X)$ satisfying $\Phi(\overline{f\delta_s})=\phi(f\delta_s)$ whenever $f\in D{s^*}$.

In order to prove that $\Phi$ is bijective, we will show the existence of a map $\Psi\colon A_R(S\ltimes X)\rightarrow A_R(X)\rtimes_\alpha S$ which is in fact the inverse map of $\Phi$.

By Lemma \ref{lemmapartitionbybasic}, $A_R(X)$ is generated, as an $R$-module, by characteristic functions of compact-open basic bisections (those of the form $1_{[s,U]}$, where $U\subseteq X_s$ is compact-open).

By Lemma \ref{lemmapartitionbybasic}, every element $f\in A_R(S\ltimes X)$ may be written as $f=\sum_{i=1}^n r_i 1_{[s_i,U_i]} \in A_R(S\ltimes X)$, where $r_1,\ldots,r_n \in R$ and $[s_1,U_1],\ldots,[s_n,U_n]\in\mathscr{B}_{\mathrm{germ}}$. Define
\[
  \Psi(f)=\Psi\left(\sum_{i=1}^n r_i1_{[s_i, U_i]} \right):=\sum_{i=1}^n \overline{r_i1_{\theta_{s_i}(U_i)}\delta_{s_i}}.
\]
By Lemma \ref{lemmafunctionfromsteinbergalgebraiswelldefined}, $\Psi$ is well-defined, and clearly additive. To prove that $\Psi$ is a left inverse to $\Phi$, let $\overline{f_s\delta_s}\in A_R(X)\rtimes S$ (where $f_s\in D_s$). We already know (Equation \eqref{equsefuldefinitionofPhi}) that, by writing $f_s=\sum_{i=1}^n r_i1_{\theta_s(U_i)}$, we have
\[\Psi(\Phi(\overline{f_s\delta_s}))=\Psi\left(\sum_{i=1}^nr_i1_{[s,U_i]}\right)=\sum_{i=1}^n\overline{r_i1_{\theta_s(U_i)}\delta_s}=\overline{f_s\delta_s}.\]
Since the elements $\overline{f_s\delta_s}$ generate $A_R(X)\rtimes S$ as an additive group, we conclude that $\Psi\circ\Phi$ is the identity of $A_R(X)\rtimes S$. Similarly, the elements of the form $r1_{[s,U]}$ (where $s\in S$ and $U\subseteq X_{s^*}$ is compact-open) generate $A_R(S\ltimes X)$ as an additive group, by Lemma \ref{lemmapartitionbybasic}, and Equation \eqref{equsefuldefinitionofPhi} again implies
\[\Phi(\Psi(r1_{[s,U]}))=\Phi(\overline{r1_{\theta_s(U)}\delta_s})=r1_{[s,U]},\]
therefore $\Phi\circ\Psi$ is the identify of $A_R(S\ltimes X)$.\qedhere
\end{proof}

\begin{remark}\label{remarkdiagonalsubalgebras}
Note that the diagonal subalgebra $D_R(S\ltimes X)\cong A_R((S\ltimes X)^{(0)})$ of $A_R(S\ltimes X)$ coincides with $\operatorname{span}\left\{1_{[e,U]}:e\in E(S),\ U\subseteq X_e\right\}$, and so it is mapped, under the isomorphism of the previous theorem, to the diagonal subalgebra $\operatorname{span}\left\{\overline{1_U\delta_e}:e\in E(S),\ U\subseteq X_e\right\}$ of the crossed product $A_R(X)\rtimes S$.
\end{remark}

\begin{corollary}\label{cor:isosteinbergbisection}
Let $\G$ be an ample groupoid. Then the Steinberg algebra $A_R(\G)$ is isomorphic to the crossed product $A_R(\Go)\rtimes_{\mu}\G^{op}$ and $A_R(\Go)\rtimes_{\eta} \G^{a}$, where $\mu$ and $\eta$ are the dual actions of the canonical actions of $\G^{op}$ and $\G^a$ on $\Go$.
\end{corollary}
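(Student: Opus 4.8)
The plan is to obtain both isomorphisms at once by applying Theorem~\ref{theo:steinbergiscrossed} to the canonical actions of $\Gop$ and $\Ga$ on $\Go$, and then transporting the resulting groupoid of germs back to $\G$ itself via Example~\ref{ex:amplegermesopenbisections}. Since $\G$ is ample, its unit space $\Go$ is locally compact Hausdorff and zero-dimensional, so $\Go$ satisfies the standing hypothesis of the theorem. Let $\tau$ denote the canonical action of $\Gop$ on $\Go$ (Example~\ref{examplecanonicalaction}); this is a (global) topological action, and its dual partial action in the sense of Section~\ref{sectiondualpartialaction} is precisely $\mu$. Likewise let $\gamma$ be the canonical action of $\Ga$ on $\Go$, with dual $\eta$. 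Both $\Gop \ltimes \Go$ and $\Ga \ltimes \Go$ are then ample groupoids, by the corollary to Proposition~\ref{prop:bisectionhomeomorphic}.

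First I would identify the groupoids of germs with $\G$ itself. By Example~\ref{ex:amplegermesopenbisections}, the canonical map
\[
\Phi\colon \Gop \ltimes \Go \to \G, \qquad [U,x]\mapsto \so|_U^{-1}(x),
\]
is a surjective homomorphism of topological groupoids, and it is an isomorphism precisely when $\Gop$ forms a basis for the topology of $\G$. This holds because the open bisections of an étale groupoid always form a basis for its topology, as recorded in the preliminaries. The analogous statement for $\Ga$ is asserted directly in Example~\ref{ex:amplegermesopenbisections}: since $\G$ is ample the compact-open bisections form a basis, so $\Ga \ltimes \Go \cong \G$ as well.

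Applying Theorem~\ref{theo:steinbergiscrossed} to $\tau$ and to $\gamma$ then yields $R$-algebra isomorphisms
\[
A_R(\Gop \ltimes \Go) \cong A_R(\Go)\rtimes_\mu \Gop
\qquad\text{and}\qquad
A_R(\Ga \ltimes \Go) \cong A_R(\Go)\rtimes_\eta \Ga.
\]
Composing with the groupoid isomorphisms of the previous paragraph, which induce $R$-algebra isomorphisms of the corresponding Steinberg algebras, gives $A_R(\G)\cong A_R(\Go)\rtimes_\mu \Gop$ and $A_R(\G)\cong A_R(\Go)\rtimes_\eta \Ga$, as claimed.

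I do not expect a serious obstacle here, since the corollary is a direct specialization of the main theorem. The only points requiring care are verifying the hypotheses of Theorem~\ref{theo:steinbergiscrossed} (namely that $\Go$ is locally compact Hausdorff and zero-dimensional, which follows from ampleness of $\G$) and confirming that $\Gop$ and $\Ga$ each form a basis for the topology of $\G$, so that the maps from Example~\ref{ex:amplegermesopenbisections} are genuine isomorphisms rather than merely surjections; both facts are recorded in the preliminaries.
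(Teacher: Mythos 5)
Your proof is correct and follows essentially the same route as the paper's own argument: identify $\G$ with the groupoids of germs $\Gop\ltimes\Go$ and $\Ga\ltimes\Go$ via Example~\ref{ex:amplegermesopenbisections}, then apply Theorem~\ref{theo:steinbergiscrossed} to the canonical actions. The extra care you take in checking the hypotheses (ampleness giving a locally compact Hausdorff zero-dimensional unit space, and the bisections forming a basis so that the maps of Example~\ref{ex:amplegermesopenbisections} are isomorphisms) is exactly what the paper leaves implicit.
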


\begin{proof}
By Example~\ref{ex:amplegermesopenbisections}, $\G$ is isomorphic to the groupoids of germs $\G^{op}\ltimes\Go$ and $\G^{a}\ltimes\Go$, given by the respective canonical actions of $\Gop$ and $\Ga$ on $\Go$. The result follows from Theorem~\ref{theo:steinbergiscrossed}.
\end{proof}

It is interesting to note that the crossed products $A_R(\G[0])\rtimes \G^{op}$ and $A_R(\G[0])\rtimes \G^{a}$ arise from \emph{global actions}, and not simply partial action as in the previous theorem. Further, using Theorem~\ref{theo:steinbergiscrossed} and Corollary~\ref{cor:isosteinbergbisection} to a groupoid of germs of a partial action, we obtain
\[A_R(X)\rtimes_{\alpha}S\cong A_R(S\ltimes X)\cong A_R(X)\rtimes_{\eta}(S\ltimes X)^{a},\]
where $\eta$ is dual to the canonical action of $(S\ltimes X)^a$ on $(S\ltimes X)^{(0)}\cong X$.

\section{Recovering a topological partial action from a crossed product}

In the previous section we realized the Steinberg algebra of an ample groupoid of germs as a crossed product. In this section we will be interested in the opposite direction, that is, to determine which crossed products of the form $A_R(X)\rtimes_{\alpha}S$ can be realized as Steinberg algebras $A_R(S\ltimes_\theta S)$ in such a way that $\alpha$ is induced by $\theta$. The first problem we deal with is to find conditions which allow us to obtain a topological partial action $\theta$ of $S$ on $X$ from an algebraic action $\alpha$.

It is well-know that given a partial action $\theta=(\{X_g\}_{g\in G},\{\theta_{g}\}_{g\in G})$ of a group $G$ on a locally compact Hausdorff topological space $X$, there is an associated partial action $\alpha=(\{D_g\}_{g\in G},\{\alpha_{g}\}_{g\in G})$ of $G$ on the C*-algebra $C_0(X)$, and conversely, every partial action of a group $G$ on $C_0(X)$ comes from a partial action of $G$ on $X$. In \cite{Beuter2016}, a similar relation is shown at the purely algebraic level. More precisely, let $\mathbb{K}$ be a field and denote by $\mathcal{F}_0(X)$  the algebra of all functions $X \rightarrow \mathbb{K}$ with finite support, endowed with the pointwise operations. Then there is a one-to-one correspondence between the partial actions of a group $G$ on $X$ and the partial actions of $G$ on $\mathcal{F}_0(X)$. 

In this section, we will show that the same occurs with partial actions of inverse semigroups. Throughout this section, we will consider that:
\begin{itemize}
\item $X$ and $Y$ are locally compact Hausdorff and zero-dimensional topological spaces;
\item $S$ is an inverse semigroup;
\item $R$ is a commutative unital ring; and
\item $A_R(X)$ is the Steinberg algebra of $X$, i.e., the $R$-algebra formed by all locally constant, compactly supported, $R$-valued functions on $X$, with the pointwise operations.
\end{itemize} 

In order to find a biunivocal correspondence between partial actions $\theta=(\{X_s\}_{s\in S},\{\theta_{s}\}_{s \in X_s})$ of $S$ on $X$ and the dual partial actions  $\alpha=(\{D_s\}_{s\in S}, \{\alpha_s\}_{s \in S})$ of $S$ in $A_R(X)$, we will need a few preliminary results.

Recall that a ring $A$ is said to have \emph{local units} if, for every finite subset $F$ of $A$, there exists an idempotent $e \in A$ such that $r=er=re$ for each $r\in F$. Such an element $e$ will be referred to as a local unit for the set $F$. A commutative unital ring $R$ is said to be \emph{indecomposable} if its only idempotents are $0$ and $1$ (the trivial ones).

We will prove that, when $R$ is indecomposable, there is a bijection between ideals with local units of $A_R(X)$ and open subsets of $X$. On one hand, if $U$ is an open subset of $X$, then 
\begin{equation}\label{eq:idealofLcX}
  \mathbf{I}(U):=\left\lbrace f\in A_R(X) : \operatorname{supp}(f)\subseteq U\right\rbrace \cong A_R(U)
\end{equation} 
is an ideal of $A_R(X)$ with local units. Indeed, if $f_1, \ldots, f_n \in \mathbf{I}(U)$ then  the characteristic function $1_K$, where $K=\bigcup_{i=1}^n \supp(f_i)$, is a local unit for these functions. Moreover, $U$ is compact if and only if $\mathbf{I}(U)$ has an identity, namely, the characteristic function $1_U$ is its identity.

\begin{proposition}\label{prop:ideals}
Suppose that $R$ is an indecomposable commutative unital ring. Then the map $U\mapsto \mathbf{I}(U)$ is an order isomorphism between the lattices of open subsets of $X$ and of ideals with local units of $A_R(X)$. The inverse map is given by $I\mapsto \mathbf{U}(I):=\bigcup_{f\in I}\supp f$.
\end{proposition}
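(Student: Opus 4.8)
The plan is to verify that $\mathbf{I}$ and $\mathbf{U}$ are mutually inverse, order-preserving maps between the two lattices. Order-preservation is immediate in both directions: $U\subseteq V$ gives $\mathbf{I}(U)\subseteq\mathbf{I}(V)$ directly from the support condition, while $I\subseteq J$ gives $\mathbf{U}(I)\subseteq\mathbf{U}(J)$ straight from the definition of $\mathbf{U}$ as a union of supports. Since every $f\in A_R(X)$ is locally constant, each $\supp f$ is open, so $\mathbf{U}(I)$ is genuinely an open subset of $X$; thus both maps land in the correct lattices once the two compositions are shown to be identities.

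The first composition, $\mathbf{U}(\mathbf{I}(U))=U$ for $U$ open, is the easy direction. The inclusion $\subseteq$ is immediate from the definitions. For $\supseteq$, given $x\in U$ I would use that $X$ is locally compact, Hausdorff and zero-dimensional to choose a compact-open neighbourhood $K$ of $x$ with $K\subseteq U$; then $1_K\in\mathbf{I}(U)$ and $x\in\supp 1_K=K$, so $x\in\mathbf{U}(\mathbf{I}(U))$. The same availability of compact-open sets is what makes $\mathbf{I}$ injective, although injectivity will in any case follow formally once the compositions are identities.

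The heart of the proof is the second composition, $\mathbf{I}(\mathbf{U}(I))=I$ for an ideal $I$ with local units. The inclusion $I\subseteq\mathbf{I}(\mathbf{U}(I))$ holds for any ideal, directly from the definition of $\mathbf{U}$. The reverse inclusion is the main obstacle, and it is exactly here that indecomposability of $R$ enters. First I would record the key algebraic fact: because $R$ is indecomposable, its only idempotents are $0$ and $1$, and hence an element $e\in A_R(X)$ is idempotent precisely when $e=1_V$ for some compact-open $V\subseteq X$ (pointwise, $e(x)^2=e(x)$ forces $e(x)\in\{0,1\}$, and $\{e=1\}$ is compact-open). Now take $g\in\mathbf{I}(\mathbf{U}(I))$; note that $\supp g$ is compact-open and contained in $\mathbf{U}(I)=\bigcup_{f\in I}\supp f$. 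By compactness of $\supp g$ there are $f_1,\dots,f_n\in I$ with $\supp g\subseteq\bigcup_{i=1}^n\supp f_i$. Applying the local-unit property to $\{f_1,\dots,f_n\}$ yields an idempotent $e\in I$ with $ef_i=f_i$ for each $i$; by the fact above $e=1_V$ for a compact-open $V$, and $1_V f_i=f_i$ forces $\supp f_i\subseteq V$. Hence $\supp g\subseteq V$, so $1_V g=g$, that is, $g=eg$. Since $e\in I$ and $I$ is an ideal, $g=eg\in I$, which completes the inclusion.

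Finally, having established $\mathbf{U}\circ\mathbf{I}=\id$ on open sets and $\mathbf{I}\circ\mathbf{U}=\id$ on ideals with local units, I conclude that $\mathbf{I}$ is a bijection with inverse $\mathbf{U}$; combined with the order-preservation of both maps, this shows $\mathbf{I}$ is an order isomorphism. I expect the only subtle points to be the verification that the support of an element of $A_R(X)$ is compact-open (so that the compactness reduction to finitely many $f_i$ is legitimate) and the correct use of indecomposability to identify idempotents with characteristic functions; every remaining step is formal.
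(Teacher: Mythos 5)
Your proof is correct and follows essentially the same route as the paper's: both directions of the composition are handled identically, with the key step being that a local unit $e\in I$ for $f_1,\dots,f_n$ must, by indecomposability of $R$, be a characteristic function $1_V$ of a compact-open set containing $\bigcup_i\supp f_i$, whence $g=eg\in I$. The only additions in your write-up (explicit order-preservation and openness of $\mathbf{U}(I)$) are routine verifications the paper leaves implicit.
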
 
 
\begin{proof}
Let $I\subseteq A_R(X)$ be an ideal with local units. Then the inclusion $I\subseteq\mathbf{I}(\mathbf{U}(I))$ follows from the definitions of $\mathbf{I}$ and $\mathbf{U}$. For the converse, suppose $f\in A_R(X)$ and $\operatorname{supp}f\subseteq\mathbf{U}(I)=\bigcup_{g\in I}\operatorname{supp}(g)$. By compactness of $\operatorname{supp}f$, there are $f_1,\ldots,f_n\in I$ with $\operatorname{supp}(f)\subseteq\bigcup_{i=1}^n\operatorname{supp}(f_i)$.

Let $e\in I$ be a local unit for $f_1,\ldots,f_n$. Since $e$ is idempotent and $R$ is indecomposable then $e=1_C$ for some clopen $C\subseteq X$, and since $e$ is a local unit for $f_1,\ldots,f_n$ this means that $\bigcup_{i=1}^n\operatorname{supp}f_i\subseteq C$. Therefore $\operatorname{supp}f\subseteq C$, and $f=f1_C=fe\in I$. This proves that $\mathbf{I}(\mathbf{U}(I))=I$.

For the converse, let $U\subseteq X$ be open, so that the inclusion $\mathbf{U}(\mathbf{I}(U))\subseteq U$ is also immediate from the definitions of $\mathbf{I}$ and $\mathbf{U}$. If $x\in U$, simply take any compact-open subset $V$ with $x\in V\subseteq U$, so $1_V\in \mathbf{I}(U)$ and
\[x\in\operatorname{supp}1_V\subseteq\mathbf{U}(\mathbf{I}(U)),\]
which proves that $U=\mathbf{U}(\mathbf{I}(U))$.\qedhere
\end{proof}

\begin{corollary}
Suppose that $R$ is an indecomposable commutative unital ring. Then there is an order-isomorphism between unital ideals of $A_R(X)$ and compact-open subsets of $X$. 
\end{corollary}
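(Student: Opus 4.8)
The plan is to obtain this corollary by restricting the order isomorphism $U\mapsto\mathbf{I}(U)$ of Proposition~\ref{prop:ideals} to suitable subclasses on each side: unital ideals of $A_R(X)$ on the one hand, and compact-open subsets of $X$ on the other. The starting observation is that every unital ideal is in particular an ideal with local units. Indeed, if $I$ is an ideal possessing an identity $e$, then $x=ex=xe$ for all $x\in I$, so $e$ serves as a local unit for any finite subset of $I$. Hence unital ideals form a subclass of the ideals with local units and therefore lie in the domain of the inverse map $I\mapsto\mathbf{U}(I)$ furnished by Proposition~\ref{prop:ideals}.

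First I would check the forward direction: if $U\subseteq X$ is compact-open, then $1_U\in A_R(X)$, and as already noted just before Proposition~\ref{prop:ideals}, $1_U$ is an identity for $\mathbf{I}(U)$. Thus the order isomorphism of Proposition~\ref{prop:ideals} carries compact-open sets to unital ideals.

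Next I would verify the converse. Suppose $I$ is a unital ideal with identity $e$. Then $e$ is idempotent, and since $R$ is indecomposable the argument in the proof of Proposition~\ref{prop:ideals} shows that $e=1_C$ for some clopen $C\subseteq X$. Writing $U=\mathbf{U}(I)$, we have $I=\mathbf{I}(U)$ and $C=\supp(e)\subseteq U$. Conversely, for any $x\in U$ pick a compact-open $V$ with $x\in V\subseteq U$; then $1_V\in I$, so $1_V=1_C 1_V=1_{C\cap V}$, forcing $V\subseteq C$, and hence $U\subseteq C$. Therefore $U=C$ is clopen, and since it is the support of $e\in A_R(X)$ it is compact. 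Thus $\mathbf{U}(I)$ is compact-open whenever $I$ is unital.

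Combining these two facts, the restriction of the bijection of Proposition~\ref{prop:ideals} to compact-open sets and unital ideals is a well-defined bijection, and being the restriction of an order isomorphism it is itself an order isomorphism, which is exactly the assertion. I expect no genuine obstacle here; the only point requiring some care is the converse direction, where one uses indecomposability of $R$ to realize the identity of a unital ideal as the characteristic function of a clopen set, and then deduces compactness of the corresponding open subset from the fact that it equals the (compact) support of that identity.
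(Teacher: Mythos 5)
Your proof is correct and follows essentially the paper's intended route: the corollary is the restriction of the order isomorphism of Proposition~\ref{prop:ideals} to the two subclasses, using the equivalence (asserted in the paragraph preceding that proposition) that $U$ is compact if and only if $\mathbf{I}(U)$ is unital. Your treatment of the converse direction --- realizing the identity of a unital ideal as $1_C$ for a compact clopen $C$ via indecomposability of $R$, and then showing $\mathbf{U}(I)=C$ --- merely fills in details that the paper leaves implicit, so there is no substantive difference in approach.
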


The following is a particular case of \cite[Theorem 3.42]{cordeiro2018}. We sketch its proof for the sake of completeness.

\begin{proposition}\label{prop:isomorp}
Let $R$ be an indecomposable commutative unital ring. Then $\Gamma\colon A_R(Y) \rightarrow A_R(X)$ is an $R$-algebra isomorphism if and only if there exists a (necessarily unique) homeomorphism $\varphi\colon X \rightarrow Y$ such that $\Gamma(f)=f\circ\varphi$ for all $f\in A_R(X)$.
\end{proposition}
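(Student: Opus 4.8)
The plan is to prove the nontrivial ``only if'' direction by recovering the space from the lattice of ideals of its Steinberg algebra, and then reading off the homeomorphism directly; the ``if'' direction and uniqueness are routine. Indeed, if $\varphi\colon X\to Y$ is a homeomorphism, then $f\mapsto f\circ\varphi$ sends a locally constant, compactly supported function to one of the same type (since $\operatorname{supp}(f\circ\varphi)=\varphi^{-1}(\operatorname{supp}f)$ is compact), is $R$-linear and multiplicative for the pointwise operations, and has inverse $g\mapsto g\circ\varphi^{-1}$; hence it is an $R$-algebra isomorphism. Uniqueness also comes for free: if $f\circ\varphi=f\circ\varphi'$ for all $f\in A_R(Y)$ and $\varphi(x)\neq\varphi'(x)$ for some $x$, then, $Y$ being Hausdorff and zero-dimensional, a compact-open $V$ separates $\varphi(x)$ from $\varphi'(x)$ and $1_V$ detects the difference, a contradiction.

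Now suppose $\Gamma\colon A_R(Y)\to A_R(X)$ is an $R$-algebra isomorphism. Being a ring isomorphism, $\Gamma$ carries ideals to ideals preserving inclusion, and it preserves the property of having local units (a local unit $e$ for $\Gamma^{-1}(F)$ yields the local unit $\Gamma(e)$ for a finite set $F$). Thus $\Gamma$ restricts to an order isomorphism between the posets of ideals with local units of $A_R(Y)$ and of $A_R(X)$. Composing with the order isomorphisms of Proposition \ref{prop:ideals} for $X$ and $Y$, I obtain an order isomorphism
\[\Theta\colon \{\text{open subsets of }Y\}\to\{\text{open subsets of }X\},\qquad \Theta(U)=\mathbf{U}\bigl(\Gamma(\mathbf{I}(U))\bigr),\]
between the lattices of open sets. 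The main step -- and the main obstacle -- is to show that this lattice isomorphism is induced by a (unique) homeomorphism $\varphi\colon X\to Y$, i.e.\ that $\Theta(U)=\varphi^{-1}(U)$. Since an order isomorphism automatically preserves all existing joins and meets, $\Theta$ is an isomorphism of frames; because $X$ and $Y$ are Hausdorff, hence sober, their frames of open sets determine them up to homeomorphism, and this produces the desired $\varphi$. Alternatively, one recovers points concretely: for $x\in X$ the family $\{V\text{ compact-open in }Y : x\in\Theta(V)\}$ is an ultrafilter of compact-open sets, which converges to a unique point $\varphi(x)\in Y$ precisely because $X$ and $Y$ are locally compact, Hausdorff and zero-dimensional.

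It remains to identify $\Gamma$ with composition by $\varphi$. First I note, as in the proof of Proposition \ref{prop:ideals}, that since $R$ is indecomposable every idempotent of $A_R(X)$ is of the form $1_C$ for a compact-open $C\subseteq X$. For a compact-open $V\subseteq Y$, the ideal $\mathbf{I}(V)$ is unital with identity $1_V$, so $\Gamma(\mathbf{I}(V))$ is unital with identity $\Gamma(1_V)$; this identity is idempotent, hence equal to $1_C$ where $C=\mathbf{U}\bigl(\Gamma(\mathbf{I}(V))\bigr)=\Theta(V)=\varphi^{-1}(V)$. Therefore $\Gamma(1_V)=1_{\varphi^{-1}(V)}=1_V\circ\varphi$. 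Since the functions $1_V$ with $V$ compact-open generate $A_R(Y)$ as an $R$-module and both $\Gamma$ and $f\mapsto f\circ\varphi$ are $R$-linear, I conclude that $\Gamma(f)=f\circ\varphi$ for all $f\in A_R(Y)$, completing the proof. I expect the frame/sobriety (equivalently, point-recovery) step to be the only genuinely nontrivial point; the rest is bookkeeping with idempotents and local units.
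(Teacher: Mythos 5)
Your proof is correct, but it reaches the homeomorphism by a genuinely different mechanism than the paper's. Both arguments rest on Proposition \ref{prop:ideals} and on indecomposability of $R$ (to identify idempotents of $A_R(X)$ with characteristic functions $1_C$ of compact-open sets), but the paper recovers \emph{points} rather than the whole frame: it observes that $x\mapsto \mathbf{I}(X\setminus\{x\})$ is a bijection from $X$ onto the set $\Omega(A_R(X))$ of \emph{maximal} ideals with local units, topologizes $\Omega(A_R(X))$ by the sets $[f]=\{I:f\notin I\}$, and uses the fact that $\Gamma$ preserves maximal ideals with local units to get a homeomorphism $\varphi\colon X\cong\Omega(A_R(X))\to\Omega(A_R(Y))\cong Y$ satisfying $\supp(f)=\varphi(\supp(\Gamma(f)))$. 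You instead transport the \emph{entire} lattice of ideals with local units to get an order (hence frame) isomorphism $\Theta\colon\mathcal{O}(Y)\to\mathcal{O}(X)$ and then invoke sobriety of Hausdorff spaces (or, in your alternative, Stone-type point recovery from prime filters of compact-open sets) to produce $\varphi$ with $\Theta(U)=\varphi^{-1}(U)$. Both are sound: your route outsources the crucial reconstruction step to a standard but nontrivial external theorem (a sober space is determined by its frame of opens), while the paper's spectrum argument stays at the elementary level of Proposition \ref{prop:ideals}, at the cost of checking by hand that the spectrum map is a homeomorphism; your concrete ultrafilter variant, if written out, would essentially reprove that fact in the zero-dimensional setting. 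The concluding identification also differs, and yours is arguably cleaner: the paper argues pointwise (showing $\Gamma(1_{\varphi(U)})=1_U$ and then $\Gamma(f)(x)=f(\varphi(x))$ via a local computation with $f-f(\varphi(x))e$), whereas you show $\Gamma(1_V)=1_{\varphi^{-1}(V)}$ for every compact-open $V\subseteq Y$ and finish by $R$-linearity, since these characteristic functions span $A_R(Y)$. (One cosmetic remark: the statement's ``for all $f\in A_R(X)$'' should read ``for all $f\in A_R(Y)$'', as your proof implicitly corrects.)
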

\begin{proof}
Given a commutative ring $A$, denote by $\Omega(A)$ the set of all maximal ideals with local units of $A$.

By Proposition \ref{prop:ideals}, the map $X\ni x\mapsto \mathbf{I}(X\setminus\left\{x\right\})\in \Omega(A_R(X))$ is a bijection, and it is also a homeomorphism when we endow $\Omega(A_R(X))$ with the topology generated by all sets of the form\[[f]=\left\{I\in\Omega(A_R(X)):f\not\in I\right\}\qquad (f\in A_R(X)).\]

Repeating the same argument with $Y$ in place of $X$, and using the fact that $\Gamma$ preserves maximal ideals with local units, we obtain a homeomorphism $\varphi\colon X\cong\Omega(A_R(X))\to\Omega(A_R(Y))\cong Y$ such that $\supp(f)=\varphi(\supp(\Gamma(f))$ for all $f\in A_R(Y)$.

Let $x\in X$ be fixed, and choose any compact-open neighbourhood $U$ of $x$ and let $e=1_{\varphi(U)}\in A_R(Y)$. Then $\Gamma(e)^2=\Gamma(e^2)=\Gamma(e)$, so $\Gamma(e)$ only takes values $0$ and $1$ since $R$ is indecomposable. Moreover, $\varphi(U)=\supp(e)=\varphi(\supp\Gamma(e))$, so $\supp(\Gamma(e))=U$, and therefore $\Gamma(e)=1_U$.

Now given $f\in A_R(Y)$, fix $r=f(\varphi(x))$. We have $f(\varphi(x))=re(\varphi(x))$, thus
\[\varphi(x)\not\in\supp(f-re)=\varphi(\supp(\Gamma(f)-r\Gamma(e))).\]
Therefore $x\not\in\supp(\Gamma(f)-r\Gamma(e))$, so
\[\Gamma(f)(x)=r\Gamma(e)(x)=r=f(\varphi(x)).\qedhere\]
\end{proof}

In particular, from the proposition above, we may conclude that there is bijective anti-homomorphism between the group of all homeomorphism from $X$ to $Y$, and the group of all $R$-algebra isomorphisms from $A_R(Y)$ to $A_R(X)$, given by
\[T\colon \operatorname{Homeo}(X,Y)\to\operatorname{Iso}(A_R(Y),A_R(X)),\qquad \varphi\mapsto T_\varphi,\]
where $T_\varphi(f)=f\circ\varphi$ (compare this with \cite[Corollary 3.43]{cordeiro2018}).

\begin{proposition}\label{prop:actionalphatheta}
Suppose that $R$ is an indecomposable commutative unital ring. If $\alpha=(\{D_s\}_{s \in G}, \{\alpha_s\}_{s \in S})$ is a partial action of $S$ on the algebra $A_R(X)$ for which each ideal $D_s$ has local units, then there is a partial action $\theta=(\{X_s\}_{s \in S}, \{\theta_s\}_{s \in S})$ of $S$ on $X$ such that $\alpha$ is the dual partial action coming from $\theta$ (see Section~\ref{sectiondualpartialaction}).
\end{proposition}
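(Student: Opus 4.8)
The plan is to build a dictionary, through Propositions~\ref{prop:ideals} and~\ref{prop:isomorp}, between the pseudogroup of homeomorphisms between open subsets of $X$ and the inverse semigroup of $R$-algebra isomorphisms between ideals with local units of $A_R(X)$, and then to transport the partial action $\alpha$ across this dictionary to produce $\theta$.

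First I would set, for each $s\in S$, $X_s:=\mathbf{U}(D_s)=\bigcup_{f\in D_s}\supp f$. This is open, and Proposition~\ref{prop:ideals} gives $D_s=\mathbf{I}(X_s)\cong A_R(X_s)$. Since an open subset of a locally compact Hausdorff zero-dimensional space is again of this type, and $\alpha_s\colon D_{s^*}\to D_s$ is an $R$-algebra isomorphism of the ideals $A_R(X_{s^*})$ and $A_R(X_s)$, Proposition~\ref{prop:isomorp} supplies a unique homeomorphism $\theta_s\colon X_{s^*}\to X_s$ with $\alpha_s(f)=f\circ\theta_s^{-1}$ for every $f\in D_{s^*}$. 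By the very definition of the dual action in Section~\ref{sectiondualpartialaction}, once we know that $\theta=(\{X_s\}_{s\in S},\{\theta_s\}_{s\in S})$ is a topological partial action, the identity $\theta_{s^*}=\theta_s^{-1}$ will give $\alpha_s(f)=f\circ\theta_{s^*}$, that is, $\alpha$ is exactly the dual action of $\theta$.

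The core of the argument is to verify that $s\mapsto\theta_s$ is a partial homomorphism. I would do this by showing that the assignment $\Lambda\colon h\mapsto(f\mapsto f\circ h^{-1})$ is an \emph{isomorphism} of inverse semigroups from the pseudogroup of partial homeomorphisms of $X$ onto the sub-inverse semigroup $\Iso_{\mathrm{lu}}$ of $\mathcal{I}(A_R(X))$ consisting of isomorphisms between ideals with local units. Bijectivity of $\Lambda$ is precisely the content of Propositions~\ref{prop:isomorp} and~\ref{prop:ideals}; that $\Lambda$ preserves inverses is immediate, and a short computation shows $\Lambda(h_1h_2)=\Lambda(h_1)\Lambda(h_2)$, the only subtle point being that the domains of the composites agree, for which one uses $\mathbf{I}(U\cap V)=\mathbf{I}(U)\cap\mathbf{I}(V)$ together with Proposition~\ref{prop:ideals} to identify the domains of the relevant partial maps on each side with the corresponding open sets. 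Because $\theta_s=\Lambda^{-1}(\alpha_s)$ and $\Lambda^{-1}$, being an inverse semigroup isomorphism, preserves products, inverses and the natural order, the three defining conditions of a partial homomorphism pass verbatim from $\alpha$ to $\theta$.

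It then remains to check non-degeneracy $X=\bigcup_{e\in E(S)}X_e$. Given $x\in X$, I would pick a compact-open neighbourhood $V\ni x$ and use non-degeneracy of $\alpha$ to write $1_V=\sum_i f_i$ with $f_i\in D_{e_i}$ and $e_i\in E(S)$; evaluating at $x$ yields $1=\sum_i f_i(x)$, so $f_i(x)\neq 0$ for some $i$, whence $x\in\supp f_i\subseteq\mathbf{U}(D_{e_i})=X_{e_i}$. I expect the main obstacle to be the bookkeeping in establishing that $\Lambda$ respects the partially-defined products --- matching the domains of composite partial maps on both sides --- since once $\Lambda$ is known to be an inverse semigroup isomorphism, every remaining property of $\theta$ follows formally.
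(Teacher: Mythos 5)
Your proof is correct, and it shares its skeleton with the paper's: both define $X_s=\mathbf{U}(D_s)$ and extract the homeomorphisms $\theta_s$ from Propositions \ref{prop:ideals} and \ref{prop:isomorp}, and both obtain non-degeneracy by writing a function as a finite sum of elements of the ideals $D_e$ (your variant, evaluating $1_V$ at $x$, lands directly on $X=\bigcup_{e\in E(S)}X_e$, which is marginally tighter than the paper's $X=\bigcup_{s\in S}X_s$). The genuine divergence is in how one verifies that $s\mapsto\theta_s$ is a partial homomorphism. The paper checks conditions (i)--(iii) of Definition \ref{def:partialhomomorphism} one at a time: for each axiom it forms a concrete composite of the $\alpha$'s (e.g.\ $\alpha_{(st)^*}\circ\alpha_s\circ\alpha_t$ on $\alpha_t^{-1}(D_t\cap D_{s^*})$), identifies its domain with an open subset of $X$, and invokes the uniqueness clause of Proposition \ref{prop:isomorp} to convert an identity of algebra isomorphisms into an identity of homeomorphisms. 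You replace these three inline verifications by a single structural lemma --- $\Lambda\colon h\mapsto(f\mapsto f\circ h^{-1})$ is an isomorphism of inverse semigroups from the pseudogroup of partial homeomorphisms between open subsets of $X$ onto the $R$-algebra isomorphisms between ideals with local units of $A_R(X)$ --- after which $\theta=\Lambda^{-1}\circ\alpha$ is a partial homomorphism for purely formal reasons, since homomorphisms of inverse semigroups preserve products, inverses and order. The underlying computations are essentially the same: your domain-matching for composites, via $\mathbf{I}(U\cap V)=\mathbf{I}(U)\cap\mathbf{I}(V)$ and $\Lambda(h_2)^{-1}\bigl(\mathbf{I}(U_1\cap V_2)\bigr)=\mathbf{I}\bigl(h_2^{-1}(U_1\cap V_2)\bigr)$, is exactly what the paper computes inline when proving $\theta_s\circ\theta_t\leq\theta_{st}$. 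What your route buys: the uniqueness of Proposition \ref{prop:isomorp} is needed only once (injectivity of $\Lambda$), every axiom transfers automatically, and you get the stronger, reusable statement that the correspondence $\alpha\leftrightarrow\theta$ is implemented by an isomorphism of inverse semigroups --- which is really the ``one-to-one correspondence'' this section of the paper is aiming at. What the paper's route buys: it never needs the fact that isomorphisms between ideals with local units are closed under composition inside $\mathcal{I}(A_R(X))$. In your write-up, be careful to present that closure as a \emph{consequence} of the lemma (it follows from multiplicativity of $\Lambda$ as a map into $\mathcal{I}(A_R(X))$ together with its surjectivity onto this class), rather than assuming from the outset that the class is a sub-inverse semigroup; phrased in that order there is no circularity, and the argument goes through.
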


\begin{proof}
Let $\alpha$ be a partial action of $S$ in $A_R(X)$ satisfying the hypotheses above. By Proposition \ref{prop:ideals}, for each $s\in S$ there is an open subset $X_s\subseteq X$ such that
\[
D_s=\mathbf{I}(X_s)= \left\lbrace f \in A_R(X) : \operatorname{supp}f\subseteq X_s\right\rbrace \cong A_R(X_s).
\]

By Proposition~\ref{prop:isomorp}, for each isomorphism $\alpha_s\colon  A_R(X_{s^*}) \rightarrow A_R(X_s)$, there is a unique homeomorphism $\theta_{s^*}\colon X_s \rightarrow X_{s^*}$ such that 
\[\alpha_s(f)=f\circ\theta_{s^*}\qquad\text{for all }f\in A_R(X_{s^*})\cong D_{s^*}.\]
So we simply let $\theta= (\{X_s\}_{s \in S}, \{\theta_s\}_{s \in S})$, and it is clear that, as long as $\theta$ is indeed a partial action, then $\alpha$ is the dual partial action of $\theta$.

To finish the proof we need to we show that $\theta $ is indeed a partial action. By its very definition, each $X_s$ is open in $X$ and $\theta_s\colon X_{s^*}\to X_s$ is a homeomorphism. Non-degeneracy of $\theta$ can be proven as follows:

Let $x \in X$ and $f \in A_R(X)$ such that $x \in \supp(f)$. Since $A_R(X)=\operatorname{span}\bigcup_{s\in S}D_s$, we can write $f$ as $f=\sum_{i=1}^n f_i$ for certain elements $s_i\in S$ and $f_i\in D_{s_i}\cong A_R(X_{s_i})$. In particular,
\[\operatorname{supp}f\subseteq\bigcup_{i=1}^n\operatorname{supp} f_i\subseteq\bigcup_{i=1}^n X_{s_i}\]
and so $x \in X_{s_i}$ for some $i$. This proves that $X=\bigcup_{s\in S}X_s$.

So it remains only to prove that $s\mapsto \theta_s$ is a partial homomorphism. Let us verify the conditions of Definition~\ref{def:partialhomomorphism}:
\begin{enumerate}
\item[(i)] Given $s\in S$, we need to prove that $\theta_{s^*}=(\theta_s)^*$ $\alpha_{s^*}\circ\alpha_s$ is the identity on $D_{s^*}\cong A_R(X_{s^*})$, however for all $f\in D_{s^*}\cong A_R(X_{s^*})$,
\[f\circ\operatorname{id}_{X_{s^*}}=\alpha_{s^*}\circ\alpha_s(f)=\alpha_{s^*}(f\circ\theta_{s^*})=f\circ(\theta_{s^*}\circ\theta_s)\]
so the uniqueness part of Proposition~\ref{prop:isomorp} implies that $\theta_{s^*}\circ\theta_s=\operatorname{id}_{X_{s^*}}$. Similarly, $\theta_s\circ\theta_{s^*}=\operatorname{id}_{X_s}$, thus $\theta_{s^*}=(\theta_s)^*$.
\item[(ii)] Let $s,t\in S$. We need to prove that $\theta_s\circ\theta_t\leq\theta_{st}$. On one hand, note that (under the usual identification $A_R(U)\cong\mathbf{I}(U)$),
\begin{align*}
f\in A_R(\theta_t^{-1}(X_t\cap X_{s^*}))&\iff\operatorname{supp}f\subseteq\theta_t^{-1}(X_t\cap X_{s^*})\\
&\iff\operatorname{supp}(f\circ\theta_{t^*})\subseteq X_t\cap X_{s^*}\\
&\iff\operatorname{supp}(\alpha_t(f))\subseteq X_t\cap X_{s^*}\\
&\iff\alpha_t(f)\in D_t\cap D_{s^*},
\end{align*}
that is, under the canonical identification, $A_R(\theta_t^{-1}(X_t\cap X_{s^*}))\cong \alpha_t^{-1}(D_t\cap D_{s^*})$. Since $\alpha$ is a partial action, we obtain
\[A_R(\theta_t^{-1}(X_t\cap X_{s^*}))\cong\alpha_t^{-1}(D_t\cap D_{s^*})\subseteq D_{(st)^*}\cong A_R(X_{(st)^*})\]
which implies $\theta_t^{-1}(X_t\cap X_{s^*})\subseteq X_{(st)^*}$. The map $\alpha_{(st)^*}\circ\alpha_s\circ\alpha_t$ coincides with the identity on $\alpha_t^{-1}(D_t\cap D_{s^*})$, however
\[\alpha_{(st)^*}(\alpha_s(\alpha_t(f)))=\alpha_{(st)^*}(\alpha_s(f\circ\theta_{t^*}))=\alpha_{(st)^*}(f\circ\theta_{t^*}\circ\theta_{s^*})=f\circ\theta_{t^*}\circ\theta_{s^*}\circ\theta_{st}\]
so again uniqueness in Proposition~\ref{prop:isomorp} implies that $\theta_{t^*}\circ\theta_{s^*}\circ\theta_{st}$ is the identity on $\theta_t^{-1}(X_t\cap X_s^*)$. We can conclude that $\theta_s\circ\theta_t\leq\theta_{st}$.
\item[(iii)] Suppose $s\leq t$ in $S$. Let us prove that $\theta_s\subseteq\theta_t$. We have
\[A_R(X_{s^*})\cong D_{s^*}\subseteq D_{t^*}\cong A_R(X_{t^*})\]
so $X_{s^*}\subseteq X_{t^*}$. The restriction of $\alpha_{t^*}$ to $D_s$ coincides with $\alpha_{s^*}$, so for all $f\in D_s\cong A_R(X_s)$,
\[f\circ(\theta_t|_{X_{s^*}})=(f\circ\theta_t)|_{X_{s^*}}=\alpha_{t^*}(f)|_{X_{s^*}}=\alpha_{s^*}(f)=f\circ\theta_s\]\
and again the uniqueness part in Proposition~\ref{prop:isomorp} implies that $\theta_t|_{X_{s^*}}=\theta_s$, so $\theta_s\leq\theta_t$.\qedhere
\end{enumerate}
\end{proof}

\begin{corollary}
Suppose that $S$ is an inverse semigroup, that $R$ is an indecomposable commutative unital ring, and that $\alpha=(\{D_s\}_{s\in S}, \{\alpha_s\}_{s \in S})$ is an algebraic partial action of $S$ on $A_R(X)$, where each ideal $D_s$ has local units. Then $A_R(X)\ltimes_{\alpha} S$ is isomorphic to a Steinberg algebra $A_R(S\ltimes_{\theta} X)$ such that $\alpha$ is dual to the topological partial action $\theta$.
\end{corollary}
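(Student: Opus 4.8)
The plan is to assemble the corollary directly from the two principal results established earlier in the section, which between them already do all of the substantive work. Proposition~\ref{prop:actionalphatheta} reconstructs a topological partial action from an algebraic one under exactly the present hypotheses, and Theorem~\ref{theo:steinbergiscrossed} identifies the crossed product of a topological partial action with the Steinberg algebra of its groupoid of germs. So the argument is essentially a matter of chaining these two together and checking that the hypotheses line up.

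First I would invoke Proposition~\ref{prop:actionalphatheta}. Since $R$ is indecomposable and each ideal $D_s$ has local units, that proposition yields a topological partial action $\theta=(\{X_s\}_{s\in S},\{\theta_s\}_{s\in S})$ of $S$ on $X$ whose dual partial action, in the sense of Section~\ref{sectiondualpartialaction}, is precisely the given $\alpha$; concretely, $D_s=\{f\in A_R(X):\supp f\subseteq X_s\}$ and $\alpha_s(f)=f\circ\theta_{s^*}$. This is the step that uses indecomposability of $R$ and the local-units hypothesis in an essential way, via the order isomorphism of Proposition~\ref{prop:ideals} and the recovery of homeomorphisms from algebra isomorphisms in Proposition~\ref{prop:isomorp}.

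Next I would apply Theorem~\ref{theo:steinbergiscrossed} to this $\theta$. The standing assumptions of the section guarantee that $X$ is locally compact Hausdorff and zero-dimensional, which are exactly the hypotheses required by that theorem; hence it furnishes an $R$-algebra isomorphism $A_R(X)\rtimes_\alpha S\cong A_R(S\ltimes_\theta X)$, where the partial action appearing on the left is the dual of $\theta$. Because the dual partial action of $\theta$ is our original $\alpha$, this is precisely the desired isomorphism, and $\alpha$ is dual to $\theta$ as required.

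The only point demanding care — and the nearest thing to an obstacle — is the compatibility of the two cited statements: one must confirm that the $\theta$ produced by Proposition~\ref{prop:actionalphatheta} genuinely lies in the class of topological partial actions on a locally compact Hausdorff zero-dimensional space accepted by Theorem~\ref{theo:steinbergiscrossed}, and that the phrase ``dual partial action'' carries the identical meaning in both results. Both hold by construction: the space $X$ is fixed throughout the section with the requisite topological properties, and the formula $\alpha_s(f)=f\circ\theta_{s^*}$ recovered in Proposition~\ref{prop:actionalphatheta} is verbatim the definition of the dual partial action used in Theorem~\ref{theo:steinbergiscrossed}. With these matched, the conclusion is immediate.
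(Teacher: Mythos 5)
Your proof is correct and follows exactly the paper's own argument: apply Proposition~\ref{prop:actionalphatheta} to recover a topological partial action $\theta$ with $\alpha$ as its dual, then apply Theorem~\ref{theo:steinbergiscrossed} to obtain the isomorphism $A_R(X)\rtimes_\alpha S\cong A_R(S\ltimes_\theta X)$. The compatibility checks you flag (the standing hypotheses on $X$ and the shared meaning of ``dual partial action'') are indeed satisfied by construction, just as you say.
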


\begin{proof}
By Proposition~{\ref{prop:actionalphatheta}}, $\alpha$ is dual to a topological partial action $\theta$ of $S$ on $X$, and Theorem~\ref{theo:steinbergiscrossed} implies that $A_R(S\ltimes_\theta X)\cong A_R(X)\ltimes_{\alpha} X$.
\end{proof}

\section{Topologically principal partial actions}

In this section our main goal is to introduce \emph{topologically principal} partial actions of inverse semigroups, which will be used later in our study of continuous orbit equivalence. We then use this notion to describe $E$-unitary inverse semigroups in terms of the existence of certain topologically principal partial actions.

Let $\G$ be a groupoid. The \emph{isotropy group} at a point $x\in\Go$ is 
\[\G_x^x=\left\{a\in\G:\so(a)=\ra(a)=x\right\}.\]
Note that $\G_x^x$ is a group with the operation inherited from $\G$. The \emph{isotropy subgroupoid} of a groupoid $\G$ is the subgroupoid
\[\operatorname{Iso}(\G)=\bigcup_{x\in\Go}\G_x^x=\left\{a\in\G:\so(a)=\ra(a)\right\}.\]
Since $\G[0]$ is an open subset of $\operatorname{Iso}(\G)$, then $\G[0]\subseteq\operatorname{int}(\operatorname{Iso}(\G))$. Following the nomenclature of \cite{Renault2008}, a topological groupoid $\G$ is \emph{effective} if the converse inclusion holds, i.e., if $\G[0]=\operatorname{int}(\operatorname{Iso}(\G))$.

A topological groupoid $\G$ is \emph{topologically principal} if the set of points in $\Go$ with trivial isotropy group is dense in $\Go$. By \cite[Proposition~3.6]{Renault2008}, every Hausdorff topologically principal étale groupoid is effective (the Hausdorff property is necessary, as the groupoid constructed in Example \ref{nonhausdorffaction} is topologically principal but not effective). Conversely, if $\G$ is a second-countable effective (possibly non-Hausdorff) groupoid and $\G[0]$ satisfies the Baire property, then $\G$ is topologically principal.

The class of (global) actions of inverse semigroups which correspond to effective groupoids of germs was defined in \cite{Exel2016}. However, we will be interested in partial actions which correspond to topologically principal groupoids of germs. Since we will not make assumptions of second-countability or the Hausdorff property, it is important to distinguish effectiveness and topological principality of groupoids. 

\begin{remark}
    The nomenclature ``essentially principal'' has been used to mean either effective or topologically principal (or even slight variations) in different works. See \cite[Definition II.4.3]{MR584266}, \cite[Section 2.2]{MR3548134} and \cite[Definition 4.6(4)]{Exel2016}. To avoid confusion on this part, we settle with the nomenclature of \cite{Renault2008}.
    
    Moreover, distinct notions of topological freeness -- for either partial actions of countable groups or global actions of inverse semigroups (see \cite{Li2017} and \cite{Exel2016}) -- have natural generalizations to the context of partial actions of inverse semigroups, however they do not coincide in general.
    
    To avoid any confusion, partial actions which correspond to topologically principal or effective groupoids of germs will be called topologically principal or effective, respectively (so the term ``topologically free'' will not be used).
\end{remark}

Throughout this section, $\theta=\left(\left\{X_s\right\}_{s\in S},\left\{\theta_s\right\}_{s\in S}\right)$ will always denote a topological partial action of an inverse semigroup $S$ on a topological space $X$.

\begin{definition}[{\cite[Definition 4.1]{Exel2016}}]
Let $x\in X$ and $s\in S$. We say that
\begin{enumerate}[label=(\arabic*)]
    \item $x$ is \emph{fixed} by $s$ if $\theta_s(x)=x$;
    \item $x$ is \emph{trivially fixed} by $s$ if there exists $e\in E(S)$ such that $e\leq s$ and $x\in X_e$. (In particular, $x$ is fixed by $s$.)
\end{enumerate}
The partial action $\theta$ is \emph{effective} if for all $s\in S$, the interior of the set of fixed points of $s$ consists of the trivially fixed points of $s$, i.e.,
\[\operatorname{int}\left\{x\in X_{s^*}:\theta_s(x)=x\right\}=\bigcup\left\{X_e:e\in E(S) \text{ and } e\leq s\right\}.\]
\end{definition}

A proof analogous to that of \cite[Theorem~4.7]{Exel2016} proves that effective partial actions correspond to effective groupoids of germs.

\begin{proposition}\label{prop:effectiveactiongroupoid}
The groupoid of germs $S\ltimes_{\theta} X$ is effective if and only if $\theta$ is effective. 
\end{proposition}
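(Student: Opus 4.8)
The plan is to translate both effectiveness conditions into statements about the same family of fixed-point sets and then match them term by term. Write $\operatorname{Fix}(s)=\{x\in X_{s^*}:\theta_s(x)=x\}$ for the set of points fixed by $s$, and $T(s)=\bigcup\{X_e:e\in E(S),\ e\leq s\}$ for the set of trivially fixed points of $s$. Since trivially fixed points are fixed and each $X_e$ is open, one always has $T(s)\subseteq\operatorname{int}(\operatorname{Fix}(s))$; thus effectiveness of $\theta$ is equivalent to the reverse inclusion $\operatorname{int}(\operatorname{Fix}(s))\subseteq T(s)$ holding for every $s\in S$. Likewise $(S\ltimes X)^{(0)}\subseteq\operatorname{int}(\operatorname{Iso}(S\ltimes X))$ always holds (as recalled before the definition of effectiveness), so effectiveness of the groupoid is equivalent to the reverse inclusion $\operatorname{int}(\operatorname{Iso}(S\ltimes X))\subseteq (S\ltimes X)^{(0)}$.

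The key preliminary step I would isolate is a description of the units: a germ $[s,x]$ lies in $(S\ltimes X)^{(0)}$ if and only if $x$ is trivially fixed by $s$. For the nontrivial direction, if $[s,x]$ is a unit then $[s,x]=\so[s,x]=[s^*s,x]$, so by the germ relation \eqref{eq:equivalencegroupoidgerms} there is $u\leq s,\,s^*s$ with $x\in X_{u^*}$; since $u=s^*s\,u^*u$ is a product of idempotents it is itself idempotent, and then $u\leq s$, $x\in X_u$ exhibit $x$ as trivially fixed by $s$. The converse is immediate from Remark~\ref{changebygreater}, which gives $[s,x]=[e,x]$ whenever $e\in E(S)$, $e\leq s$ and $x\in X_e$.

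Next I would compute the interior of the isotropy subgroupoid using the étale structure. Because $\mathscr{B}_{\mathrm{germ}}$ is a basis of open bisections (Proposition~\ref{prop:bisectionhomeomorphic}) and a basic bisection $[s,U]$ meets $\operatorname{Iso}(S\ltimes X)$ in precisely those germs $[s,y]$ with $\theta_s(y)=y$, we have $[s,U]\subseteq\operatorname{Iso}(S\ltimes X)$ if and only if $U\subseteq\operatorname{Fix}(s)$. Writing the open set $\operatorname{int}(\operatorname{Iso}(S\ltimes X))$ as the union of all basic bisections it contains then yields
\[\operatorname{int}(\operatorname{Iso}(S\ltimes X))=\bigcup_{s\in S}\bigl[s,\operatorname{int}(\operatorname{Fix}(s))\bigr].\]

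With these two facts the equivalence is a short diagram chase. If $\theta$ is effective, any $\gamma\in\operatorname{int}(\operatorname{Iso}(S\ltimes X))$ equals $[s,x]$ with $x\in\operatorname{int}(\operatorname{Fix}(s))=T(s)$, so $x$ is trivially fixed by $s$ and hence $\gamma$ is a unit; thus $S\ltimes X$ is effective. Conversely, if $S\ltimes X$ is effective, then for any $s\in S$ and any $x\in\operatorname{int}(\operatorname{Fix}(s))$ the germ $[s,x]$ lies in $[s,\operatorname{int}(\operatorname{Fix}(s))]\subseteq\operatorname{int}(\operatorname{Iso}(S\ltimes X))=(S\ltimes X)^{(0)}$, so $[s,x]$ is a unit and $x\in T(s)$; this gives $\operatorname{int}(\operatorname{Fix}(s))\subseteq T(s)$ for every $s$, i.e. $\theta$ is effective. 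I expect the only real subtlety to be the unit characterization — specifically the observation that an element below an idempotent is itself idempotent — while the non-Hausdorff setting causes no trouble here, since the argument only ever uses that the open bisections $[s,U]$ form a basis.
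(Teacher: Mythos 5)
Your proof is correct and complete: the unit characterization (an element below an idempotent of the form $s^*s\,u^*u$ is idempotent, so units are exactly germs of trivially fixed points), the identity $\operatorname{int}(\operatorname{Iso}(S\ltimes X))=\bigcup_{s\in S}[s,\operatorname{int}(\operatorname{Fix}(s))]$ obtained from the basis of bisections, and the final matching of inclusions are all valid, with no Hausdorffness needed. The paper does not write out a proof at all --- it defers to the analogue for global actions in \cite[Theorem~4.7]{Exel2016} --- and your argument is essentially that standard argument adapted to partial actions, so it fills in exactly the proof the paper leaves implicit.
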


If $\theta$ is a partial action of an inverse semigroup $S$ on a set $X$, the subset $\left\{s\in S:x\in X_{s^*}\right\}$ of $S$ will be denoted by $S_x$. 
\begin{definition}\label{def:topprincipalpaction}
We denote by $\Lambda(\theta)$ the set of points of $X$ which are trivially fixed whenever they are fixed, i.e.,
\[\Lambda(\theta)=\left\{x\in X:\text{for all } s\in S_x\text{, if }\theta_s(x)=x\text{ then there exists }e\in E(S)\cap S_x\text{ with }e\leq s\right\}.\]

We say that $\theta$ is \emph{topologically principal} if $\Lambda(\theta)$ is dense in $X$.
\end{definition}

Similarly to the two descriptions of the germ relation as in Equations \eqref{eq:equivalencegroupoidgerms} and \eqref{eq:equivalencegroupoidgerms2}, we can alternatively describe $\Lambda(\theta)$ as
\[\Lambda(\theta)=\left\{x\in X:\text{for all }s,t\in S_x\text{, if }\theta_s(x)=\theta_t(x)\text{ then there exists }u\in S_x\text{ with }u\leq s,t\right\}.\ntag\label{eq:betterdefinitionoflambdatheta}\]

Suppose now that $\theta$ is a partial action of $S$ on a \emph{discrete} space $X$ - that is, a set. As closures and interiors of discrete spaces are trivial, we may rewrite both topological principality and effectiveness of $\theta$ as follows: for all $(s,x)\in S\ast X$, if $\theta_s(x)=x$ then there exists $e\in S_x\cap E(S)$ with $e\leq s$. In particular, $\theta$ is effective if and only if it is topologically principal, thus we can unambiguously call it \emph{free}.

More generally, by a \emph{free} partial action $\theta$ of $S$ on a topological space $X$, we mean a partial action which is free when $X$ is regarded simply as a set. Equivalently, this is to say that $\Lambda(\theta)=X$.

In the case that $G$ is a group, a partial action $\theta$ of $G$ is free if for all $x\in X$ (and for all $g\in G_x$), one has that $\theta_g(x)=x$ implies $g=1$, where $1$ is the identity of $G$, which is the usual notion of freeness for partial group actions.

It is interesting to note that freeness of a topological partial action implies that the associated groupoid of germs is Hausdorff. However, this is not true for topologically principal partial actions.

\begin{proposition}
If the action $\theta$ is free, then the groupoid of germs $S\ltimes X$ is Hausdorff.
\end{proposition}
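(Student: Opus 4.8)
The plan is to recognise freeness as the statement that the groupoid of germs is \emph{principal} — i.e.\ that its combined source--range map is injective — and then to Hausdorffify $S\ltimes X$ for free by pushing it continuously and injectively into $X\times X$. (Here I use that $X$ is Hausdorff, which is in any case forced by the conclusion, since $(S\ltimes X)^{(0)}\cong X$ embeds as an open subspace.) The underlying soft fact is that any space admitting a continuous injection into a Hausdorff space is itself Hausdorff: if $a\neq b$, injectivity gives distinct images, which are then separated downstairs and pulled back along the continuous injection.

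Concretely, I would introduce the map
\[
\pi\colon S\ltimes X\to X\times X,\qquad \pi[s,x]=\bigl(\mathfrak{s}[s,x],\,\mathfrak{r}[s,x]\bigr)=\bigl(x,\,\theta_s(x)\bigr).
\]
Its continuity is immediate: $\mathfrak{s}$ and $\mathfrak{r}$ are continuous on the topological groupoid $S\ltimes X$, and a map into a product is continuous exactly when each coordinate is. Note also that $\pi$ is automatically well defined, being assembled from the genuine groupoid maps $\mathfrak{s}$ and $\mathfrak{r}$.

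The heart of the argument, and the step I expect to carry all the weight, is the injectivity of $\pi$; this is precisely where freeness is used. Suppose $\pi[s,x]=\pi[t,y]$. Then $x=y$ and $\theta_s(x)=\theta_t(x)$, so in particular $s,t\in S_x$ with $\theta_s(x)=\theta_t(x)$. Since $\theta$ is free we have $\Lambda(\theta)=X$, and invoking the reformulation \eqref{eq:betterdefinitionoflambdatheta} of $\Lambda(\theta)$ at the point $x$ yields some $u\in S_x$ with $u\leq s,t$. As $u\in S_x$ means $x\in X_{u^*}$, the germ criterion \eqref{eq:equivalencegroupoidgerms} gives $[s,x]=[t,x]=[t,y]$, so $\pi$ is injective. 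The conceptual content is exactly that freeness, read through \eqref{eq:betterdefinitionoflambdatheta}, says two germs sharing a common source and range must coincide, i.e.\ all isotropy is trivial; one should be careful to use this reformulated version rather than the raw Definition~\ref{def:topprincipalpaction}, since \eqref{eq:betterdefinitionoflambdatheta} is the form that matches the germ-equality condition verbatim.

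To finish, I would take distinct $a,b\in S\ltimes X$, use injectivity to get $\pi(a)\neq\pi(b)$ in the Hausdorff space $X\times X$, pick disjoint open $W_1\ni\pi(a)$ and $W_2\ni\pi(b)$, and observe that $\pi^{-1}(W_1)$ and $\pi^{-1}(W_2)$ are disjoint open neighbourhoods of $a$ and $b$ by continuity of $\pi$ and the disjointness of the $W_i$. Hence $S\ltimes X$ is Hausdorff. The only genuinely non-formal ingredient is the injectivity of $\pi$ above; the remaining steps are routine point-set topology.
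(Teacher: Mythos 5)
Your proof is correct and takes essentially the same route as the paper: the paper's own argument separates distinct germs by pulling back disjoint neighbourhoods of $X$ along $\so$ or $\ra$, and uses freeness exactly as you do to show that two germs with equal source and range must coincide. Packaging this as injectivity of the continuous map $(\so,\ra)\colon S\ltimes X\to X\times X$ together with the ``continuous injection into a Hausdorff space'' lemma is only a cosmetic reorganization of the same argument.
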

\begin{proof}
Suppose $[s,x]\neq [t,y]$. First assume that $\so[s,x]\neq\so[t,y]$, that is, $x\neq y$. As $X$ is Hausdorff, choose disjoint neighbourhoods $U$ and $V$ of $x$ and $y$ in $X$, respectively. Then $\so^{-1}(U)$ and $\so^{-1}(V)$ are disjoint neighbourhoods of $[s, x]$ and $[t,y]$, respectively. Similarly, if $\ra[s,x]\neq\ra[t,y]$, we may find disjoint neighbourhoods of $[s,x]$ and $[t,y]$, respectively.

We are done if we prove that the two cases above are the only possibilities. Suppose then $\so[s,x]=\so[t,y]$ and $\ra[s,x]=\ra[t,y]$, that is, $x=y$ and $\theta_s(x)=\theta_t(y)=\theta_t(x)$. By freeness of $\theta$, there is $u \in S$, such that $u\leq s,t$ and $x \in X_{u^*}$, which is equivalent to stating $[s,x]=[t,x]=[t,y]$, a contradiction.\qedhere
\end{proof}

\begin{example}
As in Example~\ref{nonhausdorffaction}, let $S=\mathbb{N}\cup\left\{\infty,z\right\}$ and $\theta$ be the Munn representation of $S$ on $X=E(S)=\mathbb{N}\cup\left\{\infty\right\}$, endowed with the same topology as the one-point compactification of $\mathbb{N}$. This is a topologically principal partial action, since $\Lambda(\theta)=\mathbb{N}$ is dense in $X$, however the associated groupoid of germs $S\ltimes X$ is not Hausdorff.
\end{example}

In the specific setting of topological partial actions of countable groups on locally compact Hausdorff and second-countable spaces, \cite{Li2017} adopts a notion of ``topological freeness'' which happens to coincide (in this specific setting) with both effectiveness and topologically principality partial actions (of groups). The following proposition can be proven as in \cite[Lemma~2.4]{Li2017}, as an application of Baire's Category Theorem.

\begin{proposition}
Suppose that $S$ is countable and that $X$ is locally compact Hausdorff. Then the partial action $\theta$ of $S$ on $X$ is topologically principal if and only if for all $s\in S$, the set
\[\left\{x\in X_{s^*}:\text{if }\theta_s(x)=x\text{ then there exists }e\in E(S)\cap S_x\text{ with }e\leq s\right\}\]
is dense in $X_{s^*}$.
\end{proposition}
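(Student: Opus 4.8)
The plan is to reformulate both conditions in terms of a single family of subsets of $X$ indexed by $S$ and then invoke the Baire category theorem. For each $s\in S$ I write
\[A_s=\left\{x\in X_{s^*}:\text{if }\theta_s(x)=x\text{ then there exists }e\in E(S)\cap S_x\text{ with }e\leq s\right\}\]
for the set in the statement, and put $B_s=(X\setminus X_{s^*})\cup A_s$. Unwinding Definition~\ref{def:topprincipalpaction} shows that $\Lambda(\theta)=\bigcap_{s\in S}B_s$: a point $x$ lies in $\Lambda(\theta)$ precisely when, for every $s$ with $x\in X_{s^*}$, being fixed by $s$ forces $x$ to be trivially fixed. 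So topological principality is the density of this intersection, and the hypothesis concerns the density of each $A_s$ in the open set $X_{s^*}$.

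The crucial preliminary observation I would establish is that each $A_s$ is \emph{open} in $X$. Indeed $A_s=U_s\cup T_s$, where $U_s=\{x\in X_{s^*}:\theta_s(x)\neq x\}$ and $T_s=\bigcup\{X_e:e\in E(S),\ e\leq s\}$ is the set of points trivially fixed by $s$. The set $T_s$ is open as a union of open sets (and is contained in $X_{s^*}$ since $e\leq s$ gives $X_e\subseteq X_{s^*}$), and $U_s$ is open because $\theta_s$ is continuous and $X$ is Hausdorff, so the locus where $\theta_s$ disagrees with the identity is open; the decomposition holds because trivially fixed points are fixed, so the two pieces exhaust $A_s$. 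Consequently $N_s:=X_{s^*}\setminus A_s$ is relatively closed in $X_{s^*}$, and ``$A_s$ dense in $X_{s^*}$'' is equivalent to $N_s$ being nowhere dense in $X_{s^*}$.

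For the direction assuming each $A_s$ is dense in $X_{s^*}$, I would first upgrade ``nowhere dense in $X_{s^*}$'' to ``nowhere dense in $X$'': if the $X$-closure of $N_s$ had nonempty $X$-interior $W$, then $W\cap X_{s^*}$ would be a nonempty open subset of the $X_{s^*}$-closure of $N_s$, contradicting empty interior there, unless $W\subseteq\partial X_{s^*}$, which is impossible since the boundary of an open set has empty interior. Hence $G_s:=X\setminus\overline{N_s}$ is open and dense with $G_s\subseteq B_s$. Since $X$ is locally compact Hausdorff it is a Baire space, and $S$ is countable, so $\bigcap_{s\in S}G_s$ is dense; as $\Lambda(\theta)=\bigcap_s B_s\supseteq\bigcap_s G_s$, the action is topologically principal.

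The converse is the easy direction and uses neither countability nor Baire: if $\Lambda(\theta)$ is dense, then for each $s$ the set $\Lambda(\theta)\cap X_{s^*}$ is dense in the open set $X_{s^*}$, and the inclusion $\Lambda(\theta)\subseteq B_s$ gives $\Lambda(\theta)\cap X_{s^*}\subseteq A_s\subseteq X_{s^*}$, whence $A_s$ is dense in $X_{s^*}$. I expect the main obstacle to be purely topological: proving that $A_s$ is open (through the decomposition $U_s\cup T_s$) and carefully passing from nowhere density in the open set $X_{s^*}$ to nowhere density in $X$. Once these points are in place, the application of Baire's theorem is routine.
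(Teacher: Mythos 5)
Your proof is correct and follows essentially the same route the paper takes: the paper proves this proposition by citing the Baire category argument of \cite[Lemma~2.4]{Li2017}, and your argument is precisely that argument adapted to the inverse semigroup setting, with the necessary new ingredient (openness of each $A_s$ via the decomposition into non-fixed points $U_s$ and trivially fixed points $T_s$) handled correctly. The passage from relative nowhere density in $X_{s^*}$ to nowhere density in $X$, and the final application of Baire's theorem to the countable family $\{G_s\}_{s\in S}$, are both sound.
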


We will now reword topological principality of a partial action in terms of the groupoid of germs $S\ltimes X$.

\begin{proposition}\label{prop:principalprincipal}
The groupoid of germs $S\ltimes X$ is topologically principal if and only if $\theta$ is topologically principal.
\end{proposition}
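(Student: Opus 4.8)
The plan is to unwind both conditions to the level of points of $X$ and germs in $S\ltimes X$, and show they match. Recall that $\Lambda(\theta)$, the set of points that are trivially fixed whenever fixed, governs topological principality of $\theta$, while $S\ltimes X$ is topologically principal exactly when the set of $x$ with trivial isotropy group $(S\ltimes X)_x^x$ is dense. Since the unit space of $S\ltimes X$ is identified with $X$ via \eqref{eq:unitspaceofgroupoidofgerms}, both conditions are density statements about subsets of the same space $X$; the strategy is therefore to prove the pointwise equality
\[
\Lambda(\theta)=\left\{x\in X:(S\ltimes X)_x^x\ \text{is trivial}\right\},
\]
after which the two density conditions are literally the same statement.

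First I would fix $x\in X$ and describe the isotropy group $(S\ltimes X)_x^x$ concretely. An arrow $[s,y]$ has source $x$ and range $x$ precisely when $y=x$, $x\in X_{s^*}$ (so $s\in S_x$), and $\theta_s(x)=x$; that is,
\[
(S\ltimes X)_x^x=\left\{[s,x]:s\in S_x\ \text{and}\ \theta_s(x)=x\right\}.
\]
The unit of this group is the class $[e,x]$ for any $e\in E(S)\cap S_x$ (these all coincide as germs at $x$). The key observation is that an element $[s,x]$ of $(S\ltimes X)_x^x$ equals the unit if and only if $[s,x]=[e,x]$ for some idempotent $e\in S_x$, which by the germ relation \eqref{eq:equivalencegroupoidgerms2} happens exactly when there is $f\in E(S)$ with $x\in X_f$ and $sf=ef$; and since $ef\in E(S)$, this is equivalent to the existence of an idempotent $u=sf\le s$ with $x\in X_{u^*}$, i.e., $u\in E(S)\cap S_x$ with $u\le s$. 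In other words, a fixing element $s$ gives the trivial germ if and only if $x$ is trivially fixed by $s$.

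Combining these, $(S\ltimes X)_x^x$ is trivial if and only if every $s\in S_x$ with $\theta_s(x)=x$ is such that $x$ is trivially fixed by $s$ — which is exactly the defining condition of $x\in\Lambda(\theta)$ from Definition~\ref{def:topprincipalpaction}. This establishes the displayed set equality, and then density of $\Lambda(\theta)$ in $X$ is identical to density of the trivial-isotropy locus in $(S\ltimes X)^{(0)}\cong X$, giving the equivalence.

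I expect the only delicate point to be the bookkeeping in the isotropy computation: correctly handling the choice of idempotent representing the unit germ, and verifying that the condition ``$[s,x]$ is a unit'' translates precisely to trivial fixing rather than to mere fixing. This uses nothing beyond the two equivalent descriptions of the germ relation and the fact that the map \eqref{eq:unitspaceofgroupoidofgerms} is a homeomorphism, so no new machinery is needed; the argument is essentially a translation between the algebraic definition of $\Lambda(\theta)$ and the groupoid-theoretic notion of trivial isotropy.
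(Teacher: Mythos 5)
Your proposal is correct and follows essentially the same route as the paper: both reduce the statement to the pointwise set equality $\Lambda(\theta)=\{x\in X:(S\ltimes X)_x^x \text{ is trivial}\}$, proved by translating between the germ relation \eqref{eq:equivalencegroupoidgerms2} and the notion of trivial fixing. The only unremarked micro-step (shared with the paper) is that $x\in X_e\cap X_f$ implies $x\in X_{ef}$ for idempotents $e,f$, which follows from $\theta_e\theta_f\leq\theta_{ef}$.
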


\begin{proof}
As usual, we may assume the action $\theta$ is non-degenerate and identify $X$ with $(S\ltimes X)^{(0)}$. Then it is enough to prove that, under this identification, $\Lambda(\theta)$ is the set of points of $X$ with trivial isotropy, i.e.,
\[\Lambda(\theta)=\left\{x\in X:(S \ltimes X)_x^x=\{x\}\right\}.\]
Let $x\in X$ be given. First suppose $x\in \Lambda(\theta)$ and $[s,x]\in(S\ltimes X)_x^x$. This means that $x=\ra[s,x]=\theta_s(x)$, so there is $e\in E(S)\cap S_x$, $e\leq s$, which implies $[s,x]=[e,x]\simeq x$.

Conversely suppose $(S\ltimes X)_x^x=\left\{x\right\}$ and let $s\in S_x$ with $\theta_s(x)=x$. This means that $[s,x]\in(S\ltimes X)_x^x$, and so $[s,x]\simeq x\simeq [e,x]$ for some idempotent $e\in S_x$. By definition of the groupoid of germs, we can find another idempotent $f\in S_x$ with $se=ef$, so in particular $ef$ is an idempotent, $ef\leq s$, and $x\in X_{ef}$. This proves $x\in \Lambda(\theta)$.
\end{proof}

We finish this section by describing how $E$-unitary inverse semigroups can be characterized in terms of their partial actions.

\begin{proposition}\label{theoremfactorpartialactioneunitaryfree}
Suppose that $S$ is $E$-unitary and that $\widetilde{\theta}=\left(\left\{X_{\gamma}\right\}_{\gamma\in \mathbf{G}(S)}, \left\{\widetilde{\theta}_\gamma\right\}_{\gamma\in \mathbf{G}(S)}\right)$ is the unique partial action of $\mathbf{G}(S)$ on $X$ given by Theorem~\ref{theoremfactorpartialactioneunitary}. Then $\theta$ is topologically principal if and only if $\widetilde{\theta}$ is topologically principal.
\end{proposition}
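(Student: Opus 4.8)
The plan is to reduce the statement to the two structural results already established, namely the groupoid isomorphism of Proposition~\ref{prop:eunitarygroupoidofgerms} and the translation between principality of an action and of its groupoid of germs in Proposition~\ref{prop:principalprincipal}. Since $\theta$ and $\widetilde\theta$ act on the same space $X$ and, by $E$-unitarity, give rise to isomorphic groupoids of germs, topological principality should simply transfer across.

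Concretely, I would first invoke Proposition~\ref{prop:eunitarygroupoidofgerms} to get an isomorphism of topological groupoids $S\ltimes_\theta X\cong\mathbf{G}(S)\ltimes_{\widetilde\theta}X$ implemented by $[s,x]\mapsto([s],x)$. The point to check is that this isomorphism restricts to the identity on the common unit space: a unit of $S\ltimes_\theta X$ has the form $[e,x]$ with $e\in E(S)$ and is identified with $x\in X$, and its image $([e],x)=(1,x)$ is precisely the unit of $\mathbf{G}(S)\ltimes_{\widetilde\theta}X$ identified with the same point $x$, since the only idempotent of the group $\mathbf{G}(S)$ is $1$. As topological principality is the density in $\Go$ of the set of units with trivial isotropy group, and an isomorphism of topological groupoids that is the identity on unit spaces preserves both the topology of $\Go$ and the isotropy groups point by point, $S\ltimes_\theta X$ is topologically principal if and only if $\mathbf{G}(S)\ltimes_{\widetilde\theta}X$ is. Applying Proposition~\ref{prop:principalprincipal} on each side then yields the equivalence.

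I expect the real content to be hidden in the inputs, so I would also be ready to give the self-contained argument that the two sets of good points literally coincide, $\Lambda(\theta)=\Lambda(\widetilde\theta)$, from which equality of densities is immediate. The inclusion $\Lambda(\theta)\subseteq\Lambda(\widetilde\theta)$ is routine: given $x\in\Lambda(\theta)$ and $\gamma\in\mathbf{G}(S)$ in the domain of $\widetilde\theta$ with $\widetilde\theta_\gamma(x)=x$, choose $s$ with $[s]=\gamma$ and $x\in X_{s^*}$, so that $\widetilde\theta_\gamma(x)=\theta_s(x)=x$ by Theorem~\ref{theoremfactorpartialactioneunitary}(ii); then $x\in\Lambda(\theta)$ supplies an idempotent $e\le s$ with $x\in X_e$, and since $e\le s$ forces $[e]=[s]$ while $[e]=1$, we get $\gamma=1$. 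The main obstacle is the converse inclusion, where $E$-unitarity is genuinely needed. If $x\in\Lambda(\widetilde\theta)$ and $s\in S_x$ satisfies $\theta_s(x)=x$, then $[s]$ fixes $x$ under $\widetilde\theta$, so $[s]=1$; this only says $s\sim f$ for some idempotent $f$, i.e.\ there is $u\le s,f$. Any element below an idempotent is idempotent (as $u=fu^*u$ is a product of commuting idempotents), so $u\in E(S)$, and then $u\le s$ together with the $E$-unitary hypothesis forces $s\in E(S)$. Taking $e=s$ gives an idempotent $e\le s$ with $x\in X_{s^*}=X_e$, so $x\in\Lambda(\theta)$, completing $\Lambda(\widetilde\theta)\subseteq\Lambda(\theta)$ and hence the proof.
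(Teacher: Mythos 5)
Your proposal is correct, and its primary route is genuinely different from the paper's. The paper proves the statement exactly as in your fallback argument: it shows $\Lambda(\theta)=\Lambda(\widetilde{\theta})$ directly, with the same two inclusions you sketch (the easy one via $[e]=1$ for an idempotent $e\leq s$, and the converse by extracting from $[s]=1$ an idempotent below $s$, whence $s\in E(S)$ by $E$-unitarity, so one may take $e=s$). Your main argument instead transports principality through the isomorphism $S\ltimes_\theta X\cong\mathbf{G}(S)\ltimes_{\widetilde{\theta}}X$ of Proposition~\ref{prop:eunitarygroupoidofgerms} and then applies Proposition~\ref{prop:principalprincipal} on both sides; this is sound, and your observation that $[s,x]\mapsto([s],x)$ restricts to the identity on the common unit space $X$ is what makes it airtight, since the trivial-isotropy points then form literally the same subset of $X$ and density is tested against the same topology, so you need not worry that Proposition~\ref{prop:eunitarygroupoidofgerms} is stated without explicit attention to the topology of the isomorphism. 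The trade-off: your route consumes the $E$-unitary hypothesis inside Proposition~\ref{prop:eunitarygroupoidofgerms} (where it enters via compatibility of elements with a common lower bound) and yields the conceptual point that topological principality is an invariant of the groupoid of germs, while the paper's direct computation of $\Lambda(\theta)=\Lambda(\widetilde{\theta})$ is self-contained at the semigroup level and independent of the earlier groupoid machinery. Both are complete proofs.
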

\begin{proof}
We will prove that $\Lambda(\theta)=\Lambda(\widetilde{\theta})$. Suppose that $x\in\Lambda(\theta)$, and that $s\in S$ is such that $x=\widetilde{\theta}_{[s]}(x)=\theta_s(x)$. As $x\in\Lambda(\theta)$, there exists $e\in E(S)\cap S_x$ with $e\leq s$. In particular, $se=e$, so $[s]=[e]=1$, the unit of $\mathbf{G}(S)$. This proves that $\Lambda(\theta)\subseteq\Lambda(\widetilde{\theta})$.

Conversely, assume $x\in\Lambda(\widetilde{\theta})$, and that $s\in S_x$ is such that $x=\theta_s(x)=\widetilde{\theta}_{[s]}(x)$. This implies that $[s]=1=[s^*s]$, so there is an idempotent $e\in E(S)$ with $se=s^*se$. In particular, $s\geq s^*se$, which is idempotent, so $s$ is itself an idempotent because $S$ is $E$-unitary. It follows that $s^*s\in E(S)\cap S_x$, and $s^*s=s$. This proves that $\Lambda(\widetilde{\theta})\subseteq\Lambda(\theta)$.\qedhere
\end{proof}

\begin{lemma}\label{lemmaesidempotents}
Suppose that $\theta$ is topologically principal, and that $X_{s}\neq \varnothing$ for all $s\in S$. Then $E(S)=\left\{s\in S:\theta_s\text{ is idempotent}\right\}$.
\end{lemma}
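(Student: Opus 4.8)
The plan is to prove the two set inclusions separately; only the reverse one uses the hypotheses. For the inclusion $E(S)\subseteq\{s:\theta_s\text{ is idempotent}\}$ I would simply check that the partial homomorphism $s\mapsto\theta_s$ carries idempotents to partial identities, recalling that the idempotents of $\mathcal{I}(X)$ are exactly the partial identities. Concretely, if $e\in E(S)$ then $e^*=e$, so condition (i) of Definition~\ref{def:partialhomomorphism} gives $\theta_e=\theta_{e^*}=\theta_e^{*}$; thus $\theta_e$ is a self-inverse homeomorphism of $X_e$ and $\theta_e\circ\theta_e=\mathrm{id}_{X_e}$. On the other hand condition (ii) gives $\theta_e\theta_e\le\theta_{e^2}=\theta_e$, that is $\mathrm{id}_{X_e}\le\theta_e$; since both sides have domain $X_e$, this forces $\theta_e=\mathrm{id}_{X_e}$, which is idempotent. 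This step needs neither topological principality nor the nonemptiness hypothesis on the domains.

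For the reverse inclusion, suppose $\theta_s$ is idempotent. As an idempotent of $\mathcal{I}(X)$ is a partial identity, this means $X_s=X_{s^*}$ and $\theta_s=\mathrm{id}_{X_{s^*}}$. By hypothesis $X_{s^*}=X_s$ is a nonempty open set, and since $\theta$ is topologically principal the dense set $\Lambda(\theta)$ meets it; fix $x\in\Lambda(\theta)\cap X_{s^*}$. Because $\theta_s(x)=x$ with $s\in S_x$, the defining property of $\Lambda(\theta)$ in Definition~\ref{def:topprincipalpaction} produces an idempotent $e\in E(S)\cap S_x$ with $e\le s$.

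It remains to upgrade ``there is an idempotent $e\le s$'' to ``$s\in E(S)$'', and this is the step I expect to be the main obstacle: for a general inverse semigroup, $e\le s$ with $e$ idempotent does not by itself force $s$ to be idempotent, as that implication is exactly the $E$-unitary property (cf.\ Lemma~\ref{lem:eunitarycompatibility}). My plan is therefore to squeeze more out of topological principality. First, taking inverses in $e\le s$ gives $e=e^*\le s^*$, so $e$ is a common lower bound of $s$ and $s^*$; hence $s$ and $s^*$ are compatible, which already yields $s^2\in E(S)$ and lets me form $s\wedge s^*$. More substantially, running the previous paragraph at every point of the dense set $\Lambda(\theta)\cap X_{s^*}$ shows that $\bigcup\{X_e:e\in E(S),\ e\le s\}$ is dense in $X_{s^*}$, so the region on which $s$ agrees with an idempotent exhausts (up to a nowhere dense set) its whole domain. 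I would then try to combine this density with the standing requirement that \emph{every} $X_t$ be nonempty in order to identify $s$ with $s^*s$, i.e.\ to exclude the possibility that the action collapses $s$ onto a proper idempotent below it; establishing precisely that the nonemptiness of all the domains forbids such a collapse is the delicate point on which the argument turns.
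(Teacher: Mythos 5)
Your argument coincides with the paper's own proof right up to the point you flag: the paper also picks $x\in X_{s^*}\cap\Lambda(\theta)$ (possible because $X_{s^*}$ is nonempty open and $\Lambda(\theta)$ is dense), observes $\theta_s(x)=x$, extracts an idempotent $e\leq s$, and then concludes that $s$ is idempotent \emph{because $S$ is $E$-unitary}. That is, the lemma carries a hypothesis that $S$ is $E$-unitary, implicit from its position in the text (it sits between Proposition~\ref{theoremfactorpartialactioneunitaryfree}, which assumes $E$-unitarity, and the theorem characterizing $E$-unitary semigroups, which applies this lemma only to $E$-unitary $S$), but omitted from the statement as you received it. So the ``delicate point'' you identify is not a step you failed to find; it is a step that cannot be carried out from the stated hypotheses.

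Indeed, the statement is false without $E$-unitarity, so your plan of extracting it from topological principality plus nonemptiness of the domains is doomed. Counterexample: let $S=\{0,1,g\}$ be the two-element group $\{1,g\}$ with a zero adjoined, so that $E(S)=\{0,1\}$ and $0\leq g$, whence $S$ is not $E$-unitary; let $X=\{x\}$ be a one-point space and put $X_s=X$, $\theta_s=\operatorname{id}_X$ for all $s\in S$. This is a global action, every $X_s$ is nonempty, and it is free (for any $s\in S_x=S$ fixing $x$, the idempotent $0$ satisfies $0\leq s$ and $x\in X_0$), hence topologically principal; yet $\theta_g$ is idempotent while $g\notin E(S)$. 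The same example shows that your density observation, though correct, cannot close the gap. Note also that your intermediate claim that $e\leq s,s^*$ forces $s$ and $s^*$ to be compatible already smuggles in the missing hypothesis: by Lemma~\ref{lem:eunitarycompatibility}, ``common lower bound implies compatible'' is precisely equivalent to $S$ being $E$-unitary. (Your first inclusion, $E(S)\subseteq\left\{s\in S:\theta_s\text{ is idempotent}\right\}$, is correct and needs no hypotheses, as you say; the paper leaves it implicit.)
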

\begin{proof}
Suppose $\theta_s$ is an idempotent. Since $X_{s^*}\neq\varnothing$, choose any $x\in X_{s^*}\cap\Lambda(\theta)$. Then $\theta_s(x)=x$, which implies that there is some $e\in E(S)$ with $e\leq s$, so $s$ is idempotent because $S$ is $E$-unitary.\qedhere
\end{proof}

\begin{lemma}\label{lemmaconditionsonpartialactionwhichimplysiseunitary}
Let $S$ be an inverse semigroup and $\theta=\left(\left\{X_s\right\}_{s\in S},\left\{\theta_s\right\}_{s\in S}\right)$ be a partial action of $S$ a space $X$ such that
\begin{enumerate}[label=(\roman*)]
\item\label{lemmaconditionsonpartialactionwhichimplysiseunitary1} $\theta$ factors through $\mathbf{G}(S)$ -- there is a partial action $\widetilde{\theta}=\left(\left\{X_{[s]}\right\}_{[s]\in \mathbf{G}(S)},\left\{\widetilde\theta_{[s]}\right\}_{[s]\in \mathbf{G}(S)}\right)$ such that $\widetilde{\theta}_{[s]}(x)=\theta_s(x)$ for all $x\in X_{s^*}$;
\item\label{lemmaconditionsonpartialactionwhichimplysiseunitary2} $E(S)=\left\{s\in S:\theta_s\text{ is idempotent}\right\}$.
\end{enumerate}
Then $S$ is $E$-unitary.
\end{lemma}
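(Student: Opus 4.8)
The plan is to verify the defining property of $E$-unitarity directly: given $e\in E(S)$ and $s\in S$ with $e\leq s$, I must show that $s\in E(S)$. The second hypothesis, that $E(S)=\left\{s\in S:\theta_s\text{ is idempotent}\right\}$, converts this into a purely dynamical statement: it suffices to show that $\theta_s$ is an idempotent of $\mathcal{I}(X)$, i.e.\ that $\theta_s=\operatorname{id}_{X_{s^*}}$. Thus the whole argument reduces to proving that $s$ fixes every point of its domain.

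To establish this, I would first pass to the maximal group image. Since $e\leq s$ and trivially $e\leq e$, the element $e$ is a common lower bound of $s$ and $e$, so $s\sim e$ in the congruence defining $\mathbf{G}(S)$, whence $[s]=[e]$; as $e$ is idempotent, $[e]$ is the identity $1$ of the group $\mathbf{G}(S)$, and therefore $[s]=1$. Now I invoke the factorization hypothesis: for every $x\in X_{s^*}$ one has $\theta_s(x)=\widetilde\theta_{[s]}(x)=\widetilde\theta_1(x)$.

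The key remaining point is that $\widetilde\theta_1$ is the identity on its domain. This holds because $1$ is the (unique) idempotent of the group $\mathbf{G}(S)$, and under any partial action an idempotent acts as a partial identity: if $f=f^*=f^2$ then $\widetilde\theta_f\colon X_f\to X_f$ is a bijection with $\widetilde\theta_f\widetilde\theta_f\leq\widetilde\theta_{ff}=\widetilde\theta_f$, and since $\widetilde\theta_f\widetilde\theta_f$ and $\widetilde\theta_f$ share the domain $X_f$, this forces $\widetilde\theta_f=\operatorname{id}_{X_f}$. For the group this yields $\widetilde\theta_1=\operatorname{id}_X$, using non-degeneracy $X_1=X$. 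Since $x\in X_{s^*}\subseteq X$, we get $\theta_s(x)=x$ for all $x\in X_{s^*}$, so $\theta_s=\operatorname{id}_{X_{s^*}}$ is idempotent in $\mathcal{I}(X)$, and the second hypothesis gives $s\in E(S)$.

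The argument is short and largely formal; the only place demanding care is the identification $\widetilde\theta_1=\operatorname{id}$, where one must recall both that the factorization is asserted only pointwise on the subdomains $X_{s^*}$ and that $X_1=X$ stems from non-degeneracy. Once these bookkeeping points are handled, no genuine obstacle remains, and the two hypotheses combine to give precisely the $E$-unitary condition.
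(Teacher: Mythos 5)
Your proof is correct and follows essentially the same route as the paper's: pass to $\mathbf{G}(S)$ via $[s]=[e]=1$, use the factorization to get $\theta_s(x)=\widetilde\theta_1(x)=x$ on $X_{s^*}$, and conclude via hypothesis (ii). The only difference is that you explicitly justify $\widetilde\theta_1=\operatorname{id}_X$ (via $\widetilde\theta_1\widetilde\theta_1\leq\widetilde\theta_1$ with equal domains and non-degeneracy), a step the paper's proof takes for granted.
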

\begin{proof}
Suppose $e\in E(S)$, $e\leq s$. We have $1=[e]=[s]$, thus for all $x\in X_{s^*}$, $\theta_s(x)=\widetilde{\theta}_{[s]}(x)=\widetilde{\theta}_1(x)=x$, so $\theta_s$ is an idempotent and $s$ is idempotent by \ref{lemmaconditionsonpartialactionwhichimplysiseunitary2}.\qedhere
\end{proof}

Given an inverse semigroup $S$, we will consider the \emph{canonical} action of $S$ on itself as the action $\alpha=\left(\left\{D_s\right\}_{s\in S},\left\{\alpha_s\right\}_{s\in S}\right)$, where $D_s=\left\{t\in S:t^*t\leq ss^*\right\}$, and $\alpha_s(t)=st$ for $t\in D_{s^*}$. (This action is usually considered when one proves the Vagner-Preston theorem.)

\begin{theorem}
$S$ is $E$-unitary if and only if it admits a topologically principal partial action satisfying \ref{lemmaconditionsonpartialactionwhichimplysiseunitary1} and \ref{lemmaconditionsonpartialactionwhichimplysiseunitary2} of Lemma \ref{lemmaconditionsonpartialactionwhichimplysiseunitary}.
\end{theorem}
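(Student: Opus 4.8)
The plan is to prove the two implications separately, the forward implication being essentially immediate and the reverse implication being the substantive part.

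For the ``if'' direction, suppose $S$ admits a topologically principal partial action satisfying conditions \ref{lemmaconditionsonpartialactionwhichimplysiseunitary1} and \ref{lemmaconditionsonpartialactionwhichimplysiseunitary2}. This requires no work: Lemma~\ref{lemmaconditionsonpartialactionwhichimplysiseunitary} already asserts that \emph{any} partial action satisfying \ref{lemmaconditionsonpartialactionwhichimplysiseunitary1} and \ref{lemmaconditionsonpartialactionwhichimplysiseunitary2} forces $S$ to be $E$-unitary. (Note that topological principality is not even needed here.)

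For the ``only if'' direction, I would assume $S$ is $E$-unitary and exhibit an explicit witness, namely the canonical action $\alpha=(\{D_s\}_{s\in S},\{\alpha_s\}_{s\in S})$ of $S$ on itself, viewed as a discrete space. First I would record that each $D_s$ is nonempty (for instance $ss^*\in D_s$), so that the hypotheses of Lemma~\ref{lemmaesidempotents} become available. Condition \ref{lemmaconditionsonpartialactionwhichimplysiseunitary1} is then automatic: since $S$ is $E$-unitary, Theorem~\ref{theoremfactorpartialactioneunitary} produces the induced partial action $\widetilde{\alpha}$ of $\mathbf{G}(S)$ with $\widetilde{\alpha}_{[s]}(t)=\alpha_s(t)$ for all $t\in D_{s^*}$, which is exactly the factorization demanded by \ref{lemmaconditionsonpartialactionwhichimplysiseunitary1}.

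The heart of the argument is to verify that $\alpha$ is topologically principal; since the underlying space $S$ is discrete, this amounts to showing $\alpha$ is \emph{free}, i.e.\ $\Lambda(\alpha)=S$. So I would fix $t\in S$ and $s\in S_t$ with $\alpha_s(t)=st=t$, and look for an idempotent $e\le s$ lying in $S_t$. The key computation is that right-multiplying $st=t$ by $t^*$ gives $s(tt^*)=tt^*$; as $tt^*$ is idempotent we have $s(tt^*)\le s$, and hence $tt^*\le s$. Since $S$ is $E$-unitary and $tt^*$ is an idempotent below $s$, this forces $s$ itself to be idempotent. Then $e:=s$ does the job: it is idempotent, it satisfies $e\le s$, and it lies in $S_t$ by hypothesis. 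This proves $\Lambda(\alpha)=S$, so $\alpha$ is free and therefore topologically principal. Finally, condition \ref{lemmaconditionsonpartialactionwhichimplysiseunitary2} follows directly from Lemma~\ref{lemmaesidempotents}, since $\alpha$ is topologically principal with every $D_s\neq\varnothing$ and $S$ is $E$-unitary.

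I expect the main obstacle to be the freeness verification, and specifically the clean extraction of an idempotent below $s$ from the fixed-point equation $st=t$: the right idempotent to try is the range idempotent $tt^*$, and the crux is recognizing that $st=t$ already pins $s$ down on $tt^*$ and that $E$-unitarity upgrades ``$s$ lies above an idempotent'' to ``$s$ is an idempotent.'' Some care is also needed to keep straight the domain/range idempotent conventions in the definition of $D_s$, both so that $\alpha$ is genuinely a partial action and so that the membership $s\in S_t$ translates into the correct inequality between idempotents.
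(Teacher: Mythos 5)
Your proposal is correct and follows essentially the same route as the paper's own proof: the ``if'' direction is delegated to Lemma~\ref{lemmaconditionsonpartialactionwhichimplysiseunitary}, and for the ``only if'' direction you use the canonical action of $S$ on itself, prove freeness via the computation $st=t \Rightarrow stt^*=tt^* \Rightarrow tt^*\leq s$ and $E$-unitarity, and then invoke Theorem~\ref{theoremfactorpartialactioneunitary} for condition~\ref{lemmaconditionsonpartialactionwhichimplysiseunitary1} and Lemma~\ref{lemmaesidempotents} for condition~\ref{lemmaconditionsonpartialactionwhichimplysiseunitary2}. Your added care (checking $D_s\neq\varnothing$ so Lemma~\ref{lemmaesidempotents} applies, and exhibiting $e:=s$ explicitly where the paper says ``clearly'') only makes explicit what the paper leaves implicit.
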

\begin{proof}
One implication is proven in Lemma \ref{lemmaconditionsonpartialactionwhichimplysiseunitary}. Assume then that $S$ is $E$-unitary, and let us prove that the canonical action $\alpha$ of $S$ is free: Suppose $st=t$, where $tt^*\leq s^*s$. Then $s\geq stt^*=tt^*$, which is idempotent, so $s$ is itself an idempotent. This clearly implies that the action $\alpha$ is free. Condition \ref{lemmaconditionsonpartialactionwhichimplysiseunitary1} follows from Theorem~\ref{theoremfactorpartialactioneunitary}, and condition \ref{lemmaconditionsonpartialactionwhichimplysiseunitary2} from Lemma~\ref{lemmaesidempotents}.\qedhere
\end{proof}

In fact, condition \ref{lemmaconditionsonpartialactionwhichimplysiseunitary2} of Lemma \ref{lemmaconditionsonpartialactionwhichimplysiseunitary} is always satisfied by the canonical action $\alpha$ of an inverse semigroup $S$ on itself: if $\alpha_s$ is idempotent, then $s=ss^*s=\alpha_s(s^*s)=s^*s$ is idempotent. We thus obtain:
\begin{corollary}
$S$ is $E$-unitary if and only if the canonical action of $S$ factors through $\mathbf{G}(S)$.
\end{corollary}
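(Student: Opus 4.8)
The plan is to read off both implications directly from Lemma~\ref{lemmaconditionsonpartialactionwhichimplysiseunitary} and Theorem~\ref{theoremfactorpartialactioneunitary}, applied to the canonical action $\alpha=(\{D_s\}_{s\in S},\{\alpha_s\}_{s\in S})$ of $S$ on itself (with $S$ regarded as a discrete topological space), exploiting the observation made just before the statement that $\alpha$ automatically satisfies condition~\ref{lemmaconditionsonpartialactionwhichimplysiseunitary2} of that lemma. The only preliminary point to record is that $\alpha$ is indeed a legitimate non-degenerate partial action on the discrete space $S$: each $D_s=\{t\in S:t^*t\leq ss^*\}$ is (trivially) open, each $\alpha_s\colon D_{s^*}\to D_s$ is a bijection, and $S=\bigcup_{e\in E(S)}D_e$ since any $t\in S$ lies in $D_{t^*t}$.

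For the forward implication, I would assume $S$ is $E$-unitary and apply Theorem~\ref{theoremfactorpartialactioneunitary} to $\alpha$. This produces a unique partial action $\widetilde\alpha$ of $\mathbf{G}(S)$ on $S$ with $\widetilde\alpha_{[s]}(t)=\alpha_s(t)$ for every $(s,t)\in S\ast S$, which is exactly the assertion that the canonical action factors through $\mathbf{G}(S)$. For the reverse implication, I would assume that the canonical action factors through $\mathbf{G}(S)$; this is precisely condition~\ref{lemmaconditionsonpartialactionwhichimplysiseunitary1} of Lemma~\ref{lemmaconditionsonpartialactionwhichimplysiseunitary}. Since $\alpha$ always satisfies condition~\ref{lemmaconditionsonpartialactionwhichimplysiseunitary2} (if $\alpha_s$ is idempotent then $s=ss^*s=\alpha_s(s^*s)=s^*s\in E(S)$), Lemma~\ref{lemmaconditionsonpartialactionwhichimplysiseunitary} immediately yields that $S$ is $E$-unitary.

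The content here is entirely bookkeeping: the two substantive ingredients have already been proved, so there is no real obstacle beyond verifying that the canonical action meets the hypotheses of Theorem~\ref{theoremfactorpartialactioneunitary} (non-degeneracy and openness of the $D_s$), which is routine once $S$ is given the discrete topology. I would present the argument simply as a direct corollary, combining the preceding Theorem's $E$-unitary characterization with the automatic validity of condition~\ref{lemmaconditionsonpartialactionwhichimplysiseunitary2} for $\alpha$.
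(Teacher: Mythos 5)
Your proof is correct and is essentially the paper's own (implicit) argument: the forward direction is Theorem~\ref{theoremfactorpartialactioneunitary} applied to the canonical action regarded on the discrete space $S$, and the converse is Lemma~\ref{lemmaconditionsonpartialactionwhichimplysiseunitary} combined with the observation, recorded just before the corollary, that condition~\ref{lemmaconditionsonpartialactionwhichimplysiseunitary2} of that lemma holds automatically for the canonical action. The only cosmetic difference is that the paper reaches the forward implication via the preceding theorem (which also establishes freeness of the canonical action), a fact your argument correctly identifies as unnecessary for this corollary.
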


\section{Continuous Orbit Equivalence}\label{sec:continuousorbitequivalence}

In \cite{Li2017}, Li characterized continuous orbit equivalence of topologically free partial actions of countable groups on second-countable, locally compact Hausdorff spaces in terms of diagonal-preserving isomorphisms of the associated C*-crossed products. In this section, we will extend the notion of continuous orbit equivalence to partial actions of inverse semigroups and characterize orbit equivalence of topologically principal systems in terms of diagonal-preserving isomorphisms of the associated crossed products.

Throughout this section, $\theta=\left(\left\{X_s\right\}_{s\in S},\left\{\theta_s\right\}_{s\in S}\right)$ and $\gamma=\left(\left\{Y_t\right\}_{t\in T},\left\{\gamma_t\right\}_{t\in T}\right)$ will always denote topological partial actions of inverse semigroup $S$ and $T$ on topological spaces $X$ and $Y$, respectively. Recall that $S\ast X=\left\{(s,x)\in S\times X:s\in S\text{ and }x\in X_{s^*}\right\}$ (and similarly for $T\ast Y$). We regard $S$ and $T$ as discrete topological spaces.

\begin{definition}\label{definitioncoe}
We say that $\theta$ and $\gamma$ are \emph{continuously orbit equivalent} if there exist a homeomorphism
\[\varphi\colon X \longrightarrow Y\]
and continuous maps
\[a\colon S\ast X \longrightarrow T\qquad\text{and}\qquad b\colon T\ast Y \longrightarrow S\]
such that for all $x\in X$, $s\in S_x$, $y\in Y$ and $t\in T_y$,
\begin{enumerate}[label=(\roman*)]
\item\label{definitioncoea} $ \varphi(\theta_s(x)) = \gamma_{a(s,x)}(\varphi(x))$;
\item\label{definitioncoeb} $ \varphi^{-1}(\gamma_t(y)) = \theta_{b(t,y)}(\varphi^{-1}(y))$.
\end{enumerate}
Implicitly, we require that $a(g,x) \in T_{\varphi(x)}$ and $b(t,y) \in S_{\varphi^{-1}(y)}$.
We call the triple $(\varphi,a,b)$ a \emph{continuous orbit equivalence} from $\theta$ to $\gamma$.
\end{definition}

Our next goal is to prove that continuous orbit equivalence of topologically principal partial actions is equivalent to the isomorphism of the respective groupoids of germs. For this, we need to prove some identities related to how the functions $a$ and $b$ above preserve the structure of $S$ and $T$.

\begin{lemma}\label{lem:seila}
Let $(\varphi,a,b)$ be a continuous orbit equivalence from $\theta$ to $\gamma$. Assume that $X$ and $Y$ are Hausdorff. Then the following implications hold:
\begin{enumerate}[label=(\alph*)]
    \item\label{lem:seilaa} $[s_1, x]=[s_2, x] \Rightarrow [a(s_1,x),\varphi(x)]=[a(s_2,x),\varphi(x)]$, for all $x\in X$ and $s_1, s_2 \in S_x$.
    \item\label{lem:seilab} $[a(s_1s_2, x),\varphi(x)]=[a(s_1, \theta_{s_2}(x))a(s_2, x),\varphi(x)]$ for all $x\in X$, $s_2\in S_x$ and $s_1\in S_{\theta_{s_2}(x)}$.
    \item\label{lem:seilac} $[b(a(s, x), \varphi(x)),x] = [s,x]$, for all $x\in X$ and $s\in S_x$.
\end{enumerate}

Analogous statements hold with $(\varphi^{-1}, b, a)$ in place of $(\varphi, a, b)$.
\end{lemma}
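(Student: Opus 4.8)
The plan is to reduce all three identities to a single ``germ-rigidity'' principle and then feed the orbit-equivalence identities into it. The first observation is that, since $S$ and $T$ carry the discrete topology, continuity of $a$ and $b$ means exactly that $a(s,\cdot)$ and $b(t,\cdot)$ are locally constant on their domains. The engine I would isolate is the following: if $\gamma$ is topologically principal and the groupoid of germs $T\ltimes Y$ is Hausdorff, then for $t_1,t_2\in T$ with $\gamma_{t_1}=\gamma_{t_2}$ on some open neighbourhood $V$ of a point $y_0\in Y_{t_1^*}\cap Y_{t_2^*}$, one has $[t_1,y_0]=[t_2,y_0]$. To prove this, note that for every $y\in V\cap\Lambda(\gamma)$ the germ $[t_1,y]^{-1}[t_2,y]=[t_1^*t_2,y]$ has source and range both equal to $y$ (because $\gamma_{t_2}(y)=\gamma_{t_1}(y)$), so it lies in the isotropy group at $y$; by Proposition~\ref{prop:principalprincipal} and Definition~\ref{def:topprincipalpaction} this isotropy is trivial, whence $[t_1,y]=[t_2,y]$. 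Since $\Lambda(\gamma)$ is dense, $V\cap\Lambda(\gamma)$ is dense in $V$, and the two maps $V\to T\ltimes Y$, $y\mapsto[t_1,y]$ and $y\mapsto[t_2,y]$, being continuous sections of basic bisections, agree on a dense set; Hausdorffness of $T\ltimes Y$ forces them to agree at $y_0$. The symmetric engine, with $\theta$ and $S\ltimes X$ in place of $\gamma$ and $T\ltimes Y$, is proved identically.

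With this in hand, each part becomes a short computation. For \ref{lem:seilaa}, the hypothesis $[s_1,x]=[s_2,x]$ means $\theta_{s_1}$ and $\theta_{s_2}$ coincide on a neighbourhood $W$ of $x$ (Remark~\ref{changebygreater}); shrinking $W$ so that $a(s_1,\cdot)$ and $a(s_2,\cdot)$ are constant on it and applying identity~\ref{definitioncoea} pointwise gives, for $x'\in W$,
\[\gamma_{a(s_1,x)}(\varphi(x'))=\varphi(\theta_{s_1}(x'))=\varphi(\theta_{s_2}(x'))=\gamma_{a(s_2,x)}(\varphi(x')),\]
so $\gamma_{a(s_1,x)}=\gamma_{a(s_2,x)}$ near $\varphi(x)$ and the engine yields the claim. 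For \ref{lem:seilab}, writing $t_L=a(s_1s_2,x)$ and $t_R=a(s_1,\theta_{s_2}(x))\,a(s_2,x)$, I would compute on a neighbourhood of $x$ (using local constancy of $a(s_1,\cdot)$ at $\theta_{s_2}(x)$ and of $a(s_2,\cdot)$ at $x$, together with $\theta_{s_1s_2}=\theta_{s_1}\theta_{s_2}$ where defined) that both $\gamma_{t_L}$ and $\gamma_{t_R}$ send $\varphi(x')$ to $\varphi(\theta_{s_1}\theta_{s_2}(x'))$; the engine then gives $[t_L,\varphi(x)]=[t_R,\varphi(x)]$. For \ref{lem:seilac}, put $s'=b(a(s,x),\varphi(x))$; local constancy of $a$ and $b$ shows $s'=b(a(s,x'),\varphi(x'))$ for $x'$ near $x$, and then combining \ref{definitioncoea} with \ref{definitioncoeb} gives $\theta_{s'}(x')=\varphi^{-1}(\gamma_{a(s,x')}(\varphi(x')))=\theta_s(x')$, so $\theta_{s'}=\theta_s$ near $x$ and the $S\ltimes X$-engine yields $[s',x]=[s,x]$.

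The analogous statements for $(\varphi^{-1},b,a)$ follow verbatim after interchanging the roles of $(\theta,S,X)$ and $(\gamma,T,Y)$ and of $a$ and $b$. The one genuinely delicate point — and the step I expect to be the main obstacle — is the passage from equality of the induced partial homeomorphisms on a neighbourhood to equality of germs, since in general two semigroup elements can act identically near a point without being germ-equivalent (Example~\ref{nonhausdorffaction}). It is precisely here that topological principality (to obtain germ-equality on the dense set $\Lambda$, via triviality of isotropy) and Hausdorffness of the groupoids of germs (to propagate it from $\Lambda$ to the possibly non-principal point in question) are both used; everything else is bookkeeping with local constancy and the defining identities \ref{definitioncoea}--\ref{definitioncoeb}.
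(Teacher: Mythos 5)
Your strategy is essentially the paper's: exploit local constancy of $a$ and $b$ (continuity into discrete targets), reduce each of (a)--(c) to the agreement of two partial homeomorphisms on a neighbourhood, use topological principality via \eqref{eq:betterdefinitionoflambdatheta} to get germ equality on the dense set $\varphi^{-1}(\Lambda(\gamma))$ (resp.\ $\Lambda(\theta)$), and then pass from the dense set to the point in question by a density/limit argument; your bookkeeping in (a), (b), (c) matches the paper's almost step for step. The one genuine divergence is the limit step. Your ``engine'' assumes that the groupoids of germs $S\ltimes X$ and $T\ltimes Y$ are Hausdorff, which is \emph{not} among the lemma's hypotheses (only $X$ and $Y$ are assumed Hausdorff). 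The paper instead argues that the relevant germs all lie in the single bisection $[a(s_1,x),\varphi(U)]$, which is Hausdorff because it is homeomorphic to $\varphi(U)\subseteq Y$, and invokes uniqueness of limits there.

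Your caution about this step is exactly right, and in fact the paper's weaker argument does not work: the net $\tilde x\mapsto[a(s_1,x),\varphi(\tilde x)]=[a(s_2,x),\varphi(\tilde x)]$ converges to the two candidate limits $[a(s_1,x),\varphi(x)]$ and $[a(s_2,x),\varphi(x)]$, which lie in two \emph{different} bisections, and Hausdorffness of each bisection separately does not force these limits to coincide; for that one needs the ambient groupoid (or at least the union of the two bisections) to be Hausdorff. The lemma is actually false as stated. Let $S=\{1\}$ be the trivial group acting trivially on $X=\mathbb{N}\cup\{\infty\}$ (the one-point compactification), and let $T=\mathbb{N}\cup\{\infty,z\}$ act on $Y=\mathbb{N}\cup\{\infty\}$ by the Munn representation of Example~\ref{nonhausdorffaction}; both actions are topologically principal and $X,Y$ are compact Hausdorff. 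Since every $\theta_s$ and every $\gamma_t$ is a restriction of the identity map, the triple $\varphi=\operatorname{id}$, $a\equiv z$, $b\equiv 1$ satisfies conditions \ref{definitioncoea} and \ref{definitioncoeb} and is a continuous orbit equivalence; yet item \ref{lem:seilab} fails at $x=\infty$ with $s_1=s_2=1$, since it would assert $[z,\infty]=[zz,\infty]=[\infty,\infty]$, which is false in $T\ltimes Y$ (the common lower bounds of $z$ and $\infty$ in $T$ are the elements of $\mathbb{N}$, and none of them has $\infty$ in the domain of its action). So the Hausdorffness of the groupoids of germs that you add is not a dispensable convenience but a necessary repair: with it, your engine and the derivations of (a)--(c) are correct, and this strengthened form is what Theorem~\ref{theo:isodequasetudo} actually needs; without it, the paper's proof (and, by the same example, Theorem~\ref{theo:coegroupoidgermiso} together with the remark that the implication needs no Hausdorffness of the groupoids of germs) breaks down.
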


\begin{proof}
\begin{enumerate} \setlength\itemsep{1ex}
\item[(a)] Let $x\in X$ and $s_1, s_2 \in S_x$. Suppose that $[s_1,x]=[s_2,x]$. First, choose $s\leq s_1,s_2$ such that $x\in X_{s^*}$. Then choose an open neighbourhood $U\subseteq X_{s^*}$ of $x\in X$ such that
\[a(s_1, \tilde{x})=a(s_1, x)\qquad\text{and}\qquad a(s_2, \tilde{x})=a(s_2, x)\qquad\text{whenever }\tilde{x}\in U.\] Then for all $\tilde{x}\in U\cap\varphi^{-1}(\Lambda(\gamma))$ and for $i=1,2$, we have $[s_i,\tilde{x}]=[s,\tilde{x}]$, so
\begin{align*}
    \gamma_{a(s_i,x)}(\varphi(\tilde{x})&=\gamma_{a(s_i,\widetilde{x})}(\varphi(\tilde{x}))=\varphi(\theta_{s_i}(\tilde{x}))=\varphi(\ra[s_i,\tilde{x}])=\varphi(\ra[s,\tilde{x}]).
\end{align*}

It follows that $\gamma_{a(s_1,x)}(\varphi(\tilde{x}))=\gamma_{a(s_2,x)}(\varphi(\tilde{x}))$. As $\varphi(\tilde{x})\in\Lambda(\gamma)$, the description of $\Lambda(\gamma)$ as in Equation \eqref{eq:betterdefinitionoflambdatheta} implies that \[[a(s_1,x),\varphi(\tilde{x})]=[a(s_2,x),\varphi(\tilde{x})]\qquad\text{for all } x\in U\cap\varphi^{-1}(\Lambda(\gamma)).\ntag\label{eq:comparisonofasixvarphixtilde}\] In particular, $[a(s_i,x),\varphi(\tilde{x})]$ and $[a(s_i,x),\varphi(x)]$ belong to the bisection $[a(s_1,x),\varphi(U)]$, which is Hausdorff.

Since $\gamma$ is topologically principal, $\Lambda(\gamma)$ is dense in $Y$, so $U\cap \varphi^{-1}(\Lambda(\gamma))$ is dense in $U$ and therefore we may take the limit $\widetilde{x}\to x$ in Equation \eqref{eq:comparisonofasixvarphixtilde} and conclude that $[a(s_1,x),\varphi(x)]=[a(s_2,x),\varphi(x)]$,  limits are unique in Hausdorff spaces.

\item[(b)] Choose an open neighbourhood $U$ of $x \in X$ such that 
\[a(s_1s_2,\tilde{x})=a(s_1s_2,x),\quad a(s_1,\theta_{s_2}(\tilde{x}))= a(s_1,\theta_{s_2}(x))\quad\text{and}\quad a(s_2, \tilde{x}) = a(s_2,x)\]
for all $\tilde{x} \in U$. Then for all $\tilde{x} \in U \cap\varphi^{-1}(\Lambda(\gamma))$
\begin{align*}
  \gamma_{a(s_1s_2, \tilde{x})}(\varphi(\tilde{x})) 
     & = \varphi(\theta_{s_1s_2}(\tilde{x})) 
       = \varphi(\theta_{s_1}(\theta_{s_2}(\tilde{x})) 
       = \gamma_{a(s_1,\theta_{s_2}(\tilde{x}))}(\varphi(\theta_{s_2}(\tilde{x}))) \\
     & = \gamma_{a(s_1, \theta_{s_2}(\tilde{x}))}(\gamma_{a(s_2, \tilde{x})}(\varphi(\tilde{x})))
       = \gamma_{a(s_1, \theta_{s_2}(\tilde{x}))a(s_2, \tilde{x})}(\varphi(\tilde{x}))
\end{align*}
so, the same way as in item (a), the given property of $U$ and the definition of $\Lambda(\gamma)$ imply that $[a(s_1s_2,x),\varphi(\tilde{x})]=[a(s_1, \theta_{s_2}(x))a(s_2, x),\varphi(\tilde{x})]$. Since $\varphi^{-1}(\Lambda(\gamma))\cap U$ is dense in the Hausdorff space $U$, we conclude that $[a(s_1s_2,x),\varphi(x)]=[a(s_1,\theta_{s_2}(x))a(s_2,x),\varphi(x)]$ by taking the limit $\tilde{x}\to x$.

\item[(c)] Similarly to the previous items, take neighbourhoods $U$ of $x$ and $V$ of $\varphi(x)$ such that
\[a(s,\tilde{x})=a(s,x)\qquad\text{and}\qquad b(a(s,x),\tilde{y})=b(a(s,x),\varphi(x))\]
whenever $\tilde{x}\in U$ and $\tilde{y}\in V$. Then for all $\tilde{x}\in U\cap\varphi^{-1}(V)\cap \Lambda(\theta)$,
\[\theta_{b(a(s,\tilde{x}),\varphi(\tilde{x}))}(\tilde{x})=\varphi^{-1}(\gamma_{a(s,\tilde{x})}(\varphi(\tilde{x})))=\varphi^{-1}(\varphi(\theta_s(\tilde{x})))=\theta_s(x)\]
so the properties of $U$, $V$ and $\Lambda(\theta)$ yield $[b(a(s,x),\varphi(x)),\tilde{x}]=[s,\tilde{x}]$ and again taking $\tilde{x}\to x$ gives us the desired result.\qedhere
\end{enumerate}
\end{proof}

\begin{theorem}\label{theo:coegroupoidgermiso}
Suppose that $\theta$ and $\gamma$ are topologically principal, continuously orbit equivalent partial actions, and that $X$ and $Y$ are Hausdorff. Then $S \ltimes X$ and $T \ltimes Y$ are isomorphic as topological groupoids.
\end{theorem}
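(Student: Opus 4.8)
The plan is to build the isomorphism explicitly out of the orbit equivalence data $(\varphi,a,b)$, letting Lemma \ref{lem:seila} do the heavy lifting. I would define $\Theta\colon S\ltimes X\to T\ltimes Y$ on germs by $\Theta[s,x]=[a(s,x),\varphi(x)]$ and, symmetrically, $\Xi\colon T\ltimes Y\to S\ltimes X$ by $\Xi[t,y]=[b(t,y),\varphi^{-1}(y)]$. Since the source of a germ determines its base point, the only issue for well-definedness of $\Theta$ is independence of the chosen representative $s$, which is exactly the implication in Lemma \ref{lem:seila}\ref{lem:seilaa}; the implicit requirement $a(s,x)\in T_{\varphi(x)}$ from Definition \ref{definitioncoe} guarantees $(a(s,x),\varphi(x))\in T\ast Y$, so $\Theta$ genuinely lands in $T\ltimes Y$. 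The analogous statements for $(\varphi^{-1},b,a)$ do the same for $\Xi$.

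First I would check that $\Theta$ and $\Xi$ are mutually inverse, which settles injectivity and surjectivity in one stroke. Indeed $\Xi(\Theta[s,x])=[b(a(s,x),\varphi(x)),x]=[s,x]$ by Lemma \ref{lem:seila}\ref{lem:seilac}, and the reverse composite $\Theta\circ\Xi$ is the identity by the corresponding statement for $(\varphi^{-1},b,a)$. Next I would verify that $\Theta$ is a groupoid homomorphism. If $[s_1,x_1][s_2,x_2]$ is composable, so $x_1=\theta_{s_2}(x_2)$, then condition \ref{definitioncoea} of Definition \ref{definitioncoe} gives $\so(\Theta[s_1,x_1])=\varphi(\theta_{s_2}(x_2))=\gamma_{a(s_2,x_2)}(\varphi(x_2))=\ra(\Theta[s_2,x_2])$, so the images are composable, and their product $[a(s_1,\theta_{s_2}(x_2))a(s_2,x_2),\varphi(x_2)]$ equals $\Theta[s_1s_2,x_2]=\Theta([s_1,x_1][s_2,x_2])$ precisely by Lemma \ref{lem:seila}\ref{lem:seilab}. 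Preservation of products already forces preservation of units and inverses: in a groupoid the idempotents are exactly the units, and $\Theta$ of a unit $u=uu$ satisfies $\Theta(u)=\Theta(u)\Theta(u)$, hence is a unit; likewise $\Theta(g)\Theta(g^{-1})$ and $\Theta(g^{-1})\Theta(g)$ are units, forcing $\Theta(g^{-1})=\Theta(g)^{-1}$. Thus $\Theta$ is an algebraic groupoid isomorphism.

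It remains to see that $\Theta$ is a homeomorphism, and here the convenient property to establish is openness rather than continuity. Fixing $[s,x]$ and writing $t_0=a(s,x)$, continuity of $a$ together with the discreteness of $T$ yields an open $U\subseteq X_{s^*}$ with $a(s,\tilde x)=t_0$ for all $\tilde x\in U$; then $\Theta([s,U])=[t_0,\varphi(U)]$, which is a basic open set of $T\ltimes Y$ since $\varphi$ is a homeomorphism and $\varphi(U)\subseteq Y_{t_0^*}$ (again from $a(s,\tilde x)\in T_{\varphi(\tilde x)}$). As such small $[s,U]$ form a basis of $S\ltimes X$ by Proposition \ref{propositionbgermisbasis}, the map $\Theta$ is open; the identical argument applied to $\Xi$, using continuity of $b$, shows $\Xi$ is open as well. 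Two mutually inverse open bijections are both continuous, so $\Theta$ is an isomorphism of topological groupoids.

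I expect the genuinely delicate points to be already absorbed into Lemma \ref{lem:seila}, whose proof is where the Hausdorff hypothesis and topological principality enter, via the limiting argument along the dense set $\varphi^{-1}(\Lambda(\gamma))$. Granting that lemma, the theorem is an assembly job, and the two things to get right are the bookkeeping that composable pairs are sent to composable pairs (which is where Definition \ref{definitioncoe}\ref{definitioncoea} is used) and the observation that verifying openness of $\Theta$ and $\Xi$ separately is cleaner than verifying continuity of $\Theta$ directly.
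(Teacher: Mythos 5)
Your proposal is correct and takes essentially the same route as the paper: the paper defines the same two maps $\Phi[s,x]=[a(s,x),\varphi(x)]$ and $\Psi[t,y]=[b(t,y),\varphi^{-1}(y)]$ and invokes Lemma~\ref{lem:seila}\ref{lem:seilaa}, \ref{lem:seilab}, \ref{lem:seilac} for well-definedness, multiplicativity, and mutual inverseness exactly as you do. The only cosmetic difference is the topological step: the paper deduces continuity of both maps directly from continuity of $a$, $b$ and $\varphi$, while you verify openness of both via local constancy of $a$ and $b$; since the maps are mutually inverse bijections these are the same verification.
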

\begin{proof}
Let $(\varphi,a,b)$ be a continuous orbit equivalence from $\theta$ to $\gamma$ (as in Definition~\ref{definitioncoe}). Then the map
\begin{align*}
\Phi\colon S\ltimes X\to T\ltimes Y,\qquad \Phi[s,x]=[a(s,x),\varphi(x)]
\end{align*}
is a continuous groupoid homomorphism. Indeed, by Lemma~\ref{lem:seila}\ref{lem:seilaa}, $\Phi$ is well-defined, and item \ref{lem:seilab} of that lemma implies that $\Phi$ is a homomorphism. As $a$ and $\varphi$ are continuous, it follows that $\Phi$ is continuous. Similarly, the map  
 \begin{align*}
\Psi\colon T\ltimes Y\to S\ltimes X,\qquad\Psi[t,y]=[b(t,y),\varphi^{-1}(y)]
\end{align*}
is a continuous groupoid homomorphism, and $\Psi$ is a left inverse to $\Phi$ by Lemma~\ref{lem:seila}\ref{lem:seilac}. The same arguments with $(\varphi^{-1},b,a)$ in place of $(\varphi,a,b)$ prove that it is also a right inverse.\qedhere
\end{proof}

We will now be interested in constructing an orbit equivalence for two actions from an isomorphism of the corresponding groupoids of germs. Note that in general the continuous maps $a$ and $b$ in the definition of continuous orbit equivalence take values in discrete spaces (namely, the corresponding semigroups), and so $X$ and $Y$ are required to have sufficiently many clopen sets in order for a continuous orbit equivalence between the corresponding partial actions to exist. Thus we concentrate on spaces which have sufficiently many clopen sets and partial actions which respect this structure.

The required property for the topological spaces that we will need to consider is \emph{ultraparacompactness}, which is a stronger version of zero-dimensionality and covers most cases of interest (namely locally compact Hausdorff and zero-dimensional spaces which are also second countable or compact; see Example \ref{examplelindelofisultraparacompact}). We refer to \cite{MR0261565,MR1908882} and the references therein to finer properties, the history, and nontrivial examples of ultraparacompact spaces.

\begin{definition}
A Hausdorff topological space $X$ is \emph{ultraparacompact} if every open cover $\mathcal{U}$ of $X$ admits a refinement by clopen pairwise disjoint sets.
\end{definition}

Alternatively (see \cite[Proposition 1.2]{MR0261565}), a Hausdorff space $X$ is ultraparacompact if and only if it is paracompact\footnote{A topological space $X$ is \emph{paracompact} if every open cover of $X$ admits a locally finite refinement.}, and if whenever $F\subseteq O\subseteq X$, where $F$ is closed and $O$ is open, there is a clopen $C\subseteq X$ such that $F\subseteq C\subseteq O$.

\begin{example}\label{examplelindelofisultraparacompact}
Recall that a topological space $X$ is \emph{Lindelöf} if every open cover of $X$ admits a countable subcover. All compact spaces are Lindelöf, and all second-countable spaces are Lindelöf, and there are spaces which are compact but not second-countable and vice-versa.

Let us prove that every Lindelöf, Hausdorff and zero-dimensional space $X$ is ultraparacompact. Let $\mathscr{U}$ be an open cover of $X$. Since $X$ is zero-dimensional, there exists a refinement $\mathscr{V}$ of $\mathscr{U}$ by clopen sets, and we may assume that $\mathscr{V}$ is countable as $X$ is Lindelöf, say $\mathscr{V}=\left\{V_n:n\in\mathbb{N}\right\}$. Letting $V_0=\varnothing$, and defining $W_n=V_n\setminus\bigcup_{i=0}^{n-1}V_i$ for all $n\geq 1$, we obtain a refinement $\mathscr{W}=\left\{W_n:n\in\mathbb{N}\right\}$ of $\mathscr{U}$ by pairwise disjoint clopen sets.
\end{example}

\begin{definition}
    The topological partial action $\theta=(\left\{X_s\right\}_{s\in S},\left\{\theta_s\right\}_{s\in S})$ is \emph{almost ample} if $X$ is locally compact Hausdorff and all the subsets $X_s$ ($s\in S$) are ultraparacompact. (In particular, $X$ is zero-dimensional.)
\end{definition}

The class of almost ample partial actions is strictly larger class than the class of ``ample actions'' considered in \cite[Definition 5.2]{Steinberg2010}, as Example \ref{examplelindelofisultraparacompact} shows.

\begin{lemma}\label{lemmalocaldescriptionofcoe}
Suppose that the partial actions $\theta$ and $\gamma$ are almost ample. Let $\varphi\colon X\to Y$ be a continuous function. Then the following are equivalent:
 \begin{enumerate}[label=(\alph*)]
     \item\label{lemmalocaldescriptionofcoea} There exist a continuous function $a\colon S\ast X\to T$ such that for every $s\in S$ and $x\in X_{s^*}$, $\varphi(\theta_s(x))=\gamma_{a(s,x)}(\varphi(x))$;
     \item\label{lemmalocaldescriptionofcoeb} For every $s\in S$ and every $x\in X_{s^*}$, there exists a neighbourhood $U\subseteq X_{s^*}$ of $x$ and $t\in T$ such that $\varphi(\theta_s(\tilde{x}))=\gamma_t(\varphi(\tilde{x}))$ for all $\tilde{x}\in U$.
 \end{enumerate}
\end{lemma}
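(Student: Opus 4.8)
The plan is to treat the two implications separately. The forward direction, from the existence of a global continuous $a$ to the local description, is a soft consequence of the discreteness of $T$; the substantive content is the converse, where the pointwise data supplied by condition (b) must be assembled into a single continuous function $a\colon S\ast X\to T$, and this is precisely where ultraparacompactness will be used. For the easy implication, I would fix $s\in S$ and $x\in X_{s^*}$: since $T$ carries the discrete topology and $a$ is continuous, $a$ is locally constant, so intersecting the open set $a^{-1}(\{a(s,x)\})$ with the slice $\{s\}\times X_{s^*}$ yields a neighbourhood $U$ of $x$ on which $a(s,\cdot)$ is constantly equal to $t:=a(s,x)$. The defining identity then gives $\varphi(\theta_s(\tilde x))=\gamma_t(\varphi(\tilde x))$ for all $\tilde x\in U$, which is exactly condition (b).

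For the converse I would first record a structural observation: since $S$ is discrete, $S\ast X$ is the topological disjoint union of the clopen slices $\{s\}\times X_{s^*}$, each homeomorphic to $X_{s^*}$ via the second-coordinate projection. Hence a function on $S\ast X$ is continuous if and only if its restriction to every slice is continuous, and it suffices to construct $a(s,\cdot)$ separately for each fixed $s$. Fixing $s$, condition (b) furnishes for each $x\in X_{s^*}$ an open neighbourhood $U_x\subseteq X_{s^*}$ and an element $t_x\in T$ with $\varphi\circ\theta_s=\gamma_{t_x}\circ\varphi$ on $U_x$, so $\{U_x\}_{x\in X_{s^*}}$ is an open cover of $X_{s^*}$. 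Because $X_{s^*}$ is ultraparacompact (it is one of the domains of the almost ample action $\theta$), this cover admits a refinement $\mathscr{W}$ by pairwise disjoint clopen subsets of $X_{s^*}$. For each $W\in\mathscr{W}$ I pick $x(W)$ with $W\subseteq U_{x(W)}$ and set $a(s,\cdot):=t_{x(W)}$ on $W$; this is unambiguous since the members of $\mathscr{W}$ are disjoint and cover $X_{s^*}$, and the required identity holds on each $W$ by the choice of $t_{x(W)}$. As each $W$ is open, $a(s,\cdot)$ is locally constant, hence continuous on the slice, which completes the construction.

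The main obstacle is exactly this passage from a local to a global choice of the element $t$. Because the target $T$ is discrete, two witnesses attached to overlapping neighbourhoods cannot be interpolated, so neither a partition of unity nor a merely paracompact (open, locally finite) refinement would help. Ultraparacompactness is the hypothesis tailored to remove this difficulty: the \emph{disjointness} of the clopen refinement guarantees that $a(s,\cdot)$ is well defined, while the \emph{clopenness} guarantees that it is locally constant, i.e. continuous. Both features are genuinely needed — an open but non-closed disjoint refinement would fail to give continuity across the boundaries of its pieces, whereas a closed but non-open cover would obstruct local constancy — which is why the weaker zero-dimensionality or paracompactness alone does not suffice.
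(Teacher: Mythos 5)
Your proof is correct and follows essentially the same route as the paper's: the implication (a)$\Rightarrow$(b) uses local constancy of $a$ coming from the discreteness of $T$, and (b)$\Rightarrow$(a) uses ultraparacompactness of each $X_{s^*}$ to extract a pairwise disjoint clopen refinement of the cover $\{U_x\}$ and defines $a(s,\cdot)$ piecewise, exactly as in the paper (which states this construction more tersely). One minor quibble with your closing commentary only: a pairwise disjoint \emph{open} cover is automatically clopen (each piece's complement is the union of the remaining pieces), so the hypothetical ``open but non-closed disjoint refinement'' cannot occur; this does not affect the proof itself.
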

\begin{proof}
Assuming that \ref{lemmalocaldescriptionofcoea} is valid and given $(s,x)\in S\ast X$, we take $t=a(s,x)$ and $U=\left\{y\in X_{s^*}:a(s,y)=t\right\}$, which is open since $T$ is discrete and $a$ is continuous. Then the statement in \ref{lemmalocaldescriptionofcoeb} is valid.

Assume then that \ref{lemmalocaldescriptionofcoeb} is valid. Given $s\in S$, the condition in \ref{lemmalocaldescriptionofcoeb} and ultraparacompactness of $X_{s^*}$ allow us to find a clopen partition $\mathscr{U}_s$, and a family $\left\{t_U:U\in\mathscr{U}_s\right\}\subseteq T$ such that for all $U\in\mathscr{U}_s$ and all $x\in U$, $\varphi(\theta_s(x))=\gamma_{t_U}(\varphi(x))$.

We define $a\colon S\ast X\to T$, by setting $a(s,x)=t_U$, where $U$ is chosen as the unique element of $\mathscr{U}_s$ such that $x\in U$. Then \ref{lemmalocaldescriptionofcoea} holds.\qedhere
\end{proof}

We are now ready to prove that topological isomorphisms between Hausdorff groupoids of germs yield a continuous orbit equivalence between the respective partial actions.

\begin{theorem}\label{theo:groupoidgermscoe}
Suppose that $\theta$ and $\gamma$ are almost ample topological partial actions, and that the groupoids of germs $S \ltimes X$ and $T \ltimes Y$ are topologically isomorphic. Then $\theta$ and $\gamma$ are continuously orbit equivalent.
\end{theorem}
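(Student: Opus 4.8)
The plan is to extract the homeomorphism $\varphi$ from the given isomorphism of groupoids, and then build the maps $a$ and $b$ of Definition~\ref{definitioncoe} by invoking the local criterion of Lemma~\ref{lemmalocaldescriptionofcoe}, whose hypotheses are exactly matched by the almost ample assumption.

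First I would let $\Phi\colon S\ltimes X\to T\ltimes Y$ be a topological groupoid isomorphism. Since groupoid homomorphisms preserve the purely algebraic characterization of units ($u=\so(u)$), $\Phi$ restricts to a homeomorphism of unit spaces, which under the canonical identifications $(S\ltimes X)^{(0)}\cong X$ and $(T\ltimes Y)^{(0)}\cong Y$ from Equation~\eqref{eq:unitspaceofgroupoidofgerms} yields a homeomorphism $\varphi\colon X\to Y$. Because $\Phi$ commutes with the source and range maps, for every $(s,x)\in S\ast X$ the arrow $\Phi[s,x]$ satisfies $\so(\Phi[s,x])=\varphi(x)$ and $\ra(\Phi[s,x])=\varphi(\theta_s(x))$.

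Next I would produce $a$. By Lemma~\ref{lemmalocaldescriptionofcoe} it suffices to verify the local condition~\ref{lemmalocaldescriptionofcoeb}: for each $s\in S$ and $x\in X_{s^*}$ there is a neighbourhood $U\subseteq X_{s^*}$ of $x$ and $t\in T$ with $\varphi(\theta_s(\tilde{x}))=\gamma_t(\varphi(\tilde{x}))$ for all $\tilde{x}\in U$. To obtain this, consider the basic bisection $[s,X_{s^*}]\in\mathscr{B}_{\mathrm{germ}}$, whose image $\Phi([s,X_{s^*}])$ is an open bisection of $T\ltimes Y$ containing $\Phi[s,x]$. Since the sets $[t,V]$ form a basis of $T\ltimes Y$, I would choose a basic bisection $[t,V]$ with $\Phi[s,x]\in[t,V]\subseteq\Phi([s,X_{s^*}])$, and set $U=\so(W)$ where $W=[s,X_{s^*}]\cap\Phi^{-1}([t,V])$; as $[s,X_{s^*}]$ is a bisection, $\so$ restricts to a homeomorphism on it and $U$ is an open neighbourhood of $x$ inside $X_{s^*}$. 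For $\tilde{x}\in U$ we have $\Phi[s,\tilde{x}]\in[t,V]$, and comparing sources forces $\Phi[s,\tilde{x}]=[t,\varphi(\tilde{x})]$ (so $\varphi(\tilde{x})\in V\subseteq Y_{t^*}$, which also secures the implicit requirement $a(s,x)\in T_{\varphi(x)}$); comparing ranges then yields $\varphi(\theta_s(\tilde{x}))=\gamma_t(\varphi(\tilde{x}))$. This is precisely \ref{lemmalocaldescriptionofcoeb}, so Lemma~\ref{lemmalocaldescriptionofcoe} furnishes a continuous $a\colon S\ast X\to T$ satisfying Definition~\ref{definitioncoe}\ref{definitioncoea}.

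Finally I would run the identical argument with $\Phi^{-1}$ and $\varphi^{-1}$ in place of $\Phi$ and $\varphi$ to obtain a continuous $b\colon T\ast Y\to S$ satisfying Definition~\ref{definitioncoe}\ref{definitioncoeb}; the triple $(\varphi,a,b)$ is then the desired continuous orbit equivalence. The main obstacle I anticipate is the source/range bookkeeping in the third paragraph: one must check carefully that $\Phi$ intertwines the two identifications of the unit spaces, so that the membership $\Phi[s,\tilde{x}]\in[t,V]$ genuinely pins the representative down to $[t,\varphi(\tilde{x})]$, and one must make sure that the ultraparacompactness hypothesis (entering through Lemma~\ref{lemmalocaldescriptionofcoe}) is what upgrades the purely local data $(U,t)$ into a single globally defined, continuous selection $a$.
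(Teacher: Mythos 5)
Your proposal is correct and takes essentially the same approach as the paper: restrict $\Phi$ to the unit spaces to obtain $\varphi$, verify the local condition \ref{lemmalocaldescriptionofcoeb} of Lemma~\ref{lemmalocaldescriptionofcoe} by choosing a basic bisection $[t,V]$ inside $\Phi([s,X_{s^*}])$ and comparing sources and ranges, and then symmetrize with $\Phi^{-1}$ to get $b$. If anything, your handling of the membership $\Phi[s,\tilde{x}]\in[t,V]$ (via $U=\so\bigl([s,X_{s^*}]\cap\Phi^{-1}([t,V])\bigr)$ and the bisection property) is slightly more explicit than the paper's, which takes $U=\varphi^{-1}(V)\cap X_{s^*}$ and leaves that step implicit.
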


\begin{proof}
Let $\Phi\colon S \ltimes X \rightarrow  T \ltimes Y$ be an isomorphism of topological groupoids. As $(S\ltimes X)^{(0)}=X$ and $(T\ltimes Y)^{(0)}=Y$, the restriction
\[\varphi:= \Phi|_X\colon X \to Y\]
is a homeomorphism.

We will use Lemma~\ref{lemmalocaldescriptionofcoe}. Let $s\in S$ and $x\in X_{s^*}$ be fixed. Since $[s,X_{s^*}]$ is a neighbourhood of $[s,x]$, then $\Phi[s,X_{s^*}]$ is a neighbourhood of $\Phi[s,x]$, so we may choose a basic neighbourhood $[t,V]$ of $T\ltimes Y$, where $t\in T$ and $V\subseteq Y_ {t^*}$, such that $\Phi[s,x]\in [t,V]$. Consider the neighbourhood $U=\varphi^{-1}(V)\cap X_{s^*}$ of $x$.

If $a\in[t,V]$, then $\ra(a)=\gamma_t(\so(a))$. It follows that for all $\tilde{x}\in U$, we have
\[\varphi(\theta_s(\tilde{x}))=\Phi(\ra[s,\tilde{x}])=\ra(\Phi[s,\tilde{x}])=\gamma_t(\so(\Phi[s,\tilde{x}]))=\gamma_t(\Phi(\so[s,x]))=\gamma_t(\varphi(\tilde{x})).\]

Thus Lemma \ref{lemmalocaldescriptionofcoe}\ref{lemmalocaldescriptionofcoeb} holds, which implies item \ref{lemmalocaldescriptionofcoea} of the same lemma. This yields us the function $a\colon S\ast X\to T$ satisfying property \ref{definitioncoea} of Definition \ref{definitioncoe}. The function $b\colon T\ast Y\to S$ satisfying Definition \ref{definitioncoe}\ref{definitioncoeb} is constructed in a similar manner, and we therefore obtain a continuous orbit equivalence from $\theta$ to $\gamma$.\qedhere
\end{proof}

\begin{example}
In some sense, the hypothesis that the domains of the partial actions are ultraparacompact is the weakest condition possible one needs to assume to obtain Theorem \ref{theo:groupoidgermscoe}.

For example, suppose that $X$ is Hausdorff, but not ultraparacompact (for example, $X=\omega_1$, the first uncountable ordinal with the order topology, which is in fact locally compact and zero-dimensional).

Let $\mathscr{U}$ be any clopen cover of $X$ which does not admit any refinement by pairwise disjoint clopen sets. We let $S$ be the collection of all finite intersections of elements of $\mathscr{U}$, which is a semigroup (actually, a semilattice) under intersection, and let $\theta=\left(\{X_A\}_{A\in S},\{\theta_A\}_{A\in S}\right)$ be the natural action of $S$ on $X$: $X_A=A$ and $\theta_A=\operatorname{id}_A$, the identity of $A$, for all $A\in S$.

Also, let $G=\left\{1\right\}$ be the trivial group and $\gamma$ the trivial action of $G$ on $X$: $\gamma_1=\operatorname{id}_X$.

Then both $S\ltimes X$ and $G\ltimes X$ are isomorphic, as topological groupoids, to $X$. Let us prove, however, that $\theta$ and $\gamma$ are not continuously orbit equivalent. Suppose, on the contrary, that $(\varphi,a,b)$ were a continuous orbit equivalence from $\theta$ to $\gamma$. For all $x\in X$, we have
\[\varphi(x)=\varphi(\gamma_1(x))=\theta_{b(1,x)}(\varphi(x))\]
which in particular implies that \[\varphi(x)\in b(1,x).\ntag\label{exampleisomorphicbutnotcoenonultraparacompact}\]

For each $A\in S$, we consider the subset $U_A=\left\{x\in X:b(1,x)=A\right\}$ of $X$. The collection $\mathscr{V}:=\left\{\varphi(U_A):A\in S\right\}$ is a clopen partition of $X$, and Equation \eqref{exampleisomorphicbutnotcoenonultraparacompact} means that $\varphi(U_A)\subseteq A$ for each $A\in S$. Thus $\mathscr{V}$ is a refinement of $S$, and therefore a refinement of $\mathscr{U}$, contradicting the choice of $\mathscr{U}$.
\end{example}

\subsection*{Topological full pseudogroups}

We will use a similar terminology to that of \cite{MR2876963}. For each compact-open bisection $U\in\Ga$ of an ample groupoid $\mathcal{G}$, we denote by $\tau_U$ the homeomorphism given by the canonical action of $\Ga$ on $\Go$, namely $\tau_U=\ra\circ (\so|_U^{-1})\colon\so(U)\to\ra(U)$. Recall from Example~\ref{examplecanonicalaction} that $U\mapsto\tau_U$ is a homomorphism from $\Ga$ to $\mathcal{I}(\Go)$.

\begin{definition}
The \emph{topological full pseudogroup} of an ample groupoid is the semigroup 
\[[[\mathcal{G}]]=\left\{\tau_U:U\in\Ga\right\}\]
\end{definition}

\begin{example}
If $\theta$ is a partial action of an inverse semigroup $S$ on a locally compact Hausdorff and zero-dimensional space $X$, then the topological full pseudogroup $[[S\ltimes X]]$ is the set of all partial homeomorphisms $\varphi\colon U\to V$ of $X$ for which there are $s_1,\ldots,s_n\in S$ and compact-open $U_1,\ldots,U_n\subseteq X$ such that
\begin{enumerate}\setlength\itemsep{1ex}
\item[(i)] $U=\bigcup_{i=1}^n U_i$;
\item[(ii)] $U_i\subseteq X_{s_i^*}$ for all $i$; and
\item[(iii)] $\varphi|_{U_i}=\theta_{s_i}|_{U_i}$ for all $i$.
\end{enumerate}
\end{example}

The proposition below was proven in \cite[Corollary~3.3]{Renault2008} when one considers all open bisections instead of only compact-open ones. In any case, we provide a short and direct proof of it.
\begin{proposition}\label{prop:bisectionsemigroupfullsemigroup}
Suppose $\mathcal{G}$ is an ample groupoid. Then the homomorphism $\tau\colon\mathcal{G}^a\to [[\mathcal{G}]]$ is an isomorphism if and only if $\G$ is effective.
\end{proposition}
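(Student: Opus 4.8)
The plan is to first note that $\tau$ is surjective by the very definition of $[[\G]]=\left\{\tau_U:U\in\Ga\right\}$, so proving that $\tau\colon\Ga\to[[\G]]$ is an isomorphism amounts to proving it is injective. I would then establish that injectivity of $\tau$ is \emph{equivalent} to effectiveness of $\G$, treating the two implications separately. Recall that $\tau$ is a homomorphism (already recorded before the definition), so no verification is needed there.

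For the direction ``effective $\Rightarrow$ injective'', I would take $U,V\in\Ga$ with $\tau_U=\tau_V$ and aim to conclude $U=V$. Equality of the partial homeomorphisms $\tau_U$ and $\tau_V$ forces $\so(U)=\so(V)$ and $\ra(\so|_U^{-1}(x))=\ra(\so|_V^{-1}(x))$ for every $x$ in this common set. The key device is to pass to the compact-open bisection $UV^{-1}\in\Ga$: a direct check shows that for composable $u\in U$, $v\in V$ (i.e. $\so(u)=\so(v)$) one has $\so(uv^{-1})=\ra(v)=\ra(u)=\ra(uv^{-1})$, so $UV^{-1}\subseteq\operatorname{Iso}(\G)$. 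Since $UV^{-1}$ is open, effectiveness gives $UV^{-1}\subseteq\operatorname{int}(\operatorname{Iso}(\G))=\G[0]$. Then for each $x$, writing $u=\so|_U^{-1}(x)$ and $v=\so|_V^{-1}(x)$, the element $uv^{-1}$ is a unit equal to $\ra(u)$, and the short computation $u=u\,\so(u)=uv^{-1}v=\ra(u)\,v=v$ (using $\ra(u)=\ra(v)$) yields $u=v$; as $x$ was arbitrary, $U=V$.

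For the converse I would argue contrapositively, producing two distinct bisections with the same image when $\G$ fails to be effective. Non-effectiveness gives an arrow $a\in\operatorname{int}(\operatorname{Iso}(\G))\setminus\G[0]$. Since $\G$ is ample and $\operatorname{int}(\operatorname{Iso}(\G))$ is open, there is a compact-open bisection $U\in\Ga$ with $a\in U\subseteq\operatorname{int}(\operatorname{Iso}(\G))$. Because every arrow of $U$ lies in the isotropy, $\ra(\so|_U^{-1}(x))=x$, so $\tau_U=\operatorname{id}_{\so(U)}$. Taking $V=\so(U)$, which is a compact-open subset of $\G[0]$ and hence a bisection in $\Ga$, we get $\tau_V=\operatorname{id}_{\so(U)}=\tau_U$, yet $U\neq V$ because $a\in U\setminus\G[0]$. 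Thus $\tau$ is not injective.

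The routine parts are the closure of $\Ga$ under products and inverses (so that $UV^{-1}\in\Ga$) and that $\so(U)$ is a compact-open bisection, both of which follow from the ample-groupoid facts in the preliminaries. The only place demanding genuine care is the forward implication: the main obstacle is upgrading the pointwise equality $\tau_U=\tau_V$ to an actual equality of the \emph{subsets} $U$ and $V$ of $\G$, and the trick that makes this succeed is forming $UV^{-1}$ and invoking effectiveness to push it into the unit space, after which the inverse-semigroup identities collapse $u$ and $v$.
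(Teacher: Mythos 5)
Your proposal is correct and follows essentially the same route as the paper: both directions hinge on forming the product bisection ($UV^{-1}$ in your case, $V^{-1}U$ in the paper's), observing it is an open subset of $\operatorname{Iso}(\G)$, invoking effectiveness to push it into $\G[0]$, and then collapsing $U=V$; the converse in both cases exhibits a compact-open bisection inside $\operatorname{int}(\operatorname{Iso}(\G))$ not contained in $\G[0]$, which has the same image under $\tau$ as its source set. The only cosmetic difference is that you verify the key identities elementwise, where the paper argues at the level of the inverse semigroup $\Ga$ (e.g.\ $V=V\so(V)=VV^{-1}U\subseteq U$).
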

\begin{proof}
First suppose that $\G$ is effective, that is, $\mathcal{G}^{(0)}=\operatorname{int}(\operatorname{Iso}(\mathcal{G}))$. We need to prove that $\tau$ is injective, so assume $\tau_U=\tau_V$. Then $\tau_{V^{-1}U}=\tau_V^{-1}\circ\tau_V=\operatorname{id}_{\so(V)}$, which means that $V^{-1}U\subseteq \operatorname{Iso}(\mathcal{G})$. Since $V^{-1}U$ is open, we obtain $V^{-1}U\subseteq\mathcal{G}^{(0)}$, or equivalently $\so(V^{-1}U)=V^{-1}U$.

Moreover, from $\operatorname{id}_{\so(V)}=\tau_{V^{-1}U}$ we also have equality of the domains, $\so(V)=\so(V^{-1}U)$, which implies
\[V=V\so(V)=V\so(V^{-1}U)=VV^{-1}U\subseteq U,\]
and symmetrically we obtain $U\subseteq V$. Therefore $U=V$ and $\tau$ is injective.

Conversely, suppose $\operatorname{int}(\operatorname{Iso}(\mathcal{G}))\neq\mathcal{G}^{(0)}$. Take any nonempty compact-open bisection $U\subseteq\operatorname{int}(\operatorname{Iso}(\mathcal{G}))$ which is not contained in $\mathcal{G}^{(0)}$. Then $U\neq \so(U)$ but $\tau_U=\tau_{\so(U)}$, so $\tau$ is not injective.\qedhere
\end{proof}

Let us now summarize the connections between continuous orbit equivalence of partial actions, isomorphisms of groupoids of germs, isomorphisms of topological full pseudogroups, diagonal-preserving isomorphisms of Steinberg algebras, and consequently diagonal-preserving isomorphisms of the associated crossed products. To do so, we will use \cite[Corollary~5.8]{Steinberg2017}, which is an improvement of \cite[Theorem~3.1]{MR3848066}.

Note that each individual implication in the next theorem is valid under weaker hypotheses (e.g. \ref{theoremconnectingeverythingitemcoe}$\iff$\ref{theoremconnectingeverythingitemgroupoids} does not require that the groupoids of germs are Hausdorff).

\begin{theorem}\label{theo:isodequasetudo}
Let $R$ be an indecomposable commutative unital ring and suppose that $\theta$ and $\gamma$ are almost ample and topologically principal partial actions, and that the groupoids of germs $S\ltimes X$ and $T\ltimes Y$ are Hausdorff. Then the following are equivalent:
\begin{enumerate}[label=(\arabic*)]
\item\label{theoremconnectingeverythingitemcoe} the partial actions $\theta$ and $\gamma$ are continuously orbit equivalent;
\item\label{theoremconnectingeverythingitemgroupoids} the groupoids of germs $S\ltimes X$ and $T \ltimes Y$ are topologically isomorphic;
\item\label{theoremconnectingeverythingitemamplesemigroups} the inverse semigroups $(S\ltimes X)^a$ and $(T\ltimes Y)^a$ are topologically isomorphic;
\item\label{theoremconnectingeverythingitemfullpseudogroups} the inverse semigroups $[[S\ltimes X]]$ and $[[T\ltimes Y]]$ are isomorphic;
\item\label{theoremconnectingeverythingitemsteinbergalgebras} there exists a diagonal-preserving (ring or $R$-algebra) isomorphism between the Steinberg algebras $A_R(S\ltimes X)$ and $A_R(T \ltimes Y)$; 
\item\label{theoremconnectingeverythingitemcrossedproducts} there exists a diagonal-preserving (ring or $R$-algebra) isomorphism between the crossed products $A_R(X)\rtimes S$ and $A_R(Y)\rtimes T$.
\end{enumerate}
\end{theorem}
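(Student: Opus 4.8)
The plan is to establish the six conditions through a web of implications, each of which reduces to a tool already developed. I would begin with the two equivalences that are essentially free. For \ref{theoremconnectingeverythingitemcoe}$\iff$\ref{theoremconnectingeverythingitemgroupoids}, note that almost ampleness forces $X$ and $Y$ to be locally compact Hausdorff, so Theorem~\ref{theo:coegroupoidgermiso} (using topological principality and the Hausdorff property) gives \ref{theoremconnectingeverythingitemcoe}$\Rightarrow$\ref{theoremconnectingeverythingitemgroupoids}, while Theorem~\ref{theo:groupoidgermscoe} (using almost ampleness) gives the converse. For \ref{theoremconnectingeverythingitemsteinbergalgebras}$\iff$\ref{theoremconnectingeverythingitemcrossedproducts}, I would invoke Theorem~\ref{theo:steinbergiscrossed} together with Remark~\ref{remarkdiagonalsubalgebras}: the isomorphism $A_R(S\ltimes X)\cong A_R(X)\rtimes S$ (and its analogue for $T$) carries the diagonal subalgebra onto the diagonal subalgebra, so a diagonal-preserving isomorphism of Steinberg algebras is literally the same datum as a diagonal-preserving isomorphism of crossed products.

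Next I would treat \ref{theoremconnectingeverythingitemgroupoids}$\iff$\ref{theoremconnectingeverythingitemsteinbergalgebras}. A topological groupoid isomorphism $\Phi\colon S\ltimes X\to T\ltimes Y$ induces the pushforward $f\mapsto f\circ\Phi^{-1}$, which is an $R$-algebra isomorphism of the Steinberg algebras (convolution is preserved because $\Phi$ respects the groupoid operations) and is diagonal-preserving since $\Phi$ maps units to units; this gives \ref{theoremconnectingeverythingitemgroupoids}$\Rightarrow$\ref{theoremconnectingeverythingitemsteinbergalgebras}. For the converse, the groupoids of germs are ample and Hausdorff, and being topologically principal and Hausdorff they are effective by \cite[Proposition~3.6]{Renault2008}; since $R$ is indecomposable, the reconstruction result \cite[Corollary~5.8]{Steinberg2017} applies and shows that a diagonal-preserving isomorphism of the Steinberg algebras is induced by a (unique) topological groupoid isomorphism, yielding \ref{theoremconnectingeverythingitemsteinbergalgebras}$\Rightarrow$\ref{theoremconnectingeverythingitemgroupoids}.

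It then remains to attach \ref{theoremconnectingeverythingitemamplesemigroups} and \ref{theoremconnectingeverythingitemfullpseudogroups} to this web. For \ref{theoremconnectingeverythingitemamplesemigroups}$\iff$\ref{theoremconnectingeverythingitemfullpseudogroups}, the groupoids of germs are effective (as above), so Proposition~\ref{prop:bisectionsemigroupfullsemigroup} shows that $\tau\colon(S\ltimes X)^a\to[[S\ltimes X]]$ and its analogue for $T$ are isomorphisms; hence the ample semigroups are isomorphic precisely when the topological full pseudogroups are. The implication \ref{theoremconnectingeverythingitemgroupoids}$\Rightarrow$\ref{theoremconnectingeverythingitemamplesemigroups} is routine, since a topological groupoid isomorphism carries compact-open bisections to compact-open bisections and respects their products and inverses, hence restricts to an inverse-semigroup isomorphism $(S\ltimes X)^a\cong(T\ltimes Y)^a$. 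Once \ref{theoremconnectingeverythingitemamplesemigroups}$\Rightarrow$\ref{theoremconnectingeverythingitemgroupoids} is established, the chains \ref{theoremconnectingeverythingitemcoe}$\iff$\ref{theoremconnectingeverythingitemgroupoids}$\iff$\ref{theoremconnectingeverythingitemsteinbergalgebras}$\iff$\ref{theoremconnectingeverythingitemcrossedproducts} and \ref{theoremconnectingeverythingitemgroupoids}$\iff$\ref{theoremconnectingeverythingitemamplesemigroups}$\iff$\ref{theoremconnectingeverythingitemfullpseudogroups} close up, so all six conditions are equivalent.

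The genuine obstacle is precisely the reverse implication \ref{theoremconnectingeverythingitemamplesemigroups}$\Rightarrow$\ref{theoremconnectingeverythingitemgroupoids}, where one must rebuild the groupoid from the abstract ample semigroup. Here I would use Example~\ref{ex:amplegermesopenbisections}, which realizes each ample groupoid canonically as the groupoid of germs of its ample semigroup acting on its unit space by the canonical action. The point is that the unit space is recoverable from the ample semigroup alone: the idempotents $E((S\ltimes X)^a)$ form the generalized Boolean algebra of compact-open subsets of $X$, whose Stone spectrum is homeomorphic to $X$ because $X$ is locally compact Hausdorff and zero-dimensional. Thus an isomorphism $(S\ltimes X)^a\cong(T\ltimes Y)^a$ restricts to an isomorphism of idempotent Boolean algebras, which dualizes to a homeomorphism $\varphi\colon X\cong Y$; and because the canonical action is expressible purely through the semigroup operations (conjugation of idempotents, $e\mapsto UeU^{-1}$ for $e\le\so(U)$), the homeomorphism $\varphi$ is equivariant for the two canonical actions. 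Example~\ref{ex:amplegermesopenbisections} then delivers the isomorphism $S\ltimes X\cong T\ltimes Y$. The care in this step lies in the Stone-type reconstruction itself---identifying the spectrum of $E((S\ltimes X)^a)$ with $X$ and verifying that the dual homeomorphism intertwines the canonical actions---which is where I expect the main effort to be concentrated.
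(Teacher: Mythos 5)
Your proposal is correct, and its global architecture coincides with the paper's: the same web of implications, with \ref{theoremconnectingeverythingitemcoe}$\iff$\ref{theoremconnectingeverythingitemgroupoids} from Theorems~\ref{theo:coegroupoidgermiso} and \ref{theo:groupoidgermscoe}, \ref{theoremconnectingeverythingitemgroupoids}$\iff$\ref{theoremconnectingeverythingitemsteinbergalgebras} from \cite[Corollary~5.8]{Steinberg2017}, \ref{theoremconnectingeverythingitemsteinbergalgebras}$\iff$\ref{theoremconnectingeverythingitemcrossedproducts} from Theorem~\ref{theo:steinbergiscrossed} (with Remark~\ref{remarkdiagonalsubalgebras}), and \ref{theoremconnectingeverythingitemamplesemigroups}$\iff$\ref{theoremconnectingeverythingitemfullpseudogroups} from Proposition~\ref{prop:bisectionsemigroupfullsemigroup} via effectiveness (\cite[Proposition~3.6]{Renault2008}). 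The one place you genuinely diverge is \ref{theoremconnectingeverythingitemgroupoids}$\iff$\ref{theoremconnectingeverythingitemamplesemigroups}: the paper disposes of it in one line by citing non-commutative Stone duality \cite[Theorem~3.23]{MR3077869}, whereas you re-prove the hard direction \ref{theoremconnectingeverythingitemamplesemigroups}$\Rightarrow$\ref{theoremconnectingeverythingitemgroupoids} by hand, recovering $X$ as the Stone spectrum of the generalized Boolean algebra $E((S\ltimes X)^a)$ of compact-open subsets of $X$, observing that an abstract inverse-semigroup isomorphism is an order isomorphism (hence preserves the Boolean structure and the conjugation formula $e\mapsto UeU^{-1}$ encoding the canonical action), and then invoking Example~\ref{ex:amplegermesopenbisections} to identify each groupoid with the germ groupoid of its ample semigroup. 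This is a sound unpacking of exactly the duality the paper cites; note that Example~\ref{ex:amplegermesopenbisections} requires no effectiveness (the germ relation is governed by the semigroup order, not merely by the pseudogroup of homeomorphisms), so your reconstruction step is valid for arbitrary ample Hausdorff groupoids, just as the cited duality is. What the paper's citation buys is brevity and a clean appeal to an established duality; what your version buys is self-containedness within the paper's own toolkit, at the cost of the equivariance verification for the Stone-dual homeomorphism, which you correctly flag as the locus of real work and which does go through as you sketch it.
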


\begin{proof}
\ref{theoremconnectingeverythingitemcoe}$\iff$\ref{theoremconnectingeverythingitemgroupoids} follows from Theorems~\ref{theo:coegroupoidgermiso} and \ref{theo:groupoidgermscoe}.

\ref{theoremconnectingeverythingitemgroupoids}$\iff$\ref{theoremconnectingeverythingitemamplesemigroups} follows from non-commutative Stone duality: See, for example, \cite[Theorem~3.23]{MR3077869}. (Note that Hausdorff Boolean groupoids of \cite{MR3077869} corresponds to ample Hausdorff groupoids.)

\ref{theoremconnectingeverythingitemamplesemigroups}$\iff$\ref{theoremconnectingeverythingitemfullpseudogroups} follows from Proposition~\ref{prop:bisectionsemigroupfullsemigroup}.

\ref{theoremconnectingeverythingitemgroupoids}$\iff$\ref{theoremconnectingeverythingitemsteinbergalgebras} follows from  \cite[Corollary~5.8.]{Steinberg2017}.

\ref{theoremconnectingeverythingitemsteinbergalgebras}$\iff$\ref{theoremconnectingeverythingitemcrossedproducts} follows from Theorem~\ref{theo:steinbergiscrossed}.\qedhere
\end{proof} 

\section{Orbit equivalence of graphs and Leavitt path algebras}

In \cite{MR3614030}, the notion of continuous orbit equivalence for directed graphs was introduced, following Matsumoto’s notion of continuous orbit equivalence for topological Markov shifts (see \cite{MR3276420}). We will compare this notion with the continuous orbit equivalence of canonical actions of inverse semigroups associated to directed graphs. A similar study was made by Li in \cite{Li2017}, who considered the case of partial actions of free groups generated by edges of a graph. We reiterate that we do not make any assumptions on the second-countability of topological spaces, or countability of graphs.

\subsection*{Directed graphs}

A \emph{directed graph} is a tuple $E=(E^0,E^1,s,r)$, where $E^0$ is a set of \emph{vertices}, $E^1$ is a set of \emph{edges} and $s,r:E^1\to E^0$ are functions, called the \emph{source} and \emph{range}.

A \emph{path} in $E$ is a finite or infinite sequence $\mu=(\mu_i)_i=\mu_1\mu_2\cdots$, where $\mu_i\in E^1$ and $\so(\mu_{i+1})=\ra(\mu_i)$ for each $i$.

\begin{remark}
Even though every groupoid has a structure of graph, the conventions for ``concatenation'' do not agree: arrows/edges in groupoids are usually thought of functions, and thus they are read from right to left. On the other hand, the usual convention for paths in a graph is to read them from left to right. Nevertheless, this shall bring no confusion to our discussion.
\end{remark}

The \emph{length} of a finite path $\mu$ is the number $|\mu|$ of edges in $\mu$, that is, if $\mu=\mu_1\cdots\mu_n$, where $\mu_i\in E^1$, then $|\mu|=n$. Each vertex in $E^0$ is also regarded as a path of length $0$, and each edge in $E^1$ is a path of length $1$. So given an integer $n\geq 0$, we denote $E^n$ the set of paths of length $n$. The set of finite paths of $E$ will be denoted by $E^\star=\bigcup_{0\leq n<\infty}^\infty E^n$.

The length of an infinite path $\mu$ is simply $|\mu|=\infty$, and the set of all infinite paths is denoted $E^\infty$.

We extend the source map to $E^\star\cup E^\infty$, and the range map to $E^\star$ as follows: If $v\in E^0$ is a vertex, then $s(v)=r(v)=v$; If $\mu=\mu_1\cdots$ is a path (finite or infinite) of length $|\mu|\geq 1$, then $s(\mu)=s(\mu_1)$; If $\mu=\mu_1\cdots\mu_{|\mu|}$ is finite, we set $r(\mu)=r(\mu_{|\mu|})$.

Paths can be concatenated if their range and source agree, as long as we take the proper care with vertices: if $v$ is a vertex and $\mu\in E^\star$ is such that $r(\mu)=v$, then we specify that $\mu v=\mu$, and similarly if $\nu\in E^\star\cup E^\infty$ is such that $s(\rho)=v$, we set $v\rho =\rho$.

If $\mu=\mu_1\cdots\mu_{|\mu|}\in E^\star$ and $\nu=\nu_1\cdots\in E^\star\cup E^\infty$ are paths of length $\geq 1$ with $r(\mu)=s(\nu)$, we set $\mu\nu=\mu_1\cdots\mu_{|\mu|}\nu_1\cdots$.

Note that we always have $|\mu\nu|=|\mu|+|\nu|$ whenever the concatenation $\mu\nu$ is defined.

A vertex $v$ is called a \emph{sink} if $s^{-1}(v)=\varnothing$ and it is called an \emph{infinite emitter} if $|s^{-1}(v)| = \infty$. If $v\in E^0$ is either a sink or an infinite emitter then it is called \emph{singular}.

The \emph{boundary path space} of $E$ is defined as
\[\partial E:= E^\infty \cup \left\{ \mu \in E^\star : r(\mu) \text{ is singular} \right\}.\]

For a finite path $\mu \in E^\star$, we define the \emph{cylinder set} 
\[Z(\mu)= \{ \mu x \mid x \in \partial E \ \text{and} \ r(\mu)=s(x) \} \subseteq \partial E,\] 
and for a finite set $F \subseteq s^{-1}(r(\mu))$ (possibly empty), we define the \emph{generalised cylinder set}
\[Z(\mu, F)=Z(\mu)\setminus\bigcup_{e\in F}Z(\mu e) =\{\mu x\mid x\in \partial E, x_1\notin F \text{ and }r(\mu)=s(x)\} .\]
The generalised cylinder sets provide a basis of compact-open sets for a Hausdorff topology on $\partial E$ (see \cite[Theorem~2.1]{MR3119197}).
\subsection*{Graph semigroup}

We will associate an inverse semigroup $\mathcal{S}_E$ to the graph $E=(E^0,E^1,s,r)$. Let
\[\mathcal{S}_E = \left\{(\mu, \nu) \mid \mu,  \nu \in E^\star \ \text{and} \ r(\mu)=r(\nu)\right\}\cup \{0\}.\]
The product is determined by setting $0$ as a zero (absorbing) element, and
\[\ntag\label{definitionproductofgraphinversesemigroup}(\mu,\nu)(\zeta,\eta)=\begin{cases}
    (\mu, \eta \gamma),  &  \text{if } \nu=\zeta \gamma \text{ for some } \gamma \in E^\star \\
    (\mu \gamma, \eta),  &  \text{if } \zeta=\nu \gamma \text{ for some } \gamma \in E^\star  \\
    0,                   & \text{otherwise.}
\end{cases}\]
This operation makes $\mathcal{S}_E$ into an inverse semigroup, with the inverse given by
$(\mu,\nu)^*=(\nu,\mu)$ and $0^*=0$ (see \cite[Proposition~3.1]{MR1962477} for a proof). The set $E(\mathcal{S}_E)$ of idempotents of $\mathcal{S}_E$ coincides with the set of all pairs $(\mu,\mu)$, where $\mu\in E^\star$, and the zero element $0$.

Notice that the product of two pairs $(\mu,\nu),(\zeta,\eta)$ is non-zero if, and only if, 
$\nu$ is an initial segment of $\zeta$ or vice-versa. In this case, 
we say that $\nu$ and $\zeta$ are \emph{comparable}. It is easy to see that if $(\mu,\nu)\leq(\zeta,\eta)$ if and only if there is $\gamma \in E^\star$
such that $(\mu,\nu)=(\zeta\gamma,\eta\gamma)$. From this, it follows that $\mathcal{S}_E$ is a semilattice (actually, an $E^*$-unitary inverse semigroup).

We will now describe the \emph{canonical action} of $\mathcal{S}_E$ on the boundary path space $\partial E$. Given $(\mu,\nu) \in \mathcal{S}_E \setminus \{0\}$ we let 
\[\ntag\label{actiontheta}
    \theta_{(\mu, \nu)}\colon Z(\nu) \to Z(\mu),\qquad \nu x \mapsto \mu x,
\]
and $\theta_0\colon \varnothing  \rightarrow  \varnothing$ the empty map. The verification that the collection
\[\theta=\left(\left\lbrace Z(\mu)\right\rbrace_{(\mu,\nu)\in \mathcal{S}_E},\left\lbrace\theta_{(\mu,\nu)}
\right\rbrace_{(\mu,\nu)\in \mathcal{S}_E}\right)\]
is a topological (global) action of $\mathcal{S}_E$ on $\partial E$ is straightforward, by considering the different cases as in Equation \eqref{definitionproductofgraphinversesemigroup}.

Since $\mathcal{S}_E$ is a semilattice and the subsets $Z(\mu)$ are all compact -- and in particular clopen -- in $\partial E$, then the groupoid of germs $\mathcal{S}_E\ltimes\partial E$ is Hausdorff by Proposition \ref{prop:weaksemilattice}, and ample since $\partial E$ is locally compact Hausdorff and zero-dimensional.

\subsection*{The shift map and boundary path groupoid}
For each $n\in \mathbb{N}$, let $\partial E^{\geq n}=\left\{x\in\partial E : |x|\geq n\right\}$. Then $\partial E^{\geq n}=\bigcup_{\mu\in E^n}Z(\mu)$ is an open subset of $\partial E$. We define the \emph{one-sided shift} map $\sigma\colon \partial E^{\geq 1}\to \partial E$ as follows: given $x=x_1x_2\cdots\in\partial E^{\geq 1}$,
\[\ntag\label{eq:one-sidedshift}
\sigma(x)=\begin{cases}
r(x), & \text{if }|x|=1\\
x_2\cdots,& \text{if }|x|\geq 2\end{cases}\]
The $n$-fold composition $\sigma^n$ is defined on $\partial E^{\geq n}$ and we understand $\sigma^0\colon \partial E\rightarrow \partial E$ as the identity map. 
Following \cite{MR3614030}, the \emph{boundary path groupoid} of $E$ is
\begin{align*}
\G_E&= \{(x,m-n,y)\in \partial E\times\mathbb{Z}\times \partial E :
 \sigma^m(x)=\sigma^n(y)\} \\
&=\{(\mu x,|\mu|-|\nu|,\nu x) : 
\mu, \nu \in E^\star,\ x\in \partial E,\ r(\mu)=r(\nu)=s(x)\},
\end{align*} 
where the product and inverse are defined as
\[(x,k,y)(y,l,z)=(x,k+l,z),\qquad\text{and}\quad (x,k,y)^{-1}=(y,-k,x).\]
As such, $\G_E$ is a groupoid with unit space $\Go_E = \{(x,0,x):x\in \partial E\}$, which we identify with $\partial E$. To put a topology on $\G_E$, we consider finite paths $\mu,\nu \in E^\star$ with $r(\mu)=r(\nu)$, and a finite set of edges $F\subseteq s^{-1}(r(\mu))$. Then we define the sets
\[Z(\mu,\nu):=\left\{(\mu x,|\mu|-|\nu|,\nu x) : x\in s^{-1}(r(\mu))\right\}\]
and
\begin{align*}
Z(\mu,\nu,F) & := Z(\mu,\nu)\setminus\bigcup_{e\in F}Z(\mu e,\nu e) = \left\{(\mu x,|\mu|-|\nu|,\nu x) : x\in s^{-1}(r(\mu))\setminus\bigcup_{e\in F} Z(e).\right\}
\end{align*}
The collection of these sets provides a basis of compact-open bisections for a Hausdorff topology on $\G_E$ (see \cite[Proposition 2.6]{MR1432596} for more details in the case of row-finite graphs and \cite[Section 3]{MR1962477} for the general case).

\begin{proposition}\label{prop:graphgroupoidisomorphic}
The groupoid of germs $\mathcal{S}_E\ltimes \partial E$, associated to the canonical action of $\mathcal{S}_E$ on $\partial E$ and  the boundary path groupoid $\G_E$ are isomorphic as topological groupoids.
\end{proposition}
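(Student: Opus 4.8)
The plan is to write down the obvious candidate isomorphism and check that it is a well-defined, bijective homeomorphism of topological groupoids. Define
\[
\Phi\colon \mathcal{S}_E\ltimes\partial E\to\G_E,\qquad \Phi[(\mu,\nu),\nu x]=(\mu x,\ |\mu|-|\nu|,\ \nu x),
\]
for $(\mu,\nu)\in\mathcal{S}_E\setminus\{0\}$ and $x\in\partial E$ with $s(x)=r(\nu)$. This is essentially the only reasonable choice: the germ $[(\mu,\nu),\nu x]$ has source $\nu x$ and range $\theta_{(\mu,\nu)}(\nu x)=\mu x$, and a direct computation of source and range in $\G_E$ (using $(x,k,y)^{-1}=(y,-k,x)$) shows that $(\mu x,|\mu|-|\nu|,\nu x)$ has exactly these source and range. (Alternatively, one can feed the assignment $(\mu,\nu)\mapsto Z(\mu,\nu)$, $0\mapsto\varnothing$, together with $\phi=\mathrm{id}_{\partial E}$, into the universal property of groupoids of germs, Theorem~\ref{theo:universalpropertyofgroupoidofgerms}, to obtain $\Phi$ as a continuous homomorphism at once; but verifying that this assignment is a partial homomorphism into $\G_E^{op}$ costs essentially the same work, so I proceed directly.)

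Next come well-definedness and injectivity, which both rest on the order on $\mathcal{S}_E$, namely $(\alpha,\beta)\le(\mu,\nu)$ iff $(\alpha,\beta)=(\mu\gamma,\nu\gamma)$ for some $\gamma\in E^\star$. For well-definedness, suppose $[(\mu,\nu),\nu x]=[(\mu',\nu'),\nu'x']$. By the germ relation \eqref{eq:equivalencegroupoidgerms} there is $(\alpha,\beta)\le(\mu,\nu),(\mu',\nu')$ with $\nu x=\nu'x'\in Z(\beta)$; writing $(\alpha,\beta)=(\mu\gamma,\nu\gamma)=(\mu'\gamma',\nu'\gamma')$ and cancelling common path-prefixes I get $\nu x=\beta y$, hence $x=\gamma y$ and $x'=\gamma'y$, and finally $\mu x=\alpha y=\mu'x'$ together with $|\mu|-|\nu|=|\alpha|-|\beta|=|\mu'|-|\nu'|$, so the two images coincide. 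Injectivity runs the same computation in reverse: from $\mu x=\mu'x'$, $\nu x=\nu'x'$ and $|\mu|-|\nu|=|\mu'|-|\nu'|$, comparability of $\nu,\nu'$ lets me assume $\nu'=\nu\gamma$; then $x=\gamma x'$, the length identity forces $\mu'=\mu\gamma$, whence $(\mu',\nu')\le(\mu,\nu)$ with $\nu x\in Z(\nu')$, and the germ relation (or Remark~\ref{changebygreater}) gives $[(\mu,\nu),\nu x]=[(\mu',\nu'),\nu'x']$. Surjectivity is immediate, since every element of $\G_E$ has the form $(\mu z,|\mu|-|\nu|,\nu z)=\Phi[(\mu,\nu),\nu z]$.

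That $\Phi$ is a groupoid homomorphism is then a routine case check. Units $[(\mu,\mu),\mu x]$ map to $(\mu x,0,\mu x)$, i.e.\ to $\mu x\in\Go_E$. For composable germs $[(\mu,\nu),\nu x][(\zeta,\eta),\eta x']$ (composability meaning $\nu x=\zeta x'$, so $\nu,\zeta$ are comparable) I split into the two cases of the product rule \eqref{definitionproductofgraphinversesemigroup}: if $\nu=\zeta\gamma$ then $x'=\gamma x$ and the product germ is $[(\mu,\eta\gamma),\eta\gamma x]$, while if $\zeta=\nu\gamma$ then $x=\gamma x'$ and the product germ is $[(\mu\gamma,\eta),\eta x']$. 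In each case a short length computation shows that $\Phi$ of the product equals the $\G_E$-product $(\mu x,|\mu|-|\nu|,\nu x)(\zeta x',|\zeta|-|\eta|,\eta x')=(\mu x,(|\mu|-|\nu|)+(|\zeta|-|\eta|),\eta x')$.

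Finally, for continuity and openness I compute $\Phi$ on the standard basic sets: for a generalised cylinder $Z(\nu,F)\subseteq Z(\nu)$ one gets
\[
\Phi\big[(\mu,\nu),Z(\nu,F)\big]=Z(\mu,\nu,F).
\]
Since the sets $Z(\nu,F)$ form a basis for $\partial E$, the sets $[(\mu,\nu),Z(\nu,F)]$ form a basis for $\mathcal{S}_E\ltimes\partial E$, while the sets $Z(\mu,\nu,F)$ form a basis for $\G_E$; thus the bijection $\Phi$ carries a basis onto a basis and is a homeomorphism, completing the proof that $\Phi$ is an isomorphism of topological groupoids. I expect the bookkeeping in the well-definedness and injectivity step — tracking the prefix $\gamma$ produced by the order relation and the resulting cancellations in $\partial E$ — to be the only genuinely delicate part; once $\Phi$ is known to be well defined, the homomorphism property and the topological statements are straightforward.
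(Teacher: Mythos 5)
Your proposal is correct and takes essentially the same approach as the paper: the same map $[(\mu,\nu),\nu x]\mapsto(\mu x,\,|\mu|-|\nu|,\,\nu x)$, the same prefix-cancellation and length bookkeeping to handle well-definedness and injectivity (which the paper packages as an if-and-only-if at the level of $\mathcal{S}_E\ast\partial E$, invoking that $\mathcal{S}_E$ is $E^*$-unitary for one direction), and the same basis-onto-basis computation for the homeomorphism, where the paper writes $\Psi([(\mu,\nu),Z(\nu\eta,F)])=Z(\mu\eta,\nu\eta,F)$ and your version with $\eta$ empty is equivalent via Remark~\ref{changebygreater}.
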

\begin{proof}
The map
\[\psi\colon\mathcal{S}_E\ast\partial E\to\G_E,\qquad \psi((\mu,\nu),x)=(\theta_{(\mu,\nu)}(x),|\mu|-|\nu|,x)\]
is a surjective semigroupoid homomorphism. Given $((\mu_i,\nu_i),x_i)\in \mathcal{S}_E\ast\partial E$ ($i=1,2$), we need to verify the equivalence
\[\psi((\mu_1,\nu_1),x_1)=\psi((\mu_2,\nu_2),x_2)\iff((\mu_1,\nu_1),x_1)\sim((\mu_2,\nu_2),x_2),\ntag\label{equationinjectivitymapgraphgroupoid}\]
where $\sim$ is the germ equivalence relation (Equation \eqref{eq:equivalencegroupoidgerms2}). This is enough, because it implies that $\psi$ factors (uniquely) to a (semi)groupoid isomorphism between $\mathcal{S}_E\ltimes\partial E$ and $\G_E$.

First, we write $x_i=\nu_ix_i'$. Then $\psi((\mu_1,\nu_1),x_1)=\psi((\mu_2,\nu_2),x_2)$ is equivalent to the following three statements (simultaneously):
\begin{enumerate}[label=(\roman*)]
\item\label{injectivitymapgraphgroupoiditem1} $\nu_1x_1'=\nu_2x_2'$;
\item\label{injectivitymapgraphgroupoiditem2} $|\mu_1|-|\nu_1|=|\mu_2|-|\nu_2|$;
\item\label{injectivitymapgraphgroupoiditem3} $\mu_1x_1'=\mu_2x_2'$.
\end{enumerate}
From items \ref{injectivitymapgraphgroupoiditem1} and \ref{injectivitymapgraphgroupoiditem3}, it follows that $\nu_1$ and $\nu_2$ are comparable, as are $\mu_1$ and $\mu_2$. Item \ref{injectivitymapgraphgroupoiditem2} then implies that either both $\nu_1$ and $\mu_1$ are subpaths of $\nu_2$ and $\mu_2$, respectively, or the reverse is true.

By symmetry, let us assume that $\nu_1$ and $\mu_1$ are subpaths of $\nu_2$ and $\mu_2$, respectively, say $\nu_2=\nu_1p$ and $\mu_2=\mu_1q$. From item \ref{injectivitymapgraphgroupoiditem1} we obtain $p x_2'=x_1'$, and thus from \ref{injectivitymapgraphgroupoiditem3}, $\mu_1px_2'=\mu_1x_1'=\mu_2x_2'=\mu_1qx_2'$, and therefore $p=q$.

In other words, these three items imply that
\begin{enumerate}[label=(\roman*)']
    \item\label{injectivitymapgraphgroupoiditem1'} $x_1=x_2$;
    \item\label{injectivitymapgraphgroupoiditem2'} $(\mu_1,\nu_1)\leq(\mu_2,\nu_2)$ or $(\mu_2,\nu_2)\leq(\mu_1,\nu_1)$
\end{enumerate}
and it is not hard to see that, conversely, items \ref{injectivitymapgraphgroupoiditem1'}' and \ref{injectivitymapgraphgroupoiditem2'}' imply \ref{injectivitymapgraphgroupoiditem1}-\ref{injectivitymapgraphgroupoiditem3}.

Finally, items \ref{injectivitymapgraphgroupoiditem1'}' and \ref{injectivitymapgraphgroupoiditem2'}' clearly imply that $((\mu_1,\nu_1),x_1)\sim((\mu_2,\nu_2),x_2)$, and the converse is also true because $\mathcal{S}_E$ is $E^*$-unitary. Therefore $\psi$ factors through a groupoid isomorphism $\Psi\colon\mathcal{S}_E\ltimes \partial E\to\G_E$. To verify that $\Psi$ is a homeomorphism, note that a basic (nonempty) open subset of $\mathcal{S}_E\ltimes\partial E$ has the form $[(\mu,\nu),Z(\nu\eta,F)]$, where $(\mu,\nu)\in\mathcal{S}_E$, $\eta\in s^{-1}(r(\nu))$ and $F$ is a finite subset of $r^{-1}(s(\eta))$. Then
\begin{align*}
    \Psi([(\mu,\nu),Z(\nu\eta,F)])=\left\{(\mu\eta x,|\mu|-|\nu|,\nu\eta x):x\in s^{-1}(r(\eta))\setminus\bigcup_{e\in F}Z(e)\right\}=Z(\mu\eta,\nu\eta,F)
\end{align*}
and these are precisely the basic open subsets of $\G_E$. Therefore, $\Psi$ is a homeomorphism.\qedhere
\end{proof}

A \emph{loop} or \emph{cycle} in a graph $E$ is a finite path $y\in E^\star$ such that $|y|\geq 1$ and $s(y)=r(y)$. An \emph{exit} of a loop $y=y_1\cdots y_{|y|}$ (where $y_i\in E^1$) is an edge $e$ for which there is $i$ such that $s(e)=s(y_i)$ and $e\neq y_i$. The graph $E$ is said to satisfy \emph{Condition (L)} if every loop has an exit.

\begin{definition}[{\cite[Definition 3.1]{MR3614030}}]
Two directed graphs $E=(E^0,E^1,r,s)$ and $F=(F^0,F^1,r,s)$  are \emph{continuously orbit equivalent} if there exists a homeomorphism $\varphi\colon \partial E \rightarrow \partial F$ together with continuous maps $k,l\colon \partial E^{\geq 1} \rightarrow \mathbb{N}$ and $k',l'\colon \partial F^{\geq 1} \rightarrow \mathbb{N}$ such that
\[\ntag\label{eq:coegraph1}
    \sigma_F^{k(x)}(\varphi(\sigma_E(x)))=\sigma_F^{l(x)}(\varphi(x)),\text{ for all }x \in \partial E^{\geq 1},
\]
and
\[\ntag\label{eq:coegraph2}
    \sigma_E^{k'(y)}(\varphi^{-1}(\sigma_F(x)))=\sigma_E^{l'(y)}(\varphi^{-1}(y)),\text{ for all }y \in \partial F^{\geq 1}.
\]
Here, $\sigma_E$ and $\sigma_F$ denote the shift maps on $E$ and $F$, respectively.
\end{definition}

The following is an analogue of \cite[Proposition~2.3]{MR3614030}. We provide a simple proof for completeness.

\begin{proposition}
Let $E=(E^0,E^1,r,s)$ be a directed graph. Then $E$ satisfies Condition (L) if and only if the canonical action $\theta$ of $\mathcal{S}_E$ on $\partial E$ is topologically principal (or equivalently, $\G_E$ is topologically principal).
\end{proposition}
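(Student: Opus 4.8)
The plan is to use Proposition~\ref{prop:principalprincipal} together with the isomorphism $\mathcal{S}_E\ltimes\partial E\cong\G_E$ of Proposition~\ref{prop:graphgroupoidisomorphic} to reduce the statement to one about isotropy in $\G_E$, and then to translate Condition~(L) into a density statement about \emph{aperiodic} boundary paths. The first step is to identify $\Lambda(\theta)$. A direct computation with $\theta_{(\mu,\nu)}(\nu x)=\mu x$ shows that a point $x=\nu x'\in\partial E$ is fixed by $(\mu,\nu)$ exactly when $\mu x'=\nu x'$; comparing lengths forces either $\mu=\nu$ (so $(\mu,\nu)$ is idempotent and $x$ is trivially fixed) or, up to inversion, $\mu=\nu p$ with $p$ a loop at $r(\nu)$ and $x'=p^\infty$. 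Since no idempotent lies below $(\nu p,\nu)$ when $|p|\geq 1$, a point $x$ is nontrivially fixed by some element of $\mathcal{S}_E$ if and only if $x$ is an eventually periodic infinite path $x=\alpha p^\infty$, equivalently (in $\G_E$) $\sigma^m(x)=\sigma^n(x)$ for some $m\neq n$. Hence $\Lambda(\theta)=\partial E\setminus\{\text{eventually periodic infinite paths}\}$, and in particular every finite boundary path lies in $\Lambda(\theta)$.

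For the implication ``not Condition~(L) $\Rightarrow$ not topologically principal'', I would take a loop $p=p_1\cdots p_n$ without exit. Having no exit means each vertex $s(p_i)$ emits only the edge $p_i$; thus no vertex of $p$ is singular and every edge is forced, so $Z(s(p))=\{p^\infty\}$. This is a nonempty open set whose only point is eventually periodic, so $Z(s(p))\cap\Lambda(\theta)=\varnothing$ and $\Lambda(\theta)$ fails to be dense.

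For the converse ``Condition~(L) $\Rightarrow$ topologically principal'', I would show that every nonempty basic open set $Z(\mu,F)$ meets $\Lambda(\theta)$. If $r(\mu)$ is singular, then $\mu\in Z(\mu,F)$ is a finite boundary path and we are done; otherwise nonemptiness yields an edge $e\in s^{-1}(r(\mu))\setminus F$ with $Z(\mu e)\subseteq Z(\mu,F)$, so it suffices to find a point of $\Lambda(\theta)$ in an arbitrary cylinder $Z(\nu)$, with $u=r(\nu)$. If a singular vertex is reachable from $u$, routing to it gives a finite boundary path in $Z(\nu)\cap\Lambda(\theta)$. If some infinite path from $u$ visits infinitely many distinct vertices, then, since an eventually periodic path visits only finitely many vertices, that path is aperiodic and, prefixed by $\nu$, lies in $Z(\nu)\cap\Lambda(\theta)$. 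In the remaining case all reachable vertices are non-singular and the reachable region carrying infinite paths is finite, so I would pass to a terminal strongly connected component $C$ reachable from $u$. Condition~(L) forbids $C$ from being a single cycle (a terminal cycle has out-degree one, hence no exit), so $C$ contains a vertex $w$ on a cycle with out-degree $\geq 2$; strong connectivity then gives two first-return paths $R_0,R_1$ from $w$ to $w$ beginning with different edges. Setting $A=R_0R_1$ and $B=R_1R_0$, these are distinct words of equal length, and concatenating them along a non-eventually-periodic $\{A,B\}$-sequence (e.g.\ Thue--Morse) produces an aperiodic infinite path at $w$: any eventual period of the edge path would, after passing to a multiple, descend to an eventual period of the block sequence, a contradiction. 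Prefixing by a path from $u$ to $w$ lands this aperiodic path in $Z(\nu)\cap\Lambda(\theta)$.

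The hard part will be exactly this last construction of an aperiodic boundary path inside a cylinder; everything else is bookkeeping about cylinders and lengths. Condition~(L) enters essentially at two points: in ruling out that a terminal strongly connected component is a bare (exit-free) cycle, and, implicitly, in the fact that an out-degree-one forced path can never close into a loop. Finally I would assemble the two directions and record, via Propositions~\ref{prop:principalprincipal} and~\ref{prop:graphgroupoidisomorphic}, the equivalence with topological principality of $\G_E$.
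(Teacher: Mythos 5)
Your proposal is correct, and its skeleton matches the paper's: both identify $\Lambda(\theta)$ with the set of boundary paths that are not eventually periodic (the paper calls these ``not periodic''), both handle the failure of Condition~(L) by observing that a loop $p$ without exit makes $\{p^\infty\}$ a nonempty open set disjoint from $\Lambda(\theta)$, and both invoke Propositions~\ref{prop:principalprincipal} and~\ref{prop:graphgroupoidisomorphic} for the equivalence with topological principality of the groupoids. The genuine difference is in the forward direction: the paper simply \emph{asserts} that Condition~(L) implies density of the set $X$ of finite boundary paths and non-periodic infinite paths, treating this as known, whereas you actually prove it, via the case analysis (singular vertex reachable; some infinite path meeting infinitely many vertices, hence aperiodic; otherwise a finite terminal strongly connected component) and the explicit construction of an aperiodic path by concatenating the blocks $A=R_0R_1$, $B=R_1R_0$ along a non-eventually-periodic binary sequence. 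Your block-descent argument for aperiodicity is sound (an eventual period of the edge path can be multiplied up to a multiple of $|A|=|B|$, and since $A\neq B$ as words it descends to an eventual period of the block sequence), and Condition~(L) enters exactly where you say it does. What the paper's route buys is brevity, at the cost of leaving the density step as folklore; what yours buys is a self-contained proof that visibly uses no countability hypotheses, consistent with the paper's stated aims. One caveat: your assertion that, in the remaining case, the reachable region is finite is true but not free -- it needs a K\"onig-type argument (the tree of geodesic paths from $u$ is finitely branching since no reachable vertex is an infinite emitter, an infinite reachable set forces an infinite branch, and a geodesic cannot revisit vertices, producing a path through infinitely many distinct vertices, contradicting the case hypothesis). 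You should spell this out, since without it the existence of a terminal strongly connected component is not guaranteed in an infinite graph.
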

\begin{proof}
Let us say that an element $x\in \partial E$ is \emph{cyclic} if there exists $x'\in E^\star$ with $|x'|\geq 1$ such that $x=x'x$, or equivalently $x=x'x'x'\cdots$, and that $x$ is \emph{periodic} if $x=\nu y$ for some $\nu\in E^\star$ and some cyclic $y$.

First suppose that $E$ satisfies Condition (L). Consider the set $X=(E^\star\cap\partial E)\cup\left\{x\in E^\infty:x\text{ is not periodic}\right\}$. Condition (L) implies that $X$ is dense in $\partial E$. We are done by proving that $X\subseteq\Lambda(\theta)$. Suppose $(\mu,\nu)\in \mathcal{S}_E$ and $x=\nu y\in Z(\nu)$ is such that $\theta_{(\mu,\nu)}(x)=x$. Let us prove that $\mu=\nu$. We have
\[\mu y=\theta_{(\mu,\nu)}(x)=x=\nu y.\ntag\label{equationproofconditionltopologicallyprincipal}\]
It follows that $\mu$ and $\nu$ are comparable, so to prove that $\mu=\nu$ it suffices to prove that $|\mu|=|\nu|$. Without loss of generality, let us assume that $\mu=\nu\mu'$ for some $\mu'$. From \eqref{equationproofconditionltopologicallyprincipal} we obtain $y=\mu'y$. However, $y$ is not cyclic, since $x$ is not periodic, so $|\mu'|=0$, and $|\mu|=|\nu\mu'|=|\nu|$. We conclude that $\theta$ is topologically principal.

Conversely, suppose $E$ does not satisfy Condition (L), and let $y$ be any loop in $E$ without exit. The element $x=yyy\cdots$ is isolated in $\partial E$, because $Z(y)=\left\{x\right\}$, and $\theta_{(y,yy)}(x)=x$. However, the only idempotent in $\mathcal{S}_E$ which is smaller than $(y,yy)$ is the zero, and $\theta_0$ is the empty function, thus $Z(y)\cap\Lambda(\theta)=\varnothing$. This proves that $\Lambda(\theta)$ is not dense in $\partial E$, therefore $\theta$ is not topologically principal.\qedhere
\end{proof}

We will now compare continuous orbit equivalence of graphs and continuous orbit equivalence of the canonical action of the associated semigroups. The following is analogue to \cite[Lemma 3.8]{Li2017}, but we do not require that the graphs satisfy Condition (L).

\begin{proposition}\label{prop:graphcoe}
Let $E=(E^0,E^1,s,r)$ and $F=(F^0,F^1,s,r)$ be directed graphs. Then $E$ and $F$ are continuously orbit equivalent if and only if the canonical actions $\theta^E$ and $\theta^F$ associated to $E$ and $F$ are continuously orbit equivalent.
\end{proposition}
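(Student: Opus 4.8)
The plan is to prove both implications with the \emph{same} homeomorphism $\varphi\colon\partial E\to\partial F$, by translating between the shift--cocycle data $(k,l,k',l')$ of graph continuous orbit equivalence and the inverse--semigroup data $(a,b)$ of Definition~\ref{definitioncoe}, after identifying $S=\mathcal{S}_E$, $T=\mathcal{S}_F$, $\theta=\theta^E$, $\gamma=\theta^F$, $X=\partial E$, $Y=\partial F$. The bridge between the two formalisms is that the one-sided shift is, locally, an element of the canonical action: for $e\in E^1$ one has $\sigma_E|_{Z(e)}=\theta_{(r(e),e)}$ (compare \eqref{actiontheta} and \eqref{eq:one-sidedshift}), and more generally $\sigma_E^{n}|_{Z(\mu)}=\theta_{(r(\mu),\mu)}$ for $\mu\in E^{n}$. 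Dually, for $(\mu,\nu)\in\mathcal{S}_E$ with $x=\nu x'$ and $\varphi(x)=\eta w$, the identity $\varphi(\theta_{(\mu,\nu)}(x))=\gamma_{(\zeta,\eta)}(\varphi(x))$ says exactly that $\sigma_F^{|\zeta|}(\varphi(\theta_{(\mu,\nu)}(x)))=w=\sigma_F^{|\eta|}(\varphi(x))$, i.e.\ a shift--cocycle relation. I will also use the elementary fact that $\mathcal{S}_E\setminus\{0\}$ is generated, as an inverse semigroup, by the idempotents together with the elements $(r(e),e)$, $e\in E^1$, since $(\mu,\nu)=(\mu,r(\mu))(r(\nu),\nu)$ and each factor decomposes into edge generators and their inverses.

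For the implication \emph{graph COE $\Rightarrow$ action COE}, I would invoke Lemma~\ref{lemmalocaldescriptionofcoe}, which reduces constructing a continuous $a\colon\mathcal{S}_E\ast\partial E\to\mathcal{S}_F$ to verifying, for each $s\in\mathcal{S}_E$ and $x\in X_{s^*}$, the local condition~\ref{lemmalocaldescriptionofcoeb}: a neighbourhood $U\ni x$ and some $t\in\mathcal{S}_F$ with $\varphi\circ\theta_s=\gamma_t\circ\varphi$ on $U$. The set $\mathcal{C}$ of elements $s$ satisfying this is a sub-inverse-semigroup of $\mathcal{S}_E$ containing every idempotent; since the canonical action is \emph{global}, $\theta_{ss'}=\theta_s\theta_{s'}$, so closure of $\mathcal{C}$ under products and inverses is a routine neighbourhood-intersection argument. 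For a generator $(r(e),e)=\sigma_E|_{Z(e)}$ I would argue: given $x\in Z(e)$, \eqref{eq:coegraph1} gives $\sigma_F^{m}(\varphi(\sigma_E\tilde x))=\sigma_F^{n}(\varphi(\tilde x))$ with $m=k(x)$, $n=l(x)$ on a neighbourhood $U\subseteq Z(e)$ on which $k,l$ are constant; shrinking $U$ so that the length-$m$ prefix $\zeta$ of $\varphi(\sigma_E\tilde x)$ and the length-$n$ prefix $\eta$ of $\varphi(\tilde x)$ are constant (possible by continuity of $\varphi$ and clopenness of cylinders) gives $\varphi(\sigma_E\tilde x)=\zeta w$ and $\varphi(\tilde x)=\eta w$ with common tail $w=\sigma_F^{n}(\varphi\tilde x)$, whence $\varphi(\sigma_E\tilde x)=\gamma_{(\zeta,\eta)}(\varphi\tilde x)$ for all $\tilde x\in U$. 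Thus $\mathcal{C}=\mathcal{S}_E$ and Lemma~\ref{lemmalocaldescriptionofcoe} yields $a$ satisfying Definition~\ref{definitioncoe}\ref{definitioncoea}; applying the same reasoning to $\varphi^{-1}$ and $(k',l')$ produces $b$.

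For the converse \emph{action COE $\Rightarrow$ graph COE}, I would keep $\varphi$ and read off the cocycles directly. For $x\in\partial E^{\geq1}$ with first edge $x_1$, set $(\zeta,\eta):=a\big((r(x_1),x_1),x\big)$ and $k(x)=|\zeta|$, $l(x)=|\eta|$. Writing $\varphi(x)=\eta w$ (legitimate because the implicit domain requirement $a(s,x)\in T_{\varphi(x)}$ of Definition~\ref{definitioncoe} forces $\varphi(x)\in Z(\eta)$), Definition~\ref{definitioncoe}\ref{definitioncoea} for $s=(r(x_1),x_1)$ gives $\varphi(\sigma_E x)=\gamma_{(\zeta,\eta)}(\varphi(x))=\zeta w$, so $\sigma_F^{k(x)}(\varphi(\sigma_E x))=w=\sigma_F^{l(x)}(\varphi(x))$, which is \eqref{eq:coegraph1}; the length bookkeeping makes the shifts automatically defined. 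Continuity of $k,l$ holds since $x\mapsto x_1$ is constant on each clopen cylinder $Z(e)$, and these cover $\partial E^{\geq1}$ disjointly, while $a$ restricts there to a continuous, hence locally constant, $\mathcal{S}_F$-valued map. The maps $k',l'$ come symmetrically from $b$, giving \eqref{eq:coegraph2}.

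The main obstacle is the \emph{graph COE $\Rightarrow$ action COE} direction, specifically the reduction to generators: one must check that the local condition is preserved under products and inverses and transport it along the factorization of an arbitrary $(\mu,\nu)$ into edge generators. Here it is essential that the canonical action is global, so that $\theta_{ss'}=\theta_s\theta_{s'}$ holds as an equality rather than merely $\theta_s\theta_{s'}\leq\theta_{ss'}$. Secondary care is needed with the degenerate features of the boundary path space (finite paths ending at singular vertices, isolated vertices with no edges, and tails $w$ of length zero), where the concatenation conventions must be checked so that each reconstructed pair $(\zeta,\eta)$ genuinely lies in $\mathcal{S}_F$, i.e.\ satisfies $r(\zeta)=r(\eta)$.
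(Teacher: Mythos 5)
Your proposal is correct and takes essentially the same route as the paper: the converse direction reads off $k,l$ from $a$ evaluated at the edge generators $(r(x_1),x_1)$ exactly as you do, and the forward direction likewise reduces via Lemma~\ref{lemmalocaldescriptionofcoe} to the local condition, verifies it on edge generators using constancy neighbourhoods for $k,l$ and for the relevant prefixes, and then extends to all of $\mathcal{S}_E$ using globality of the canonical action. The only cosmetic difference is that the paper replaces your abstract ``set of good elements is a sub-inverse semigroup containing idempotents and edge generators'' closure argument with an explicit five-case induction on $(|\mu|,|\nu|)$, which amounts to the same propagation through products and inverses.
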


\begin{proof}
Assume that $(\varphi, a, b)$ is a continuous orbit equivalence between $\theta^E$ and $\theta^F$. Given $x\in\partial E^{\geq 1}$, let us denote by $x_1\in E^1$ the first edge of $x$ (i.e., $x=x_1y$ for some $y\in\partial E$). The map $x\mapsto x_1$ is locally constant on $\partial E^{\geq 1}$ -- namely, it is the constant map $x\mapsto e$ on $Z(e)$ for each $e\in E^1$, and $\left\{Z(e):e\in E^1\right\}$ is a partition of $\partial E^{\geq 1}$.

Let $\alpha,\beta\colon\partial E^{\geq 1}\to \mathcal{S}_E$ be functions such that $a((r(x_1),x_1),x)=(\alpha(x), \beta(x))$ for all $x\in \partial E^{\geq 1}$, and define $k(x)=|\alpha(x)|$ and $l(x)=|\beta(x)|$. As $a$ is continuous, then $k$ and $l$ are continuous. Moreover, we have
\[\varphi(\sigma_E(x))=\varphi(\theta^E_{(r(x_1),x_1)}(x))=\theta_{(\alpha(x),\beta(x))}^F(\varphi(x)),\] 
which means that $\varphi(\sigma_E(x))=\alpha(x) y$ and $\varphi(x)=\beta(x) y$,  for some $y \in \partial F$. Thus
\begin{align*}
\sigma_F^{k(x)}(\varphi(\sigma_E(x)))=\sigma_F^{|\alpha(x)|}(\alpha(x)y)=y=\sigma_F^{|\beta(x)|}(\beta(x)y)=\sigma_F^{l(x)}(\varphi(x))
\end{align*}
and so \eqref{eq:coegraph1} holds. To prove \eqref{eq:coegraph2}, $k'$ and $l'$  are defined in a similar way, using $b$.

Conversely, suppose $\varphi:\partial E\to\partial F$ is a homeomorphism and that there are maps $k,l\colon \partial E^{\geq 1}\to\mathbb{N}$ satisfying, for all $x\in\partial E^{\geq 1}$,
	\[\ntag\label{equationorbitequivalenceofgraph}
	\sigma_F^{k(x)}(\varphi(\sigma_E(x)))=\sigma_F^{l(x)}(\varphi(x)).    
\]
We must show that there is a continuous function $a\colon \mathcal{S}_E\ast \partial E\to\mathcal{S}_F$ such that
	\[\ntag\label{equationorbitequicalenceofactions}
	    \varphi(\theta^E_{(\mu,\nu)}(x))=\theta^F_{a(\mu,\nu,x)}(\varphi(x)),
	\]
	for all $(\mu,\nu)\in\mathcal{S}_E$ and $x\in Z^E(\nu)$.

By Lemma \ref{lemmalocaldescriptionofcoe}, it is sufficient to prove that for all $(\mu,\nu)\in\mathcal{S}_E$ and for all $x\in Z(\nu)$, there exists an open set $U$ containing $x$ and $(\alpha,\beta)\in\mathcal{S}_F$ such that for all $\widetilde{x}\in U$,
	\[\varphi(\theta_{(\mu,\nu)}^E(\widetilde{x}))=\theta_{(\alpha,\beta)}^F(\varphi(\widetilde{x})).\]
	
Let us separate the proof in cases:	
	\begin{enumerate}
		\item Assume that $|\mu|=|\nu|=0$ (which implies that $\mu=\nu$).
	
	In this case, we simply take $U=Z^E(\nu)\cap\varphi^{-1}(Z^F(s(\varphi(x))))$. Then for all $\widetilde{x}\in U$,
	\[\varphi(\theta^E_{(\mu,\nu)}(\widetilde{x}))=\varphi(\widetilde{x})=\theta^F_{(s(\varphi(x)),s(\varphi(x)))}(\varphi(\widetilde{x})),\]
	so we are done.
	
	\item Assume that $|\mu|=0$ and $|\nu|= 1$.
	
	Let $K=k(x)$ and $L=l(x)$. For all $\widetilde{x}\in Z^E(\nu)\subseteq \partial E^{\geq 1}$, we have
	\[\theta^E_{(\mu,\nu)}(\widetilde{x})=\sigma_E(\widetilde{x})\]
	Let $U_1=Z^E(\nu)\cap k^{-1}(K)\cap l^{-1}(L)$. Then for all $\widetilde{x}\in U_1$, Equation \eqref{equationorbitequivalenceofgraph} implies that
	\[\ntag\label{equationproof}
	  \sigma_F^{K}(\varphi(\theta^E_{(\mu,\nu)}(\widetilde{x}))=\sigma_F^L(\varphi(\widetilde{x}))
	  \]
		Equation \eqref{equationproof} with $\widetilde{x}=x$ implies that there exist $(\alpha,\beta)\in\mathcal{S}_F$, with $|\alpha|=K$ and $|\beta|=L$, such that
	\[\varphi(\theta_{(\mu,\nu)}^E(x))=\theta^F_{(\alpha,\beta)}(\varphi(x)).\]
	Thus setting $U=U_1\cap\varphi^{-1}(Z^F(\nu))\cap(\varphi\circ\theta_{(\mu,\nu)}^E)^{-1}(Z^F(\mu))$, we obtain Equation \eqref{equationorbitequicalenceofactions} on $U$.
	
	\item\label{case3} Assume that $|\mu|=0$ and $|\nu|\geq 1$.
	
	Write $\nu=\nu_1\cdots\nu_{|\nu|}$, where $\nu_i\in E^1$. Notice that
	\[(\mu,\nu)=(\mu,\nu_{|\nu|})(s(\nu_{|\nu|}),\nu_{|\nu|-1})\cdots(s(\nu_3),\nu_2)(s(\nu_2),\nu_1)\]
	In other words, there are elements $e_1,\ldots,e_{|\nu|}$ of the form considered in the previous case, such that $(\mu,\nu)=e_{|\nu|}\cdots e_1$. Applying the previous case, for each $k\geq 1$ we may find a neighbourhood $U_k$ of $\theta_{e_{k-1}\cdots e_1}(x)$ (or simply $x$ in the case $k=1$) and an element $f_k\in\mathcal{S}_F$ such that
	\[\varphi\circ\theta^E_{e_k}=\theta^F_{f_k}\circ\varphi\]
	on $U_k$. Then $U=U_1\cap\bigcap_{k=2}^{|\nu|}\theta_{e_{k-1}\cdots e_1}^{-1}(U_k)$ is a neighbourhood of $x$ such that
	\[\varphi\circ\theta^E_{(\mu,\nu)}=\varphi\circ\theta^E_{e_{|\nu|}}\circ\cdots\circ\theta^E_{e_{1}}=\theta^F_{f_{|\nu|}}\circ\cdots\circ\theta^F_{f_1}\circ\varphi=\theta^F_{f_{|\nu|}\cdots f_1}\circ\varphi,\]
	since $\theta^E$ and $\theta^F$ are actions.
	
	\item\label{case4} Assume that $|\mu|\geq 1$ and $|\nu|=0$.
	
	Applying the case \ref{case3} to $(\nu,\mu)$, there exists a neighbourhood $V$ of $\theta^E_{(\mu,\nu)}(x)$ and $(\beta,\alpha)\in\mathcal{S}_F$ such that $\varphi\circ\theta^E_{(\nu,\mu)}=\theta^F_{(\beta,\alpha)}\circ\varphi$ on $V$. In other words, $\varphi\circ\theta^E_{(\mu,\nu)}=\theta^F_{(\alpha,\beta)}\circ\varphi$ on the neighbourhood $U=\theta^E_{(\nu,\mu)}(V)$ of $x$, as we wanted.
	
	\item\label{case5} Assume that $|\mu|,|\nu|\geq 1$.
	In this case, $(\mu,\nu)=(\mu,r(\mu))(r(\mu),\nu)$, so we may apply cases \ref{case3} and \ref{case4}, and proceed in a manner similar to that of case \ref{case3}.
\end{enumerate}
Since we have exhausted all possibilities for $(\mu,\nu)$, the theorem is proven.\qedhere
\end{proof}

The \emph{Leavitt path algebra} $L_R(E)$ of a directed graph $E$ with coefficients in a unital commutative ring $R$ is the $R$-algebra generated by a set $\{v \in E^0 \}$ of pairwise orthogonal idempotents and a set of variables $\left\{e, e^* : e \in E^1 \right\}$ satisfying the relations:
\begin{enumerate}[label=(\roman*)]
\item $s(e)e = e = er(e)$ for all $e \in E^1$;
\item $r(e)e^* = e^* = e^*s(e)$ for all $e \in E^1$;
\item $e^*f=\delta_{e,f}r(e)$ for all $e, f \in E^1$ (where $\delta_{x,y}$ denotes the Kronecker delta);
\item $v=\sum_{e \in s^{-1}(v)} ee^*$ whenever $v$ is not a sink nor an infinite emitter.
\end{enumerate}

The Leavitt path algebra $L_R(E)$ is isomorphic to the Steinberg algebra $A_R(\G_E)$ of the boundary path groupoid $\G_E$ (see \cite[Example~3.2]{MR3299719}). By Proposition~\ref{prop:graphgroupoidisomorphic} the groupoids $\G_E$ and $\mathcal{S}_E\ltimes \partial E$ are isomorphic, so by Theorem \ref{theo:steinbergiscrossed} we obtain the isomorphisms
\[L_R(E)\cong A_R(\G_E) \cong A_R(\mathcal{S}_E\ltimes \partial E) \cong A_R(\partial E)\rtimes \mathcal{S}_E.\]

Finally, from Propositions~\ref{prop:graphcoe} and Theorem~\ref{theo:isodequasetudo}, we obtain the following theorem:

\begin{theorem} 
Let $E$ and $F$ be directed graphs that satisfy the Condition (L) and $R$ an indecomposable commutative unital ring. Then the following are equivalent: 
\begin{enumerate}[label=(\roman*)]
\item\label{item:graph1} the graphs $E$ and $F$ are continuously orbit equivalent;
\item\label{item:graph2} the actions $\theta^E$ and $\theta^F$ are continuously orbit equivalent;
\item\label{item:graph3} $\mathcal{S}_E \ltimes \partial E$ and $\mathcal{S}_F \ltimes \partial F$  are isomorphic as topological groupoids;
\item\label{item:graph4} $\G_E$ and $\G_F$ are isomorphic as topological groupoids;
\item\label{item:graph6} there exists a diagonal-preserving isomorphism between the Steinberg algebras $A_R(\G_E)$ and $A_R(\G_F)$;
\item\label{item:graph7} there exists a diagonal-preserving isomorphism between the  skew inverse semigroup rings $A_R(\partial E)\rtimes \mathcal{S}_E$ and $A_R(\partial F)\rtimes \mathcal{S}_F$;
\item\label{item:graph8} there exists a diagonal-preserving isomorphism between the Leavitt path algebras $L_R(E)$ and $L_R(F)$.
\end{enumerate}
\end{theorem}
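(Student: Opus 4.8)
The plan is to assemble the theorem entirely from results established earlier, since each of the seven conditions already matches an item of Theorem~\ref{theo:isodequasetudo} or one of the graph-specific propositions. First I would dispose of \ref{item:graph1}$\iff$\ref{item:graph2}: this is precisely Proposition~\ref{prop:graphcoe}, and notably it does not use Condition (L), so it holds unconditionally.

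The core of the argument is to invoke Theorem~\ref{theo:isodequasetudo} for the canonical actions $\theta^E$ and $\theta^F$, which requires verifying its three standing hypotheses. For \emph{almost ampleness}, the spaces $\partial E$ and $\partial F$ are locally compact Hausdorff and zero-dimensional, and each domain of the canonical action is a cylinder $Z(\mu)$, which is compact-open; since compact spaces are Lindel\"of, Example~\ref{examplelindelofisultraparacompact} shows each $Z(\mu)$ is ultraparacompact, so $\theta^E$ and $\theta^F$ are almost ample. For \emph{Hausdorffness of the groupoids of germs}, the semigroups $\mathcal{S}_E$ and $\mathcal{S}_F$ are semilattices and the domains $Z(\mu)$ are clopen, so $\mathcal{S}_E\ltimes\partial E$ and $\mathcal{S}_F\ltimes\partial F$ are Hausdorff by Proposition~\ref{prop:weaksemilattice}. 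For \emph{topological principality} --- this is exactly where Condition (L) enters --- the proposition characterizing Condition (L) gives that $\theta^E$ and $\theta^F$ are topologically principal precisely because $E$ and $F$ satisfy Condition (L). With these in hand, Theorem~\ref{theo:isodequasetudo} yields the equivalence of \ref{item:graph2} (its item \ref{theoremconnectingeverythingitemcoe}), the isomorphism of the germ groupoids (item \ref{theoremconnectingeverythingitemgroupoids}, i.e.\ \ref{item:graph3}), the diagonal-preserving isomorphism of their Steinberg algebras (item \ref{theoremconnectingeverythingitemsteinbergalgebras}), and the diagonal-preserving isomorphism of the crossed products $A_R(\partial E)\rtimes\mathcal{S}_E$ and $A_R(\partial F)\rtimes\mathcal{S}_F$ (item \ref{theoremconnectingeverythingitemcrossedproducts}, i.e.\ \ref{item:graph7}).

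It then remains to transport these statements along the graph-groupoid identification. Proposition~\ref{prop:graphgroupoidisomorphic} gives topological groupoid isomorphisms $\mathcal{S}_E\ltimes\partial E\cong\G_E$ and $\mathcal{S}_F\ltimes\partial F\cong\G_F$, which immediately gives \ref{item:graph3}$\iff$\ref{item:graph4}. Because these isomorphisms restrict to homeomorphisms of the unit spaces (both identified with the boundary path space), the induced Steinberg-algebra isomorphisms $A_R(\mathcal{S}_E\ltimes\partial E)\cong A_R(\G_E)$ carry the diagonal subalgebra onto the diagonal; hence the diagonal-preserving isomorphism of germ-groupoid Steinberg algebras furnished by Theorem~\ref{theo:isodequasetudo} is equivalent to \ref{item:graph6}. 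Finally, the isomorphism $L_R(E)\cong A_R(\G_E)$ of \cite[Example~3.2]{MR3299719} sends the standard diagonal of the Leavitt path algebra (generated by the elements $\mu\mu^*$) onto the diagonal subalgebra $D_R(\G_E)$, so it is diagonal-preserving and gives \ref{item:graph6}$\iff$\ref{item:graph8}.

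I expect the only genuinely delicate point to be the bookkeeping of the \emph{diagonal-preserving} property as one passes between $A_R(\mathcal{S}_E\ltimes\partial E)$, $A_R(\G_E)$ and $L_R(E)$: each transfer is routine once one checks that the relevant isomorphism is the identity on unit spaces (equivalently, on boundary paths), but this must be stated explicitly so that the diagonal-preserving hypotheses and conclusions line up across the citations. Everything else is a direct appeal to the results collected above.
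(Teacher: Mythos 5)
Your proposal is correct and follows essentially the same route as the paper: the theorem is assembled from Proposition~\ref{prop:graphcoe} (for \ref{item:graph1}$\iff$\ref{item:graph2}), Theorem~\ref{theo:isodequasetudo} applied to the canonical actions (whose hypotheses hold since the $Z(\mu)$ are compact-open, $\mathcal{S}_E$ is a semilattice, and Condition (L) gives topological principality), Proposition~\ref{prop:graphgroupoidisomorphic}, and the isomorphism $L_R(E)\cong A_R(\G_E)$. Your extra care in verifying almost ampleness via Example~\ref{examplelindelofisultraparacompact} and in tracking the diagonal-preserving property across the identifications is exactly the bookkeeping the paper leaves implicit, and it is carried out correctly.
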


\bibliographystyle{amsplain}
\bibliography{references}

\end{document}